\title{Dynamical properties of spatial discretizations of a generic homeomorphism}
\date{\today}
\author{Pierre-Antoine Guihéneuf}
\DeclareRobustCommand{\SkipTocEntry}[5]{}
\newtheorem{lemme}{Lemma}
\newtheorem{theoreme}[lemme]{Theorem}
\newtheorem{prop}[lemme]{Proposition}
\newtheorem{coro}[lemme]{Corollary}
\newtheorem{conj}[lemme]{Conjecture}
\newtheorem*{theorem}{Theorem}
\newtheorem*{propo}{Proposition}
\theoremstyle{definition}
\newtheorem{definition}[lemme]{Definition}
\newtheorem{notation}[lemme]{Notation}
\theoremstyle{remark}
\newtheorem*{ques}{Question}
\newtheorem{rem}[lemme]{Remark}
\newtheorem{ex}[lemme]{Example}
\newcounter{counconst}
\newenvironment{constat}{\bigskip\noindent\stepcounter{counconst}\fontfamily{cmss}\selectfont \thecounconst)}{\bigskip}
\newcommand{\N}{\mathbf{N}}
\newcommand{\R}{\mathbf{R}}
\newcommand{\Sn}{\mathfrak{S}}
\newcommand{\T}{\mathbf{T}}
\newcommand{\V}{\mathcal{V}}
\newcommand{\Z}{\mathbf{Z}}
\newcommand{\varep}{\varepsilon}
\newcommand{\Hom}{\mathrm{Homeo}}
\newcommand{\Leb}{\mathrm{Leb}}
\newcommand{\ud}{\mathrm{d}}
\newcommand{\Prb}{\mathcal{P}}
\newcommand{\tT}{\mathcal{T}}
\newcommand{\Th}{\operatorname{tanh}}
\newcommand{\card}{\operatorname{Card}}
\newskip\@bigflushglue \@bigflushglue = -100pt plus 1fil
\def\bigcentering{\let\\\@centercr\rightskip\@bigflushglue%
\leftskip\@bigflushglue
\parindent\z@\parfillskip\z@skip}
\begin{document}

\begin{abstract}
This paper concerns the link between the dynamical behaviour of a dynamical system and the dynamical behaviour of its numerical simulations. Here, we model numerical truncation as a spatial discretization of the system. Some previous works on well chosen examples (see e.g. cite{Gamb-dif}) show that the dynamical behaviours of dynamical systems and of their discretizations can be quite different. We are interested in generic homeomorphisms of compact manifolds. So our aim is to tackle the following question: can the dynamical properties of a generic homeomorphism be detected on the spatial discretizations of this homeomorphism?

We will prove that the dynamics of a single discretization of a generic conservative homeomorphism does not depend on the homeomorphism itself, but rather on the grid used for the discretization. Therefore, dynamical properties of a given generic conservative homeomorphism cannot be detected using a single discretization. Nevertheless, we will also prove that some dynamical features of a generic conservative homeomorphism (such as the set of the periods of all periodic points) can be read on a sequence of finer and finer discretizations.
\end{abstract}

\email{pierre-antoine.guiheneuf@math.u-psud.fr}
\address{Laboratoire de mathématiques CNRS UMR 8628\\Université Paris-Sud 11, Bât. 425\\91405 Orsay Cedex FRANCE}
\subjclass[2010]{37M05, 37M25, 37M99, 37B99, 37A05}
\keywords{Conservative generic homeomorphism, dissipative generic homeomorphism, dynamical property, discretization}

\maketitle

\tableofcontents

\section{Introduction}

When one tries to simulate a discrete time dynamical system with a computer, calculations are performed with a finite numerical precision. Numerical errors made at each iteration may add up, so that after a while, the numerically calculated orbit of a point will have nothing in common with the actual one. Nevertheless, a numerically calculated orbit is close to an actual orbit at any time, thus one can hope that the collective behaviour of numerically calculated orbits provides information about the collective behaviour of actual orbits. Interesting properties of dynamical systems are often \emph{asymptotic} (i.e. concern the behaviour of the system in the long term) on the one hand, and \emph{collective} (i.e. concern the collective behaviour of orbits or even the action of the system on open sets and probability measures) on the other. The purpose of our study is to tackle the question: can such properties be detected on numerical simulations?

We will model numerical truncation as a spatial discretization\footnote{See page \pageref{pagefin} about the accuracy of this model.}. Consider a dynamical system whose phase space is a manifold $X$ and whose evolution law is given by a map $f$ from $X$ into itself. Roughly speaking, a numerical simulation of the system with a precision $10^{-N}$ replaces the continuous phase space $X$ by a discrete space $E_N$ made of points of $X$ whose coordinates are decimal numbers with at most $N$ decimal places, and replaces the map $f$ by its discretization $f_N: E_N \to E_N$ that maps $x\in E_N$ to the point (or one of the points) of $E_N$ nearest $f(x)$. We can clarify a bit the question raised in the previous paragraph:

\begin{ques}
Which dynamical properties of a homeomorphism $f$ can be read on the dynamics of its discretizations $(f_N)_{N\geq 0}$?
\end{ques}

Questions about discretizations of dynamical systems have mainly been studied empirically, using numerical simulations of well-chosen examples (see e.g. \cite{Lanf-inf,MR907194,MR1169615,MR929137,MR835874}). For instance, a particular attention has been paid to the obtaining of physical measures by numerical calculations: see e.g. \cite{MR0478237,MR534103}, \cite{Boya-comp, Gora-why} or \cite{MR2279269}. These papers explain why in some cases absolutely continuous invariant measures of the initial system can be obtained from numerical simulations, for example when the system is uniquely ergodic. However there are situations where (serious) problems arise: in a short article, J.-M. Gambaudo and C. Tresser \cite{Gamb-dif} \label{refG}give the examples of two simple homeomorphisms of $[0,1]^2$ which both have attractors that attract most of the orbits. Nevertheless, those attractors are undetectable in practice, simply because the connected components of their basins are much too small. Thus, the actual dynamics of these homeomorphisms does not have much in common with that observed on simulations.

In the early 90s, a significant work has been done by  a group of researchers including among others P. Diamond\label{DiaDia}, P. Kloeden, V. Kozyakin, J. Mustard and A. Pokrovskii. Their point of view lies between theory and practice: for instance in \cite{MR1265228,MR1331572,MR1354569,MR1387977} they define some dynamical properties that are robust under discretization. Nevertheless, the precision where one can detect these properties on discretizations might be huge, so that these notions could be unusable in practice. On the other hand, in \cite{MR1353178,MR1392078,MR1400185,MR1481914,MR1445902} the authors empirically remark that some quantities related to the discretizations of dynamical systems in dimension 1, such as the proportion of points in the basin attraction of the fixed point 0, the distribution of the lengths of cycles etc., are similar to same quantities for random applications with an attractive centre.

From a more theoretical point of view, S. Luzzatto and P. Pilarczyk conducted recently a quite interesting study of the modeling of a computer discretization by multivalued in \cite{MR2776399} (see also \cite{MR1403460} and \cite{MR1387977}). Note that this modeling is very different from that used by most authors and which is described above. Their paper includes a discussion about the relationship between the continuous and the discrete: at first glance properties of the original system can be deduced from a finite number of discretizations only when these properties are robust, which is relatively rare. To overcome this difficulty the authors define what they call finite resolution properties, in particular such properties can be verified in finite time by computational methods.

In \cite{Ghys-vari}, \'E. Ghys made a complete study of the dynamical behaviour of discretizations of the Anosov automorphism of the torus $\T^2$ given by $(x,y)\mapsto (y,x+y)$ (see also \cite{MR1176587}). \'E. Ghys notes that the dynamics of these discretizations (he shows that they are permutations with remarkably small orders) has nothing to do with the actual dynamics of the automorphism, which is a paradigm of chaotic dynamics. The very specific arithmetical properties of this example let one hope that such a difference between the actual dynamics of a system and the dynamics of its discretizations might be exceptional. This hope is supported by more recent results of S. Hayashi in \cite{Haya-obs}, roughly speaking, he shows that every $C^1$-diffeomorphism of a compact manifold with no positive Lyapunov exponent can be approximated by another diffeomorphism whose attractors are ``observable in practise''. Furthermore, T. Miernowski made a fairly comprehensive study of discretizations of circle homeomorphisms \cite{MR2279269}. He basically shows that the dynamical properties of a typical (generic or prevalent) circle homeomorphism/diffeomorphism can be read on the dynamics of its discretizations. All these results suggest to tackle the above question not for \emph{arbitrary} homeomorphisms but rather for \emph{typical} homeomorphisms; it is the point of view we will adopt in this article.
\bigskip

In this paper we study dynamical properties of discretizations of generic homeomorphisms in the sense of Baire. We establish properties for both \emph{dissipative}, i.e. arbitrary homeomorphisms of $X$, and \emph{conservative} homeomorphisms, i.e. homeomorphisms of $X$ that preserve a given good probability measure.

The results we obtained concern generic homeomorphisms of a compact manifold (with boundary) of dimension $n\geq 2 $. In this introduction we state the main results for the torus $\T^n$ and for ``uniform'' discretizations  on $\T^n$. The general framework in which the results are valid is defined in section \ref{0..2}.

Let $n\geq 2$. For all $N\in\N$, let $E_N$ be the finite subset of the torus $\T^n=\R^n/\Z^n$ made of the points whose coordinates are decimal numbers with at most $N$ decimal places:
\[E_N=\left\{\left(\frac{i_1}{10^N},\dots,\frac{i_n}{10^N}\right) \in \T^n \ \Big| \ (i_1,\dots,i_n)\in\Z^n\right \}.\]
Let $P_N$ be a projection of $\T^n$ on $E_N$ minimizing the euclidian distance: for $x\in \T^n$, $P_N(x)$ is one of the points of $E_N$ nearest $x$. Let $f_N:E_N\to E_N$ be the discretization of the homeomorphism $f:\T^n\to\T^n$ according to $E_N$ defined by $f_N=P_N\circ f$.
\bigskip

We will prove many results, concerning various aspects of the dynamics of the discretizations. In the rest of this introduction, we try to organize those results according to some ``lessons'':

\begin{constat}
The dynamics of a single discretization of a generic homeomorphism has in general nothing to do with the dynamics of the initial homeomorphism.
\end{constat}

Given a finite set $E$, the dynamics of any finite map $\sigma : E\to E$ is quite simple: given $x\in E$, the orbit $(\sigma^k(x))_k$ is preperiodic. Therefore, the union of periodic orbits of $\sigma$, which we will denote by $\Omega(\sigma)$ and called the \emph{maximal invariant set} of $\sigma$, is exactly the union of the $\omega$-limits sets of points of $E$. To study the dynamics of $\sigma$ one can focus on quantities such as the cardinality of $\Omega(\sigma)$, the stabilization time of $\sigma$ (i.e. the smallest $t\in\N$ such that $\sigma^t(E)=\Omega(\sigma)$), the number of orbits of $\sigma_{|\Omega(\sigma)}$, their lengths, the period of $\sigma_{|\Omega(\sigma)}$\dots

Another interesting dynamical quantity for a finite map $\sigma :  E\to E$ is the \emph{recurrence rate}: it is the ratio between the cardinalities of $\Omega(\sigma)$ and that of $E$. The starting point of our article is a question from \'E. Ghys (see chapter 6 of \cite{Ghys-vari}): for a generic conservative homeomorphism of the torus, what is the asymptotical behaviour of the sequence of recurrence rates of $f_N$? A partial answer to this question was obtained by T. Miernowski in the second chapter of his thesis.

\begin{theorem}[Miernowski]
For a generic conservative homeomorphism\footnote{I.e. there is a $G_\delta $ dense subset of the set of conservative homeomorphisms of the torus on which the conclusion of theorem holds.} $f$, there are infinitely many integers $N$ such that the discretization $f_N$ is a cyclic permutation.
\end{theorem}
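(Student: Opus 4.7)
The plan is to express the set of conservative homeomorphisms satisfying the conclusion as a countable intersection of open dense sets and invoke Baire. For each $N_0\in\N$, let $U_{N_0}$ denote the set of conservative homeomorphisms $f$ of $\T^n$ for which there exists some $N\ge N_0$ with $f_N$ a cyclic permutation of $E_N$. The set of $f$ such that $f_N$ is cyclic for infinitely many $N$ is precisely $\bigcap_{N_0\ge 0}U_{N_0}$, so it is enough to prove that each $U_{N_0}$ is open and dense.

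Openness is the soft step. If $f$ lies in $U_{N_0}$ via some specific $N$, then after an arbitrarily small perturbation one may assume that $f(x)$ lies in the interior of its Voronoi cell for every $x\in E_N$. The discretization $f_N$ depends only on which cell contains each $f(x)$, so any homeomorphism $g$ sufficiently $C^0$-close to $f$ satisfies $g_N=f_N$ and hence also lies in $U_{N_0}$.

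Density is where the real work happens. Given $f$ and $\varepsilon>0$, I would invoke an Alpern-style refinement of Lax's approximation theorem: for every sufficiently large $N$ there exists a cyclic permutation $\sigma$ of $E_N$ with $\max_{x\in E_N}d(f(x),\sigma(x))<\varepsilon/2$. The remaining task is to realize $\sigma$ as the discretization of some measure-preserving homeomorphism $g$ that is still $C^0$-close to $f$. Because on the torus with the uniform grid $E_N$ every Voronoi cell is a translate of the same small cube, all cells have the same Lebesgue measure, and one can build $g$ piece by piece: on the cube $C_x$ centred at $x$, send $C_x$ homeomorphically and measure-preservingly to $C_{\sigma(x)}$, with matching boundary data along shared faces so that the local pieces glue into a global homeomorphism of $\T^n$. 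The main obstacle I anticipate is precisely arranging those boundary identifications while preserving volume and keeping $\|g-f\|_\infty$ under control; this is an Oxtoby--Ulam style construction and, for the uniform decimal grid on the torus, it goes through cleanly.

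Once such a $g\in U_{N_0}$ is produced arbitrarily close to $f$, density of $U_{N_0}$ follows, and Baire's theorem then yields a $G_\delta$ dense set of conservative homeomorphisms for which $f_N$ is a cyclic permutation for infinitely many values of $N$, which is exactly the statement.
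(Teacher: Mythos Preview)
Your overall architecture---writing the conclusion as $\bigcap_{N_0} U_{N_0}$ and proving each $U_{N_0}$ open and dense via Lax's theorem---is exactly the paper's, and your openness argument is fine. The gap is in the density step, specifically in how you pass from the cyclic permutation $\sigma$ to a conservative homeomorphism $g$ close to $f$ with $g_N=\sigma$.

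You propose to send each cube $C_x$ homeomorphically and measure-preservingly onto $C_{\sigma(x)}$ and then glue along shared faces. This cannot be made to work. If $C_x$ and $C_y$ share a face $F$, gluing forces $g(F)\subset \partial C_{\sigma(x)}\cap \partial C_{\sigma(y)}$, hence $C_{\sigma(x)}$ and $C_{\sigma(y)}$ must themselves share a face. But $\sigma$, while pointwise close to $f$, has no reason to preserve the adjacency graph of the grid; in general adjacent cubes are sent to non-adjacent cubes, and the local pieces simply do not match up. This is not a technicality one can smooth over with Oxtoby--Ulam: it is a genuine topological obstruction to the cube-by-cube scheme.

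The paper avoids this entirely with a much lighter tool, the \emph{finite map extension proposition}: given distinct points $(a_i)$ and $(b_i)$ in $X\setminus\partial X$ with $\max_i d(a_i,b_i)<\delta$, there exists $\varphi\in\Hom(X,\lambda)$ with $d(\varphi,\mathrm{Id})<\delta$ and $\varphi(a_i)=b_i$ for all $i$. One takes $a_i=f(x_i)$ and chooses $b_i$ to be a tiny perturbation of $\sigma(x_i)$ so that the $b_i$ are pairwise distinct yet still satisfy $P_N(b_i)=\sigma(x_i)$. Then $g=\varphi\circ f$ is a conservative homeomorphism with $d(f,g)<\varepsilon$ and $g_N=\sigma$. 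No boundary matching is needed because one is only prescribing $g$ on the finite set $E_N$, not on whole cubes.
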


To prove this theorem, T. Miernowski combines a genericity argument with a quite classical technique in generic dynamics of homeo(and auto)morphisms: approximation by permutations (see e.g. \cite{Kato-metr}, \cite{Alpe-appr}, \cite{MR0097489}), and more precisely Lax's theorem (see \cite{MR0272983}, see also \cite{MR1307740,MR1453713} for a generalisation and some simulations in dimension 1), which states that any conservative homeomorphism of the torus can be approximated by cyclic permutations of the discretization grids. In fact this proof can be generalized to obtain many results about discretizations. We will establish some variants of Lax theorem; each of them, combined with a genericity argument, leads to a result for discretizations of generic homeomorphisms. For instance:

\begin{theorem}
For a generic conservative homeomorphism $f$, tehre exists $C>0$ such that there are infinitely many integers $N$ such that the cardinality of $\Omega(f_N)$  is smaller than $C$.
\end{theorem}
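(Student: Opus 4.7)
The plan is to mimic the proof of Miernowski's theorem above, replacing the use of Lax's classical theorem by a variant adapted to ``$\#\Omega(f_N)\le C$'' instead of ``$f_N$ is a cyclic permutation''. The first key step is the following Lax-type lemma: for every conservative homeomorphism $f$ and every $\varepsilon>0$ there exists an integer $C=C(f,\varepsilon)$ such that for arbitrarily large $N$ one can find a conservative homeomorphism $g$ with $d_{C^0}(f,g)<\varepsilon$ and $\#\Omega(g_N)\le C$.

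To construct $g$, I would use uniform continuity of $f$ to pick $\delta>0$ with $\operatorname{diam}(f(B(x,\delta)))<\varepsilon/3$ for all $x$, and partition $\T^n$ into $C\approx\delta^{-n}$ cubes $Q_1,\dots,Q_C$ of diameter less than $\delta$. For each $i$, pick a grid point $p_i$ within distance $\varepsilon/2$ of $f(Q_i)$, and modify $f$ inside each cube separately, using an Oxtoby--Ulam / Alpern realization argument (the technology underlying Lax's theorem, see e.g.~\cite{Alpe-appr}), to obtain a conservative homeomorphism $g$ that agrees with $f$ outside $\bigcup_i Q_i$ and sends every grid point of $Q_i\cap E_N$ into the Voronoi cell of $p_i$. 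Then $g_N(E_N)\subseteq\{p_1,\dots,p_C\}$, so $\#\Omega(g_N)\le C$, and $d(g(x),f(x))\le\operatorname{diam}(f(Q_i))+10^{-N}<\varepsilon$ on each $Q_i$.

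For the Baire step, I define, for $k,M\ge1$,
\[ \U_{k,M}:=\bigcup_{N\ge M}\bigl\{f\in\Hom(\T^n,\Leb):\#\Omega(f_N)<k\bigr\}, \]
which is open after discarding the meager set of ``ambiguous'' homeomorphisms (those for which some $f(x)$, $x\in E_N$, is equidistant from two grid points). The Lax variant shows that for every $f$ and $\varepsilon$, the union $\bigcup_{k}\U_{k,M}$ contains a point at distance less than $\varepsilon$ from $f$, hence is dense in $\Hom(\T^n,\Leb)$. Intersecting these dense open sets over $M$ and extracting via a diagonal argument coupling $k$, $M$ and the Lax-approximation scale $\varepsilon$ (as in Miernowski's scheme) produces a residual $G_\delta$ of $f$ for which some $C$ satisfies $\#\Omega(f_N)<C$ for infinitely many $N$.

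The main obstacle is the construction in the Lax variant: the $|Q_i|\cdot10^{nN}$ grid points of $Q_i$ must be routed by $g$ into the single Voronoi cell of $p_i$ (of volume $10^{-nN}$), while $g$ remains a volume-preserving homeomorphism of $\T^n$ close to $f$. The apparent conflict with volume preservation is resolved because the grid points form a measure-zero set; prescribing the image of a finite set of points by a conservative homeomorphism, up to a Voronoi-cell error, is exactly what the Oxtoby--Ulam / Alpern realization theorems allow, at the cost of producing a $g$ that is highly oscillatory at scales below $10^{-N}$ (harmless since we only control $d_{C^0}(f,g)$). A secondary subtlety is the Baire diagonal: since $C(f,\varepsilon)$ grows with $1/\varepsilon$, no fixed $C$ yields a dense open set, and one must arrange the final residual intersection so that $C$ depends on $f$ while still producing a dense $G_\delta$.
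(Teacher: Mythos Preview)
Your Lax-type construction is essentially correct and is close in spirit to the paper's third variation of Lax's theorem (Proposition~\ref{crunch}): the finite map extension proposition does allow you to route all the grid points of each cube $Q_i$ into a single Voronoi cell by a conservative homeomorphism $\varepsilon$-close to $f$. So the statement ``for every $f$ and $\varepsilon$ there is $C(f,\varepsilon)$ and, for each large $N$, a $g$ with $d(f,g)<\varepsilon$ and $\#\Omega(g_N)\le C(f,\varepsilon)$'' is fine.

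The real gap is in the Baire step, and it is precisely the ``secondary subtlety'' you flag but do not resolve. Your bound satisfies $C(f,\varepsilon)\sim \delta(f,\varepsilon)^{-n}\to\infty$ as $\varepsilon\to 0$, so no single $\mathcal U_{k,M}$ is dense; you only get that $\bigcup_k\mathcal U_{k,M}$ is dense, which is vacuous since every $f_N$ has finite $\Omega$. The set you actually want to be residual is $\bigcup_C\bigcap_M\mathcal U_{C,M}$, a countable union of $G_\delta$ sets, and there is no ``diagonal argument as in Miernowski'' that turns this into a dense $G_\delta$: Miernowski's scheme works because ``$f_N$ is a cyclic permutation'' is a single property, not a family indexed by $C$. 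Concretely, near an irrational rotation of $\T^n$ there are no periodic points of period $\le C$, and your construction forces $C(f,\varepsilon)$ to blow up as $\varepsilon\to 0$; so for no fixed $C$ is $\bigcap_M\mathcal U_{C,M}$ dense.

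The paper circumvents this by a different Lax variant. Starting from the cyclic permutation $\sigma_N$ given by Lax's theorem, one uses a \emph{periodic point} $x$ of $f$ of period $p$ and modifies $\sigma_N$ at a single point so that the orbit of $x_N$ closes up after exactly $p$ steps (Proposition~\ref{propvar2}); the resulting map has $\#\Omega=p$ \emph{independently of $\varepsilon$}. For the Baire step one then partitions a dense open subset of $\Hom(X,\lambda)$ into pairwise disjoint open sets $\mathcal U_p$ according to the smallest period of a persistent periodic point (created via the local modification theorem). On each $\mathcal U_p$ the bound $C=p$ is fixed, so $\bigcap_M\mathcal U_{p,M}$ is a dense $G_\delta$ in $\mathcal U_p$, and the disjointness of the $\mathcal U_p$'s lets one reassemble these into a dense $G_\delta$ in the whole space (Corollary~\ref{corovar2}). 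The key idea you are missing is that a periodic point gives a bound on $\#\Omega$ that does not degrade with the approximation scale.
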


Note that the combination of these two theorems answer the question of \'E. Ghys: for a generic homeomorphism $f$, the sequence of the recurrence rate of $f_N$ accumulates on both $0$ and $1$, one can even show that it accumulates on the whole segment $[0,1]$.

Let us move on to the behaviour on the maximal invariant set $\Omega(f_N)$. Another variations of Lax's theorem lead to a theorem that enlightens the behaviour of the discretizations on their maximal invariant set:

\begin{theorem}
For a generic conservative homeomorphism $f$, there are infinitely many integers $N$ such that the discretization $f_N$ is a cyclic permutation and for all $M\in\N$, there are infinitely many integers $N$ such that $f_N$ is a permutation of $E_N$ having at least $M$ periodic orbits.
\end{theorem}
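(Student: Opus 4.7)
The statement is the conjunction of two conditions on $f$; I treat them separately and conclude by Baire, since the intersection of two $G_\delta$ dense subsets of the conservative homeomorphism group is again $G_\delta$ dense. The first condition (infinitely many $N$ with $f_N$ cyclic) is exactly the theorem of Miernowski already quoted. For the second, set
\[\mathcal{A}_{M,N_0}=\bigl\{f:\exists N\geq N_0,\ f_N\text{ is a permutation of }E_N\text{ with at least }M\text{ cycles}\bigr\};\]
the condition reads $f\in\bigcap_{M,N_0\in\N}\mathcal{A}_{M,N_0}$, so it suffices to prove that each $\mathcal{A}_{M,N_0}$ contains an open dense subset. Openness is handled in the usual way: outside the closed nowhere-dense set of $f$ that send some point of $E_N$ onto a hyperplane equidistant from two grid points, the map $f\mapsto f_N$ is locally constant, so ``$f_N$ has at least $M$ cycles'' is open on a dense open subset.

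The core of the argument is therefore density, which I would prove via the following Lax-type variant: given a conservative homeomorphism $f$, $\varep>0$, and $M,N_0\in\N$, there exist $N\geq N_0$ and a permutation $\sigma$ of $E_N$ with at least $M$ cycles such that $\sup_{x\in E_N}d(\sigma(x),f(x))<\varep$. Once this is in hand, the standard technique of realizing a prescribed permutation as the discretization of a conservative homeomorphism close to $f$ (see \cite{MR0272983,Alpe-appr}) produces a $g$ with $d(f,g)$ small and $g_N=\sigma$, whence $g\in\mathcal{A}_{M,N_0}$, exactly as in the proof of Miernowski's theorem.

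To construct $\sigma$, I apply Lax's theorem to obtain a cyclic permutation $\tau_0$ of $E_N$ (with $N\geq N_0$ arbitrarily large) such that $\sup_{E_N}d(\tau_0,f)<\varep/3$, and then iteratively define $\tau_k=\tau_{k-1}\circ(x_k,y_k)$ for $k=1,\dots,M-1$ by choosing a transposition satisfying (i) $x_k$ and $y_k$ lie in the same cycle of $\tau_{k-1}$, which forces the swap to \emph{split} a cycle in two (so $\tau_k$ has exactly one more cycle than $\tau_{k-1}$), and (ii) $d(\tau_{k-1}(x_k),\tau_{k-1}(y_k))<\varep/3$, which keeps $\sup_{E_N}d(\tau_k,f)<2\varep/3$. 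Both requirements are met simultaneously by a pigeonhole argument: the longest cycle of $\tau_{k-1}$ has at least $|E_N|/M$ elements, and their $\tau_{k-1}$-images, being distinct points of $\T^n$, must pile up in some cell of a fixed cover of $\T^n$ by $O(\varep^{-n})$ cells of diameter $<\varep/3$ as soon as $N$ is large enough; avoiding the at most $2(M-1)$ grid points already used in previous steps is harmless. Then $\sigma=\tau_{M-1}$ has at least $M$ cycles, and the main obstacle is precisely this combinatorial bookkeeping---simultaneously controlling the cycle structure and the uniform distance to $f$ across the $M-1$ transpositions. Everything else (openness, passage from permutation to homeomorphism, and countable intersection of $G_\delta$ dense sets) is by now routine in the framework set up earlier in the paper.
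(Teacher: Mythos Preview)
Your proof is correct, and for the second half (at least $M$ cycles) it follows a genuinely different route from the paper. The paper proves this via Proposition~\ref{killing} (the fourth variation of Lax's theorem), which relies on the \emph{self-similarity} hypothesis on the grids: one finds many disjoint sub-grids of $E_N$ each conjugate to a coarse grid $E_{N_0}$, applies Lax's theorem once on $E_{N_0}$, and then transports the resulting cyclic permutation to each sub-grid, yielding many conjugate cycles. You instead start from a single cyclic permutation of $E_N$ given by Lax's theorem and split it into $M$ pieces by composing with $M-1$ well-chosen transpositions, using pigeonhole to keep each swap $\varep$-small.

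Your approach is more elementary and, notably, does \emph{not} require the self-similarity assumption: it works for any well-distributed, well-ordered sequence of grids (the minimal hypotheses under which Lax's theorem holds). On the other hand, the paper's approach yields a stronger conclusion---Corollary~\ref{corovar3} actually produces $\vartheta(N)$ cycles for any $\vartheta(N)=o(q_N)$, all pairwise conjugate---whereas your splitting argument as written gives only a fixed number $M$ of cycles (with no control on their mutual structure). For the theorem as stated in the introduction, your version is exactly what is needed; if one wanted the quantitative strengthening, self-similarity (or some substitute) seems hard to avoid.
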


To summarize, generically, infinitely many discretizations are cyclic permutations, but also infinitely many discretizations are highly non injective or else permutations with many cycles. In particular, it implies that for all $x\in X$, tehre exists infinitely many integers $N$ such the orbit of $x_N$ under $f_N$ does not shadow the orbit of $x$ under $f$: in this sense, generically, the dynamics of discretizations does not reflect that of the homeomorphism.

\begin{constat}
A dynamical property of a generic homeomorphism can not be deduced from the frequency it appears on discretizations either.
\end{constat}

The previous theorems express that the dynamics of a single discretization does not reflect the actual dynamics of the homeomorphism. However, one might reasonably expect that the properties of the homeomorphism are transmitted to many discretizations. More precisely, one may hope that given a property $(P)$ about discretizations, if there are many $N$ such that the discretization $f_N$ satisfies $(P)$, then the homeomorphism satisfies a similar property. It is not so, for instance:

\begin{theorem}
For a generic conservative homeomorphism $f$, when $M$ goes to infinity, the proportion of integers $N$ between $1$ and $M$ such that $f_N$ is a cyclic permutation accumulates on both $0$ and $1$.
\end{theorem}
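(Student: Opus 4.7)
My plan is a Baire category argument, parallel in structure to the proofs of the preceding theorems but using variants of Lax's theorem that simultaneously control many consecutive scales. For integers $k\geq 1$ and $M_0\geq 1$, set
$$a_M(f)=\frac{|\{1\leq N\leq M:\ f_N\text{ is a cyclic permutation}\}|}{M}$$
and define
$$A_{k,M_0}^{+}=\{f:\ \exists\, M\geq M_0,\ a_M(f)>1-1/k\},\qquad A_{k,M_0}^{-}=\{f:\ \exists\, M\geq M_0,\ a_M(f)<1/k\}.$$
A conservative homeomorphism satisfies the conclusion of the theorem if and only if it belongs to $\bigcap_{k,M_0}(A_{k,M_0}^{+}\cap A_{k,M_0}^{-})$, so by Baire's theorem it is enough to prove each of the sets $A_{k,M_0}^{\pm}$ is open and dense.

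Openness is straightforward: away from the meager set of homeomorphisms sending some grid point to a Voronoi boundary, each $f_N$ is locally constant in $f$, so for a fixed $M$ the events $\{a_M(f)>1-1/k\}$ and $\{a_M(f)<1/k\}$ are open, and taking unions over $M\geq M_0$ preserves openness. For density of $A_{k,M_0}^{+}$, given a conservative $f$ and $\varep>0$ my plan is to iterate Lax's approximation over a long block of consecutive scales: build conservative homeomorphisms $g_0=f,g_1,\ldots,g_K$ with $d(g_i,g_{i+1})<\eta_{i+1}$ and $\sum\eta_i<\varep$, so that at step $i$ the discretizations $g_{i,N}$ are cyclic permutations for every $N\in\{N_0,\ldots,N_0+i\}$, where $N_0$ is fixed in advance. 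At step $i+1$, apply a Lax-type approximation at scale $N_0+i+1$ to produce $g_{i+1}$ whose discretization at that scale is cyclic, choosing $\eta_{i+1}$ strictly smaller than half the minimum distance from the images $g_i(y)$, $y\in E_{N_0+j}$, $j\leq i$, to the Voronoi boundaries of $E_{N_0+j}$; this freezes the previously realised coarser cyclic discretizations inside $g_{i+1}$. Choosing $K$ large enough that $N_0+K\geq M_0$ and $(K+1)/(N_0+K)>1-1/k$, the endpoint $g=g_K$ has $g_N$ cyclic for every $N\in[N_0,N_0+K]$, so $M=N_0+K$ witnesses $g\in A_{k,M_0}^{+}$. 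Density of $A_{k,M_0}^{-}$ is analogous, using the variants of Lax from the earlier theorems producing, at a prescribed scale, permutations with many cycles (or highly non-injective discretizations), both of which exclude cyclicity.

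The principal obstacle is the multi-scale compatibility of this inductive construction: at step $i+1$, the permitted perturbation $\eta_{i+1}$ is dictated by the geometry of the discretizations frozen at earlier steps, whereas Lax's theorem at scale $N_0+i+1$ typically demands a perturbation comparable to the modulus of continuity of $g_i$ at that scale. Carrying out the induction therefore reduces to establishing a prescribed-scale quantitative version of Lax: for every $\eta>0$ and every integer $N$ large relative to a suitable function of $\eta$, there is a cyclic-permutation approximation of $g_i$ on $E_N$ within $\eta$. If the exact scale $N_0+i+1$ cannot always be hit, one can instead control a non-consecutive but positive-density subset of scales in $[N_0,M]$, with gaps chosen small enough to keep the overall proportion above $1-1/k$; this refined prescribed-scale Lax statement is the technical heart of the argument.
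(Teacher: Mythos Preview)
Your Baire setup and the sets $A_{k,M_0}^{\pm}$ are correct, and the openness argument is fine. But note first that the paper does not itself give a full proof of this theorem: the half ``the proportion accumulates on $1$'' is stated later as Proposition~\ref{propdemin}, where the paper explicitly says that it ``can not be proved by using lemma~\ref{recopie}'' and requires ``a slightly more precise result, that we will not prove here''. So for that direction there is nothing in the paper to compare against; your outline is a reasonable plan of attack, subject to the gap discussed below.

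For the direction ``the proportion accumulates on $0$'', the paper takes a route that sidesteps your multi-scale induction entirely. Under the standing assumption of Section~\ref{bofbof} that the grids refine, Corollary~\ref{petitepermieux} (via Lemma~\ref{recopie}) shows that the property ``$\card(f_N(E_N))/\card(E_N)<\varep$'' is satisfied in average; since such an $f_N$ is non-injective and hence not a cyclic permutation, the proportion of cyclic $f_N$ has lower limit~$0$. The mechanism there exploits the nesting $E_{M_0}\subset E_M$ so that a single perturbation at one coarse scale propagates to all finer scales simultaneously---no iterated freezing is needed. The reason this fails for cyclicity (and why the paper singles that case out) is precisely that being a cyclic permutation of $E_N$ is a global property of the whole grid that cannot be inherited from behaviour on any proper sub-grid.

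The obstacle you flag in your induction is genuine and you do not close it. At step $i+1$ you need a Lax approximation on $E_{N_0+i+1}$ of size below the Voronoi margins left over at all earlier scales, whereas Lax as stated only delivers an approximation of size roughly $\omega_{g_i}(\operatorname{diam} C_{N_0+i+1})$; nothing in your sketch controls the moduli of continuity of the successive $g_i$, nor arranges that the images $g_i(y)$ sit well inside their cells rather than arbitrarily close to a boundary. For rapidly refining grids (e.g.\ $k_N=p^N$) this bookkeeping is plausible, but the proposal stops exactly where the real work begins.
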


In fact, for most of the properties considered in the previous paragraph, the frequency they appear on discretizations of orders smaller than $M$ accumulates on both $0$ and $1$ when $M$ goes to infinity.

\begin{constat}
Some important dynamical features of a generic homeomorphism can be detected by looking at some dynamical features of \emph{all} the discretizations.
\end{constat}

We have observed that one can not detect the dynamics of a generic homeomorphism when looking at the dynamics of its discretizations, or even at the frequency some dynamics appears on discretizations. Nevertheless, some dynamical features can be deduced from the analogous dynamical features of \emph{all} the discretizations. This idea of convergence of the dynamics when looking at arbitrary large precisions can be related to the work of P. Diamond \emph{et al} (see page \pageref{DiaDia}). For instance, the periods of periodic orbits of a homeomorphism can be read on the periods of periodic orbits of its discretizations:

\begin{propo}
A generic homeomorphism $f$ has a periodic orbit with period $p$ if and only if there exists infinitely many integers $N$ such that $f_N$ has a periodic orbit with period~$p$.
\end{propo}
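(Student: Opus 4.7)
I would set up the argument as a Baire-category proof on a residual subset $\mathcal{R} \subset \Hom(X)$, built as the intersection of two residual conditions handling each direction. The first observation is that for each $N$ and $p$, the set $O_{N,p} = \{f \in \Hom(X) : f_N \text{ has a periodic orbit of exact period } p\}$ is open: indeed $f_N = P_N \circ f$ depends only on the values of $f$ at the finite set $E_N$ and is locally constant in $f$ for the $C^0$ topology. Consequently $B_p = \bigcap_M \bigcup_{N \geq M} O_{N,p}$ is a $G_\delta$ set, to which Baire's theorem applies.

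For the forward direction $A_p \Rightarrow B_p$, I would first use a classical genericity argument to obtain a residual set $\mathcal{R}_1$ on which every periodic orbit of $f$ of period $p$ is \emph{robust}, meaning that $f^p$ has a nonzero local fixed-point index at it, so that the orbit persists under $C^0$-small perturbations; call $A_p^{\mathrm{rob}}$ the corresponding open set. Then, using a local variant of Lax's theorem analogous to those invoked in the earlier theorems of the paper, I would show that $\bigcup_{N \geq M} O_{N,p}$ is dense in $A_p^{\mathrm{rob}}$ for every $M$: given $f \in A_p^{\mathrm{rob}}$, one can perturb $f$ by an arbitrarily small $C^0$ amount to some $g$ having a period-$p$ orbit passing through grid points of $E_N$ for some large $N \geq M$, and $g_N$ then inherits a $p$-cycle. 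A Baire argument inside the open Baire space $A_p^{\mathrm{rob}}$ yields a residual subset on which $A_p^{\mathrm{rob}} \subset B_p$.

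For the backward direction $B_p \Rightarrow A_p$, I would extract, from $N_k \to \infty$ with $x_{N_k} \in E_{N_k}$ of exact $f_{N_k}$-period $p$, a further subsequence along which each $f_{N_k}^j(x_{N_k})$ converges to some $y_j \in X$. The bound $\|f_N - f\|_\infty \leq \|P_N - \mathrm{Id}\|_\infty \to 0$ and continuity of $f$ give $f(y_j) = y_{j+1 \bmod p}$, so $\{y_j\}$ is a periodic $f$-orbit of period $q$ dividing $p$. If $q = p$, then $f \in A_p$ and we are done; the delicate case is $q < p$, where each $y_j$ is the limit of $p/q$ distinct grid points and the $f_{N_k}$-orbit collapses onto a shorter $f$-orbit.

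The main obstacle is ruling out $q < p$ generically: I would introduce a second residual condition $\mathcal{R}_2$ forcing that, near every periodic orbit of $f$ of period $q$ dividing $p$ with $q < p$, the local dynamics is topologically rigid enough that no infinite sequence of discretization cycles of length $p/q > 1$ can concentrate on it. This is again engineered by a dense-open perturbation argument, and it is this step --- exhibiting perturbations that simultaneously destroy such ``phantom'' long discretization cycles at every shorter-period orbit --- that is the real content of the proposition. Setting $\mathcal{R} = \mathcal{R}_1 \cap \mathcal{R}_2$ intersected with the residual subset coming from the Baire argument above then gives the equivalence $A_p \Leftrightarrow B_p$ on $\mathcal{R}$.
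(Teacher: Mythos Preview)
Your forward direction is essentially the paper's, though what you describe is actually a bit simpler than what you cite. The paper appeals to a variation of Corollary~\ref{corovar2}, which routes through Lax's theorem and Proposition~\ref{propvar2}; but the step you actually describe --- perturb $f$ so that a robust period-$p$ orbit lands exactly on grid points of some $E_N$ with $N\ge M$ --- needs only the finite map extension proposition (Proposition~\ref{extension}) together with persistence, not the full Lax machinery. Either way, both arguments establish density of $\bigcup_{N\ge M}O_{N,p}$ in the open set of homeomorphisms with a robust period-$p$ orbit, and a Baire argument finishes.

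For the backward direction the paper says only that it ``follows easily from a compactness argument''. You are right to flag this: extracting a convergent subsequence from $p$-cycles of $f_{N_k}$ yields an $f$-orbit of period $q$ \emph{dividing} $p$, and nothing in the bare compactness argument forces $q=p$. So you have located a genuine gap in the paper's one-line justification. That said, your remedy via $\mathcal{R}_2$ is only a gesture: you want a $C^0$ dense-open condition guaranteeing that near every period-$q$ orbit with $q\mid p$, $q<p$, no infinite sequence of discretization $p$-cycles can concentrate, but you do not say what local model you would perturb to, nor why such a condition would be $C^0$-open --- in the $C^0$ category there is no hyperbolicity or normal-form theory to lean on. Calling this step ``the real content of the proposition'' also inverts the paper's emphasis; for the author the forward direction, carrying the Lax and persistence machinery, is the substance. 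Your observation shows the backward direction is less routine than the paper claims, but the fix you outline still needs to be made concrete before it constitutes a proof.
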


We will also prove a theorem in the same vein for invariant measures. It expresses that the set of invariant measures of the homeomorphism can be deduced from the sets of invariant measures of its discretizations. More precisely: 

\begin{theorem}
Let $\mathcal M_N$ be the set of probability measures on $E_N$ that are invariant under $f_N$. For a generic conservative homeomorphism $f$, the upper limit over $N$ (for Hausdorff topology) of the sets $\mathcal M_N$ is exactly the set of probability measures that are invariant under~$f$.
\end{theorem}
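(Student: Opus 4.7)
The plan is to prove two inclusions separately: $\limsup_N \mathcal{M}_N \subset \mathcal{M}(f)$ (which holds for every $f$, no genericity needed) and $\mathcal{M}(f) \subset \limsup_N \mathcal{M}_N$ (which requires genericity, a Lax-type approximation lemma, and a Baire argument). The first inclusion is a direct weak-$*$ computation: if $\mu_{N_k} \in \mathcal{M}_{N_k}$ converges weakly to $\mu$, then for any continuous $\varphi : \T^n \to \R$, I will write
\[\int \varphi \circ f_{N_k}\, d\mu_{N_k} = \int \varphi \circ f\, d\mu_{N_k} + \int (\varphi \circ f_{N_k} - \varphi \circ f)\, d\mu_{N_k}.\]
The left-hand side equals $\int \varphi\, d\mu_{N_k} \to \int \varphi\, d\mu$, the first term on the right tends to $\int \varphi \circ f\, d\mu$ by weak convergence, and the remainder vanishes since $f_{N_k} \to f$ uniformly (as $P_N \to \operatorname{id}$ uniformly) and $\varphi$ is uniformly continuous. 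Hence $\mu \in \mathcal{M}(f)$.

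The hard direction will rest on the following Lax-type approximation lemma: \emph{for any conservative homeomorphism $f$, any $\mu \in \mathcal{M}(f)$, any $\varepsilon > 0$, and any $N_0 \in \N$, there exist $N \geq N_0$ and a conservative homeomorphism $g$ with $d_\infty(f,g) < \varepsilon$ such that $g_N$ admits an invariant measure within $\varepsilon$ of $\mu$.} By the ergodic decomposition, together with the fact that $\mathcal{M}_N(g)$ is the simplex generated by the uniform measures on the cycles of $g_N$, it suffices to treat ergodic $\mu$: non-ergodic ones are recovered by combining cycles that approximate each ergodic component. When $\mu$ is ergodic, Birkhoff yields a $\mu$-generic $x$ and $L$ such that $\frac{1}{L}\sum_{k=0}^{L-1}\delta_{f^k(x)}$ is $\varepsilon/2$-close to $\mu$; for $N$ large enough that $E_N$ resolves the orbit segment, I will use the Alpern-style bistochastic-matrix construction to build a cyclic permutation $\sigma$ of $E_N$ that simultaneously $\varepsilon$-approximates $f$ and contains a sub-cycle tracking the segment, so that the uniform measure on that sub-cycle is $\varepsilon$-close to $\mu$. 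A standard extension then realizes $\sigma$ as $g_N$ for a conservative $g$ with $d_\infty(f,g) < \varepsilon$.

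Given this lemma, a Baire argument concludes. Fix a countable dense $\{\nu_j\}$ of probability measures on $\T^n$ and a compatible metric $d$. For each $(j, m, N_0) \in \N^3$ set
\[F_{j,m} := \{f : \mathcal{M}(f) \cap \overline{B(\nu_j, 1/m)} \neq \emptyset\}, \qquad U_{j,m,N_0} := \{f : \exists N \geq N_0,\ \mathcal{M}_N(f) \cap B(\nu_j, 2/m) \neq \emptyset\}.\]
The set $F_{j,m}$ is closed by the same weak-$*$ computation as in the easy inclusion (graph-closedness of $f \mapsto \mathcal{M}(f)$). After restricting once and for all to the dense $G_\delta$ of homeomorphisms whose image of $E_N$ avoids the Voronoi boundaries for every $N$, the map $g \mapsto g_N$ is locally constant, so $U_{j,m,N_0}$ is relatively open there. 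Consequently $O_{j,m,N_0} := F_{j,m}^c \cup U_{j,m,N_0}$ is open, and the key lemma gives its density: if $f_0 \in F_{j,m}$, pick $\mu \in \mathcal{M}(f_0) \cap \overline{B(\nu_j,1/m)}$ and apply the lemma to produce $g$ close to $f_0$ with $g \in U_{j,m,N_0}$. Baire now gives a dense $G_\delta$ contained in $\bigcap_{j,m,N_0} O_{j,m,N_0}$; for such $f$, given $\mu \in \mathcal{M}(f)$, $\varepsilon > 0$, and $N_0$, choosing $j, m$ with $d(\nu_j,\mu) < 1/m$ and $3/m < \varepsilon$ forces $f \in F_{j,m}$ and hence $f \in U_{j,m,N_0}$, which produces an element of $\mathcal{M}_N(f)$ within $3/m < \varepsilon$ of $\mu$.

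The main obstacle I expect is the approximation lemma, and more specifically the combinatorial construction of the cycle. The classical Lax theorems deliver uniform approximation of a conservative homeomorphism by cyclic permutations of $E_N$, but give no control on the distribution of those cycles. The delicate point is to carry out the Alpern-style construction so that the cycle not only approximates $f$ on $E_N$ but also has visit-frequencies matching those of the long $\mu$-generic orbit segment; this is what upgrades the uniform-topology control over $f$ into weak-$*$ control over the invariant measures of $f_N$.
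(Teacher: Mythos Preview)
Your overall architecture is the same as the paper's: an easy inclusion via weak-$*$ continuity, a Lax-type approximation lemma producing a nearby $g$ whose $g_N$ has an invariant measure close to a prescribed $\mu$, and a Baire argument. Your Baire scheme, organised around a countable dense family $\{\nu_j\}$ of \emph{all} probability measures and the closed sets $F_{j,m}$, is a legitimate variant of the paper's (which instead uses compactness of $\mathcal M^f$ to reduce to finitely many invariant measures and then covers $\mathcal M^f$ all at once). Both routes are fine.

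There are, however, two genuine soft spots in your lemma. First, the sentence ``build a cyclic permutation $\sigma$ of $E_N$ that \dots\ contains a sub-cycle tracking the segment, so that the uniform measure on that sub-cycle is $\varepsilon$-close to $\mu$'' cannot be right as written: a cyclic permutation of $E_N$ has a \emph{single} invariant probability measure, the uniform one on $E_N$, and no invariant proper sub-cycle. What the paper actually does (and what you presumably intend) is to take the Lax cyclic permutation $\sigma_N$ and then \emph{modify} it at one point, redefining $\sigma_N^{\tau-1}(x_N)\mapsto x_N$, to obtain a non-injective map $\sigma'_N$ whose unique periodic orbit has length $\tau$ and shadows the Birkhoff-good segment $(x,f(x),\dots,f^{\tau-1}(x))$. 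The extension of $\sigma'_N$ to a conservative $g$ then goes through the usual trick of first perturbing $\sigma'_N$ to an injection into $X$ with the same discretisation.

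Second, your reduction to the ergodic case is incomplete. You write that ``non-ergodic ones are recovered by combining cycles that approximate each ergodic component'', but each application of the ergodic case yields a \emph{different} $g$; you need one $g$ whose $g_N$ simultaneously carries cycles close to every ergodic component $\nu_1^e,\dots,\nu_r^e$ appearing in a Krein--Milman approximation of $\mu$. The paper handles this by invoking the \emph{self-similarity} hypothesis on the grids (which is part of the theorem's assumptions): one places the construction for each $\nu_j^e$ on a separate sub-grid of $E_N$, obtaining disjoint cycles inside a single $g_N$. If you want to avoid self-similarity you would instead have to close several orbits inside one Lax permutation, which forces you to choose the Birkhoff-good points $x_j$ so that the finite orbit segments $\{x_j,\dots,f^{\tau_j-1}(x_j)\}$ are pairwise disjoint and remain so after discretisation; this is doable but is exactly the missing argument.
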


\begin{constat}
Physical measures of a generic homeomorphism can not be detected on discretizations.
\end{constat}

Given $x\in \T^n$, the Birkhoff limit of $x$ is defined (when it exists) as the limit of $\frac 1m \sum_{i=0}^{m-1}{f}_*^i \delta_x$ when $m$ goes to infinity; the \emph{basin} of a measure $\mu$ is the set of points $x$ whose Birkhoff limit coincides with $\mu$. Heuristically, the basin of a measure is the set of points that can see the measure. A Borel measure is said \emph{physical} if its basin has positive Lebesgue measure (see e.g. \cite{Youn-wha}). The heuristic idea underlying this concept is that physical measures are the invariant measure which can be detected ``experimentally'' (since many initial conditions lead to these measures). Indeed, some experimental results on specific examples of dynamical systems show that they are actually the measures that are detected in practice (see e.g. \cite{MR0478237,MR534103} or \cite{Boya-comp, Gora-why}); moreover if the dynamical system is uniquely ergodic then the invariant measure appears naturally on discretizations (see \cite{MR2279269}).

According to this heuristic and these results, one could expect physical measures to be the only invariant measures that can be detected on discretizations of generic conservative homeomorphisms. This is not the case: for a generic conservative homeomorphism, there exists a unique physical measure, say Lebesgue measure. According to the previous theorem, invariant measures of the discretizations accumulate on all the invariant measures of the homeomorphism and not only on Lebesgue measure.

However, one can still hope to distinguish the physical measure from other invariant measures. For this purpose, we define the canonical physical measure $\mu^f_N$ associated to a discretization $f_N$: it is the limit in the sense of Cesàro of the images of the uniform measure on $E_N$ by the iterates of $f_N$: if $\lambda_N$ is the uniform measure on $E_N$, then
\[\mu^f_N = \lim_{M\to\infty}\frac{1}{M}\sum_{m=0}^{M-1}(f_N^m)_*\lambda_N.\]
This measure is supported by the maximal invariant set $\Omega(f_N)$; it is uniform on every periodic orbit and the total weight of a periodic orbit is proportional to the size of its basin of attraction. The following theorem expresses that these measures accumulate on the whole set of $f$-invariant measures: physical measures can not be distinguished from other invariant measures on discretizations, at least for generic homeomorphisms.

\begin{theorem}
For a generic conservative homeomorphism $f$, the set of accumulations points of the sequence $(\mu^f_N)_{N\in\N}$ is the set of all $f$-invariant measures.
\end{theorem}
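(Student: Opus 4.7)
\emph{Two inclusions.} Write $\text{Acc}(\mu^f_N)$ for the set of weak-$*$ accumulation points of $(\mu^f_N)_{N\in\N}$ and $\mathcal M(f)$ for the set of Borel probabilities invariant under $f$. The inclusion $\text{Acc}(\mu^f_N)\subseteq \mathcal M(f)$ holds for \emph{every} homeomorphism: since $\mu^f_N$ is $f_N$-invariant and $\|f_N-f\|_\infty$ is bounded by the diameter of the Voronoi cells of $E_N$, hence tends to $0$, any weak-$*$ limit $\mu$ of a subsequence $(\mu^f_{N_k})$ satisfies $\int\varphi\,d\mu=\int\varphi\circ f\,d\mu$ for every $\varphi\in C(X)$. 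The reverse inclusion is the heart of the statement and requires genericity.

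\emph{Baire setup for the reverse inclusion.} Fix a countable dense family $(\nu_k)$ of $\Prb(X)$ (weak-$*$ topology) and, for positive integers $k,\ell,N_0$, set
\[ O_{k,\ell,N_0}=\Bigl\{\,f\ :\ d\bigl(\nu_k,\mathcal M(f)\bigr)>\tfrac1\ell\ \text{or}\ \exists\,N\geq N_0\ \text{with}\ d(\mu^f_N,\nu_k)<\tfrac1\ell\,\Bigr\}. \]
Openness of the first alternative comes from upper semicontinuity of $f\mapsto\mathcal M(f)$. For the second, restrict to the open-dense set of $f$ whose $E_N$-orbits avoid boundaries of Voronoi cells: on this set, $f\mapsto f_N$, and hence $f\mapsto\mu^f_N$, is locally constant. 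Once density of $O_{k,\ell,N_0}$ is known, $G=\bigcap_{k,\ell,N_0}O_{k,\ell,N_0}$ is a dense $G_\delta$; for $f\in G$ and $\mu\in\mathcal M(f)$, approximating $\mu$ by some $\nu_k$ forces the second alternative and gives $d(\mu^f_N,\mu)<3/(2\ell)$ with $N\geq N_0$, whence $\mu\in\text{Acc}(\mu^f_N)$.

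\emph{Density via a refined Lax approximation.} Assume $f$ fails the first alternative and pick $\mu\in\mathcal M(f)$ with $d(\mu,\nu_k)\leq 1/\ell$. The goal is to produce $f'$ arbitrarily $C^0$-close to $f$ and $N\geq N_0$ with $\mu^{f'}_N$ close to $\mu$. Parallel to the other Lax-type arguments in the paper, I would proceed in three steps: first, approximate $\mu$ weak-$*$ by a finite convex combination $\nu=\sum_i p_i\,\nu_{\mathcal O_i}$ of uniform measures on periodic orbits $\mathcal O_i$ of a small perturbation of $f$ (using denseness of periodic measures in the invariant measures of generic conservative homeomorphisms, via Oxtoby--Ulam type arguments). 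Second, for large enough $N$, build $f'$ so that its discretization $f'_N$ has each $\mathcal O_i$ as a periodic cycle with $f'_N$-basin of cardinality close to $p_i|E_N|$; this is achieved by adjoining transient tree-like pre-images to each cycle and pasting a conservative Lax-type cyclic approximation on the complement. Third, since $\mu^{f'}_N=\sum_i \frac{|B_i|}{|E_N|}\,\nu_{\mathcal O_i}$ by definition, this measure is close to $\nu$ and hence to $\mu$. A final arbitrarily small perturbation places $f'$ in the generic position where its discretization is locally constant.

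\emph{Main obstacle.} The genuinely new ingredient, relative to the earlier theorems of the paper, is the second step: a Lax-type construction that jointly prescribes (a) the cycle structure of $f'_N$, (b) the sizes of the basins of attraction, and (c) the conservativity and $C^0$-proximity of the global perturbation $f'$ to $f$. Controlling basin sizes essentially amounts to a combinatorial distribution of the non-recurrent points of $E_N$ into rooted ``trees'' attached to each prescribed cycle, while ensuring that the map on the complement of these trees realises a measure-preserving cyclic permutation close to $f$; this combinatorial-analytic coordination is where I expect the bulk of the proof to lie.
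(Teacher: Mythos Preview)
Your Baire framework and the easy inclusion $\text{Acc}(\mu^f_N)\subseteq\mathcal M(f)$ are correct and essentially identical to the paper's setup (the sets $\mathcal{O}_{\ell,N_0,k_0}$ there). The divergence is entirely in how density is obtained, i.e.\ in the construction of $f'$ close to $f$ with $\mu^{f'}_N$ close to a prescribed $\mu\in\mathcal M(f)$.

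The paper does \emph{not} go through periodic orbits of $f$ and does not attempt to control basin sizes by attaching transient trees. Instead it proves an ``ergodic variation of Lax's theorem'' (Lemma~\ref{Laxergod}). For $\mu$ ergodic, one picks a $\mu$-typical recurrent point $x$; by Birkhoff, a long orbit segment $x,f(x),\dots,f^{\tau-1}(x)$ has empirical measure close to $\mu$, and recurrence lets one choose $\tau$ with $f^\tau(x)$ close to $x$. One then takes the cyclic Lax permutation $\sigma_N$ of $E_N$ and modifies it at a single point so that $x_N,\sigma_N(x_N),\dots,\sigma_N^{\tau-1}(x_N)$ becomes a periodic cycle attracting \emph{all} of $E_N$. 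Thus $\mu^{f'}_N$ is the uniform measure on that single cycle, which is close to $\mu$ by Birkhoff; there is no basin-size issue at all. For general $\mu$, one writes $\mu\approx\sum_j\lambda_j\nu_j^e$ by Krein--Milman and uses the \emph{self-similarity} hypothesis on the grids: $E_N$ splits into disjoint copies of a coarser grid $E_{N_0}$, one assigns roughly $\lambda_j$-proportion of these copies to $\nu_j^e$, and on each copy performs the ergodic construction. The weights $\lambda_j$ are then realised automatically by the number of sub-grids, not by tree combinatorics.

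So your ``main obstacle'' is real for the route you chose, but the paper sidesteps it entirely. Two concrete remarks: (a) you never invoke self-similarity of the grids, which is an explicit hypothesis of the theorem and is exactly what replaces your tree construction; (b) your Step~1 (approximating $\mu$ by periodic measures of a perturbation) is an unnecessary detour---Birkhoff's theorem applied to the original $f$ and the ergodic components of $\mu$ gives the needed orbit segments directly, with no prior perturbation and no appeal to genericity of denseness of periodic measures. Your scheme could plausibly be made to work, but it is strictly harder than what the paper does and leaves the central step unresolved.
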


\begin{constat}
The dynamics of discretizations of a generic dissipative homeomorphism tends to that of the initial homeomorphism.
\end{constat}

We then study properties of discretizations of generic \emph{dissipative} homeomorphisms\footnote{I.e. without assumption of preservation of a given measure.}. The basic tool is the \emph{shredding lemma} of F. Abdenur and M. Andersson \cite{MR3027586}, which implies that a generic homeomorphism has a ``attractor dynamics''. This easily transmits to discretizations, for example the basins of attraction of the homeomorphism can be seen on all the fine enough discretizations. Moreover there is \emph{convergence} of the dynamics of discretizations $f_N$ to that of $f$:

\begin{theorem}
For a generic dissipative homeomorphism $f$, for all $\varep>0$ and all $\delta>0$, there is a full measure dense open subset $O$ of $\T^n$ such that for all $x\in O$, all $\delta>0$ and all $N$ large enough, the orbit of $x_N$ under $f_N$ $\delta$-shadows the orbit\footnote{I.e. for all $k\in\N$, $d(f_N^k(x_N),f^k(x))<\delta$.} of $x$ under $f$.
\end{theorem}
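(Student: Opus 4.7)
The plan is to combine the shredding lemma of Abdenur--Andersson with a straightforward finite-time continuity argument, exploiting the fact that the strict trapping condition is robust under small perturbations.

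Given $\delta>0$, I would first apply the shredding lemma at scale $\delta/3$: this yields finitely many pairwise disjoint open sets $W_1,\dots,W_m$, each strictly trapping ($f(\overline{W_i})\subset W_i$) with $\mathrm{diam}\bigl(f(\overline{W_i})\bigr)<\delta/3$, and such that the basin $O := \bigcup_{k\ge 0} f^{-k}\bigl(\bigcup_i W_i\bigr)$ is a dense open set of full Lebesgue measure. Set $\rho := \min_i d\bigl(f(\overline{W_i}),\partial W_i\bigr)$, which is positive because each $f(\overline{W_i})$ is compact and contained in the open set $W_i$. The $O$ thus produced will be the one asserted by the theorem.

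For a fixed $x\in O$, let $T=T(x)$ be the first time at which $f^T(x)$ enters some $W_{i(x)}$. By uniform continuity of $f,f^2,\dots,f^T$ together with $x_N\to x$, for every $N$ large enough (depending on $x$, $T$ and $\rho$) we have $d(f_N^k(x_N),f^k(x))<\min(\delta,\rho/3)$ for all $0\le k\le T$; in particular the discrete orbit also lies in $W_{i(x)}$ at time $T$. The key observation is that the discretization inherits trapping: as soon as the mesh of $E_N$ is smaller than $\rho/2$, the point $f_N(y)=P_N(f(y))$ lies within $1/N<\rho/2$ of $f(\overline{W_{i(x)}})$ for every $y\in\overline{W_{i(x)}}\cap E_N$, so $f_N$ sends $\overline{W_{i(x)}}\cap E_N$ into $W_{i(x)}$. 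By induction, $f_N^k(x_N)$ stays in the $(1/N)$-neighborhood of $f(\overline{W_{i(x)}})$ for every $k\ge T+1$; since $f^k(x)$ itself lies in $f(\overline{W_{i(x)}})$ for such $k$ and this set has diameter less than $\delta/3$, we obtain $d(f_N^k(x_N),f^k(x))<\delta/3+2/N<\delta$ for $N$ large. Combining with the finite-time estimate gives $\delta$-shadowing at every time.

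The main difficulty is the interplay between finite-time approximation, which is purely a continuity matter, and asymptotic trapping, which exploits the structural information provided by the shredding lemma. The trick is to take $N$ large enough so that simultaneously the finite-time error on $[0,T]$ is controlled by both $\delta$ and $\rho/3$ (ensuring the discrete orbit actually enters the same $W_{i(x)}$ as the continuous one), and the mesh of $E_N$ is smaller than the trapping margin $\rho/2$ (ensuring the discrete orbit cannot subsequently escape). Once both conditions hold, the argument is modular and no further genericity hypothesis is needed beyond what the shredding lemma provides.
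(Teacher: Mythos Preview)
Your overall strategy matches the paper's: both derive shadowing from the shredding lemma by showing that the discretization inherits the attracting structure once the mesh is fine enough. The paper packages this step as a separate ``discrete shredding lemma'' and then invokes convergence of the discrete cycles $w_{j,i}^N$ to the sets $W_{j,i}$, but the content is the same finite-time continuity plus robust trapping that you outline.

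There is, however, a genuine misstatement of the shredding lemma in your proposal. The small-diameter sets $W_{j,i}$ it produces are \emph{not} individually trapping: the lemma gives
\[
f(\overline{W_{j,i}}) \subset W_{j,\,i+1 \bmod \ell_j},
\]
i.e.\ they are cyclically permuted within each basin $U_j$. (Were each $W_i$ itself trapping with arbitrarily small diameter, every attractor of $f$ would reduce to a fixed point, which the lemma certainly does not claim.) Your inductive step ``$f_N$ sends $\overline{W_{i(x)}}\cap E_N$ into $W_{i(x)}$'' is therefore false as written: it sends it into the \emph{next} set of the cycle.

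The repair is immediate and uses exactly the same mechanism. Set
\[
\rho := \min_{j,i}\, d\bigl(f(\overline{W_{j,i}}),\, \partial W_{j,i+1}\bigr) > 0.
\]
When the mesh of $E_N$ is below $\rho/2$, any $y\in \overline{W_{j,i}}\cap E_N$ satisfies $f_N(y)\in W_{j,i+1}$; by induction both $f^k(x)$ and $f_N^k(x_N)$ lie in the \emph{same} set $W_{j,\,i(x)+k-T}$ for every $k\ge T$. Since each $W_{j,i}$ has diameter below $\delta/3$, this already gives $d(f^k(x),f_N^k(x_N))<\delta$ for all $k\ge T$, and the finite-time estimate covers $k\le T$. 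With this correction your proof is complete and agrees with the paper's.
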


Thus, one can detect on discretizations the dynamics of a generic dissipative homeomorphism, which is mainly characterized by position of the attractors and of the corresponding basins of attraction. Note that this behaviour is in the opposite of the conservative case, where the individual behaviour of discretizations does not indicate anything about the actual dynamics of the homeomorphism. 

\begin{constat}
In practice, one can not deduce the dynamics of a conservative homeomorphism from its discretizations, and the dynamics of a dissipative homeomorphism can be detected on discretizations only if the basins are large enough.
\end{constat}

Finally, we compare our theoretical results with the reality of numerical simulations. Indeed, it is not clear that the behaviour predicted by our results can be observed on computable discretizations of an homeomorphism defined by a simple formula. On the one hand, all our results are valid ``for \emph{generic} homeomorphisms'': nothing indicates that these results apply to practical examples of homeomorphisms defined by simple formulas. On the other hand, results such as ``there are infinitely many integers $N$ such that the discretization of order $N$\dots'' provide no control over the integers $N$ involved, they may be so large that the associated discretizations are not calculable in practice.

For conservative homeomorphisms, our numerical simulations produce mixed results. From a quantitative viewpoint, the behaviour predicted by our theoretical result cannot be observed on our numerical simulations. For example, we can not observe any discretization whose recurrence rate is equal to $1$ (i.e. which is a permutation). This is nothing but surprising: the events pointed out by the theorems are a priori very rare. For instance, there is a very little proportion of bijective maps among maps from a given finite set into itself. From a more qualitative viewpoint, the behaviour of the simulations is quite in accordance with the predictions of the theoretical results. For example, for a given conservative homeomorphism, the recurrence rate of a discretization depends a lot on the size of the grid used for the discretization. Similarly, the canonical invariant measure associated with a discretization of a homeomorphism $f$ does depend a lot on the size of the grid used for the discretization. 

We also carried out simulations of dissipative homeomorphisms. The results of discretizations of a small perturbation of identity (in $C^0$ topology) may seem disappointing at first sight: the attractors of the initial homeomorphisms can not be detected, and there is little difference with the conservative case. This behaviour is similar to that highlighted by J.-M. Gambaudo and C. Tresser in \cite{Gamb-dif} (see page \ref{refG}). That is why it seemed to us useful to test an homeomorphism which is $C^0$ close to the identity, but whose basins are large enough. In this case the simulations point out a behaviour that is very similar to that described by theoretical results, namely that the dynamics converges to the dynamics of the initial homeomorphism. In facts, one have actually observed behaviours as described by theorems only for examples of homeomorphisms with a very few number of attractors.
\bigskip

We carried out many other numerical simulations, they can be found on the web page: \url{http://www.math.u-psud.fr/~guiheneu/Simulations.html}
\bigskip

In section \ref{0..2} we will present the framework. Sections \ref{Grosse} to \ref{grobra} concern conservative homeomorphisms. More precisely, the results concerning the behaviour of a single discretization are set out in section \ref{partie 1.3}, and those concerning the average behaviour of discretizations in section \ref{bofbof}. We then come to the results about all the discretizations in section \ref{Sec8} and about physical measures in section \ref{grobra}. The behaviour of discretizations of generic dissipative homeomorphisms is established in section \ref{label}. Finally, the results of numerical simulations are presented in part \ref{partietrois}.

\section{Framework}\label{0..2}

\addtocontents{toc}{\SkipTocEntry}\subsection*{The manifold $X$ and the measure $\lambda$} The results stated in the introduction for the torus $\T^n$ and the Lebesgue measure $\Leb$ actually extend to any smooth connected manifold $X$ with dimension $n\ge 2$, compact and possibly with boundary, endowed with a Riemannian metric~$d$. \emph{We fix once and for all such a manifold $X$ endowed with the metric $d$.} In the general case, Lebesgue measure on $\T^n$ can be replaced by a \emph{good measure} $\lambda$ on $X$:

\begin{definition}\label{bonne mesure}
A Borel probability measure $\lambda$ on $X$ is called a \emph{good measure}, or an \emph{Oxtoby-Ulam} measure if it is nonatomic, it has total support (it is positive on nonempty open sets) and it is zero on the boundary of $X$.
\end{definition}

\emph{We fix once and for all a good measure $\lambda$ on $X$.}

\begin{notation}
We denote by $\Hom(X)$ the set of homeomorphisms of $X$, endowed by the metric $d$ defined by:
\[d(f,g) = \sup_{x\in X} d(f(x),g(x)).\]
We denote by $\Hom(X,\lambda)$ the subset of $\Hom(X)$ made of the homeomorphisms that preserve the measure $\lambda$, endowed with the same metric $d$. Elements of $\Hom(X)$ will be called \emph{dissipative} homeomorphisms and which of $\Hom(X,\lambda)$ \emph{conservative} homeomorphisms. 
\end{notation}

\addtocontents{toc}{\SkipTocEntry}\subsection*{Generic properties in $\Hom(X)$ and $\Hom(X,\lambda)$} The topological spaces $\Hom(X)$ and $\Hom(X,\lambda)$ are Baire spaces (see \cite{MR2931648}), i.e.: the intersection of every countable collection of dense open sets is dense. We call $G_\delta$ a countable intersection of open sets; a property satisfied on at least a $G_\delta$ dense set is called \emph{generic}. Note that in a Baire space, generic properties are stable under intersection.

Sometimes we will use the phrase ``for a generic homeomorphism $f\in \Hom(X)$ (resp. $\Hom(X,\lambda)$), we have the property $(P)$''. By that we will mean that ``the property $(P)$ is generic in $\Hom(X)$ (resp. $\Hom(X,\lambda)$)'', i.e. ``there exists $G_\delta$ dense subset $G$ of $\Hom(X)$ (resp. $\Hom(X,\lambda)$), such that every $f\in G$ satisfy the property $(P)$''.

\addtocontents{toc}{\SkipTocEntry}\subsection*{Discretization grids, discretizations of a homeomorphism} We now define a more general notion of discretization grid. Some of the assumptions about these grids will be useful later, it will be the subject of the next paragraph.

\begin{definition}[Discretization grids]\label{grillmiam}
A \emph{sequence of discretization grids} on $X$ is a sequence $(E_N)_{N\in\N}$ of discrete subsets of $X\setminus\partial X$, such that the grids are more and more precise: for all $\varep>0$, there exists $N_0\in\N$ such that for all $N\ge N_0$, the grid $E_N$ is $\varep$-dense. We denote by $q_N$ the cardinality of $E_N$.
\end{definition}

\emph{We fix once and for all a sequence $(E_N)_{N\in \N}$ of discretization grids on $X$.} We can now define discretizations associated to these grids:

\begin{notation}[Discretizations]\label{notA}
Let $P_N$ be a projection of $X$ on $E_N$ (the projection of $x_0\in X$ on $E_N$ is some $y_0\in E_N$ minimizing the distance $d(x_0,y)$ when $y$ runs through $E_N$). Such a projection is uniquely defined out of the set $E'_N$ made of the points $x\in X$ for which there exists at least two points minimizing the distance between $x$ and $E_N$. On $E'_N$ the map $P_N$ is chosen arbitrarily (nevertheless measurably). For $x\in X$ we denote by $x_N$ the \emph{discretization of order $N$} of $x$, defined by $x_N = P_N(x)$. For $f\in\Hom(X)$ we denote by $f_N : E_N\to E_N$ the \emph{discretization of order $N$} of $f$, defined by $f_N = P_N\circ f$.

Let $\mathcal{D}_N$ be the set of homeomorphisms $g$ such that $g(E_N)\cap E'_N=\emptyset$.

If $\sigma : E_N\to E_N$ and $f\in \Hom(X)$, we denote by $d_N(f,\sigma)$ the distance between $f_{|E_N}$ and $\sigma$, considered as maps from $E_N$ into $X$.
\end{notation}

\begin{rem}
One might wonder why the points of the discretization grids are supposed to be \emph{inside} $X$. The reason is simple: a homeomorphism $f$ of $X$ sends $\partial X$ on $\partial X$. Putting points of some grids on the edge could perturb the dynamics of discretizations $f_N$. In particular it would introduce at least one orbit\footnote{Recall that orbit means forward orbit.} with length smaller than $\card(E_N \cap\partial X) $.
\end{rem}

\begin{rem}\label{MerciReferee}
As the exponential map is a local diffeomorphism, the sets $E'_N$ are closed and have empty interior for every $N$ large enough. Subsequently, we will implicitely suppose that the union $\bigcup_{N\in\N} E'_N$ is an $F_\sigma$ with empty interior. It is not a limitating assumption as we will focus only on the behaviour of the discretizations for $N$ going to $+\infty$. It will allow us to restrict the study to the $G_\delta$ dense set $\bigcap_{N\in\N} \mathcal{D}_N$, which is the set of homeomorphisms whose $N$-th discretization is uniquely defined for all $N\in\N$.
\end{rem}

\addtocontents{toc}{\SkipTocEntry}\subsection*{Probability measures on $X$} From section \ref{Sec8}, we will be interested in \emph{ergodic} properties of discretizations of $f$. Denote $\Prb$ the set of Borel probability measures on $X$ endowed with the weak-star topology: a sequence $(\nu_m)_{m\in\N}$ of $\Prb$ tends to $\nu\in\Prb$ (denoted by $\nu_m\rightharpoonup\nu$) if for all continuous function $\varphi : X\to\R$,
\[\lim_{m\to\infty}\int_{X}\varphi\,\ud \nu_m = \int_{X}\varphi\,\ud \nu.\]
Under these conditions the space $\Prb$ is metrizable and compact, therefore separable (Prohorov, Banach-Alaoglu-Bourbaki theorem).

To study ergodic properties of homeomorphisms and their discretizations, we define natural invariant probability measures associated with these maps:

\begin{definition}\label{defmes}
For any nonempty open subset $U$ of $X$, we denote by $\lambda_{U}$ the normalized restriction of $\lambda$ on $U$, i.e. $\lambda_U=\frac{1}{\lambda(U)} \lambda_{|U}$. We also denote by $\lambda_{N,U}$ the uniform probability measure on $E_N\cap U$ and $\lambda_N = \lambda_{N,X}$. For $x\in X$ we denote by (when the limit exists) the \emph{Birkhoff limit} of $x$:
\[\mu^f_x = \lim_{m\to\infty}\frac 1m \sum_{i=0}^{m-1}{f}_*^i \delta_x,\]
and similarly for $f_N$. When it is well defined, we set
\[\mu^f_U = \lim_{m\to\infty}\frac 1m \sum_{i=0}^{m-1}{f}_*^i \lambda_U\]
and
\[\mu^f_{N,U} = \lim_{m\to\infty}\frac 1m \sum_{i=0}^{m-1}(f_N)_*^i\, \lambda_{N,U}.\]
Finally, we note $\mu^f_N = \mu^f_{N,X}$.
\end{definition}

We just define two types of invariant measures: on the one hand from a point $x$, the other from the uniform measure $\lambda$. The link between it is done by the following proposition:

\begin{prop}\label{convdom}
When $U$ is an open set whose almost every point admit a Birkhoff limit, the measure $\mu^f_U$ is well defined and satisfies, for every continuous map $\varphi : X\to\R$, 
\[\int_X\varphi\, \ud\mu^f_U = \int_U\left(\int_X \varphi\, \ud\mu^f_x\right)\ud\lambda_U.\]
Similarly,
\[\int_X\varphi\, \ud\mu^f_{N,U} = \int_U\left(\int_X \varphi\, \ud\mu^f_{N,x}\right)\ud\lambda_{N,U}.\]
\end{prop}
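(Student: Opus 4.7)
The plan is to unpack the definitions, swap the integral over $\lambda_U$ with the Cesàro average by linearity, and then pass to the limit using dominated convergence, relying on the Birkhoff-limit hypothesis pointwise.

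First, I would fix a continuous test function $\varphi:X\to\R$ and compute, by linearity of the integral and the definition of push-forward,
\[
\int_X \varphi\,\ud\!\left(\frac1m\sum_{i=0}^{m-1}f_*^i\lambda_U\right)
= \frac1m\sum_{i=0}^{m-1}\int_U \varphi\circ f^i\,\ud\lambda_U
= \int_U \left(\int_X \varphi\,\ud\!\left(\frac1m\sum_{i=0}^{m-1}f_*^i\delta_x\right)\!\right)\ud\lambda_U(x).
\]
For $\lambda_U$-almost every $x\in U$ the Birkhoff limit $\mu^f_x$ exists by hypothesis, so the inner quantity converges to $\int_X\varphi\,\ud\mu^f_x$ as $m\to\infty$. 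Since the inner quantity is bounded in absolute value by $\|\varphi\|_\infty$ uniformly in $m$ and $x$, and $\lambda_U$ is a probability measure, the dominated convergence theorem gives
\[
\lim_{m\to\infty}\int_X\varphi\,\ud\!\left(\frac1m\sum_{i=0}^{m-1}f_*^i\lambda_U\right) = \int_U\left(\int_X\varphi\,\ud\mu^f_x\right)\ud\lambda_U(x).
\]

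Next, I would promote this to weak-star convergence. Because $X$ is compact metric, the space $\Prb$ is metrizable; in particular, to show that $\frac1m\sum_{i=0}^{m-1}f_*^i\lambda_U$ converges weakly it suffices to check convergence of its integrals against every continuous $\varphi$, which is precisely what the previous step gives. Defining $\mu^f_U$ as this weak-star limit, the identity in the proposition holds by construction, which shows both that $\mu^f_U$ is well-defined and that it satisfies the claimed disintegration-type formula.

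The discretized statement is obtained by running exactly the same argument with $\lambda_{N,U}$ (a finite convex combination of Dirac masses on $E_N\cap U$) in place of $\lambda_U$ and $f_N$ in place of $f$; note that here every point $x\in E_N$ is eventually periodic under $f_N$, so $\mu^f_{N,x}$ automatically exists and the hypothesis about almost-sure existence of a Birkhoff limit is trivially verified. There is no real obstacle: the only point requiring any care is to argue that the weak-star limit actually exists (rather than just checking what it would be if it did), which is handled by the metrizability of $\Prb$ together with the pointwise convergence of the integrals against all continuous $\varphi$.
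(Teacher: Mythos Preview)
Your proof is correct and is precisely the argument the paper has in mind: the paper's own proof consists of the single sentence ``It follows easily from the dominated convergence theorem,'' and you have simply spelled out the details of that application (linearity to exchange the Ces\`aro average with $\int_U$, domination by $\|\varphi\|_\infty$, then metrizability of $\Prb$ to conclude weak-star convergence). There is nothing to add.
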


\begin{proof}[Proof of proposition \ref{convdom}]
It follows easily from the dominated convergence theorem.
\end{proof}

\addtocontents{toc}{\SkipTocEntry}\subsection*{Hypothesis on discretization grids} Previously, we have given a very general definition of the concept of sequence of discretization grids. In some cases, we will need additional technical assumptions about these sequences of grids. Of course all of them will be satisfied by the uniform discretization grids on the torus (as defined in the introduction).

The first assumption is useful to prove Lax's theorem (theorem \ref{Lax}), and therefore necessary only in the part concerning conservative homeomorphisms.

\begin{definition}[Well distributed and well ordered grids]\label{Ashe}
We say that a sequence of discretization grids $(E_N)_{N\in\N}$ is \emph{well distributed} if one can associate to each $x\in E_N$ a subset $C_{N,x}$ of $X$, which will be called a \emph{cube of order $N$}, such that:
\begin{itemize}
\item for all $N$ and all $x\in E_N$, $x\in C_{N,x}$,
\item for all $N$, $\{C_{N,x}\}_{x\in E_N}$ is a measurable partition of $X$: $\bigcup_{x\in E_N} C_{N,x}$ is full measure and for $x,y$ two distinct points of $E_N$, the intersection $C_{N,x}\cap C_{N,y}$ is null measure,
\item for a fixed $N$, all the cubes $C_{N,x}$ have the same measure,
\item the diameter of the cubes of order $N$ tends to $0$: $\max_{x\in E_N}\, \operatorname{diam}(C_{N,x})\underset{N\to+\infty}{\longrightarrow} 0$.
\end{itemize}

If $(E_N)_{N\in\N}$ is well distributed and if $\{C_{N,x}\}_{N\in\N,x\in E_N}$ is a family of cubes as below, we will say that $(E_N)_{N\in\N}$ is \emph{well ordered} if, for a fixed $N$, the cubes  $\{C_{N,x}\}_{x\in E_N}$ can be indexed by $\Z/q_N\Z$ such that two consecutive cubes (in $\Z/q_N\Z$) are close to each other: $\max_{i\in \Z/q_N\Z}\, \operatorname{diam}(C_N^i\cup C_n^{i+1})\underset{N\to+\infty}{\longrightarrow} 0$ (especially, it is true when the boundaries of two consecutive cubes overlap).
\end{definition}

At first glance, it can seem surprising that there is no link between the cubes and the projections. In fact, the existence of such cubes expresses that the grids ``fit'' the measure $\lambda$.

The two following definitions describe assumptions that will be useful especially to obtain properties in average.

\begin{definition}[Refining grids]\label{Ashe''}
We say that a sequence of discretization grids \emph{refines} if for all $N,N'\in\N$ such that $N\le N'$, one has $E_{N}\subset E_{N'}$.
\end{definition}

\begin{definition}[Self-similar grids]\label{Ashe'}
We say that a sequence of discretization grids $(E_N)_{N\in\N}$ is \emph{self similar} if for all $\varep>0$, there exists $N_0, N_1\in\N$ such that for all $N\ge N_1$, the set $E_{N}$ contains disjoint subsets $\widetilde E_N^1, \dots, \widetilde E_N^{\alpha_N}$ whose union fills a proportion greater than $1-\varep $ of $E_{N}$, and such that for all $j$, $\widetilde E_N^j$ is the image of the grid $E_{N_0}$ by a bijection $h_j$ which is $\varep$-close to identity.

We say that a sequence of discretization grids $(E_N)_{N\in\N}$ is \emph{strongly self similar} if it is self similar and for all $N\ge N_0$, one of the $h_j$ equals to identity.
\end{definition}

\begin{rem}
One easily verify that the hypothesis ``being strongly self-similar'' implies both ``being self-similar'' and ``refining''.
\end{rem}

\section{Some examples of discretization grids}\label{exgrilles}

In the previous section we set properties on discretizations --- namely being well distributed, being well ordered, refining, being strongly self-similar or self-similar --- that will be used subsequently. In this section we give some examples of grids that verify some of these assumptions.

\addtocontents{toc}{\SkipTocEntry}\subsection*{Uniform discretization grids on the torus}
The simplest example, which will be used for the simulations, is that of the torus $\T^n = \R^n/\Z^n$ of dimension $n\ge 2$ endowed with discretizations called \emph{uniform} discretizations, defined from the fundamental domain $I^n=[0,1]^n$ of $\T^n$: take an increasing sequence of integers $(k_N)_{N\in\N}$ and set
\[E_N = \left\{\left(\frac{i_1}{k_N},\dots,\frac{i_n}{k_N}\right)\in I^n \big|\ \forall j,\, 0\le i_j\le {k_N}-1\right\},\]
\[C_{N,(i_1/N,\dots,i_n/N)} = \prod_{j=1}^n \left[\frac{i_j}{k_N}-\frac{1}{2k_N}\, ,\, \frac{i_j}{k_N}+\frac{1}{2k_N}\right].\]

We easily verify that this sequence of grids is well distributed, well ordered and self-similar. If we further assume that for any $N\in\N$, $k_N$ divides $k_{N+1}$ (which is true when $k_N = p^N$ with $p\ge 2$), then the sequence is strongly self-similar (therefore refines). When $k_N = p^N$ with $p=2$ (resp. $p=10$) the discretization performs what one can expect from a numerical simulation: doing a binary (resp. decimal) discretization at order $N$ is the same as truncating each binary (resp. decimal) coordinate of the point $x\in I^n$ to the $N$-th digit, i.e. working with a fixed digital precision\label{pagefin}\footnote{In fact, in practice the computer works in floating point format, so that the number of decimal places is not the same when the number is close to $0$ or not.}.

\begin{figure}
\begin{center}
\makebox[\textwidth]{\parbox{\textwidth}{
\begin{minipage}[c]{.32\linewidth}
\begin{tikzpicture}[scale=.6]
\draw[thick] (.5,.5) rectangle (5.5,5.5);
\foreach \k in {0,...,5}
 {\foreach \l in {0,...,5}
  {\draw[color=red!70!black] node at (\k+.5,\l+.5){\footnotesize$\bullet$};}}
\clip (.5,.5) rectangle (5.5,5.5);
\draw[blue] (0,0) grid (6,6);
\end{tikzpicture}
\end{minipage}\hfill
\begin{minipage}[c]{.32\linewidth}
\begin{tikzpicture}[scale=.7714]
\draw[blue] (0,0) grid (4,4);
\draw[thick] (0,0) rectangle (4,4);
\foreach \k in {1,...,4}
 {\foreach \l in {1,...,4}
  {\draw[color=red!70!black] node at (\k*4/5,\l*4/5){\footnotesize$\bullet$};}}
\end{tikzpicture}
\end{minipage}\hfill
\begin{minipage}[c]{.32\linewidth}
\begin{tikzpicture}[scale=.7714]
\draw[blue] (0,0) grid (4,4);
\draw[thick] (0,0) rectangle (4,4);
\foreach \k in {1,...,4}
 {\foreach \l in {1,...,4}
  {\draw[color=red!70!black] node at (\k-.5,\l-.5){\footnotesize$\bullet$};}}
\end{tikzpicture}
\end{minipage}
}}
\end{center}
\caption{Uniform discretization grids of order $5$ on the torus $\T^2$ ($E^N$ left) and on the cube $I^2$ ($E_N^0$ middle and $E_N^1$ right) and their associated cubes}
\label{Grill}
\end{figure}
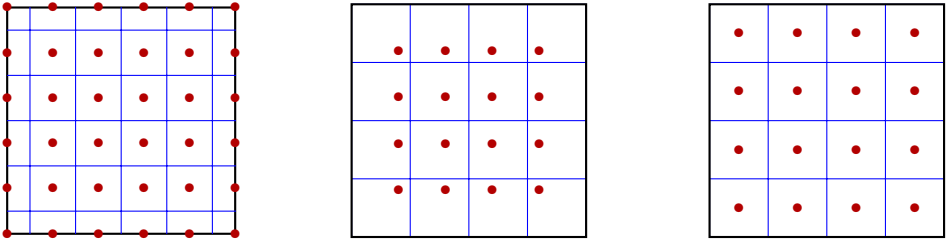

\addtocontents{toc}{\SkipTocEntry}\subsection*{Uniform discretization grids on the cube} Similarly, one can define uniform discretizations on the cube  $I^n = [0,1]^n$ by setting
\[E_N^0 = \left\{\left(\frac{i_1}{k_N},\dots,\frac{i_n}{k_N}\right)\in I^n \big|\ \forall j,\, 1\le i_j\le {k_N}-1\right\},\]
still with an increasing sequence $(k_N)_{N\in\N}$ of integers, to which are associated the cubes\footnote{Be careful, these cubes have their vertices on the grid of order $k_N-1$.} (see figure \ref{Grill})
\[C_{N,(i_1/N,\dots,i_n/N)} = \prod_{j=1}^n\left[\frac{i_j-1}{k_N-1}\, ,\, \frac{i_j}{k_N-1}\right].\]
As before, one easily verifies that this sequence of grids is well distributed, well ordered and self-similar (see figure \ref{soucubhihi}). If we further assume that for any $N\in\N$, $k_N$ divides $k_{N+1}$, then the sequence is strongly self-similar (therefore refines).

\begin{figure}
\begin{center}
\begin{tikzpicture}[scale=.7]
\draw (0,0) grid (7,7);
\draw[thick] (0,0) grid[step=2] (6,6);
\foreach \k in {1,...,7}
 {\draw[color=black] node at (\k*7/8,49/8){\footnotesize$\bullet$};
  \draw[color=black] node at (49/8,\k*7/8){\footnotesize$\bullet$};}
\foreach \k in {1,...,3}
 {\foreach \l in {1,...,3}
  {\draw[color=green!60!black] node at (\k*7/4-7/8,\l*7/4-7/8){\footnotesize$\bullet$};}}
\foreach \k in {1,...,3}
 {\foreach \l in {1,...,3}
  {\draw[color=blue!60!black] node at (\k*7/4-7/8,\l*7/4){\footnotesize$\bullet$};}}
\foreach \k in {1,...,3}
 {\foreach \l in {1,...,3}
  {\draw[color=cyan!60!black] node at (\k*7/4,\l*7/4-7/8){\footnotesize$\bullet$};}}
\foreach \k in {1,...,3}
 {\foreach \l in {1,...,3}
  {\draw[color=red!70!black] node at (\k*7/4,\l*7/4){\large$\bullet$};}}
\end{tikzpicture}
\caption{Self similarity of grids $E_N^0$}\label{soucubhihi}
\end{center}
\end{figure}
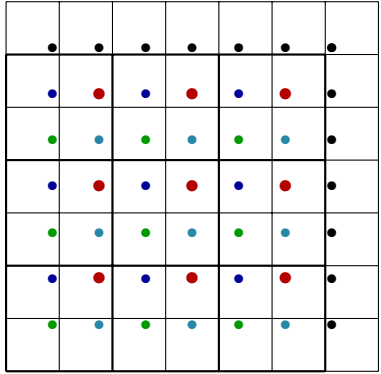

One can also take discretizations according to the centres of the cubes.
\[E_N^1 = \left\{\left(\frac{i_1+1/2}{k_N},\dots,\frac{i_n+1/2}{k_N}\right)\in I^n \big|\ \forall j,\, 0\le i_j\le {k_N}-1\right\}.\]
This time the cubes are much more natural (see figure \ref{Grill}):
\[C_{N,(i_1/N,\dots,i_n/N)} = \prod_{j=1}^n\left[\frac{i_j}{k_N}\, ,\, \frac{i_j+1}{k_N}\right].\]
Again, we easily verify that this sequence of grids is well distributed, well ordered and self-similar (but does not refine).

\addtocontents{toc}{\SkipTocEntry}\subsection*{Discretization grids on an arbitrary manifold }In fact, discretizations $E_N^0$ on $I^n$ can be generalized to an arbitrary manifold $X$ with the Oxtoby-Ulam theorem (see \cite{Oxto-meas}):

\begin{theoreme}[Oxtoby, Ulam]\label{Brown-mesure}
Under the assumptions that have been made on $X$ and $\lambda$, there exists a map $\phi : I^n\to X$ such that:
\begin{enumerate}
\item $\phi$ is surjective
\item $\phi_{|\mathring{I^n}}$ is an homeomorphism on its image
\item $\phi(\partial I^n)$ is a closed subset of $X$ with empty interior and disjoint from $\phi (\mathring{I^n})$
\item $\lambda(\phi(\partial I^n))=0$
\item $\phi^*(\lambda) = \Leb$, where $\Leb$ is Lebesgue measure.
\end{enumerate}
\end{theoreme}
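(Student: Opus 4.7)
The plan is to decompose the proof into a topological step followed by a measure-theoretic step, and then to compose.

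First (topological step), I produce a continuous surjection $\psi : I^n \to X$ such that $\psi_{|\mathring{I^n}}$ is a homeomorphism onto an open dense subset of $X$, the set $\psi(\partial I^n)$ is closed with empty interior, and $\lambda(\psi(\partial I^n)) = 0$. The topological content of items $(1)$--$(3)$ is classical: any compact connected $n$-manifold, possibly with boundary, can be unfolded from a smooth triangulation into a single $n$-cell identified with $I^n$ (Brown's theorem), producing such a $\psi$. The additional measure-theoretic constraint $(4)$ requires more care, since a good measure need not be absolutely continuous with respect to Lebesgue in charts; it is obtained by first producing a topologically suitable $\psi_0$ and then left-composing with a homeomorphism of $X$ that pushes the boundary image $\psi_0(\partial I^n)$, a closed nowhere-dense set, onto a $\lambda$-null set. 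The existence of such a homeomorphism is a standard consequence of the Oxtoby-Ulam philosophy applied on small charts: one iteratively flattens the finitely many $(n-1)$-cells making up $\psi_0(\partial I^n)$.

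Second (measure step), define a Borel probability measure $\mu$ on $I^n$ by $\mu(A) = \lambda(\psi(A \cap \mathring{I^n}))$ for every Borel set $A\subseteq I^n$. Since $\lambda(\psi(\partial I^n)) = 0$, one has $\psi_*\mu = \lambda$. Moreover $\mu$ is a good measure on $I^n$ in the sense of Definition \ref{bonne mesure}: nonatomic (inherited from $\lambda$), of total support (since $\psi(\mathring{I^n})$ is open in $X$ and $\lambda$ has total support), and zero on $\partial I^n$ by construction. The classical Oxtoby-Ulam theorem on the cube \cite{Oxto-meas} then yields a homeomorphism $h : I^n \to I^n$, pointwise fixing $\partial I^n$, with $h_*\Leb = \mu$. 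I set $\phi = \psi \circ h$.

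Verifying the five conclusions is now routine. Surjectivity follows from that of $\psi$ and $h$; $\phi_{|\mathring{I^n}}$ is a homeomorphism onto its image because $h$ restricts to a homeomorphism of $\mathring{I^n}$ and $\psi_{|\mathring{I^n}}$ is a homeomorphism onto its image. Since $h$ fixes $\partial I^n$ pointwise, $\phi(\partial I^n) = \psi(\partial I^n)$, giving $(3)$ and $(4)$ from the topological step. Finally, $\phi_*\Leb = \psi_*(h_*\Leb) = \psi_*\mu = \lambda$, which is condition $(5)$ (reading $\phi^*(\lambda) = \Leb$ as $\phi_*\Leb = \lambda$, equivalent since $\phi$ is a bijection modulo a $\lambda$-null set).

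The main obstacle is the measure-zero clause in the topological step: producing $\psi$ such that $\lambda(\psi(\partial I^n)) = 0$, even though a good measure on a manifold is a priori singular enough to give positive mass to closed nowhere-dense sets. Once $\psi$ has this property, the rest is bookkeeping together with the original Oxtoby-Ulam result on $I^n$.
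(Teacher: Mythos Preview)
The paper does not prove this theorem; it is stated as a classical result and attributed to Oxtoby--Ulam via the citation \cite{Oxto-meas}, then used as a black box to transport discretization grids from the cube to a general manifold. So there is no ``paper's own proof'' to compare against.

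On its own merits, your proposal follows the standard route and is essentially sound: reduce to the classical Oxtoby--Ulam theorem on $I^n$ by first producing a topological unfolding $\psi$ and then straightening the pulled-back measure with a boundary-fixing self-homeomorphism of the cube. The verification of $(1)$--$(5)$ for $\phi=\psi\circ h$ is correct as written, including the check that the pullback $\mu$ is a good measure on $I^n$.

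The one place that deserves more than a sentence is exactly the point you flag yourself: arranging $\lambda(\psi(\partial I^n))=0$. Your sketch (``iteratively flatten the $(n-1)$-cells'') is the right idea but is not quite self-contained. A clean way to make it precise, without circularity, is this: for each $(n-1)$-cell $C$ of the triangulation's $(n-1)$-skeleton, take a bicollared neighbourhood $C\times(-1,1)$; since the uncountably many slices $C\times\{t\}$ are disjoint and $\lambda$ is finite, all but countably many $t$ satisfy $\lambda(C\times\{t\})=0$, so a small homeomorphism supported in the collar pushes $C$ onto such a null slice. Doing this cell by cell (handling lower-dimensional strata first so that later moves do not disturb earlier ones) yields the desired $\psi$. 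You should also note that when $\partial X\neq\emptyset$, the unfolding $\psi_0$ must be chosen so that $\psi_0(\partial I^n)\supset\partial X$; then $\lambda(\partial X)=0$ (part of the definition of a good measure) handles that portion automatically, and the collar argument above is applied only to the interior $(n-1)$-cells. With these details filled in, the argument is complete.
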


This theorem allows us to define discretization grids on $X$ from uniform discretizations on the cube. We obtain the following informal proposition:

\begin{prop}
If there is a sequence $(E_N)_N$ of grids on $I^n$ whose elements are not on the edge of the cube, then its image by $\phi$ defines a sequence of grids on $X$ which satisfy the same properties as the initial grid on the cube.
\end{prop}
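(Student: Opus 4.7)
The plan is to take $F_N := \phi(E_N)$ and transport every structural property of the cube grids through $\phi$. The three facts about $\phi$ that make this possible are: $\phi$ is continuous on the compact set $I^n$ (hence uniformly continuous, with some modulus of continuity $\omega_\phi$), its restriction to $\mathring{I^n}$ is a homeomorphism onto its image, and $\phi_*\Leb = \lambda$ with $\lambda(\phi(\partial I^n))=0$. Combined with the hypothesis $E_N\subset\mathring{I^n}$, this should let every computation on the cube be read off on $X$ modulo null sets.

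First I would check that $(F_N)$ is a sequence of discretization grids in the sense of Definition \ref{grillmiam}. Finiteness and discreteness of $F_N$ follow from those of $E_N$. The inclusion $F_N\subset X\setminus\partial X$ uses that $\partial X\subset\phi(\partial I^n)$: by surjectivity every point of $\partial X$ has a preimage in $I^n$, and any preimage inside $\mathring{I^n}$ would, by invariance of domain applied to $\phi|_{\mathring{I^n}}$, have a neighbourhood sent to a subset of $X$ open near that boundary point, which is impossible. Turning $\varep$-density of $E_N$ in $I^n$ into $\varep'$-density of $F_N$ in $X$ is just uniform continuity of $\phi$ applied to the $\varep$-net $E_N$ together with surjectivity.

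Next I would transport the cubes by setting $D_{N,\phi(x)} := \phi(C_{N,x})$. Equal $\lambda$-measure follows from $\phi_*\Leb=\lambda$ and equal Lebesgue measures of the $C_{N,x}$. The partition-modulo-null-sets property uses injectivity of $\phi|_{\mathring{I^n}}$ together with $\lambda(\phi(\partial I^n))=0$, and the bound $\operatorname{diam} D_{N,\phi(x)}\le\omega_\phi(\operatorname{diam} C_{N,x})$ gives shrinking cubes. The same modulus-of-continuity argument transports the well-ordered property. Refinement is immediate from $E_N\subset E_{N'}\Rightarrow F_N\subset F_{N'}$. For self-similarity, a bijection $h_j$ that is $\varep$-close to the identity on the cube induces $H_j:\phi(E_{N_0})\to\phi(\widetilde E_N^j)$ by $H_j(\phi(x))=\phi(h_j(x))$, and the estimate $d(H_j(\phi(x)),\phi(x))\le\omega_\phi(\varep)$ makes it close to the identity on $X$; the proportion $1-\varep$ of $E_N$ covered transfers to the same proportion of $F_N$ because $\phi$ is measure preserving. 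The strong case ($h_j=\mathrm{id}$) is preserved trivially.

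The main obstacle I anticipate is the careful bookkeeping of null sets around $\phi(\partial I^n)$ and the boundaries of the cubes: one must verify that every ``almost equal'' statement on the cube really becomes an ``almost equal'' statement on $X$ after applying $\phi$, and that the fact that only $\phi|_{\mathring{I^n}}$ (rather than $\phi$) is a homeomorphism is enough to keep the transported cubes essentially disjoint and to avoid pathologies near $\partial X$. Everything else is a routine translation via the modulus of continuity of $\phi$ and the change-of-variables formula $\phi_*\Leb=\lambda$, which is presumably why the author states the proposition only informally.
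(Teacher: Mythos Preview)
Your proposal is correct, and in fact the paper gives no proof of this proposition at all: it is explicitly labelled an ``informal proposition'' and left unproved, precisely because the verification is the routine transport argument you outline. Your own closing remark already identifies this. The only point worth flagging is the inclusion $F_N\subset X\setminus\partial X$: your invariance-of-domain argument works, but one can also read it directly off item (3) of the Oxtoby--Ulam theorem, since $\phi(\mathring{I^n})$ and $\phi(\partial I^n)$ are disjoint and $\partial X\subset\phi(\partial I^n)$ follows from the fact that $\phi(\mathring{I^n})$ is open in $X$.
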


\begin{rem}
The example also includes the case where $X=I^n$, $\lambda=\Leb$ and where the grids are the images of the grids $E_N$ by a unique homeomorphism of $X$ preserving Lebesgue measure.
\end{rem}

\part{Discretizations of a generic conservative homeomorphism}

Recall that we have fixed once and for all a manifold $X$, a good measure $\lambda$ on $X$ and a sequence of discretization grids $(E_N)_{N\in\N}$ (see definition \ref{grillmiam}). It is further assumed that this sequence of grids is well distributed and well ordered (see definition \ref{Ashe}). In this section we will focus on discretizations of elements of $\Hom (X,\lambda)$, so homeomorphisms will always be supposed conservative.

\section{Dense types of approximations}\label{Grosse}

To begin with we define the notion of \emph{type of approximation}.

\begin{definition}
A \emph{type of approximation} $\tT = (\tT_N)_{N\in\N}$ is a sequence of subsets of the set $\mathcal F (E_N,E_N)$ made of applications from $E_N$ into itself.

Let $\mathcal{U}$ be an open subset of $\Hom (X,\lambda)$. A type of approximation $\tT$ is said to be \emph{dense} in $\mathcal{U}$ if for all $f\in \mathcal{U}$, $\varepsilon>0$ and all $N_0\in\N$, there exists $N\ge N_0$ and $\sigma_N\in \tT_N$ such that $d_N(f,\sigma_N)<\varepsilon$ (recall that $d_N$ is the distance between $f_{|E_N}$ and $\sigma_N$ considered as maps from $E_N$ into $X$).
\end{definition}

The goal of this paragraph is to obtain the following result: every dense type of approximation appears on infinitely many discretizations of a generic homeomorphism.

\begin{theoreme}\label{génécycl}
Let $\mathcal U$ be an open subset\footnote{We will often take $\mathcal U=\Hom (X,\lambda)$.} of $\Hom (X,\lambda)$ and $\tT$ be a type of approximation which is dense in $\mathcal U$. Then for a generic homeomorphism $f\in\mathcal U$ and for all $N_0\in\N$, there exists $N\ge N_0$ such that $f_{N}\in \tT_{N}$.
\end{theoreme}

When we will try to obtain properties of discretizations of generic homeomorphisms, this theorem will be the second step of the proof, the first being to show that the approximation type we are interested in is dense. In section \ref{partie 1.3} we will try to establish as many corollaries of this theorem as possible, showing that the approach ``prove that such approximation is dense then apply theorem \ref{génécycl}'' is a general method in the study of properties of discretizations\footnote{Although some properties can be proved in different ways: for instance corollary \ref{crush} can be proved in inserting horseshoes in the dynamics of a given homeomorphism, using the local modification theorem (theorem \ref{extension-sphères}, for a presentation of the technique in another context see section 3.3 of \cite{MR2931648}); also a variation of corollary \ref{corovar1} can be shown in perturbing any given homeomorphism such that it has a periodic orbit whose distance to the grid $E_N$ (grids are assumed to refine) is smaller than the modulus of continuity of $f$, so that the actual periodic orbit and the discrete orbit fit together.}.

To prove of theorem \ref{génécycl} we start from a dense type of approximations --- in other words discrete applications --- and we want to get properties of homeomorphisms. The tool that allows us to restore a homeomorphism from a finite map $\sigma_N : E_N\to E_N$, i.e. do the opposite of a discretization, is the finite map extension proposition:

\begin{prop}[Finite map extension]\label{extension} Let $(x_1,x_2,\dots,x_k)$ and $(y_1,y_2,\dots,y_k)$ be two $k$-tuples of pairwise distinct points in $X\setminus \partial X$ (i.e. for all $i\neq j$, $x_i\neq x_j$ and $y_i\neq y_j$). Then there exists $f\in\mathrm{Homeo}(X,\lambda)$ such that for all $i$, $f(x_i) = y_i$. Moreover $f$ can be chosen such that if $\max_i d(x_i,y_i)< \delta$ then $d(f,\mathrm{Id})<\delta$.
\end{prop}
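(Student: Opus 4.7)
The plan is to connect each $x_i$ to $y_i$ by a small embedded arc $\gamma_i$, thicken each $\gamma_i$ to a ``tube'' $T_i$ (a tame closed topological ball in $X\setminus\partial X$) so that the $T_i$ are pairwise disjoint and each has diameter $<\delta$, build on each $T_i$ a measure-preserving homeomorphism $h_i$ that fixes $\partial T_i$ pointwise and sends $x_i$ to $y_i$, and finally glue: set $f=h_i$ on $T_i$ and $f=\mathrm{Id}$ elsewhere. Continuity across each $\partial T_i$ is automatic, the pieces are $\lambda$-preserving so $f\in\Hom(X,\lambda)$, and the displacement bound is immediate because for $x\in T_i$ we have $d(f(x),x)\le\mathrm{diam}(T_i)<\delta$ while $f=\mathrm{Id}$ off the tubes.

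First I would construct the disjoint arcs. In local charts around each pair $(x_i,y_i)$, the straight segment provides a path of length $d(x_i,y_i)$; since $X\setminus\partial X$ is a connected manifold of dimension $n\ge 2$ and the set of $2k$ points is finite, these paths can be extended globally inside $X\setminus\partial X$. A small generic perturbation makes them pairwise disjoint embedded arcs, because an arc has codimension $\ge 1$ and transversality fails on a nowhere dense set. Under the assumption $\max_i d(x_i,y_i)<\delta$, the perturbation can be chosen arbitrarily small, so each $\gamma_i$ still lies in a ball of diameter $<\delta$. Around each such compact arc I would then take a tame closed topological ball $T_i\subset X\setminus\partial X$ of diameter $<\delta$, the $T_i$ chosen pairwise disjoint (possible by compactness and disjointness of the arcs) and with $\lambda(\partial T_i)=0$ (a generic choice of radius works, since $\lambda$ is nonatomic).

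Next, on each tube the restriction $\lambda_{|T_i}$ is a good measure on the closed ball $T_i$, in the sense of Definition~\ref{bonne mesure}. The Brown--Oxtoby--Ulam theorem (see Theorem~\ref{Brown-mesure}) identifies $(T_i,\lambda_{|T_i})$ with $(I^n,\mathrm{Leb})$ via a homeomorphism; it is classical that on $(I^n,\mathrm{Leb})$ the group of Lebesgue-preserving homeomorphisms fixing $\partial I^n$ acts transitively on $\mathrm{int}(I^n)$ (one can produce an explicit isotopy from $\mathrm{Id}$ to a homeomorphism sending one interior point to another, supported in the interior, then straighten it into a measure-preserving one using again Oxtoby--Ulam). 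Pulling back gives the desired $h_i$.

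The main obstacle is the simultaneous realisation of arc disjointness \emph{and} diameter control: when some $x_i$ coincides with some $y_j$ (or when the geodesic segments $[x_i,y_i]$ are forced to cross), one cannot take the straight segments directly but must perturb them while keeping the diameter below $\delta$. This requires a careful local general-position argument, exploiting the fact that any perturbation transverse to the existing arcs can be made arbitrarily small. The rest of the construction (existence of $h_i$, gluing) is standard once the tubes are in place.
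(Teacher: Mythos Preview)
Your approach via disjoint tubes is different from the paper's sketch (which moves each $x_i$ to $y_i$ by composing many small ``central symmetries'' along a chain of nearby points, processing the pairs sequentially and choosing the supports so as to avoid the remaining sources and already-placed targets). The tube method is appealing, but it has a genuine gap in the case you yourself flag: when $x_i=y_j$ for some $i\neq j$. You propose to handle this by perturbing the arcs, but that cannot work, because the endpoints of the arcs are prescribed. The point $x_i=y_j$ is an endpoint of $\gamma_i$ and also an endpoint of $\gamma_j$; no general-position argument will separate them, so $\gamma_i\cap\gamma_j\neq\emptyset$ and the tubes $T_i,T_j$ cannot be disjoint. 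The hypothesis only says the $x_i$'s are pairwise distinct and the $y_j$'s are pairwise distinct, so this overlap is allowed, and it does occur in the application (Lemma~\ref{lemmetrans}), where one takes $x_\ell=f(e_\ell)$ and $y_\ell=\sigma'_N(e_\ell)$ for the grid points $e_\ell$.

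There are two standard fixes. One is to factor through an auxiliary $k$-tuple $(z_1,\dots,z_k)$ with $\{z_i\}$ disjoint from $\{x_i\}\cup\{y_i\}$ and $d(x_i,z_i)$ arbitrarily small; then run your tube construction twice, once for $(x_i)\to(z_i)$ and once for $(z_i)\to(y_i)$, and compose. With the $z_i$ chosen close enough to the $x_i$ the total displacement stays below $\delta$. The other fix is precisely the paper's: move one pair at a time by a composition of small-support involutions, choosing each chain of intermediate points to avoid the (finitely many) points that must not be disturbed; the sequential nature of the construction is what makes the overlap $x_i=y_j$ harmless. Either way, your use of Oxtoby--Ulam inside each tube is fine once the tubes exist, though note that the transitivity statement you invoke on $\mathrm{int}(I^n)$ is itself most cleanly proved by the same chain-of-small-moves idea rather than by a second appeal to Oxtoby--Ulam.
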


The idea of the proof is quite simple: it suffice to compose homeomorphisms whose support's size is smaller than $\varep$ and which are central symmetries within this support. Then one constructs a sequence $(z_j)_{1 \le j \le k}$ such that $z_1 = x_1$, $y_1 = z_k$ and $d(z_j, z_{j+1}) <\varep/10$. Composing $k-1$ homeomorphisms as above, such that each one sends $z_j$ on $z_{j+1}$, one obtains a conservative homeomorphism which sends $x_0$ on $y_0$. Implementing these remarks is then essentially technical. A detailed proof can be found in section 2.2 of \cite{MR2931648}.

With this proposition, we can build homeomorphisms from a dense type of approximation: 

\begin{lemme}\label{lemmetrans}
Let $\mathcal U$ be an open subset of $\Hom(X,\lambda)$ and $\tT=(\tT_N)_{N\in\N}$ be a type of approximation which is dense in $\mathcal U$. Then for all $N_0\in\N$, the set of homeomorphisms $f$ such that there exists $N\ge N_0$ such that the discretization $f_N$ belongs to $\tT_N$ is dense in $\mathcal U$.
\end{lemme}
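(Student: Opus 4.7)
The plan is to perturb $f \in \mathcal U$ into a homeomorphism $g$, close to $f$, whose $N$-th discretization is exactly a prescribed $\sigma_N\in\tT_N$. Fix $f\in\mathcal U$ and $\varepsilon>0$; shrinking $\varepsilon$ if necessary, assume the ball $B(f,\varepsilon)$ lies in $\mathcal U$. By density of $\tT$ in $\mathcal U$, choose $N\ge N_0$ and $\sigma_N\in\tT_N$ with $d_N(f,\sigma_N)$ arbitrarily small.

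The core step is to upgrade the finite map $\sigma_N$ to a homeomorphism. Two issues must be handled: $\sigma_N$ need not be injective (so one cannot directly feed the pairs $(x,\sigma_N(x))$ into the finite map extension proposition), and that proposition only produces homeomorphisms close to the identity rather than close to $f$. The fix is as follows. For each $x\in E_N$, pick a target point $y_x$ in the interior of the Voronoi cell of $\sigma_N(x)$ (the set of points of $X$ strictly closer to $\sigma_N(x)$ than to any other grid point), chosen within distance $\varepsilon/3$ of $\sigma_N(x)$, and with all $y_x$ (as $x$ ranges over $E_N$) pairwise distinct. This is possible since each such Voronoi cell contains a nonempty open neighborhood of $\sigma_N(x)\in E_N\subset X\setminus\partial X$, and only finitely many distinct targets need to be chosen. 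Now apply the finite map extension proposition (Proposition \ref{extension}) to the two tuples $(f(x))_{x\in E_N}$ and $(y_x)_{x\in E_N}$: the source tuple is made of distinct points because $f$ is a homeomorphism, and the target tuple was chosen so. This yields $h\in\Hom(X,\lambda)$ with $h(f(x))=y_x$ for every $x\in E_N$, and with $d(h,\mathrm{Id})$ bounded by $\max_x d(f(x),y_x)\le d_N(f,\sigma_N)+\varepsilon/3<\varepsilon$.

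Setting $g=h\circ f$ then gives $d(g,f)=\sup_{x\in X}d(h(f(x)),f(x))\le d(h,\mathrm{Id})<\varepsilon$, so $g\in B(f,\varepsilon)\subset\mathcal U$. Moreover, for each $x\in E_N$ one has $g(x)=y_x$, and since $y_x$ lies in the interior of the Voronoi cell of $\sigma_N(x)$, the projection $P_N(y_x)$ is uniquely defined and equals $\sigma_N(x)$. Hence $g_N=\sigma_N\in\tT_N$, as desired.

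The main obstacle is the combination described in the middle paragraph: one must both fabricate distinct interior targets $y_x$ realizing the given discrete map (to dodge the non-injectivity of $\sigma_N$ and force $P_N\circ g$ to equal $\sigma_N$) \emph{and} realize these displacements via a perturbation composed with $f$ rather than with the identity. The rest is routine bookkeeping with the triangle inequality.
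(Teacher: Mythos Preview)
Your proof is correct and follows essentially the same approach as the paper: both pick $N\ge N_0$ and $\sigma_N\in\tT_N$ close to $f$, replace the possibly non-injective $\sigma_N$ by an injective map $x\mapsto y_x$ into $X$ whose $P_N$-projection is still $\sigma_N$, and then apply the finite map extension proposition to the pairs $(f(x),y_x)$ to produce $h\in\Hom(X,\lambda)$ close to the identity so that $g=h\circ f$ satisfies $g_N=\sigma_N$. Your version is marginally more explicit in insisting that the targets $y_x$ lie in the \emph{interior} of the Voronoi cells (so that $P_N(y_x)$ is unambiguously $\sigma_N(x)$), which is a nice touch the paper leaves implicit.
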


\begin{proof}[Proof of lemma \ref{lemmetrans}] Let $f\in\mathcal U$, $N_0\in\N$ and $\varepsilon>0$. Since $\tT$ is a type of approximation which is dense in $\mathcal U$, there exists $N\ge N_0$ and $\sigma_{N}\in \tT_{N}$ such that $d_{N}(f,\sigma_{N})< \varepsilon$. Let $x_1,\dots,x_{q_{N}}$ be the elements of $E_N$. Then for all $\ell$, $d(f(x_\ell),\sigma_{N}(x_\ell))<\varepsilon$. We modify $\sigma_N$ into a bijection $\sigma'_N : E_N\to X$ whose discretization on $E_N$ equals to $\sigma_N$: set $\sigma'_N(x_1) = \sigma(x_1)$, $\sigma'_N$ is defined by induction by choosing $\sigma'_N(x_\ell)$ such that $\sigma'_N(x_\ell)$ is different from $\sigma'_N(x_i)$ for $i\le\ell$, such that $P_N(\sigma'_N(x_\ell)) = P_N(\sigma_N(x_\ell))$ and that $d(f(x_\ell),\sigma'_N(x_\ell))<\varep$.

Since $\sigma'_N$ is a bijection proposition \ref{extension} can be applied to $f(x_\ell)$ and $\sigma'_{N}(x_\ell)$; this gives a measure-preserving homeomorphism $\varphi$ such that $\varphi(f(x_\ell)) = \sigma'_{N}(x_\ell)$ for all $\ell$ and such that $d(\varphi,\mathrm{Id})< \varepsilon$. Set $f' = \varphi\circ f$, we have $d(f,f')\leq \varepsilon$ and $f'(x_\ell) = \sigma'_{N}(x_\ell)$ for all $\ell$, and therefore $f_N = \sigma_N$.
\end{proof}

\begin{proof}[Proof of theorem \ref{génécycl}]
Let $(x_{{N},\ell})_{1\le \ell\le q_{N}}$ be the elements of $E_{N}$ and consider the set (where $\rho_{N}$ is the minimal distance between two distinct points of $E_{N}$)
\[\bigcap_{N_0\in\N}\,\bigcup_{\substack{N\ge N_0\\\sigma_{N}\in \tT_{N}}}\,\left\{f\in\mathcal U \mid\forall \ell,\, d_{N}\big(f(x_{{N},\ell}),\sigma_{N}(x_{{N},\ell})\big)<\frac{\rho_{N}}{2}\right\}.\]
This set is clearly a $G_\delta$ set and its density follows directly from lemma \ref{lemmetrans}. Moreover we can easily see that its elements satisfy the conclusions of the theorem.
\end{proof}

\section{Lax's theorem}

Now that we have shown theorem \ref{génécycl}, we have to obtain dense types of approximation. Again, a theorem will be systematically used: Lax's theorem, and more precisely its improvement stated by S. Alpern \cite{Alpe-newp} which allows us to approach any homeomorphism by a cyclic permutation of the elements of a discretization grid.

\begin{theoreme}[Lax, Alpern]\label{Lax}
Let $f\in\mathrm{Homeo}(X,\lambda)$ and $\varepsilon>0$. Then there exists $N_0\in\N$ such that for all $N\ge N_0$, there exists a cyclic permutation $\sigma_N$ of $E_N$ such that $d(f,\sigma_N)<\varepsilon$.
\end{theoreme}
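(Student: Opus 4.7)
The plan is a two-step strategy. First I would prove the classical (non-cyclic) Lax theorem --- for $N$ large enough there exists a permutation $\sigma_N$ of $E_N$ with $d_N(f,\sigma_N)<\varepsilon$ --- via Hall's marriage theorem applied to the well distributed cube partition. Then I would upgrade $\sigma_N$ into a cyclic permutation using the cyclic indexing provided by the well ordered hypothesis, by iteratively merging the cycles of $\sigma_N$ via small transpositions.

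Fix $\varepsilon>0$ and pick, by uniform continuity of $f$, some $\eta>0$ such that $d(x,y)<\eta$ implies $d(f(x),f(y))<\varepsilon/3$; choose $N_0$ such that for $N\ge N_0$, every cube $C_{N,x}$ has diameter below $\min(\eta,\varepsilon/3)$ and every consecutive pair $C_N^i\cup C_N^{i+1}$ has diameter below $\eta$. For the first step, define a bipartite graph $G_N$ on two copies of $E_N$ by joining $x$ on the left to $y$ on the right whenever $f(C_{N,x})\cap C_{N,y}$ has positive $\lambda$-measure. Since all cubes share the common measure $m_N=1/q_N$ and $f$ preserves $\lambda$, for any $A\subset E_N$ one has
\[\lambda\Big(f\Big(\bigcup_{x\in A}C_{N,x}\Big)\Big)=\card(A)\,m_N,\]
and this set is contained modulo null sets in the union of cubes indexed by the right-neighborhood $N_{G_N}(A)$, so $\card(N_{G_N}(A))\ge\card(A)$. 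Hall's marriage theorem therefore delivers a perfect matching, i.e.\ a permutation $\sigma_N$ such that for every $x\in E_N$ there exists $z\in C_{N,x}$ with $f(z)\in C_{N,\sigma_N(x)}$; the triangle inequality using $d(x,z)<\eta$ and the cube diameter bound then yields $d(f(x),\sigma_N(x))<\varepsilon$.

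For the second step, if $\sigma_N$ is not cyclic then its cycles partition $E_N$ into at least two classes, so by pigeonhole on the well ordered cyclic index there exists some $i$ with $x_i$ and $x_{i+1}$ in distinct cycles. Postcomposing $\sigma_N$ with the transposition $(x_i,x_{i+1})$ merges those two cycles into one. Iterating reduces the cycle count by one at each step and terminates after at most $q_N-1$ merges with a cyclic permutation $\sigma_N^{\mathrm{cyc}}$.

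The main obstacle is keeping the cumulative perturbation in check: each merge alters $\sigma_N$ at two points, and a naive estimate lets the error grow with the number of merges. The key observation is that a merge at index $i$ can happen at most once (once $x_i$ and $x_{i+1}$ share a cycle they stay so forever), so each point is a swap pivot in at most two merges, one at index $j-1$ and one at index $j$. With a careful merging schedule (proceeding along the cyclic index), the product of all merge transpositions has uniformly bounded displacement, so that each $\sigma_N^{\mathrm{cyc}}(x_j)$ equals $\sigma_N(x_{j'})$ with $d(x_j,x_{j'})$ going to zero with $N$. Starting the Lax step with a sharper tolerance $\varepsilon/C$ for a suitable universal $C$ absorbs this extra displacement via uniform continuity of $f$, yielding $d(f,\sigma_N^{\mathrm{cyc}})<\varepsilon$. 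This is the strengthening over Lax's original theorem due to Alpern, and the one place where the full cyclic structure of the grids --- not merely their equidistribution --- really matters.
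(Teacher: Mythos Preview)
Your overall strategy matches the paper's: Hall's marriage theorem for the first step, then a cyclic upgrade exploiting the well-ordered indexing. The first step is fine and essentially identical to the paper's argument.

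The gap is in your second step. You correctly identify the obstacle --- controlling the displacement of the product of merge transpositions --- but the resolution you offer does not work. The observation that ``each point is a swap pivot in at most two merges'' bounds how many transpositions \emph{mention} a given $x_j$, not the displacement of $x_j$ under their product. These are different: the image of $x_j$ can be carried along by a chain of transpositions even though each individual transposition only touches $x_j$'s current position once. Concretely, take $\sigma_N$ with $x_1,\dots,x_m$ fixed points and $x_{m+1},\dots,x_{q_N}$ forming a single cycle. Your sequential scheme merges at every index $1,\dots,m$, and the resulting product $\rho=\tau_m\cdots\tau_1$ satisfies $\rho(x_1)=x_{m+1}$, which has index-displacement $m$. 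For $m$ comparable to $\sqrt{q_N}$ (perfectly possible for $\sigma_N$ arising from the marriage step when $f$ is near the identity on a region), the distance $d(x_1,x_{m+1})$ in $X$ need not be small, and your final estimate $d(f(x_1),\sigma_N^{\mathrm{cyc}}(x_1))<\varepsilon$ fails.

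The paper does not attempt to prove the cyclic upgrade by hand; it invokes Alpern's combinatorial lemma (Lemma~\ref{Pioure}): for any $\sigma\in\mathfrak{S}_q$ there exists $\tau$ with $|\tau(k)-k|\le 2$ in $\Z/q\Z$ such that $\tau\sigma$ is cyclic. The bound $2$ is uniform in $q$ and in $\sigma$, which is exactly what your argument needs but does not establish. Alpern's proof is more delicate than sequential adjacent merging --- it requires choosing $\tau$ globally rather than greedily. So either cite this lemma, or supply a genuinely different construction of $\tau$ with uniformly bounded displacement; the greedy scheme does not suffice.
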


As a compact metric set can be seen as a ``finite set up to $\varep$'', Lax's theorem allows us to see an homeomorphism as a ``cyclic permutation up to $\varep$''. In practice this theorem is used to ``break'' homeomorphisms into small pieces. Genericity of transitivity in $\Hom(X,\lambda)$ follows easily from Lax's theorem together with finite map extension proposition (again, see \cite{Alpe-typi} or part 2.4 of \cite{MR2931648}). In our case, applying theorem \ref{génécycl}, we deduce that infinitely many discretizations of a generic homeomorphism are cyclic permutations. The purpose of section \ref{partie 1.3} is to find variations of Lax's theorem (which are at the same time corollaries of it) concerning other properties of discretizations.

We give briefly the beautiful proof of Lax's theorem, which is essentially combinatorial and based on marriage lemma and on a lemma of approximation of permutations by cyclic permutations due to S. Alpern. Readers wishing to find proofs of these lemmas may consult section 2.1 of \cite{MR2931648}. This is here that we use the fact that discretizations are well distributed and well ordered.

\begin{lemme}[Marriage lemma]\label{mariage}
Let $E$ and $F$ be two finite sets and $\approx$ a relation between elements of $E$ and $F$. Suppose that the number of any subset $E'\subset E$ is smaller than the number of elements in $F$ that are associated with an element of $E'$, i.e.:
\[\forall E'\subset E,\quad \card (E')\leq \card\left\{f\in F\mid \exists e \in E' : e\approx f \right\},\]
then there exists an injection $\Phi : E\to F$ such that for all $e\in E$, $e\approx \Phi(e)$.
\end{lemme}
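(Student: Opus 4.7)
The plan is to prove this by induction on $\card(E)$, following the classical proof of Hall's marriage theorem. The base case $\card(E) = 1$ is immediate: Hall's condition applied to $E' = E$ guarantees at least one partner in $F$ for the unique element of $E$.

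For the inductive step with $\card(E) = n \ge 2$, I would split into two cases according to whether Hall's condition holds strictly on every proper nonempty subset. In Case 1, suppose that for every nonempty proper $E' \subsetneq E$ we have the strict inequality $\card\{f \in F \mid \exists e \in E',\ e \approx f\} > \card(E')$. Pick any $e_0 \in E$ and any partner $f_0 \in F$ with $e_0 \approx f_0$ (which exists since Hall's condition on $\{e_0\}$ yields at least one neighbor), set $\Phi(e_0) = f_0$, and restrict to $E \setminus \{e_0\}$ and $F \setminus \{f_0\}$. For any $E' \subset E \setminus \{e_0\}$, the set of neighbors loses at most one element, so the strict inequality gives $\card(N(E') \setminus \{f_0\}) \ge \card(N(E')) - 1 \ge \card(E')$. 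The inductive hypothesis then produces an injection of $E \setminus \{e_0\}$ into $F \setminus \{f_0\}$ respecting the relation, and combining with $e_0 \mapsto f_0$ yields the desired $\Phi$.

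In Case 2, assume there is a nonempty proper subset $E_0 \subsetneq E$ with $\card(N(E_0)) = \card(E_0)$, where $N(E_0)$ denotes the set of $f \in F$ related to some $e \in E_0$. Apply the inductive hypothesis to $E_0$ with $F$ replaced by $N(E_0)$: Hall's condition is inherited since any $E' \subset E_0$ has the same neighbors in $N(E_0)$ as in $F$. This yields an injection $\Phi_0 : E_0 \to N(E_0)$, which is necessarily a bijection by cardinality. It remains to match $E \setminus E_0$ injectively into $F \setminus N(E_0)$ via the relation. For any $E' \subset E \setminus E_0$, apply Hall's condition to $E_0 \cup E'$ in the original problem to obtain
\[
\card(N(E_0 \cup E')) \ge \card(E_0) + \card(E'),
\]
and since $N(E_0 \cup E') = N(E_0) \cup N(E')$, the neighbors of $E'$ in $F \setminus N(E_0)$ number at least $\card(E')$. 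The inductive hypothesis then gives an injection $\Phi_1 : E \setminus E_0 \to F \setminus N(E_0)$, and $\Phi_0 \sqcup \Phi_1$ is the desired injection.

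The main obstacle is the bookkeeping in Case 2: one must check carefully that Hall's condition genuinely survives the restriction to the complement $F \setminus N(E_0)$, and this is precisely where the strict/non-strict dichotomy is crucial, since without isolating a tight subset one has no reason for the restricted problem to satisfy Hall's hypothesis. Once this verification is done, both cases recombine the two partial injections into a global one, completing the induction.
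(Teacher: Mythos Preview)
Your proof is correct and is precisely the classical Halmos--Vaughan inductive argument for Hall's theorem; the bookkeeping in Case~2 that you flag as the main obstacle is handled correctly by applying the original Hall condition to $E_0\cup E'$. Note that the paper does not actually give its own proof of this lemma: it merely states it and refers the reader to \cite{MR2931648} for a proof, so there is no paper argument to compare against.
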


\begin{lemme}[Cyclic approximations in $\Sn_q$, \cite{Alpe-newp}]\label{Pioure}
Let $q\in\N^*$ and $\sigma \in \mathfrak{S}_q$ ($\Sn_q$ is seen as the permutation group of $\Z/q\Z$). Then there exists $\tau \in \mathfrak{S}_q$ such that $|\tau(k)-k|\leq 2$ for all $k$ (where $|.|$ is the distance in $\Z/q\Z$) and such that the permutation $\tau\sigma$ is cyclic.
\end{lemme}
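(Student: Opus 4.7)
My plan begins with a reformulation. Setting $\rho := \tau\sigma$, the condition ``$\tau\sigma$ is cyclic with $|\tau(k)-k| \le 2$'' becomes: find a $q$-cycle $\rho \in \Sn_q$ with $|\rho(j) - \sigma(j)| \le 2$ for every $j \in \Z/q\Z$, since for $k = \sigma(j)$ one has $\tau(k) - k = \rho(j) - \sigma(j)$. The problem is thereby reduced to the purely combinatorial one of finding a single $q$-cycle close to $\sigma$.

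The construction would be iterative. Start from $\rho_0 := \sigma$ and perform successive \emph{swap moves}: at each stage, pick two consecutive elements $a, a+1 \in \Z/q\Z$ lying in \emph{distinct} cycles of the current $\rho$, and exchange their preimages. Concretely, if $j_1 := \rho^{-1}(a)$ and $j_2 := \rho^{-1}(a+1)$, set $\rho'(j_1) := a+1$, $\rho'(j_2) := a$, and keep $\rho$ unchanged elsewhere. Tracing the cycles through the modified map shows that the two cycles of $\rho$ containing $a$ and $a+1$ merge into a single cycle of $\rho'$; meanwhile the values of $\rho$ at $j_1$ and $j_2$ each shift by exactly $\pm 1$. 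A pair $a, a+1$ with the required property always exists when $\rho$ has more than one cycle, by connectedness of $\Z/q\Z$ under the adjacency relation: labelling each element by its cycle index, two consecutive labels must differ somewhere. Starting from $\sigma$, which has some number $r$ of cycles, the procedure terminates after exactly $r - 1$ swap moves with a $q$-cycle.

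The hard part will be controlling displacement: a position $j \in \Z/q\Z$ involved in $k$ successive swap moves accumulates a displacement of up to $k$, which would exceed $2$ as soon as $k \ge 3$. I would therefore need to schedule the $r - 1$ swap moves so that no position is used in more than two of them. The approach would be greedy: at each step, among all admissible pairs $(a, a+1)$ in distinct cycles of the current $\rho$, pick one whose preimages $j_1, j_2$ have not already been used twice. Proving that such a choice is always possible --- via a Hall/marriage type counting argument in the spirit of Lemma \ref{mariage} --- is the combinatorial heart of the proof. This is where the bound $2$ (rather than $1$) is essential: with only one modification allowed per position, at most $\lfloor q/2 \rfloor$ swap moves could ever be performed, which is insufficient when $r$ is close to $q$ (for instance $\sigma = \mathrm{id}$, where $r = q$). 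With the ``budget'' of $2q$ position-uses afforded by the constant $2$, the $2(r - 1) \le 2(q - 1)$ uses actually required fit comfortably, and the remaining task is to arrange them without conflict.
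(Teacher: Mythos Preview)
The paper does not actually prove this lemma: it is quoted from Alpern \cite{Alpe-newp}, and the surrounding text explicitly defers the proofs of both this lemma and the marriage lemma to \cite{MR2931648}. So there is no proof in the paper to compare against; your proposal must stand on its own.

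Your reformulation is correct, and the basic mechanism --- post-composing $\rho$ by an adjacent transposition $(a,a{+}1)$ with $a,a{+}1$ in distinct cycles merges those cycles while shifting $\rho$ by $\pm 1$ at exactly the two preimage positions --- is sound. The connectedness argument guaranteeing that such a pair exists whenever $\rho$ has more than one cycle is also fine.

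The gap is exactly where you locate it, and it is genuine. You assert that a greedy choice, backed by ``a Hall/marriage type counting argument'', will ensure no position $j$ is used more than twice, but you do not carry this out, and the global count $2(r-1)\le 2q$ is far from sufficient. The difficulty is that the problem is \emph{dynamic}: after each swap the set of admissible boundary pairs $(a,a{+}1)$ changes (cycles have merged), and the preimages $\rho^{-1}(a),\rho^{-1}(a{+}1)$ of the remaining boundary values have shifted. A Hall-type argument would need to match future merge steps to boundary pairs in a bipartite graph that is itself evolving, and whose structure depends on the earlier choices. It is not at all clear that at every stage some admissible pair has both preimages with remaining budget; one can imagine the few boundary pairs between two large already-merged blocks all having preimages that were consumed earlier. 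You would need either a careful invariant maintained by the greedy procedure, or a global (non-greedy) construction of the full sequence of swaps in advance --- for instance by choosing a suitable spanning tree in the cycle-adjacency multigraph whose edges have controlled preimage overlaps. Neither is supplied, and neither is routine. As written, the proposal is a plausible outline, but the ``combinatorial heart'' remains open.
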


\begin{proof}[Proof of theorem \ref{Lax}]
Let $f\in\mathrm{Homeo}(X,\lambda)$ and $\varep>0$. Consider $N_0\in\N$ such that for all $N\ge N_0$, the diameter of the cubes of order $N$ (given by the hypothesis ``being well distributed'') and their images by $f$ is smaller than $\varepsilon$. We define a relation $\approx$ between cubes of order $N-1$: $C\approx C'$ if and only if $f(C)\cap C'\neq\emptyset$. Since $f$ preserves $\lambda$, the image of the union of $\ell$ cubes intersects at least $\ell$ cubes, so marriage lemma (lemma \ref{mariage}) applies: there exists an injection $\Phi_N$  from the set of cubes of order $N$ into itself (then a bijection) such that for all cube $C$, $f(C)\cap\Phi_N(C)\neq\emptyset$. Let $\sigma_N$ be the application that maps the center of any cube $C$ to the center of the cube $\Phi_N(C)$, we obtain:
\[d_N(f,\sigma_N) \leq \sup_{C}\big(\operatorname{diam}(C)\big)+\sup_{C}\big(\operatorname{diam}(f(C))\big)\le 2\varepsilon.\]

It remains to show that $\sigma_N$ can be chosen as a cyclic permutation. Increasing $N$ if necessary, adn using the hypothesis that the grids are well ordered, we number the cubes such that the diameter of the union of two consecutive cubes is smaller than $\varep$. Then we use lemma \ref{Pioure} to obtain a cyclic permutation $\sigma_N'$ whose distance to $\sigma_N$ is smaller than $\varep$. Thus we have found $N_0\in\N$ and for all $N\ge N_0$ a cyclic permutation $\sigma_N'$ of $E_N$ whose distance to $f$ is smaller than $3\varepsilon$.
\end{proof}

\section{Individual behaviour of discretizations}\label{partie 1.3}

Once we have set theorems \ref{génécycl} and \ref{Lax}, we can establish results concerning the behaviour of discretizations of a generic conservative homeomorphism. Here we study \emph{individual} behaviour of discretizations, i.e. properties about only one order of discretization. As has already been said, applying theorem \ref{génécycl}, it suffices to find dense types of approximation to obtain properties about discretizations. In practice, these dense types of approximations are obtained from variations of Lax's theorem (theorem \ref{Lax}).

Recall that the sequence $(E_N)_{n\in\N}$ of discretization grids is well distributed and well ordered (see definition \ref{Ashe}), we denote by $f_N$ the discretization of an homeomorphism $f$ and $\Omega(f_N) $ the maximal invariant set of $f_N$ (i.e. the union of periodic orbits of $f_N$).

We will try to show that for simple dynamical properties $(P)$ about finite maps and for a generic conservative homeomorphism $f$, infinitely many discretizations $f_N$ verify $(P)$ as well as infinitely many discretizations verify its contrary. For instance, for a generic homeomorphism $f$, the maximal invariant set $\Omega(f_N)$ is sometimes as large as possible, i.e. $\Omega(f_N) = E_N$ (corollary \ref{typlax}), sometimes very small (corollary \ref{corovar1}) and even better sometimes the number of elements of the image of $E_N$ is small (corollary \ref{crush}). In the same way stabilization time\footnote{I.e. the smallest integer $k$ such that $f_N^k(E_N) = \Omega(f_N)$.} is sometimes zero (corollary \ref{typlax} for example), sometimes around $\card (E_N)$ (corollary \ref{corovar2}). Finally, concerning the dynamics of ${f_N}_{|\Omega(f_N)}$, sometimes it is a cyclic permutation (corollary \ref{typlax}) or a bicyclic permutation (remark \ref{VTT}, see also corollary \ref{méldiscr}), sometimes it has many orbits (corollary \ref{corovar3}).
\bigskip

Firstly, we deduce directly from Lax's theorem that cyclic permutations of sets $E_N$ form a dense type of approximation in $\Hom (X,\lambda)$. Combining this with theorem \ref{génécycl}, we obtain:

\begin{coro}[Miernowski, \cite{Mier-dyna}]\label{typlax}
For a generic homeomorphism $f\in\Hom(X,\lambda)$, for every $N_0\in\N$, there exists $N\ge N_0$ such that $f_{N}$ is a cyclic permutation\footnote{In fact, T. Miernowski proves ``permutation'' but his arguments, combined with the version of the Lax's theorem we gave, show ``cyclic permutation''.}.
\end{coro}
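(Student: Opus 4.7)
The plan is to apply Theorem \ref{g�n�cycl} directly, taking $\mathcal{U} = \Hom(X,\lambda)$ and letting $\tT = (\tT_N)_{N\in\N}$ be the type of approximation where $\tT_N \subset \mathcal{F}(E_N,E_N)$ is the subset consisting of cyclic permutations of $E_N$. The entire content of the corollary is then reduced to checking the density hypothesis of Theorem \ref{g�n�cycl}.

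First I would verify that $\tT$ is dense in $\Hom(X,\lambda)$ in the sense of the definition given just before Theorem \ref{g�n�cycl}. Fix $f \in \Hom(X,\lambda)$, $\varep > 0$, and $N_0 \in \N$. By the Lax--Alpern theorem (Theorem \ref{Lax}), there exists $N_1 \in \N$ such that for every $N \ge N_1$, one can find a cyclic permutation $\sigma_N$ of $E_N$ with $d(f,\sigma_N) < \varep$. Choosing any $N \ge \max(N_0,N_1)$, the restriction $f_{|E_N}$ and $\sigma_N$, viewed as maps from $E_N$ into $X$, satisfy $d_N(f,\sigma_N) \le d(f,\sigma_N) < \varep$, and $\sigma_N \in \tT_N$ by construction. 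This is exactly the density condition.

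Once density is established, Theorem \ref{g�n�cycl} provides a $G_\delta$ dense subset of $\Hom(X,\lambda)$ on which, for every $N_0 \in \N$, there exists $N \ge N_0$ with $f_N \in \tT_N$, i.e.\ $f_N$ is a cyclic permutation of $E_N$. This is precisely the conclusion of the corollary.

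There is no real obstacle: the two ingredients (Lax--Alpern and the abstract genericity principle Theorem \ref{g�n�cycl}) have been set up in the previous sections exactly so that this corollary is immediate. The only thing worth double-checking is that Lax--Alpern produces the approximation $\sigma_N$ with respect to the sup distance on $X$, which dominates $d_N$, so the matching between the two notions of distance used in the two theorems is automatic.
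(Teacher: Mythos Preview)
Your proposal is correct and matches the paper's approach exactly: the paper also deduces the corollary directly by noting that Lax's theorem makes cyclic permutations a dense type of approximation and then invoking Theorem~\ref{g�n�cycl}. Your observation that $d_N(f,\sigma_N)\le d(f,\sigma_N)$ handles the only point worth checking.
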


\begin{rem}\label{VTT}
The same result is obtained for with bicyclic permutations, which are permutations having exactly two orbits whose lengths are relatively prime (see \cite{MR2931648}).
\end{rem}

The first variation of Lax's theorem concerns the approximation of applications which maximal invariant set is small. It is the contrary to what happens in corollary \ref{typlax}.

\begin{prop}[First variation of Lax's theorem]\label{var1}
Let $f\in\Hom(X,\lambda)$. Then for all $\varepsilon, \varep'>0$, there exists $N_0\in\N$ such that for all $N\ge N_0$, there exists a map $\sigma_{N} : E_{N}\to E_{N}$ such that $d_{N}(f,\sigma_{N})<\varepsilon$ and
\[\frac{\card(\Omega(\sigma_N))}{\card(E_{N})} = \frac{\card(\Omega(\sigma_N))}{q_{N}}<\varepsilon',\]
and such that $E_N$ is made of a unique (pre-periodic) orbit of $\sigma_N$.
\end{prop}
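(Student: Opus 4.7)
The strategy is to apply Lax's theorem to produce a cyclic permutation $\tau_N$ approximating $f$, then to ``fold'' $\tau_N$ at a single point into a rho-shaped functional graph: a long pre-periodic tail leading into a small cycle.

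By Theorem~\ref{Lax}, for $N$ large there exists a cyclic permutation $\tau_N$ of $E_N$ with $d_N(f, \tau_N) < \varepsilon/4$; write its orbit as $w_0 \to w_1 \to \cdots \to w_{q_N - 1} \to w_0$. The first key step is to locate two indices $j_0 \ne j_1$ that are \emph{simultaneously} cyclically close in the orbit (cyclic gap $\le \varepsilon' q_N$) and geometrically close in $X$ (distance less than $\varepsilon/2$). Fix a ball $B \subset X$ of radius $\varepsilon/4$. Full support of the good measure $\lambda$, together with compactness of $X$, yields $c := \inf_{x \in X} \lambda(B(x, \varepsilon/4)) > 0$; combined with the well-distributed hypothesis (cubes of equal measure $1/q_N$ with shrinking diameter), this forces $\card(E_N \cap B) \to \infty$, so $\card(E_N \cap B) > 1/\varepsilon'$ for $N$ large. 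Since $\tau_N$ is cyclic, the orbit visits $B$ at exactly $\card(E_N \cap B)$ indices, and the cyclic gaps between consecutive visits sum to $q_N$; by pigeonhole the shortest such gap has length $m \le q_N / \card(E_N \cap B) < \varepsilon' q_N$. Let $j_0$ and $j_1 \equiv j_0 + m \pmod{q_N}$ be two consecutive visits realising this gap; then $d(w_{j_0}, w_{j_1}) \le \operatorname{diam}(B) < \varepsilon/2$.

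Define $\sigma_N$ to agree with $\tau_N$ on $E_N \setminus \{w_{j_1 - 1}\}$ and set $\sigma_N(w_{j_1 - 1}) = w_{j_0}$ (instead of $w_{j_1}$). The triangle inequality yields
\[ d\bigl(f(w_{j_1 - 1}), w_{j_0}\bigr) \le d\bigl(f(w_{j_1 - 1}), w_{j_1}\bigr) + d\bigl(w_{j_1}, w_{j_0}\bigr) < \frac{\varepsilon}{4} + \frac{\varepsilon}{2} < \varepsilon, \]
so $d_N(f, \sigma_N) < \varepsilon$. By construction the functional graph of $\sigma_N$ consists of one cycle of length $m$, namely $w_{j_0} \to w_{j_0 + 1} \to \cdots \to w_{j_1 - 1} \to w_{j_0}$, together with a tail $w_{j_1} \to w_{j_1 + 1} \to \cdots \to w_{j_0 - 1}$ of length $q_N - m$ entering the cycle at $w_{j_0}$. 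Hence $\card(\Omega(\sigma_N))/q_N = m/q_N < \varepsilon'$ and $E_N$ is a single pre-periodic orbit of $\sigma_N$ starting at $w_{j_1}$, as required.

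The main technical point is the pigeonhole step: one needs some ball of radius $\varepsilon/4$ to contain at least $1/\varepsilon'$ grid points uniformly for $N$ large enough. This is precisely where both standing assumptions on $\lambda$ and on the grid come into play --- the uniform positivity of $\lambda$ on fixed-radius balls (from full support plus compactness of $X$) combined with the well-distributed property of the grids converts a $\lambda$-measure lower bound into a count of grid points. Once this is available, the rest is a transparent one-point perturbation of the Lax cyclic permutation.
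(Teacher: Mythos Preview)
Your proof is correct, and the overall architecture --- take the Lax cyclic permutation and short-circuit it at a single point to produce a rho-shaped graph --- is exactly the paper's. The genuine difference lies in how the fold point is located. The paper picks a recurrent point $x$ of $f$, finds $\tau$ with $d(x,f^\tau(x))<\varepsilon/8$, and then uses that for $N$ large the Lax permutation $\sigma_N$ shadows $f$ over $\tau$ steps, so $x_N$ and $\sigma_N^\tau(x_N)$ are close; the cycle then has length exactly $\tau$, a constant independent of $N$. You instead run a pigeonhole argument directly on the grid: fix a small ball, use the well-distributed hypothesis to show it contains more than $1/\varepsilon'$ grid points, and conclude that two consecutive visits of the cyclic orbit to the ball are within cyclic distance $<\varepsilon' q_N$.

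Your route is more elementary in that it avoids Poincar\'e recurrence entirely and works purely from the combinatorics of the grid; it also makes transparent which hypotheses are used where (full support of $\lambda$ for the measure lower bound, the cube structure for the point count). The paper's route, on the other hand, gives the stronger conclusion $\card(\Omega(\sigma_N))=\tau$ bounded independently of $N$, and --- more importantly --- it transplants immediately to Proposition~\ref{propvar2}: replacing the recurrent point by a periodic point of period $p$ yields $\card(\Omega(\sigma_N))=p$ exactly, which is what drives Corollary~\ref{corovar2}. Your pigeonhole argument gives no such control on the cycle length, so while it proves the proposition as stated, it would not adapt to those refinements without reverting to the paper's recurrence/periodicity idea.
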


\begin{proof}[Proof of proposition \ref{var1}]
Let $f\in\Hom(X,\lambda)$, $\varepsilon>0$ and $x$ a recurrent point of $f$. There exists $\tau\in\N^*$ such that $d(x,f^\tau(x))<\frac{\varepsilon}{8}$; this inequality remains true for fine enough discretizations: there exists $N_1\in\N$ such that if $N\ge N_1$, then
\[d(x,x_{N})<\frac{\varep}{8},\quad d\big(f^\tau(x),f^\tau(x_{N})\big)<\frac{\varepsilon}{8}\quad \text{and}\quad \frac{\tau}{q_{N}}<\varepsilon'.\]
Using the modulus of continuity of $f^\tau$ and Lax's theorem (theorem \ref{Lax}), we obtain an integer $N_0\ge N_1$ such that for all $N\ge N_0$, there exists a cyclic permutation $\sigma_{N}$ of $E_{N}$ such that $d_{N}(f,\sigma_{N})<\frac{\varepsilon}{2}$ and $d_{N}(f^\tau,\sigma_{N}^\tau)<\frac{\varepsilon}{8}$. Then
\begin{eqnarray*}
d(x_{N},\sigma_N^\tau(x_{N})) & \le & d(x_{N},x) + d\big(x,f^\tau(x)\big) + d\big(f^\tau(x),f^\tau(x_{N})\big)\\
                          &     & + d\big(f^\tau(x_{N}),\sigma_N^\tau(x_{N})\big)\\
                          & <   & \frac{\varepsilon}{2}.
\end{eqnarray*}
We compose $\sigma_{N}$ by the (non bijective) application mapping $\sigma_{N}^{\tau-1}(x_{N})$ on $x_{N}$ and being identity anywhere else (see figure \ref{trajectoire}), in other words we consider the application
\[\sigma'_{N}(x) = \left\{\begin{array}{ll}
x_{N} \quad & \text{if}\ x=\sigma_{N}^{\tau-1}(x_{N})\\
\sigma_{N}(x) \quad        & \text{otherwise.}
\end{array}\right.\]
The map $\sigma'_{N}$ has a unique injective orbit whose associated periodic orbit $\Omega(\sigma'_N)$ has length $\tau$ (it is $(x_{N},\sigma_N(x_{N}),\dots,\sigma_{N}^{\tau-1}(x_{N}))$). Since $d(f,\sigma'_N)<\varep$, the map $\sigma'_{N}$ verifies the conclusion of the proposition.
\end{proof}

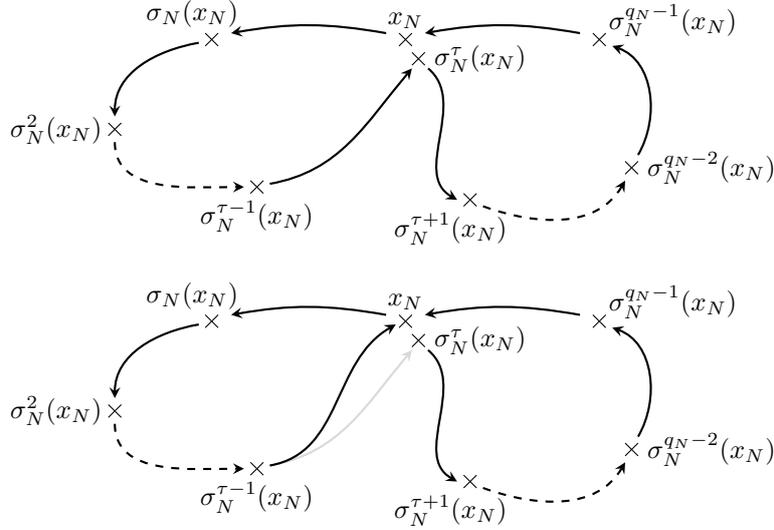
\begin{figure}
\begin{center}
\begin{tikzpicture}[scale=.85]
\draw (-.1,-.1) -- (.1,.1); \draw (-.1,.1) -- (.1,-.1); \draw (0,.3) node{$x_N$};
\draw (-3.1,-.1) -- (-2.9,.1);\draw (-3.1,.1) -- (-2.9,-.1);\draw (-3.3,.4) node{$\sigma_N(x_N)$};
\draw (-4.6,-1.5) -- (-4.4,-1.3);\draw (-4.6,-1.3) -- (-4.4,-1.5);\draw (-4.55,-1.4) node[left]{$\sigma_N^2(x_N)$};
\draw (-2.4,-2.4) -- (-2.2,-2.2);\draw (-2.4,-2.2) -- (-2.2,-2.4);\draw (-2.3,-2.75) node{$\sigma_N^{\tau-1}(x_N)$};
\draw (.1,-.4) -- (.3,-.2); \draw (.1,-.2) -- (.3,-.4);\draw (.3,-.3) node[right]{$\sigma_N^{\tau}(x_N)$};
\draw (.9,-2.4) -- (1.1,-2.6); \draw (.9,-2.6) -- (1.1,-2.4);\draw (.7,-2.95) node{$\sigma_N^{\tau+1}(x_N)$};
\draw (3.4,-1.9) -- (3.6,-2.1); \draw (3.4,-2.1) -- (3.6,-1.9);\draw (3.6,-2) node[right]{$\sigma_N^{q_N-2}(x_N)$};
\draw (3.1,-.1) -- (2.9,.1);\draw (3.1,.1) -- (2.9,-.1);\draw (3,.3) node[right]{$\sigma_N^{q_N-1}(x_N)$};
\draw[->,>=stealth,thick] (-.3,.1) to [out=170,in=10] (-2.7,.1);
\draw[->,>=stealth,thick] (-3.2,-.05) to [out=190,in=90] (-4.5,-1.2);
\draw[->,>=stealth,thick,dashed] (-4.5,-1.6) to [out=-90,in=180] (-2.5,-2.3);
\draw[->,>=stealth,thick] (-2.1,-2.25) to [out=10,in=-130] (.1,-.45);
\draw[->,>=stealth,thick] (.35,-.45) to [out=-40,in=150] (.8,-2.5);
\draw[->,>=stealth,thick,dashed] (1.2,-2.6) to [out=-20,in=-120] (3.4,-2.2);
\draw[->,>=stealth,thick] (3.6,-1.8) to [out=60,in=-20] (3.2,-.1);
\draw[->,>=stealth,thick] (2.7,.1) to [out=170,in=10] (.3,.1);
\draw (0,-3.6);
\end{tikzpicture}

\begin{tikzpicture}[scale=.85]
\draw (-.1,-.1) -- (.1,.1); \draw (-.1,.1) -- (.1,-.1); \draw (0,.3) node{$x_N$};
\draw (-3.1,-.1) -- (-2.9,.1);\draw (-3.1,.1) -- (-2.9,-.1);\draw (-3.3,.4) node{$\sigma_N(x_N)$};
\draw (-4.6,-1.5) -- (-4.4,-1.3);\draw (-4.6,-1.3) -- (-4.4,-1.5);\draw (-4.55,-1.4) node[left]{$\sigma_N^2(x_N)$};
\draw (-2.4,-2.4) -- (-2.2,-2.2);\draw (-2.4,-2.2) -- (-2.2,-2.4);\draw (-2.3,-2.75) node{$\sigma_N^{\tau-1}(x_N)$};
\draw (.1,-.4) -- (.3,-.2); \draw (.1,-.2) -- (.3,-.4);\draw (.3,-.3) node[right]{$\sigma_N^{\tau}(x_N)$};
\draw (.9,-2.4) -- (1.1,-2.6); \draw (.9,-2.6) -- (1.1,-2.4);\draw (.7,-2.95) node{$\sigma_N^{\tau+1}(x_N)$};
\draw (3.4,-1.9) -- (3.6,-2.1); \draw (3.4,-2.1) -- (3.6,-1.9);\draw (3.6,-2) node[right]{$\sigma_N^{q_N-2}(x_N)$};
\draw (3.1,-.1) -- (2.9,.1);\draw (3.1,.1) -- (2.9,-.1);\draw (3,.3) node[right]{$\sigma_N^{q_N-1}(x_N)$};
\draw[->,>=stealth,thick] (-.3,.1) to [out=170,in=10] (-2.7,.1);
\draw[->,>=stealth,thick] (-3.2,-.05) to [out=190,in=90] (-4.5,-1.2);
\draw[->,>=stealth,thick,dashed] (-4.5,-1.6) to [out=-90,in=180] (-2.5,-2.3);
\draw[->,>=stealth,thick,color=gray!30] (-2.1,-2.25) to [out=10,in=-130] (.1,-.45);
\draw[->,>=stealth,thick] (-2.1,-2.25) to [out=10,in=-150] (-.15,-.05);
\draw[->,>=stealth,thick] (.35,-.45) to [out=-40,in=150] (.8,-2.5);
\draw[->,>=stealth,thick,dashed] (1.2,-2.6) to [out=-20,in=-120] (3.4,-2.2);
\draw[->,>=stealth,thick] (3.6,-1.8) to [out=60,in=-20] (3.2,-.1);
\draw[->,>=stealth,thick] (2.7,.1) to [out=170,in=10] (.3,.1);
\end{tikzpicture}

\caption{Modification of a cyclic permutation}\label{trajectoire}
\end{center}
\end{figure}

A direct application of theorem \ref{génécycl} gives:

\begin{coro}\label{corovar1}
For a generic homeomorphism $f\in\Hom(X,\lambda)$,
\[\underline\lim_{N\to +\infty}\frac{\card(\Omega(f_N))}{\card(E_{N})} =0.\]
Specifically for all $\varepsilon>0$ and all $N_0\in\N$, there exists $N\ge N_0$ such that $\frac{\card(\Omega(f_N))}{\card(E_{N})} <\varepsilon$ and such that $E_N$ is made of a unique (pre-periodic) orbit of $f_N$.
\end{coro}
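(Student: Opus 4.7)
The statement follows from combining the first variation of Lax's theorem (Proposition~\ref{var1}) with the abstract genericity result of Theorem~\ref{g�n�cycl}, so the only real work is to set up a countable family of dense types of approximation and intersect the corresponding generic sets. The last clause of the conclusion (``$E_N$ is made of a unique pre-periodic orbit of $f_N$'') is precisely the extra structural property that Proposition~\ref{var1} provides on top of the small maximal invariant set, so it can be carried along at no extra cost.

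Concretely, for each integer $k\ge 1$, I would define a type of approximation $\tT^k=(\tT^k_N)_{N\in\N}$ by letting $\tT^k_N$ be the set of maps $\sigma:E_N\to E_N$ such that $E_N$ consists of a single $\sigma$-orbit (pre-periodic, eventually landing on the periodic part $\Omega(\sigma)$) and such that
\[
\frac{\card(\Omega(\sigma))}{\card(E_N)}<\frac{1}{k}.
\]
Proposition~\ref{var1}, applied with $\varep'=1/k$, gives exactly the density of $\tT^k$ in $\mathcal{U}=\Hom(X,\lambda)$: for every $f\in\Hom(X,\lambda)$, every $\varep>0$ and every $N_0$, one produces $N\ge N_0$ and a map $\sigma_N\in\tT^k_N$ with $d_N(f,\sigma_N)<\varep$.

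Theorem~\ref{g�n�cycl} then furnishes, for each $k$, a $G_\delta$ dense subset $G_k\subset\Hom(X,\lambda)$ such that for every $f\in G_k$ and every $N_0\in\N$, there exists $N\ge N_0$ with $f_N\in\tT^k_N$. The set $G=\bigcap_{k\ge 1}G_k$ is still a dense $G_\delta$ in the Baire space $\Hom(X,\lambda)$. For $f\in G$, fix $k\ge 1$ and $N_0\in\N$: one finds $N\ge N_0$ such that $f_N\in\tT^k_N$, which means simultaneously that $\card(\Omega(f_N))/\card(E_N)<1/k$ and that $E_N$ is a single $f_N$-orbit. Letting $k\to\infty$ yields $\underline{\lim}_{N\to\infty}\card(\Omega(f_N))/\card(E_N)=0$, and for any prescribed $\varep>0$ one simply picks $k>1/\varep$ to obtain the second, quantitative statement of the corollary.

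The only point that requires any care is verifying that Proposition~\ref{var1} really gives a \emph{type of approximation} in the precise sense of the definition opening Section~\ref{Grosse}: one needs $N\ge N_0$ and $\sigma_N\in\tT^k_N$ with $d_N(f,\sigma_N)<\varep$, which is exactly the form of Proposition~\ref{var1}'s conclusion (with $\varep'=1/k$). There is no genuine obstacle here; the whole argument is a bookkeeping exercise packaging one variation of Lax's theorem through the general machinery of Theorem~\ref{g�n�cycl}, plus a countable intersection to turn ``for every $\varep>0$'' into a single generic set.
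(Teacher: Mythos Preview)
Your proof is correct and follows exactly the approach the paper intends: Proposition~\ref{var1} shows that the relevant types of approximation $\tT^k$ are dense, Theorem~\ref{g�n�cycl} turns each into a generic property, and a countable intersection over $k$ gives the conclusion. The paper itself merely records this as ``a direct application of theorem~\ref{g�n�cycl}'', so you have simply spelled out the bookkeeping that the paper leaves implicit.
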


The same kind of idea leads to the following proposition:

\begin{prop}\label{propvar2}
Let $f\in\Hom(X,\lambda)$ having at least one periodic point of period $p$. Then for all $\varepsilon>0$, there exists $N_0\in\N$ such that for all $N\ge N_0$, there exists an application $\sigma_{N} : E_{N}\to E_{N}$ with $\card(\Omega(\sigma_N)) = p$ and $d_{N}(f,\sigma_{N})<\varepsilon$, such that $E_N$ is made of a unique (pre-periodic) orbit of $\sigma_N$.
\end{prop}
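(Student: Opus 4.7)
The proof should follow almost verbatim the scheme of Proposition \ref{var1}, with the recurrent point there replaced by the genuine periodic point here; the role played by the return time $\tau$ is now played by the exact period $p$, which gives us the precise cardinality $p$ instead of just smallness.

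Concretely, let $x$ be a periodic point of $f$ with $f^p(x)=x$, and fix $\varep>0$. First I would choose $N_0$ large enough so that for every $N\ge N_0$ one has $d(x,x_N)<\varep/8$ and, by continuity of $f^p$, $d(f^p(x),f^p(x_N))<\varep/8$. Then applying Lax's theorem (Theorem \ref{Lax}) simultaneously to $f$ and to $f^p$ (or simply to $f$ together with the modulus of continuity of $f^p$), one obtains, for $N$ large enough, a cyclic permutation $\sigma_N$ of $E_N$ with $d_N(f,\sigma_N)<\varep/2$ and $d_N(f^p,\sigma_N^p)<\varep/8$. Exactly as in the proof of Proposition \ref{var1}, the triangle inequality then yields
\[
d(x_N,\sigma_N^p(x_N))\;\le\; d(x_N,x)+d(x,f^p(x))+d(f^p(x),f^p(x_N))+d(f^p(x_N),\sigma_N^p(x_N))\;<\;\frac{\varep}{2},
\]
where the second term is zero because $x$ is periodic of period exactly $p$.

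Next I would perform the same surgery on $\sigma_N$ as in Proposition \ref{var1}, but truncating after $p$ steps rather than $\tau$: define
\[
\sigma_N'(y)=\begin{cases} x_N & \text{if } y=\sigma_N^{p-1}(x_N),\\ \sigma_N(y) & \text{otherwise.}\end{cases}
\]
By the displayed estimate, $d_N(f,\sigma_N')<\varep$. Since $\sigma_N$ was a cyclic permutation of $E_N$, the orbit of any point under $\sigma_N'$ traces out the whole cycle of $\sigma_N$ until it first hits $\sigma_N^{p-1}(x_N)$ and is then sent to $x_N$, after which it loops through $(x_N,\sigma_N(x_N),\dots,\sigma_N^{p-1}(x_N))$ forever. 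Hence $E_N$ consists of a single pre-periodic orbit of $\sigma_N'$, and $\Omega(\sigma_N')=\{x_N,\sigma_N(x_N),\dots,\sigma_N^{p-1}(x_N)\}$ has exactly $p$ elements.

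There is no real obstacle: once the existence of a periodic point of period $p$ is used to make $d(x,f^p(x))=0$, everything else is a direct transcription of the proof of Proposition \ref{var1}. The only point requiring a tiny bit of care is the simultaneous approximation of $f$ and $f^p$ by iterates of the \emph{same} cyclic permutation, but this is automatic once $\sigma_N$ is chosen fine enough relative to the modulus of continuity of $f^p$, exactly as was already done in Proposition \ref{var1}.
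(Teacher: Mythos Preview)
Your proposal is correct and follows exactly the approach of the paper: the paper's own proof of this proposition consists of the single sentence ``Simply replace the recurrent point by a periodic point of period $p$ in the proof of proposition \ref{var1},'' and you have carried out precisely that substitution in detail. The only implicit point worth noting is that for $N$ large enough one has $p<q_N$, so the points $x_N,\sigma_N(x_N),\dots,\sigma_N^{p-1}(x_N)$ are distinct (since $\sigma_N$ is a $q_N$-cycle), ensuring $\card(\Omega(\sigma_N'))=p$ exactly; but this is automatic and the paper does not spell it out either.
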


\begin{proof}[Proof of proposition \ref{propvar2}]
Simply replace the recurrent point by a periodic point of period $p$ in the proof of proposition \ref{var1}.
\end{proof}

We will show that owning a periodic point is a generic property in $\Hom(X,\lambda)$ (see also \cite{Daal-chao} or part 3.2 of \cite{MR2931648}), a proper application of Baire's theorem leads to a refinement of corollary \ref{corovar1}:

\begin{coro}\label{corovar2}
Let $\varep>0$. A generic homeomorphism $f\in\Hom(X,\lambda)$ has a periodic point whose orbit is $\varep$-dense. Moreover, $f_{N}$ has a unique periodic orbit for infinitely many integers $N$, whose period does not depend on $N\in\N$ and equals the smallest period of periodic points of $f$ which are $\varep$-dense; moreover $E_N$ is covered by a single (pre-periodic) orbit of $f_N$.
\end{coro}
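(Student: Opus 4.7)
Plan. The corollary combines two assertions: for a generic $f$, some periodic orbit of $f$ is $\varep$-dense (defining a minimal period $p=p(f,\varep)$); and for infinitely many $N$, $f_N$ consists of a unique pre-periodic orbit with periodic part of length exactly $p$. I plan to derive both by combining Lax's theorem, Proposition \ref{propvar2}, and the scheme of Theorem \ref{g�n�cycl}.

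The first step is a preliminary density statement: for each $p\in\N^*$, the set $V^p$ of $f\in\Hom(X,\lambda)$ possessing a periodic orbit of exact period $p$ is dense in $\Hom(X,\lambda)$, as announced in the excerpt. The density is established via Lax's Theorem \ref{Lax} combined with Proposition \ref{extension}: one approximates $f$ by a homeomorphism whose discretization on a fine grid $E_N$ is a cyclic permutation, selects a sub-chain $z_1,\dots,z_p$ of the grid whose return to the starting point is controlled (using Poincar\'e recurrence of the approximating cyclic permutation, which has arbitrarily small return times of order $p/q_N$), and then uses Proposition \ref{extension} to close the chain into a genuine period-$p$ orbit of a homeomorphism close to $f$. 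Performing the same construction with the chain additionally chosen $\varep$-dense in $E_N$ gives the density of $\{f : f \text{ has an } \varep\text{-dense periodic orbit}\}$.

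Second, this density upgrades Proposition \ref{propvar2} into the statement that, for every $p\in\N^*$, the type of approximation $\tT^p=(\tT^p_N)_N$ --- where $\tT^p_N$ consists of the maps $\sigma : E_N\to E_N$ for which $E_N$ is a single pre-periodic orbit and $\card(\Omega(\sigma))=p$ --- is dense in all of $\Hom(X,\lambda)$: given $f,\varepsilon,N_0$, first perturb $f$ to $f'\in V^p$ with $d(f,f')<\varepsilon/2$, then apply Proposition \ref{propvar2} to $f'$ to produce $\sigma_N\in\tT^p_N$ with $d_N(f',\sigma_N)<\varepsilon/2$, so that $d_N(f,\sigma_N)<\varepsilon$. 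Theorem \ref{g�n�cycl} then yields, for every $p$, a $G_\delta$-dense subset $G^p\subseteq\Hom(X,\lambda)$ on which $f_N\in\tT^p_N$ for infinitely many $N$. Combining $G:=\bigcap_{p\ge 1} G^p$ with the $G_\delta$-dense set of $f$ possessing an $\varep$-dense periodic orbit (obtained by the same Baire argument applied to the $\varep$-dense variant of $\tT^p$) gives a generic set of $f$ for which $p(f,\varep)$ is well-defined, and the conclusion for $G^{p(f,\varep)}$ proves the corollary.

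The main obstacle I anticipate is the preliminary density of $V^p$ and of its $\varep$-dense counterpart: while morally a direct consequence of the Lax--Alpern theorem and Proposition \ref{extension}, one must carefully arrange the sub-chain $z_1,\dots,z_p$ in $E_N$ so that its endpoints essentially close up under $f$ --- requiring quantitative control on the recurrence times of the approximating cyclic permutation --- and verify that the extension realizing $z_i\mapsto z_{i+1\bmod p}$ yields a period of exactly $p$ rather than of a proper divisor (which it does as soon as the $z_i$ are pairwise distinct). This is the sole delicate point; everything else is a routine combination of the Lax--Alpern machinery, the finite-map extension, and Baire's theorem set up in this section.
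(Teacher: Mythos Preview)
Your approach differs substantially from the paper's, and it contains a genuine gap.

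The paper does \emph{not} prove that $V^p$ is dense for every $p$, nor does it intersect $G^p$ over all $p$. Instead, it stratifies $\Hom(X,\lambda)$ into \emph{open, pairwise disjoint} sets $\mathcal{U}_p$ (homeomorphisms possessing a \emph{persistent} $\varep$-dense periodic point of period $p$ and no $\varep$-dense periodic point of smaller period), proves $\bigcup_p\mathcal{U}_p$ is dense, applies Proposition~\ref{propvar2} and Theorem~\ref{g�n�cycl} inside each $\mathcal{U}_p$, and finally uses disjointness to swap $\bigcap_N$ and $\bigcup_p$. The key ingredients you do not use --- persistent periodic points (Example~\ref{lin�aire}) and the local modification theorem (Theorem~\ref{extension-sph�res}) --- are precisely what makes each $\mathcal{U}_p$ \emph{open}, hence makes ``$f$ has an $\varep$-dense periodic orbit'' a residual property.

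The gap in your argument is exactly there. You assert that the set of $f$ possessing an $\varep$-dense periodic orbit is $G_\delta$-dense, ``obtained by the same Baire argument applied to the $\varep$-dense variant of $\tT^p$''. But Theorem~\ref{g�n�cycl} only yields: for generic $f$, infinitely many discretizations $f_N$ lie in the given type of approximation. That is a statement about $f_N$, not about $f$. Passing from ``$f_N$ has an $\varep$-dense periodic orbit of length $p$ for infinitely many $N$'' to ``$f$ itself has an $\varep$-dense periodic orbit'' requires an additional compactness/limit argument which you do not supply (and which, if supplied, would only give a periodic orbit of $f$ of period \emph{dividing} $p$). Without persistence, the bare set $\{f:\text{$f$ has an $\varep$-dense periodic orbit}\}$ is an $F_\sigma$, not obviously a $G_\delta$, and your density argument alone does not upgrade it.

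A secondary issue: your sketch for the density of $V^p$ is unconvincing. A cyclic permutation $\sigma_N$ close to $f$ need not have any point $z$ with $d(\sigma_N^p(z),z)$ small for a \emph{prescribed} $p$ (think of $f$ an irrational translation of $\T^2$: then $d(x,f^p(x))$ is bounded below uniformly). ``Poincar\'e recurrence'' gives some return time, not return time exactly $p$; the phrase ``return times of order $p/q_N$'' does not parse. This can be repaired, but it requires a genuine construction (for instance, first perturbing $f$ to have \emph{some} periodic point via a recurrent point, then inserting a local period-$p$ cycle near it --- which again leads you back to the local modification machinery the paper uses).
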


To prove corollary \ref{corovar2} we have to introduce the concept of \emph{persistent point}.

\begin{definition}
Let $f\in\Hom(X)$. A periodic point $x$ of $f$ with period $p$ is said \emph{persistent} if for all neighborhood $U$ of $x$, there exists a neighborhood $\V$ of $f$ in $\Hom(X)$ such that every $\widetilde{f}\in \V$ has a periodic point $\widetilde{x}\in U$ with period $p$.
\end{definition}

\begin{ex}\label{linéaire}
The endomorphism $h = \operatorname{Diag}(\lambda_1,\dots,\lambda_n)$ of $\R^n$, with $\prod \lambda_i = 1$ and $\lambda_i\neq 1$ for all $i$, is measure-preserving and has a persistent fixed point at the origin (see e.g. \cite{Kato-intr}, p. 319). Let $s$ be a reflection of $\R^n$, the application $h\circ f$ is also measure-preserving and has a persistent fixed point at the origin.
\end{ex}

Finally, we use the theorem of local modification of conservative homeomorphisms, which allows us to replace locally a homeomorphism by another. Although it is geometrically ``obvious'' and it has an elementary proof in dimension two, it is easily deduced from the (difficult) \emph{annulus theorem}. For more details one may refer to \cite{Daal-chao} or part 3.1 of \cite{MR2931648}.

\begin{theoreme}[Local modification]\label{extension-sphères}
Let $\sigma_1$, $\sigma_2$, $\tau_1$ and $\tau_2$ be four bicollared embeddings\footnote{An embedding $i$ of a manifold $M$ in $\R^n$ is said \emph{bicollared} if there exists an embedding  $j :M\times [-1,1] \to \R^n$ such that $j_{M\times \{0\}} = i$.} of $\mathbf{S}^{n-1}$ in $\R^n$, such that $\sigma_1$ is in the bounded connected component\footnote{By the Jordan-Brouwer theorem the complement of a set which is homeomorphic to $\mathbf{S}^{n-1}$ has exactly two connected components: one bounded and one unbounded.} of $\sigma_2$ and $\tau_1$ is in the bounded connected component of $\tau_2$. Let $A_1$ be the bounded connected component of $\R^n\setminus\sigma_1$ and $B_1$ the bounded connected component of $\R^n\setminus\tau_1$; $\Sigma$ be the connected component of $\R^n\setminus(\sigma_1\cup\sigma_2)$ having $\sigma_1 \cup \sigma_2$ as boundary and $\Lambda$ the connected component of $\R^n\setminus(\tau_1\cup\tau_2)$ having $\tau_1 \cup \tau_2$ as boundary; $A_2$ be the unbounded connected component of $\R^n\setminus\sigma_2$ and $B_2$ the unbounded connected component of $\R^n\setminus\tau_2$ (see figure \ref{dessin-extension}).

Suppose that $\mathrm{Leb}(A_1) = \mathrm{Leb}(B_1)$ and $\mathrm{Leb}(\Sigma) = \mathrm{Leb}(\Lambda)$. Let $f_i : A_i\to B_i$ be two measure-preserving homeomorphisms such that either each one preserves the orientation, or each one reverses it. Then there exists a measure-preserving homeomorphism $f : \R^n \to \R^n$ whose restriction to $A_1$ equals $f_1$ and restriction to $A_2$ equals $f_2$.
\end{theoreme}

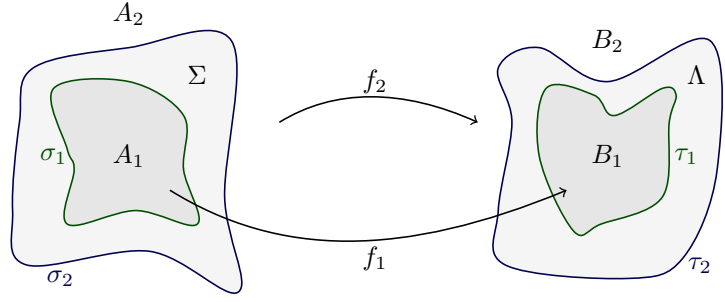
\begin{figure}
\begin{center}
\begin{tikzpicture}[scale=.9]
\draw[color=blue!30!black, fill=gray!8!white, semithick] plot[tension=0.6, smooth cycle] coordinates{(-1.4,1.3) (0,1.6) (1.5,1.7) (1.4,-.6) (1.6,-2) (0.3,-1.4) (-1.5,-1.6) (-1.7,-.8) (-1.6,.5)};
\draw[color=green!30!black, fill=gray!20!white, semithick] plot[tension=0.6, smooth cycle] coordinates{(-1.1,1) (0,1.1) (.8,.6) (.8,-.2) (1,-1) (0.1,-.8) (-.9,-1) (-.8,-.2) (-.9,.1)};
\draw[color=blue!30!black, fill=gray!8!white, semithick] plot[tension=0.6, smooth cycle] coordinates{(5.4,1.3) (6,1.6) (7,1.1) (8.5,1.7) (8.6,-.2) (8.1,-1.5) (7.3,-1.8) (5.5,-1.6) (5.4,-.8) (5.6,.5)};
\draw[color=green!30!black, fill=gray!20!white, semithick] plot[tension=0.6, smooth cycle] coordinates{((6.1,1) (6.8,.9) (7.1,.6) (7.8,1) (8,.9) (7.9,0.5) (7.8,-.6) (6.9,-1) (6.5,-1.1) (6,0)};
\draw[->, semithick] (.6,-.5) .. controls (2.2,-1.5) and (4,-1.5) .. (6.4,-.5);
\draw[->, semithick] (2.2,.5) .. controls (3,1) and (4,1) .. (5.1,.5);
\node at (0,0) {$A_1$};
\node[color=green!30!black] at (-1.1,0) {$\sigma_1$};
\node[color=blue!30!black] at (-1,-1.8) {$\sigma_2$};
\node at (1,1.2) {$\Sigma$};
\node at (0,2.1) {$A_2$};
\node at (3.6,-1.5) {$f_1$};
\node at (3.6,1.1) {$f_2$};
\node at (7,0) {$B_1$};
\node[color=green!30!black] at (8.15,0) {$\tau_1$};
\node[color=blue!30!black] at (8.35,-1.6) {$\tau_2$};
\node at (7,1.7) {$B_2$};
\node at (8.3,1.2) {$\Lambda$};
\end{tikzpicture}\caption{Local modification technique}\label{dessin-extension}
\end{center}
\end{figure}

Since this theorem is local, it can be applied to an open space $O$ instead of $\R^n$, or even better, together with Oxtoby-Ulam theorem (theorem \ref{Brown-mesure}), to any domain of chart of a manifold $X$ instead of $\R^n$ and measure $\lambda$ instead of Lebesgue measure

\begin{proof}[Proof of corollary \ref{corovar2}]
Let $\varep>0$ and $\mathcal{U}_p$ the set of homeomorphisms that have at least one periodic persistent point with period $p$ whose orbit is $\varep$-dense (strictly), but no periodic point with period less than $p$ whose orbit is $\varep$-dense (not strictly). The sets $\mathcal{U}_p$ are open and pairwise disjoints. Moreover their union is dense. Indeed, for $f\in\Hom(X,\lambda)$, if $f$ has at least one periodic point whose orbit is $\varep$-dense, then we choose a periodic point $x$ whose orbit is $\varep$-dense and whose period $p$ is minimal. Then we perturb $f$ such that this point is persistent, avoiding the creation of periodic points of smaller periods: we choose a neighborhood $D$ of $x$ such that the sets $D$, $f(D),\cdots, f^{p-1}(D)$ are pairwise disjoints and we replace locally $f$ by the map $h\circ f^{-(p-1)}$ (where $h$ is one of the two maps of the example \ref{linéaire}, depending of whether $f^{-(p-1)}$ preserves orientation or not) in the neighborhood of $f^{-1}(x)$, using the theorem of local modification (theorem \ref{extension-sphères}), such that $f$ do not change outside the union of the sets $f^i(D)$. On the contrary, if $f$ does not have any periodic point, we pick a recurrent point and perturb $f$ such that this point is periodic, and then we apply the previous case.

Let $p\in\N^*$. Proposition \ref{propvar2} indicates that the set of maps whose longest periodic orbit has length $p$ is a dense type of approximation in $\mathcal{U}_p$. So, by theorem \ref{génécycl}, there exists open sets $\mathcal{O}_{p,{N}}$ such that $\bigcap_{N\in\N} \mathcal{O}_{p,{N}}$ is a dense $G_\delta$ of $\mathcal{U}_p$, made of homeomorphisms of $\mathcal{U}_p$ whose an infinity of discretizations have only one injective orbit whose associated cycle has length $p$ and is $\varep$-dense.

Set $\mathcal{O}_{N} = \bigcup_{p\in\N^*} \mathcal{O}_{p,{N}}$. Since the sets $\mathcal{U}_p$ are pairwise disjoints,
\[\bigcap_{N\in\N}\,\mathcal{O}_{N} = \bigcap_{N\in\N}\,\bigcup_{p\in\N^*}\, \mathcal{O}_{p,{N}} = \bigcup_{p\in\N^*}\,\bigcap_{N\in\N}\,\mathcal{O}_{p,{N}}.\]
The right side of the equation forms a dense subset of $\Hom(X,\lambda)$, so the left side is a dense $G_\delta$ of $\Hom(X,\lambda)$. It is made of homeomorphisms $f$ such that for infinitely many $N\in\N$, the grid $E_N$ is covered by a single forward orbit of $f_N$ whose associated periodic orbit is $\varep$-dense and whose length is the shortest period of periodic points of $f$ whose orbit are $\varep$-dense.
\end{proof}

\begin{rem}
The shortest period of periodic points of generic homeomorphisms has not any global upper bound in $\Hom(X,\lambda)$: for example, for all $p\in\N$, there is a open set of homeomorphisms of the torus without periodic point of period less than $p$ (e.g. the neighborhood of a nontrivial rotation) and this property remains true for discretizations.
\end{rem}

We now want to get a discrete analogue to topological weak mixing (which is generic in $\Hom(X,\lambda)$, see e.g. \cite{Alpe-typi} or part 2.4 of \cite{MR2931648}).

\begin{definition}
A homeomorphism $f$ is said \emph{topologically weakly mixing} if for all nonempty open sets $(U_i)_{i\le M}$ and $(U'_i)_{i\le M}$, there exists $m\in\N$ such that $f^m(U_i)\cap U'_i$ is nonempty for all $i\le M$.
\end{definition}

The proof of the genericity of topological weak mixing starts by an approximation of every conservative homeomorphism by another having $\varep$-dense periodic orbits whose lengths are relatively prime. The end of the proof lies primarily in the use of Baire's theorem and Bezout's identity. In the discrete case, the notion of weak mixing is replaced by the following:

\begin{definition}\label{epmélfaibl}
Let $\varep>0$. A finite map $\sigma_N$ is said \emph{$\varep$-topologically weakly mixing} if for all $M\in\N$ and all balls $(B_i)_{i\le M}$ and $(B'_i)_{i\le M}$ with diameter $\varep$, there exists $m\in\N$ such that for all $i$
\[\sigma_N^m(B_i\cap E_{N})\cap (B'_i\cap E_{N})\neq\emptyset.\]
\end{definition}

The first step of the proof is replaced by the following variation of Lax's theorem:

\begin{prop}[Second variation of Lax's theorem]\label{melfaibl}
Let $f\in\Hom(X,\lambda)$ be a homeomorphism whose all iterates are topologically transitive. Then for all $\varep>0$ and all $M\in\N^*$, there exists $N_0\in\N$ such that for all $N\ge N_0$, there exists $\sigma_{N} : E_{N}\to E_{N}$ which has $M$ $\varep$-dense periodic orbits whose lengths are pairwise relatively prime, and such that $d_{N}(f,\sigma_{N})<\varep$.
\end{prop}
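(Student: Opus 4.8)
The plan is to follow the same two-stage scheme that produced Lax's theorem (theorem \ref{Lax}) and the first variation (proposition \ref{var1}): first build, inside $X$, finitely many periodic orbits of $f$-like behaviour that are $\varep$-dense and have pairwise coprime return times, and then transfer this to the grid $E_N$ by the marriage-lemma argument, finally splicing the orbits into a single discrete map $\sigma_N$ that is $\varepsilon$-close to $f$. Concretely, since every iterate of $f$ is topologically transitive, a standard Baire argument (the one used for genericity of topological weak mixing, see part 2.4 of \cite{MR2931648}) lets us find, after an arbitrarily small perturbation if needed — but here we only need \emph{approximation}, so we work with $f$ itself — points $x^{(1)},\dots,x^{(M)}$ and times $\tau_1,\dots,\tau_M$ with $\tau_i$ pairwise coprime, such that $d(x^{(j)},f^{\tau_j}(x^{(j)}))$ is as small as we wish and each pseudo-orbit $(x^{(j)},f(x^{(j)}),\dots,f^{\tau_j-1}(x^{(j)}))$ is $\varep$-dense in $X$. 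One can in fact take a single recurrent point $x$ whose orbit segment is $\varep$-dense, extract $M$ return times to a tiny ball around $x$, and then thin them out to get coprimality (for instance by passing to $\tau_i' = \tau_i \cdot p_i$ with distinct primes $p_i$ larger than all the $\tau_j$, at the cost of lengthening the segments, which does not hurt $\varep$-density).

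Next I would invoke Lax's theorem (theorem \ref{Lax}) to get, for all $N$ large, a cyclic permutation $\bar\sigma_N$ of $E_N$ with $d_N(f,\bar\sigma_N)<\varepsilon/2$ and, using the modulus of continuity of $f^{\tau_j}$ for each $j$, also $d_N(f^{\tau_j},\bar\sigma_N^{\tau_j})$ small. Writing $x^{(j)}_N = P_N(x^{(j)})$, this gives $d(x^{(j)}_N,\bar\sigma_N^{\tau_j}(x^{(j)}_N))<\varepsilon$ for all $N$ large, exactly as in the computation in the proof of proposition \ref{var1}. Now, just as there, I modify $\bar\sigma_N$ by rerouting: on the cyclic orbit of $\bar\sigma_N$ (which is all of $E_N$), for each $j$ redefine the map at the point $\bar\sigma_N^{\tau_j-1}(x^{(j)}_N)$ so that it is sent to $x^{(j)}_N$, creating a periodic orbit of length $\tau_j$ through $x^{(j)}_N$. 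One has to be a little careful that the $M$ modifications do not interfere with each other — the $x^{(j)}_N$ and the finitely many rerouted points should be chosen pairwise distinct and the orbit segments disjoint, which holds for $N$ large since the $x^{(j)}$ are distinct and the segments can be taken disjoint in $X$ (thicken each into a small tube and use that $E_N$ is $\varepsilon$-dense and that the tubes are disjoint). The resulting $\sigma_N$ then has $M$ periodic orbits of the prescribed pairwise coprime lengths $\tau_j$, each $\varep$-dense, and $d_N(f,\sigma_N)<\varepsilon$ because each local modification moves points by less than $\varepsilon$.

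The main obstacle, I expect, is the bookkeeping needed to make the $M$ reroutings simultaneously consistent while keeping the distance bound and the $\varep$-density: one must ensure that the points $\bar\sigma_N^{\tau_j-1}(x^{(j)}_N)$ are distinct from each other and from all the $x^{(i)}_N$, and that rerouting at one of them does not destroy another intended periodic orbit — this is why coprimality of the $\tau_j$ is used (and why one first arranges the $x^{(j)}$ to lie on essentially independent orbit segments of $f$). A clean way to organize this is to realize all $M$ segments as lying in pairwise disjoint balls $V_1,\dots,V_M$ around the $x^{(j)}$, note that for $N$ large each $V_j$ contains $x^{(j)}_N$ and the relevant $\bar\sigma_N$-iterates up to index $\tau_j$, and perform the $j$-th rerouting entirely within $V_j$; disjointness of the $V_j$ then makes the modifications independent. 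Once $\sigma_N$ is built, the verification that it satisfies all three conclusions is routine, exactly paralleling the end of the proof of proposition \ref{var1}.
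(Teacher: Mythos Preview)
Your overall scheme (build nearly-closed $\varepsilon$-dense $f$-orbit segments with coprime lengths, then discretize via Lax and reroute) is the right architecture, but two steps do not go through as written.

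\textbf{Coprimality.} The suggested trick ``pass to $\tau_i' = \tau_i\cdot p_i$ with distinct large primes $p_i$'' does not produce pairwise coprime numbers: if $\gcd(\tau_1,\tau_2)>1$ then $\gcd(\tau_1 p_1,\tau_2 p_2)>1$ as well. Moreover, $d(x,f^{\tau}(x))<\varepsilon$ gives no control on $d(x,f^{\tau p}(x))$, so ``lengthening the segments'' is not free. The paper gets coprimality by a different mechanism: having fixed a first $\varepsilon$-dense near-return of length $p$, it builds the second orbit by concatenating segments of lengths $q_1,\dots,q_{\ell-1}$ that are all \emph{multiples of $p$} (using transitivity of $f^p$), together with one extra step; the resulting length $1+q_1+\dots+q_{\ell-1}\equiv 1 \pmod p$ is automatically coprime to $p$. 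This is the key idea you are missing, and it is why the hypothesis ``all iterates of $f$ are transitive'' is invoked.

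\textbf{Disjointness.} Your final paragraph proposes to place each orbit segment inside a small ball $V_j$ and perform the $j$-th rerouting ``entirely within $V_j$''. This is impossible: each segment $(x^{(j)},\dots,f^{\tau_j-1}(x^{(j)}))$ is $\varepsilon$-dense in $X$, so it cannot sit in a small ball. Disjointness of the segments has to be arranged in $X$ itself. The paper does this by choosing, at each step, a transitive point of $f^p$ whose orbit avoids the (countable, nowhere-dense) union of previously chosen orbits; the resulting $f$-orbit segments are genuinely disjoint, and for $N$ large this passes to the grid so the reroutings do not interfere.

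Once you replace your coprimality trick by the ``$1+\text{(multiple of }p\text{)}$'' construction and arrange disjointness of the orbit segments in $X$ as above, the rest of your outline (Lax, then simultaneous rerouting as in proposition~\ref{var1}) is exactly what the paper does.
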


\begin{proof}[Proof of proposition \ref{melfaibl}]
We prove the proposition in the case where $M=2$, the other cases being easily obtained by an induction. Let $\varep>0$ and $f$ be an homeomorphism whose all iterates are topologically transitive. Then there exists $x_0\in X$ and $p\in\N^*$ such that $\{x_0,\dots,f^{p-1}(x_0)\}$ is $\varep$-dense and $d(x_0,f^p(x_0))<\varep/2$. Since transitive points of $f^p$ form a dense $G_\delta$ subset of $X$, while the orbit of $x_0$ form a $F_\sigma$ set with empty interior, the set of points whose orbit under $f^p$ is dense and disjoint from that of $x_0$ is dense. So we can pick such a transitive point $y_0$. Set $y_1 = f(y_0)$. Then there exists a multiple $q_1$ of $p$ such that the orbit $\{y_1,\dots,f^{q_1-1}(y_1)\}$ is $\varep$-dense and $d(y_1,f^{q_1}(y_1))<\varep/2$. Again, by density, we can choose a transitive point $y_2$ whose orbit is disjoint from that of $x_0$ and $y_1$, with $d(y_1,y_2)<\varep/2$ and $d(y_0,y_1)-d(y_0,y_2)>\varep/4$. Then there exists a multiple $q_2$ of $p$ such that $d(y_2,f^{q_2}(y_2))<\varep/2$. And so on, we construct a sequence $(y_m)_{1\le m\le \ell}$ such that (see figure \ref{constrigrec}):
\begin{enumerate}[(i)]
\item for all $m$, there exists $q_m>0$ such that $p| q_m$ and $d(y_m, f^{q_m}(y_m))<\varep/2$,
\item the orbits $\{x_0,\dots,f^{p-1}(x_0)\}$ and $\{y_m,\dots,f^{q_m-1}(y_m)\}$ ($m$ going from $0$ to $\ell-1$) are pairwise disjoints,
\item for all $m$, $d(y_m,y_{m+1})<\varep/2$ and $d(y_0,y_m)-d(y_0,y_{m+1})>\varep/4$,
\item $y_\ell = y_0$.
\end{enumerate}

Let $\sigma_{N}$ be a finite map given by Lax's theorem. For all $N$ large enough, $\sigma_{N}$ satisfies the same properties (i) to (iii) than $f$. Changing $\sigma_{N}$ at the points $\sigma_{N}^{q_m-1}((y_m)_{N})$ and $\sigma_{N}^{p-1}((x_0)_{N})$, we obtain a finite map $\sigma'_{N}$ such that ${\sigma'_{N}}^{q_N}((y_m)_{N}) = (y_{m+1})_{N}$ and $\sigma_{N}^{p}((x_0)_{N}) = (x_0)_{N}$. Thus the orbit of $(x_0)_N$ under $\sigma'_{N}$ is $2\varep$-dense and has period $p$ and the orbit of $(y_0)_{N}$ under $\sigma'_{N}$ is $2\varep$-dense, disjoint from which of $(x_0)_{N}$ and has period $1+q_1+\dots+q_{\ell-1}$ relatively prime to $p$.
\end{proof}

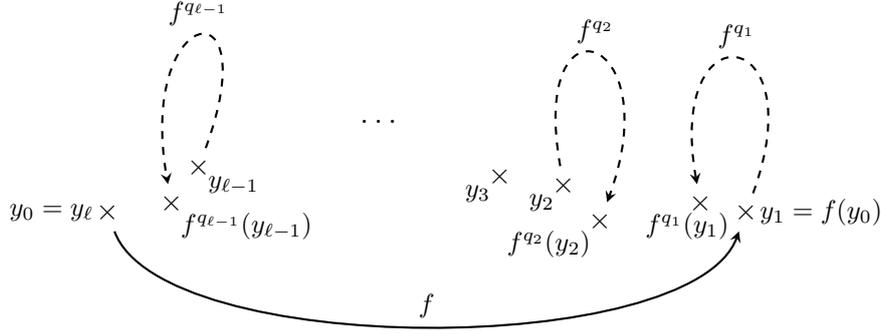
\begin{figure}
\begin{center}
\begin{tikzpicture}[scale=1.2]
\node (A) at (0,0) {\large$\times$};\node at (A) [left]{$y_0=y_\ell\,$};
\node (B) at (7,0) {\large$\times$};\node at (B) [right]{$\,y_1 = f(y_0)$};
\node (C) at (5,.3) {\large$\times$};\node at (C) [below left]{$y_2$};
\node (D) at (4.3,.4) {\large$\times$};\node at (D) [below left]{$y_3$};
\node (F) at (1,.5) {\large$\times$};\node at (F) [below right]{$y_{\ell-1}$};

\node (b) at (6.5,.1){\large$\times$};\node at (b)[below]{$f^{q_1}(y_1)\quad$};
\node (c) at (5.4,-.1){\large$\times$};\node at (c)[below left]{$f^{q_2}(y_2)$};
\node (f) at (.7,.1){\large$\times$};\node at (f)[below right]{$f^{q_{\ell-1}}(y_{\ell-1})$};
\node at (3,1){\Large$\dots$};

\draw[->,>=stealth,thick] (A) to [out=-70,in=-110,distance=1.5cm] node[above]{$f$}(B);
\draw[->,>=stealth,thick,dashed] (B) to [out=70,in=100,distance=2cm] node[above]{$f^{q_1}$}(b);
\draw[->,>=stealth,thick,dashed] (C) to [out=100,in=70,distance=2cm] node[above]{$f^{q_2}$}(c);
\draw[->,>=stealth,thick,dashed] (F) to [out=70,in=100,distance=2cm] node[above]{$f^{q_{\ell-1}}$}(f);

\end{tikzpicture}
\caption{Construction of the sequence $(y_m)_{1\le m\le \ell}$}\label{constrigrec}
\end{center}
\end{figure}

\begin{coro}\label{méldiscr}
For a generic homeomorphism $f\in \Hom(X,\lambda)$, for all $\varep>0$ and all $N_0\in\N$, there exists $N\ge N_0$ such that $f_{N}$ is $\varep$-topologically weakly mixing.
\end{coro}

\begin{proof}[Proof of corollary \ref{méldiscr}]
Again, we prove the corollary in the case where $M=2$, other cases being easily obtained by induction. Let $\varep>0$ and $N_0\in\N$. All iterates of a generic homeomorphism $f$ are topologically transitive: it is an easy consequence of the genericity of transitivity (see e.g. corollary \ref{typlax} or theorem 2.11 of \cite{MR2931648}); we pick such a homeomorphism. Combining theorem \ref{génécycl} and proposition \ref{melfaibl}, we obtain $N\ge N_0$ such that $f_{N}$ has two $\varep/3$-dense periodic orbits whose lengths $p$ et $q$ are coprime. We now have to prove that $f_N$ is $\varep$-topologically weakly mixing. Let $B_1$, $B_2$, $B'_1$ and $B'_2$ be balls with diameter $\varep$. Since each one of these orbits is $\varep/3$-dense, there exists $x_N\in X$ which is in the intersection of the orbit whose length is $p$ and $B_1$, and $y_{N}\in X$ which is in the intersection of the orbit whose length is $q$ and $B_2$. Similarly, there exists two integers $a$ and $b$ such that $f_{N}^a(x_{N})\in B'_1$ and $f_{N}^b(y_{N})\in B'_2$.

Recall that we want to find a power of $f_{N}$ which sends both $x_{N}$ in $B'_1$ and $y_{N}$ in $B'_2$. It suffices to pick $m\in\N$ such that $m = a+\alpha p = b+\beta q$. Bezout's identity states that there exists two integers $\alpha$ and $\beta$ such $\alpha p-\beta q = b-a$. Set $m = a+\alpha p$, adding a multiple of $pq$ if necessary, we can suppose that $m$ is positive. Thus $f_{N}^m(x_{N})\in B'_1$ and $f_{N}^m(y_{N})\in B'_2$.
\end{proof}

We now establish a new variation of Lax's theorem. It asserts that every homeomorphism can be approximated by a finite map whose image has a small cardinality, unlike what happens in Lax's theorem.

\begin{prop}[Third variation of Lax's theorem]\label{crunch}
Let $f\in\Hom(X,\lambda)$ and $\vartheta : \N\to\R_+^*$ a map which tends to $+\infty$ at $+\infty$. Then for all $\varep>0$, there exists $N_0\in\N$ such that for all $N\ge N_0$, there exists a map $\sigma_{N} : E_{N}\to E_{N}$ such that $\card(\sigma_{N}(E_N))<\vartheta({N})$ and $d_{N}(f,\sigma_{N})<\varep$.
\end{prop}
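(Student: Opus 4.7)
The plan is to bypass Lax's theorem entirely and build $\sigma_N$ directly by a very coarse ``bucketing'' of the target space. The key observation is that since $\vartheta(N)\to+\infty$, the bound on $\card(\sigma_N(E_N))$ can be taken to be a fixed integer depending only on $\varep$ (and on $X$), which leaves ample room. In particular, we do not need to stay close to a bijection, so no combinatorial lemma in the spirit of Lax is required.

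First, using compactness of $X$, I would fix a finite open cover $\{B_1,\dots,B_k\}$ of $X$ by balls of radius $\varep/4$, so that in particular $\operatorname{diam}(B_i)<\varep/2$. The integer $k$ then depends only on $\varep$ and on $X$, not on $N$.

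Next I would invoke the two limiting hypotheses on $N$. On the one hand, the grids $(E_N)_{N\in\N}$ are eventually $\delta$-dense in $X$ for every $\delta>0$, so for $N$ large enough each ball $B_i$ contains at least one point of $E_N$; pick such a point and call it $y_i^N\in E_N\cap B_i$. On the other hand, since $\vartheta\to+\infty$, for $N$ large enough one has $\vartheta(N)>k$. Let $N_0$ be an integer large enough that both of these properties hold for every $N\ge N_0$.

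Finally, for $N\ge N_0$ and each $x\in E_N$, I would choose any index $i(x)\in\{1,\dots,k\}$ with $f(x)\in B_{i(x)}$ and set $\sigma_N(x):=y_{i(x)}^N$. Then $\sigma_N(E_N)\subset\{y_1^N,\dots,y_k^N\}$, whence $\card(\sigma_N(E_N))\le k<\vartheta(N)$, and for every $x\in E_N$ one has $d(f(x),\sigma_N(x))\le\operatorname{diam}(B_{i(x)})<\varep/2<\varep$, giving $d_N(f,\sigma_N)<\varep$. I do not see any real obstacle in this argument: only the continuity of $f$ and the asymptotic density of the grids are used, and the conservativity of $f$ plays no role here. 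The resulting $\sigma_N$ is of course extremely far from a bijection, in stark contrast with the cyclic permutations delivered by Lax's theorem; this is precisely what lets us make the image as small as we want.
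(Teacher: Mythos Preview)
Your argument is correct and is genuinely different from the paper's own proof. The paper first invokes Lax's theorem to approximate $f$ by a cyclic permutation $\sigma_N$ of $E_N$ within $\varep/2$, and then post-composes with the projection $P_{N_1}$ onto a fixed coarse grid $E_{N_1}$, so that $\sigma'_N=P_{N_1}\circ\sigma_N$ has image of cardinality at most $q_{N_1}$; one then chooses $N_0$ so that $\vartheta(N)>q_{N_1}$ for $N\ge N_0$. Your proof skips Lax entirely and replaces the coarse grid $E_{N_1}$ by a fixed finite system of representatives $\{y_1^N,\dots,y_k^N\}$ coming from a cover of $X$ by small balls.

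What your approach buys is simplicity and generality: it uses only compactness of $X$, continuity of $f$, and the asymptotic density of the grids, so it applies verbatim to any continuous self-map of $X$ and does not require the measure-preserving hypothesis, the well-distributed/well-ordered assumptions on the grids, or the marriage lemma hidden inside Lax. What the paper's approach buys is thematic coherence: the whole section is organised around Lax's theorem and its variants, and deriving this proposition from Lax keeps it in that family; moreover the intermediate object $\sigma_N$ is a bijection, which is sometimes convenient if one wants to combine this construction with others in the section. Either way, the corollary that $\underline\lim\,\card(f_N(E_N))/\vartheta(N)=0$ follows identically.
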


\begin{proof}[Proof of proposition \ref{crunch}]
Let $f\in\Hom(X,\lambda)$, $\vartheta : \N\to\R_+^*$ a map which tends to $+\infty$ at $+\infty$ and $\varep>0$. By Lax's theorem (theorem \ref{Lax}) there exists $N_1\in\N$ such that for all $N\ge N_1$, there exists a cyclic permutation $\sigma_{N} : E_{N}\to E_{N}$ whose distance to $f$ is smaller than $\varep/2$. For $N\ge N_1$, set $\sigma'_{N} = P_{N_1}\circ \sigma_{N}$. Increasing $N_1$ if necessary we have $d(f,\sigma'_{N})<\varep$, regardless of $N$. Moreover $\card(\sigma'_{N}(E_{N}))\le q_{N_1}$, if we choose $N_0$ large enough such that for all $N\ge N_0$ we have $q_{N_1}<\vartheta({N})$, then $\card(\sigma'_{N}(E_{N}))\le \vartheta({N})$. We have shown that the map $\sigma'_{N}$ satisfies the conclusions of proposition for all $N\ge N_0$.
\end{proof}

\begin{coro}\label{crush}
Let $\vartheta : \N\to\R_+^*$ a map which tends to $+\infty$ at $+\infty$. Then for a generic homeomorphism $f\in\Hom(X,\lambda)$, 
\[\underset{N\to+\infty}{\underline\lim}\ \frac{\card(f_{N}(E_{N}))}{\vartheta({N})}=0.\]
In particular, generically, $\underline{\lim}\frac{\card(f_{N}(E_{N}))}{\card(E_{N})}=0$.
\end{coro}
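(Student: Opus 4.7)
The plan is to combine Proposition \ref{crunch} with Theorem \ref{g�n�cycl} and then take a countable intersection over a sequence of thresholds tending to $0$. The key observation is that Proposition \ref{crunch} produces, for \emph{any} function $\vartheta$ tending to $+\infty$, a dense type of approximation by maps whose image is smaller than $\vartheta(N)$; in particular, replacing $\vartheta$ by $\vartheta/k$ for each integer $k\ge 1$ still yields a function tending to $+\infty$, so the corresponding type of approximation remains dense.

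More precisely, for each $k\in\N^*$, define the type of approximation
\[\tT^k = (\tT^k_N)_{N\in\N},\qquad \tT^k_N = \left\{\sigma : E_N\to E_N \ \Big|\ \card(\sigma(E_N)) < \frac{\vartheta(N)}{k}\right\}.\]
Since $\vartheta/k$ also tends to $+\infty$ at $+\infty$, Proposition \ref{crunch} (applied with $\vartheta/k$ in place of $\vartheta$) asserts that for every $f\in\Hom(X,\lambda)$, every $\varep>0$ and every $N_0\in\N$, there exist $N\ge N_0$ and $\sigma_N\in\tT^k_N$ with $d_N(f,\sigma_N)<\varep$. Hence $\tT^k$ is a dense type of approximation in $\Hom(X,\lambda)$.

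Applying Theorem \ref{g�n�cycl} to $\tT^k$ provides a $G_\delta$ dense subset $G_k\subset\Hom(X,\lambda)$ such that every $f\in G_k$ satisfies: for all $N_0\in\N$ there exists $N\ge N_0$ with $f_N\in\tT^k_N$, i.e. $\card(f_N(E_N))/\vartheta(N)<1/k$. Since $\Hom(X,\lambda)$ is a Baire space, the intersection $G=\bigcap_{k\ge 1} G_k$ is again a $G_\delta$ dense set; for every $f\in G$ and every $k\in\N^*$ there are infinitely many $N$ with $\card(f_N(E_N))/\vartheta(N)<1/k$, so $\underline{\lim}_{N\to\infty}\card(f_N(E_N))/\vartheta(N)=0$. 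The special case $\underline{\lim}\card(f_N(E_N))/\card(E_N)=0$ follows by taking $\vartheta(N)=q_N$, which indeed tends to $+\infty$ since the grids are more and more precise.

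There is no real obstacle in this argument: all the work was done in Proposition \ref{crunch} (which, via $P_{N_1}\circ\sigma_N$, collapses the image into the smaller grid $E_{N_1}$) and in Theorem \ref{g�n�cycl} (which converts denseness of an approximation type into a generic property). The only mild care needed is the countable intersection over $k$, which is harmless thanks to the Baire category theorem.
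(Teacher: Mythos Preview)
Your proof is correct and follows the same strategy as the paper: feed Proposition \ref{crunch} into Theorem \ref{g�n�cycl}. The only difference is cosmetic: where you apply the machinery to each function $\vartheta/k$ and then take a countable intersection $\bigcap_k G_k$, the paper instead observes that it suffices to prove $\underline\lim\,\card(f_N(E_N))/\vartheta(N)\le 1$ for \emph{every} admissible $\vartheta$, and then applies this single statement to $\sqrt{\vartheta}$ (so that $\card(f_N(E_N))<\sqrt{\vartheta(N)}$ infinitely often already forces the ratio with $\vartheta(N)$ to tend to $0$ along that subsequence). The paper's $\sqrt{\vartheta}$ trick yields one $G_\delta$ instead of a countable intersection of them, but both arguments are equally valid and rest on exactly the same two ingredients.
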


\begin{proof}[Proof of corollary \ref{crush}]
Remark that if we replace $\vartheta({N})$ by $\sqrt{\vartheta({N})}$, it suffices to prove that for a generic homeomorphism, $\underline\lim\frac{\card(f_{N}(E_{N}))}{\vartheta({N})}\le 1$. It is easily obtained in combining theorem \ref{génécycl} and proposition \ref{crunch}.
\end{proof}

So far all variations of Lax's theorem built finite maps with a small number of orbits. With the additional assumption that the sequence of grids is self-similar, we show a last variation of Lax's theorem approaching every homeomorphism with a finite map with a large number of orbits.

\begin{prop}[Fourth variation of Lax's theorem]\label{killing}
Assume that the sequence of grids $(E_N)_{N\in\N}$ is self-similar. Let $f\in\Hom(X,\lambda)$ and $\vartheta : \N\to\R$ such that $\vartheta(N) = o(q_N)$. Then for all $\varepsilon>0$ there exists $N_1\in\N$ such that for all $N\ge N_1$, there exists a permutation $\sigma_{N}$ of $E_{N}$ such that $d_{N}(f,\sigma_{N})<\varepsilon$ and that the number of cycles of $\sigma_{N}$ is greater than $\vartheta({N})$. Moreover these $\vartheta({N})$ cycles of $\sigma_{N}$ are conjugated to a cyclic permutation of $E_{N_0}$ by bijections whose distance to identity is smaller than $\varep$.
\end{prop}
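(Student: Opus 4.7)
The plan is as follows. Fix an auxiliary precision $\eta>0$, to be chosen small enough depending on $\varep$ and on the modulus of continuity of $f$ at scale $\varep$. By the self-similarity hypothesis applied with parameter $\eta$, there exist integers $N_0,N_1$ such that for every $N\ge N_1$ the grid $E_N$ contains pairwise disjoint subsets $\widetilde E_N^1,\dots,\widetilde E_N^{\alpha_N}$, each of the form $\widetilde E_N^j=h_j(E_{N_0})$ with $h_j$ a bijection $\eta$-close to the identity, whose union covers a proportion at least $1-\eta$ of $E_N$. Hence $\alpha_N\,q_{N_0}\ge(1-\eta)\,q_N$; since $q_{N_0}$ is a fixed constant and $\vartheta(N)=o(q_N)$, one gets $\alpha_N\ge\vartheta(N)$ for every $N$ large enough. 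If the $N_0$ produced by self-similarity is too small for the Lax step below to yield the desired precision, I shrink $\eta$ further so that the self-similarity forces $N_0$ to be large enough.

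Apply Lax's theorem (theorem \ref{Lax}) to $f$ at precision $\eta$: this furnishes a cyclic permutation $\tau$ of $E_{N_0}$ with $d_{N_0}(f,\tau)<\eta$. Define $\sigma_N$ on the set $A:=\bigcup_j\widetilde E_N^j$ by setting $\sigma_N|_{\widetilde E_N^j}=h_j\circ\tau\circ h_j^{-1}$: each restriction is a cyclic permutation of $\widetilde E_N^j$, of length $q_{N_0}$, conjugate to $\tau$ via the bijection $h_j$ which is $\eta$-close (hence $\varep$-close) to identity. This already yields $\alpha_N\ge\vartheta(N)$ cycles of the structural form demanded in the conclusion. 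The distance estimate on $A$ is straightforward: for $x\in\widetilde E_N^j$ and $y:=h_j^{-1}(x)$, the triangle inequality gives
\[
d(f(x),\sigma_N(x))\le d(f(x),f(y))+d(f(y),\tau(y))+d(\tau(y),h_j(\tau(y))),
\]
in which the first term is controlled by the modulus of continuity of $f$ applied to $d(x,y)=d(h_j(y),y)<\eta$, the second by Lax's approximation ($<\eta$), and the third by the $\eta$-closeness of $h_j$ to the identity. For $\eta$ small enough the sum is less than $\varep$.

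To complete the construction, I extend $\sigma_N$ to the residual set $R:=E_N\setminus A$ by a bijection $R\to R$ which stays $\varep$-close to $f|_R$. Such a bijection is obtained via the marriage lemma (lemma \ref{mariage}), applied to $R$ with the relation $x\approx y$ iff $d(y,f(x))<\varep$. The resulting map $\sigma_N$ is then a permutation of $E_N$, at distance $<\varep$ from $f$, having at least $\alpha_N\ge\vartheta(N)$ cycles of the required kind, which concludes the proof.

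The main obstacle is the verification of Hall's condition for the marriage lemma on $R$: since $R$ is small (proportion $\le\eta$ of $E_N$) but not itself a grid, and since $f(R)$ need not be contained in $R$, one cannot just copy the Lax-type measure argument. The verification will need to combine the preservation of $\lambda$ by $f$ with the well-distributedness of the grids to translate cardinalities of subsets $R'\subset R$ into $\lambda$-measures of the unions of their associated cubes, and then to argue that the $\varep$-thickening of $f(R')$ captures enough cubes of $R$. If this direct Hall estimate proved insufficient, a fallback would be to start from a full Lax permutation $\rho_N$ of $E_N$ close to $f$ and reroute it through the prescribed cycles on each $\widetilde E_N^j$ by a chain of elementary transpositions, preserving both the bijection property and the $\varep$-closeness to $f$.
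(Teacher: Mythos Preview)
Your construction is the paper's: apply Lax on $E_{N_0}$ to get a cyclic $\tau$, conjugate by each $h_j$ to define $\sigma_N$ on the sub-grids $\widetilde E_N^j$, then deal with the leftover $R=E_N\setminus\bigcup_j\widetilde E_N^j$. The paper dismisses $R$ in one sentence (``outside these sets we just pick $\sigma_N$ such that $d_N(f,\sigma_N)<\varep$'') and never checks that this can be done as a bijection of $R$; the obstacle you isolate --- the Hall condition for a matching $R\to R$ close to $f$ --- is therefore a gap the paper itself leaves open, not a defect of your argument relative to it.

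Two remarks put the issue in perspective. First, for the concrete grids the paper actually cares about (uniform grids on $\T^n$ or $I^n$ with $k_{N_0}\mid k_N$), the translated copies of $E_{N_0}$ partition $E_N$ exactly, so one may arrange $R=\emptyset$ and the difficulty vanishes. Second, the only downstream use of the proposition is Corollary~\ref{corovar3}, for which a \emph{map} $\sigma_N:E_N\to E_N$ with $d_N(f,\sigma_N)<\varep$ and $\ge\vartheta(N)$ cycles of the stated form already suffices; that weaker statement follows at once from your construction by defining $\sigma_N$ on $R$ as, say, $P_N\circ f$. Your marriage-lemma and rerouting fallbacks are both more than the paper offers, but, as you anticipate, neither is easily completed under the bare self-similarity hypothesis since nothing forces $f(R)$ to sit near $R$.
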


\begin{proof}[Proof of proposition \ref{killing}]
Let $\varep>0$, for all $N_0\in\N$ large enough, Lax's theorem gives us a cyclic permutation $\sigma'_{N_0}$ of $E_{N_0}$ whose distance to $f$ is smaller than $\varep$. Since the grids are self-similar, there exists $N_1\in\N$ such that for all $N\ge N_1$, the set $E_N$ contains $q_N/q_{N_0}\ge\vartheta(N)$ disjoint subsets $\widetilde E_N^j$, each one conjugated to a grid $E_{N_0}$ by a bijection $h_j$ whose distance to identity is smaller than $\varep$. On each $\widetilde E_N^j$, we define $\sigma_N$ as the conjugation of $\sigma'_{N_0}$ by $h_j$; outside these sets we just pick $\sigma_N$ such that $d_N(f,\sigma_N)<\varep$. Since the distance between $h_j$ and identity is smaller than $\varep$, we have $d_N(f,\sigma_N)<2\varep$. Moreover, $\sigma_N$ has at least $\vartheta(N)$ cycles; this completes the proof.
\end{proof}

The application of theorem \ref{génécycl} gives us:

\begin{coro}\label{corovar3}
We still assume that the sequence of grids $(E_N)_{N\in\N}$ is self-similar. Let $\vartheta : \N\to\R$ such that $\vartheta(N) = o(q_N)$. Then for a generic homeomorphism $f\in\Hom(X,\lambda)$ and for infinitely many integers $N$, the discretization $f_{N}$ of $f$ has at least $\vartheta(N)$ cycles which are pairwise conjugated.
\end{coro}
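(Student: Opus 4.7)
The plan is to fit this into the two-step framework developed throughout the section: exhibit a dense type of approximation whose elements have the desired combinatorial feature, then invoke Theorem \ref{g�n�cycl} to transfer denseness into a generic infinitely-often statement for discretizations. Fortunately, Proposition \ref{killing} has already done all of the geometric work; what remains is essentially bookkeeping.

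Precisely, I would define, for each $N\in\N$, the set $\tT_N\subset \mathcal F(E_N,E_N)$ consisting of all permutations $\sigma_N$ of $E_N$ admitting at least $\vartheta(N)$ cycles that are pairwise conjugated to one common cyclic permutation of some $E_{N_0}$ by bijections of $E_N$ (into itself). For $N$ smaller than a threshold where $\vartheta(N)$ already forces a contradiction, one may simply set $\tT_N=\emptyset$. Proposition \ref{killing} applied with parameter $\varep$ provides, for every $f\in\Hom(X,\lambda)$, every $\varep>0$ and every $N_0\in\N$, an integer $N\ge N_0$ and an element $\sigma_N\in\tT_N$ with $d_N(f,\sigma_N)<\varep$ (the number of conjugated cycles being at least $q_N/q_{N_0}\ge \vartheta(N)$, by self-similarity and the assumption $\vartheta(N)=o(q_N)$). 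Hence $\tT=(\tT_N)_{N\in\N}$ is a dense type of approximation in $\mathcal U=\Hom(X,\lambda)$ in the sense of the definition opening section \ref{Grosse}.

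Theorem \ref{g�n�cycl} then applies verbatim: there exists a $G_\delta$ dense subset of $\Hom(X,\lambda)$ on which, for every $N_0\in\N$, one can find $N\ge N_0$ with $f_N\in\tT_N$. By definition of $\tT_N$, such an $f_N$ is a permutation of $E_N$ carrying at least $\vartheta(N)$ pairwise conjugated cycles, which is the conclusion of the corollary.

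There is, as far as I can see, no genuine obstacle; the only point requiring any care is making sure the definition of $\tT_N$ is chosen so that Proposition \ref{killing} literally provides an element of $\tT_N$ — in particular the ``pairwise conjugated'' clause must match the form of the approximants produced in the proof of that proposition, which conjugate a single cyclic permutation of $E_{N_0}$ by the self-similarity bijections $h_j$. With that compatibility in place, the proof reduces to one line.
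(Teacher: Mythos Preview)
Your proof is correct and follows exactly the paper's approach: the paper states the corollary immediately after Proposition~\ref{killing} with the single line ``The application of theorem~\ref{g�n�cycl} gives us:'', and your write-up is precisely the unpacking of that sentence --- define the type of approximation furnished by Proposition~\ref{killing}, observe it is dense, and apply Theorem~\ref{g�n�cycl}.
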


\section{Average behaviour of discretizations}\label{bofbof}

We now want to study the average behaviour of discretizations of a generic homeomorphism. For example one could imagine that even if for a generic homeomorphism $f$, the event ``$f_N$ is a cyclic permutation'' appears for infinitely many orders, it is still quite rare. More precisely, we study the frequency of occurrence of properties related to the discretizations of generic homeomorphisms in the Cesàro sense: given a property $(P)$ concerning discretizations, what is the behaviour of the proportion between $1$ and $M$ of discretizations satisfying the property $(P)$, when $ M $ goes to infinity? For this study, we assume that the sequence of discretization grids refines (which is true for example for discretizations upon uniform grids of orders powers of an integer, see section \ref{exgrilles}). This prevents us from tricky arithmetic problems about overlay of grids.

\begin{definition}
Let $f\in\Hom(X,\lambda)$. We say that a property $(P)$ about discretizations is \emph{satisfied in average} if for all $N_0\in\N$ and all $\varep>0$, there exists $N\ge N_0$ such that the proportion of integers $M\in \{ 0,\dots,N\}$ such that $f_M$ satisfies $(P)$ is greater than $1-\varep$, i.e. 
\[\underset{N\to+\infty}{\overline \lim}\,\frac{1}{N+1}\card\big\{M\in\{ 0,\dots,N\}\mid f_M\text{ satisfies }(P)\big\}=1.\]
\end{definition}

We will show that most of the dynamical properties studied in the previous section are actually satisfied on average for generic homeomorphisms. To start with we set out a technical lemma:

\begin{lemme}\label{recopie}
Let $\tT$ be a dense type of approximation in $\Hom(X,\lambda)$. Then for a generic homeomorphism $f\in\Hom(X,\lambda)$, for all $\varep>0$ and all $\alpha>0$, the property $(P)$ : ``$E_N$ contains at least $\alpha$ disjoints subsets which fulfils a proportion greater than $1-\varep$ of $E_N$, each one stabilized by $f_N$ and such that the restriction of $f_N$ to each one is conjugated to a map of $\tT$ by a bijection whose distance to identity is smaller than $\varep$'' is satisfied in average.
\end{lemme}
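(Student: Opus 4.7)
The plan is to apply Baire's theorem. First, I rewrite the set of homeomorphisms for which $(P)$ is satisfied in average with parameters $(\varep,\alpha)$ as
\[
G_{\varep,\alpha}=\bigcap_{K\in\N}\,\bigcup_{N\ge K}\mathcal{O}_{N,\varep,\alpha},
\]
where $\mathcal O_{N,\varep,\alpha}$ is the open set of homeomorphisms in $\bigcap_N\mathcal D_N$ whose proportion of $M\le N$ for which $(P)(f_M)$ holds exceeds $1-\varep$. This set is a $G_\delta$, and intersecting over a countable dense family of parameters $(\varep,\alpha)$ still gives a $G_\delta$; it remains to show that each $G_{\varep,\alpha}$ is dense in $\Hom(X,\lambda)$.

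For density, I fix $f\in\Hom(X,\lambda)$, a neighborhood $\mathcal V$ of $f$, numbers $\varep>0$, $\alpha\in\N$ with $\alpha\ge 1$, and $K\in\N$. I build $g\in\mathcal V$ and $N\ge K$ realizing the Cesàro bound in two stages. First, using the Oxtoby--Ulam theorem (Theorem \ref{Brown-mesure}) together with the local modification theorem and the Finite Map Extension Proposition \ref{extension}, I perturb $f$ inside $\mathcal V$ into a conservative homeomorphism $g_0$ that preserves $\alpha$ pairwise disjoint closed domains $X_1,\dots,X_\alpha$ of equal $\lambda$-measure summing to more than $1-\varep/2$. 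Second, using the density of $\tT$ in $\Hom(X,\lambda)$ (transported onto each $X_i$ by a measure-preserving chart) together with Proposition \ref{extension} applied locally inside each $X_i$, I modify $g_0$ into $g\in\mathcal V$ for which there is a threshold $M_0\ge K$ such that, for every $M\ge M_0$, the set $E_M\cap X_i$ is $g_M$-stable and the restriction $g_M|_{E_M\cap X_i}$ is conjugate, via a bijection within $\varep$ of the identity, to an element $\sigma_i^M\in\tT_{m_i^M}$ for some suitable order $m_i^M$.

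The refining hypothesis on the grids is what lets me pass from a single well-chosen order to a whole range of orders: the inclusions $E_{N_0}\subset E_{N_1}\subset\dots\subset E_M$ ensure that the structure installed at a fine scale persists in every finer grid, so that once $g$ has been built with each $X_i$ invariant, each of the sets $X_i\cap E_M$ is $g_M$-stable for every $M\ge M_0$. At each such $M$ I pick subsets $A_i\subset X_i\cap E_M$ of cardinality exactly $q_{m_i^M}$ and absorb the small discrepancy between $A_i$ and $X_i\cap E_M$ into the $\varep/2$ measure slack from the first stage; the near-identity bijections $h_i:E_{m_i^M}\to A_i$ are then obtained by a classical nearest-point matching. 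Taking $N$ so large that $M_0/N<\varep$ yields a Cesàro proportion $(N-M_0)/N>1-\varep$, completing the density step.

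The main obstacle is precisely this coordination at \emph{every} scale $M\in[M_0,N]$. Proposition \ref{extension} together with the density of $\tT$ give flexibility one order at a time; without a self-similarity hypothesis (which is \emph{not} assumed in this section), propagating the $\tT$-like structure across a long arithmetic range of grid orders demands a careful interplay between the refining property and the combinatorial mismatch between $|X_i\cap E_M|$ and the prescribed cardinalities $q_{m}$ available in the grid sequence.
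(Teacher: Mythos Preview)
Your $G_\delta$ framework is correct and matches the paper. The density argument, however, has a genuine gap, and your closing paragraph essentially concedes it.

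First, Stage~2 is not justified. A dense type of approximation $\tT=(\tT_N)_N$ is by definition a sequence of subsets of $\mathcal F(E_N,E_N)$, i.e.\ maps on the \emph{global} grids; there is no operation of ``transporting $\tT$ onto $X_i$ by a measure-preserving chart,'' and the hypothesis that $\tT$ is dense in $\Hom(X,\lambda)$ says nothing about approximating a restriction $g_0|_{X_i}$ by maps whose discretizations on $E_M\cap X_i$ are conjugate to elements of some $\tT_m$. Moreover the cardinality of $E_M\cap X_i$ has no reason to equal any $q_m$, so the very target of your conjugacy is ill-posed. Second, your claim that refining makes $E_M\cap X_i$ stable under $g_M$ for every $M\ge M_0$ is wrong: $g(X_i)=X_i$ does not force $P_M\circ g$ to preserve $E_M\cap X_i$, since the projection $P_M$ can push points near $\partial X_i$ across the boundary. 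You identify the coordination across all $M\in[M_0,N]$ as ``the main obstacle'' but do not resolve it; that is the whole proof.

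The paper's route avoids these difficulties by never partitioning $X$ into invariant topological domains; it works directly on the grids. The point of refining is this: if, via Proposition~\ref{extension}, one arranges $g$ so that $g$ sends $E_{M_0}$ \emph{exactly} into itself with $g|_{E_{M_0}}=\sigma\in\tT_{M_0}$, then for every $M\ge M_0$ the inclusion $E_{M_0}\subset E_M$ makes $E_{M_0}$ a $g_M$-stable subset on which $g_M$ coincides with $\sigma$. So the $\tT$-structure is installed once, at a single order, and persists verbatim at all finer orders as a fixed subset of known cardinality $q_{M_0}$ --- one never has to control the growing, uncontrolled sets $E_M\cap X_i$. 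The paper compresses the remaining combinatorics (obtaining $\alpha$ disjoint pieces filling proportion $>1-\varep$, then taking $N$ large enough for the Ces\`aro bound) into the single sentence ``combining the density of $\tT$ and the fact that the grids refine.''
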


In practice, this lemma provides many properties satisfied on average, for instance:
\begin{itemize}
\item quantitative properties on discretizations, such as owning at least $M$ periodic orbits,
\item properties of existence of sub-dynamics on discretizations, such as owning at least one dense periodic orbit.
\end{itemize}

\begin{proof}[Proof of lemma \ref{recopie}]
Let us consider the set
\[\mathcal{C} = \bigcap_{\substack{\varepsilon>0\\ N_0\in\N}}\,\bigcup_{N\ge N_0}\,\left\{
\begin{array}{c}
f\in\Hom(X,\lambda)\ \big|\\ \frac{1}{N+1}\card\big\{M\in\{ 0,\dots,N\}\mid f_M\text{ satisfies }(P)\big\}>1-\varep
\end{array}\right\}.\]
We want to show that $\mathcal{C}$ contains a dense $G_\delta$ of $\Hom(X,\lambda)$. The set $\mathcal{C}$ is a $G_\delta$ of the generic set  $\bigcap_{N\in\N} \mathcal{D}_{N}$, it suffices to prove that it is dense in $\bigcap_N \mathcal{D}_{N}$. Let $f\in \Hom(X,\lambda)$, $N_0\in\N$, $\delta>0$ and $\varepsilon>0$. To prove it we want to find a homeomorphism $g$ whose distance to $f$ is smaller than $\delta$ and an integer $N\ge N_0$ such that
\[\frac{1}{N+1}\card\big\{M\in\{ 0,\dots,N\}\mid g_M\text{ satisfies }(P)\big\}>1-\varep.\]
It is simply obtained in combining the density of the type of approximation $\tT$ and the fact that the grids refines.
\end{proof}

This corollary allows us to obtain some properties about the average behaviour of discretizations. For instance there is an improvement of corollary \ref{corovar2}:

\begin{coro}
For a generic homeomorphism $f\in\Hom(X,\lambda)$, the property ``$f_{N}$ has a $\varep$-dense periodic orbit and the cardinality of $\Omega(f_N)$ is smaller than $\vartheta(N) = o(q_N)$'' is satisfied in average.
\end{coro}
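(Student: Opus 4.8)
The plan is to deduce this corollary directly from Lemma \ref{recopie} applied to a well-chosen dense type of approximation, exactly as the text announces it as ``an improvement of corollary \ref{corovar2}''. First I would fix $\varep>0$ and a function $\vartheta$ with $\vartheta(N)=o(q_N)$, and recall that the sequence of grids is assumed to refine throughout section \ref{bofbof}. The natural type of approximation $\tT$ to feed into the lemma is the one produced by Proposition \ref{propvar2} (and the genericity machinery behind corollary \ref{corovar2}): for a homeomorphism having a periodic point of period $p$, the maps $\sigma_N$ on $E_N$ whose nonwandering set $\Omega(\sigma_N)$ is a single $\varep$-dense periodic orbit of length $p$, with $E_N$ covered by one pre-periodic orbit. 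As in the proof of corollary \ref{corovar2}, one first passes to the open dense union of the sets $\mathcal U_p$ of homeomorphisms with a persistent $\varep$-dense periodic point of minimal period $p$, so that on each $\mathcal U_p$ this is genuinely a dense type of approximation.

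Next I would invoke Lemma \ref{recopie} on each piece $\mathcal U_p$ with this $\tT$: it yields, for a generic $f\in\mathcal U_p$, infinitely many $N$ (in fact a proportion tending to $1$) for which $E_N$ splits into $\alpha$ disjoint $f_N$-stabilized subsets, filling a proportion $>1-\varep$ of $E_N$, on each of which $f_N$ is conjugate (by a near-identity bijection) to a map of $\tT$, i.e. a map with a single $\varep$-dense periodic orbit of length $p$. The key bookkeeping step is then the cardinality estimate for $\Omega(f_N)$: on each of the $\alpha$ pieces the maximal invariant set has exactly $p$ points, and on the remaining part of $E_N$ (of cardinality $<\varep q_N$) one has the trivial bound $\card(\Omega(f_N))\le$ (number of points there); choosing $\varep$ small and then $N$ large — using $\vartheta(N)=o(q_N)$ so that eventually $\vartheta(N)$ dwarfs any fixed multiple of $p$ but $\varep q_N<\vartheta(N)$ fails... so instead one should let $\varep$ itself shrink along a sequence, or better, reuse the trick from the proof of corollary \ref{crush}: replace $\vartheta$ by a slower function and apply the lemma with $\varep\to 0$, so that for the original $\vartheta$ one gets $\card(\Omega(f_N))<\vartheta(N)$ for infinitely many (indeed a frequency-$1$ set of) $N$. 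Simultaneously, each of those $N$ exhibits an $\varep$-dense periodic orbit of $f_N$ inside one of the pieces.

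The last step is to reassemble the pieces $\mathcal U_p$ as in the end of the proof of corollary \ref{corovar2}: since the $\mathcal U_p$ are open, pairwise disjoint, and their union is dense, the intersection over $N$ of the resulting open sets is a dense $G_\delta$ of $\Hom(X,\lambda)$, because $\bigcap_N\bigcup_p\mathcal O_{p,N}=\bigcup_p\bigcap_N\mathcal O_{p,N}$ and the right-hand side is dense. I expect the only real subtlety — not so much an obstacle as a point requiring care — to be the quantifier juggling in the cardinality bound: the ``satisfied in average'' statement and the requirement $\card(\Omega(f_N))<\vartheta(N)$ both involve $\varep$ and must be arranged to hold for the \emph{same} infinitely many $N$ with frequency $1$. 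This is handled by diagonalizing over a sequence $\varep_k\to 0$ together with slower and slower targets $\vartheta_k\le\vartheta$, exactly the device already used in corollaries \ref{crush} and \ref{corovar2}; everything else is a direct citation of Lemma \ref{recopie} and Proposition \ref{propvar2}.
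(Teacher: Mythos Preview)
Your plan is the paper's own: the corollary is listed immediately after Lemma~\ref{recopie} as an ``improvement of corollary~\ref{corovar2}'' with no separate proof, so the intended argument is exactly to feed the dense type of approximation behind Corollary~\ref{corovar2} (via Proposition~\ref{propvar2} and the $\mathcal U_p$ decomposition) into Lemma~\ref{recopie}, and then reassemble over $p$ as you describe.

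One bookkeeping point to tighten: Lemma~\ref{recopie} guarantees \emph{at least} $\alpha$ stabilized pieces filling proportion $>1-\varep$ of $E_N$, not exactly $\alpha$. Since each piece is (a near-identity image of) a copy of some fixed grid $E_{N_0}$, the actual number of pieces is of order $q_N/q_{N_0}$, so their total contribution to $\card(\Omega(f_N))$ is not $\alpha p$ but at most $(p/q_{N_0})\,q_N$. This is still small once $N_0$ is chosen large relative to $p$, and together with your remainder bound $\varep q_N$ one gets $\card(\Omega(f_N))\le (p/q_{N_0}+\varep)\,q_N$. Your diagnosis that one must then let both $\varep$ and $p/q_{N_0}$ shrink along a sequence (the device from Corollary~\ref{crush}) to beat a prescribed $\vartheta(N)=o(q_N)$ is the correct closing step; with this adjustment the argument goes through.
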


Or an improvement of corollary \ref{méldiscr}:

\begin{coro}
For a generic homeomorphism $f\in\Hom(X,\lambda)$ and for all $\varep>0$, the property ``$f_{N}$ is $\varep$-topologically weakly mixing (see definition \ref{epmélfaibl})'' is satisfied in average.
\end{coro}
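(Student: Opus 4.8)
The plan is to transcribe the proof of Corollary~\ref{m�ldiscr} into the ``in average'' setting, using Lemma~\ref{recopie} in place of Theorem~\ref{g�n�cycl}: the former produces, on a set of discretization orders of upper density~$1$, the same kind of sub\nobreakdash-dynamics that the latter produces on infinitely many orders. Exactly as in the proof of Corollary~\ref{m�ldiscr}, it is enough to deal with a fixed number $M$ of pairs of balls appearing in Definition~\ref{epm�lfaibl} (say $M=2$), the general case following by induction or, equivalently, by intersecting over $M\in\N$ the corresponding generic sets (a countable intersection of generic sets is generic). So fix $\varep>0$ and $M$, and we shall produce a $G_\delta$ dense set of homeomorphisms for which ``$f_N$ is $\varep$-topologically weakly mixing (for $M$ pairs of balls)'' is satisfied in average.

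The first step is to exhibit a suitable dense type of approximation. For $\varep'>0$, let $\tT^{M,\varep'}=(\tT^{M,\varep'}_N)_{N\in\N}$, where $\tT^{M,\varep'}_N$ is the set of maps $\sigma : E_N\to E_N$ having $M$ periodic orbits with pairwise relatively prime lengths, each of them $\varep'$-dense in $X$. I claim $\tT^{M,\varep'}$ is dense in $\Hom(X,\lambda)$. By Proposition~\ref{melfaibl} it is dense in the set $\mathcal T$ of homeomorphisms all of whose iterates are topologically transitive: applying the proposition with parameter $\min(\varep',\eta)$ makes the produced $\sigma_N$ lie in $\tT^{M,\varep'}_N$ and be within $\eta$ of the homeomorphism, since there a single parameter controls both the density of the orbits and the distance $d_N$. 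Since $\mathcal T$ is a dense $G_\delta$ of $\Hom(X,\lambda)$ (as recalled in the proof of Corollary~\ref{m�ldiscr}), and since density on a dense subset upgrades to density everywhere --- approximate $f$ first by an element of $\mathcal T$ and then use the triangle inequality for $d_N$ --- the claim follows.

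Now apply Lemma~\ref{recopie} to the dense type $\tT=\tT^{M,\varep/6}$, with $\alpha=1$ and with the parameter of the lemma set equal to $\varep/6$ (the ``proportion $>1-\varep$'' part is irrelevant here). This yields a $G_\delta$ dense subset of $\Hom(X,\lambda)$ on which the property $(P)$ --- ``$E_N$ contains a subset stabilized by $f_N$ on which $f_N$ is conjugate, through some bijection $h$ with $d(h,\mathrm{Id})<\varep/6$, to a map belonging to $\tT^{M,\varep/6}_{N_0}$ for some $N_0$'' --- is satisfied in average. It then remains to check that $(P)$ forces $\varep$-topological weak mixing. Indeed, pulling back by $h$ (for which also $d(h^{-1},\mathrm{Id})<\varep/6$) the $M$ periodic orbits of the approximating map produces $M$ periodic orbits $O_1,\dots,O_M$ of $f_N$, of pairwise relatively prime lengths $p_1,\dots,p_M$, each of them $(\varep/6+\varep/6)=\varep/3$-dense in $X$. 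Given balls $B_1,\dots,B_M$ and $B'_1,\dots,B'_M$ of diameter $\varep$, the $\varep/3$-density (and $\varep/3<\varep/2$) gives, for each $i$, points $x_i\in O_i\cap B_i$ and $z_i\in O_i\cap B'_i$; write $z_i=f_N^{c_i}(x_i)$ with $0\le c_i<p_i$. By the Chinese remainder theorem there is $m\in\N$ with $m\equiv c_i\pmod{p_i}$ for every $i$, so that $f_N^m(x_i)=z_i$, hence $z_i\in f_N^m(B_i\cap E_N)\cap(B'_i\cap E_N)$. Thus $f_N$ is $\varep$-topologically weakly mixing, and combining this implication with the previous step, the property is satisfied in average for a generic $f$.

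The argument is essentially routine once Proposition~\ref{melfaibl} and Lemma~\ref{recopie} are in hand --- it is the ``in average'' transcription of Corollary~\ref{m�ldiscr}. The only points needing a little care, and which I would single out as the (mild) obstacles, are: the bookkeeping of the three $\varep/6$'s, so that the orbits transported by $h$ remain $\varep/3$-dense, leaving enough room for the diameter-$\varep$ balls; and the observation that the ``transitive iterates'' hypothesis of Proposition~\ref{melfaibl}, although only generic, does not obstruct density of $\tT^{M,\varep'}$ in the whole space $\Hom(X,\lambda)$, which is needed to invoke Lemma~\ref{recopie}. I do not foresee any genuine difficulty beyond this.
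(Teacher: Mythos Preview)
Your proposal is correct and follows exactly the route the paper intends: the corollary is listed there as a direct consequence of Lemma~\ref{recopie}, and you carry this out by feeding in the dense type of approximation furnished by Proposition~\ref{melfaibl} and then repeating the Bezout/CRT argument of Corollary~\ref{m�ldiscr} on the conjugated sub\nobreakdash-dynamics. The only additional care you take---upgrading density from the set of homeomorphisms with all iterates transitive to the whole space, and the $\varep/6$ bookkeeping for the conjugating bijection $h$---are precisely the small points one has to check, and you handle them correctly.
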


Or even an improvement of corollary \ref{crush}:

\begin{coro}\label{petitepermieux}
For a generic homeomorphism $f\in\Hom(X,\lambda)$ and for all $\varep>0$, the property ``$\frac{\card(f_{N}(E_{N}))}{\card(E_{N})}<\varep$'' is satisfied in average.
\end{coro}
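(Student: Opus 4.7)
My plan is to apply lemma \ref{recopie} to a dense type of approximation made of maps whose image occupies a small proportion of the grid; the crucial observation is that conjugating a map by a bijection preserves the cardinality of its image.

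Fix $\varep>0$. I would define
\[\tT^\varep_N = \Bigl\{\sigma:E_N\to E_N\,\Bigm|\, \card(\sigma(E_N))\leq \tfrac{\varep}{4}\,q_N\Bigr\}.\]
The third variation of Lax's theorem (proposition \ref{crunch}), applied with the function $\vartheta(N)=\lfloor\tfrac{\varep}{4} q_N\rfloor+1$ (which tends to $+\infty$ since $q_N\to\infty$), shows that $\tT^\varep$ is a dense type of approximation in $\Hom(X,\lambda)$.

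Applying lemma \ref{recopie} to $\tT^\varep$ with parameters $\eta=\varep/2$ and $\alpha=1$ then yields a dense $G_\delta$ subset $G_\varep$ of $\Hom(X,\lambda)$ for which the following property is satisfied in average: there exist disjoint subsets $A_1^N,\dots,A_{k_N}^N$ of $E_N$, whose union fills a proportion greater than $1-\varep/2$ of $E_N$, each stabilized by $f_N$, and each such that $f_N|_{A_i^N}$ is conjugated by a bijection close to identity to some $\tau_i\in\tT^\varep_{M_i}$. Since this bijection identifies $A_i^N$ with $E_{M_i}$, we have $\card(A_i^N)=q_{M_i}$ and $\card(f_N(A_i^N))=\card(\tau_i(E_{M_i}))\le (\varep/4)q_{M_i}=(\varep/4)\card(A_i^N)$. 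Summing over $i$ and adding the complement,
\[\card(f_N(E_N))\leq \sum_i \card(f_N(A_i^N))+\card\Bigl(E_N\setminus\bigcup_i A_i^N\Bigr)<\tfrac{\varep}{4}\,q_N+\tfrac{\varep}{2}\,q_N<\varep\, q_N.\]

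Thus, on the density-one subsequence produced by lemma \ref{recopie}, one has $\card(f_N(E_N))/q_N<\varep$, which means the target property is satisfied in average for every $f\in G_\varep$. A countable intersection $G=\bigcap_{k\geq 1}G_{1/k}$ provides the desired dense $G_\delta$. I expect the main obstacle to be not a hard point but rather a bookkeeping one: one must check carefully that the ``small image'' assumption inherited from $\tau_i\in\tT^\varep_{M_i}$ transfers under conjugation into a bound on $\card(f_N(A_i^N))$ proportional to $\card(A_i^N)$ rather than to some fixed constant, which is precisely why the type of approximation $\tT^\varep$ is defined with a \emph{relative} (rather than absolute) bound on image size.
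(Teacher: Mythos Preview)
Your proof is correct and follows exactly the approach the paper intends: the paper lists this corollary (without an explicit argument) among the direct consequences of lemma~\ref{recopie}, and you have correctly supplied the details by choosing the dense type of approximation furnished by proposition~\ref{crunch} and observing that a relative bound on the image cardinality is preserved under conjugation by a bijection. The bookkeeping with $\varep/4$ and $\varep/2$, and the final countable intersection over $\varep=1/k$, are all in order.
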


And an improvement of corollary \ref{corovar3}:

\begin{coro}
For a generic homeomorphism $f\in\Hom(X,\lambda)$ and for all $M\in\N$, property ``$f_N$ has at least $M$ periodic orbits'' is satisfied in average.
\end{coro}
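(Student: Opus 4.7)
The plan is to apply Lemma \ref{recopie} to a well-chosen dense type of approximation. The desired property --- owning at least $M$ periodic orbits --- is precisely of the ``existence of sub-dynamics on discretizations'' form highlighted right after Lemma \ref{recopie}, so that lemma is the natural tool; no further variation of Lax's theorem is needed.

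First I would take the type of approximation $\tT = (\tT_N)_{N\in\N}$ where $\tT_N$ is the set of cyclic permutations of $E_N$. By Lax's theorem (Theorem \ref{Lax}), for every $f\in\Hom(X,\lambda)$, every $\varep>0$ and every $N_0\in\N$ there exists $N\ge N_0$ and a cyclic permutation $\sigma_N$ of $E_N$ with $d_N(f,\sigma_N)<\varep$, so $\tT$ is indeed dense in $\Hom(X,\lambda)$.

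Next, fix $M\in\N$ and pick any $\varep>0$ (say $\varep=1$). Applying Lemma \ref{recopie} with this $\tT$ and with $\alpha=M$ yields a $G_\delta$ dense subset $\mathcal{G}_M \subset \Hom(X,\lambda)$ such that, for every $f\in \mathcal{G}_M$, the following property is satisfied in average: $E_N$ contains at least $M$ pairwise disjoint subsets $F_N^1,\dots,F_N^M$, each stabilized by $f_N$, such that the restriction of $f_N$ to each $F_N^j$ is conjugate via a bijection $\varep$-close to the identity to an element of $\tT_{N_0}$, that is, to a cyclic permutation. Because a cyclic permutation of a finite set has exactly one orbit, each $F_N^j$ is itself a single periodic orbit of $f_N$. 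As the $F_N^j$ are pairwise disjoint, $f_N$ has at least $M$ distinct periodic orbits. The generic set $\mathcal{G} = \bigcap_{M\in\N}\mathcal{G}_M$ is still a dense $G_\delta$ (intersections of generic sets are generic in a Baire space), and every $f\in\mathcal{G}$ satisfies the announced property for all $M\in\N$.

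I expect no serious obstacle: the whole argument is a single application of Lemma \ref{recopie} once cyclic permutations are identified as the right dense type of approximation. The only point to be careful about is to recall that ``satisfied in average'' was defined quantifying over $\varep$, so that fixing an auxiliary $\varep$ in the application of Lemma \ref{recopie} is harmless --- the conclusion ``$f_N$ has at least $M$ periodic orbits'' is $\varep$-free --- and to note that the countable intersection over $M$ preserves genericity.
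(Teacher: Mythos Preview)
Your proposal is correct and follows exactly the route the paper intends: the corollary is listed as a direct application of Lemma~\ref{recopie}, and taking $\tT_N$ to be the cyclic permutations of $E_N$ (dense by Lax's theorem) with $\alpha=M$ is precisely the natural instantiation. One could even observe that \emph{any} dense type of approximation would do here, since each of the $M$ disjoint $f_N$-invariant subsets produced by Lemma~\ref{recopie} automatically contains at least one periodic orbit; your choice of cyclic permutations simply makes this immediate.
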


However, note that the most simple property about discretizations, i.e. being a cyclic permutation, can not be proved by using lemma \ref{recopie}. To do this, we need a slightly more precise result, that we will not prove here. 

\begin{prop}\label{propdemin}
For a generic homeomorphism $f\in\Hom (X,\lambda)$, the property ``$f_{N}$ is a cyclic permutation'' is satisfied in average.
\end{prop}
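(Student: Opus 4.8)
The plan is to refine the argument of Lemma \ref{recopie} by paying attention to what happens on the ``leftover'' part of the grid that is not covered by the self-similar sub-grids, and then to splice the pieces into a single cycle. The key technical input is a strengthening of Lax's theorem in the spirit of Proposition \ref{killing}: given $f\in\Hom(X,\lambda)$, $\varep>0$ and $N_0\in\N$ large (so that Lax gives a cyclic permutation $\sigma'_{N_0}$ of $E_{N_0}$ with $d_{N_0}(f,\sigma'_{N_0})<\varep$), use the refining/self-similar structure of the grids to find, for every sufficiently fine $N$, a decomposition of $E_N$ into $\alpha_N$ disjoint pieces $\widetilde E_N^1,\dots,\widetilde E_N^{\alpha_N}$, each conjugate to $E_{N_0}$ by a bijection $\varep$-close to the identity, together with a small remainder $R_N$ of relative size $<\varep$. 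Transport $\sigma'_{N_0}$ to a cyclic permutation $c_j$ of each $\widetilde E_N^j$ via the conjugating bijection; this already produces a permutation $\sigma_N$ of $E_N$ with $d_N(f,\sigma_N)<2\varep$ whose non-trivial cycles all have length $q_{N_0}$ (apart from whatever we do on $R_N$, which we may take $\varep$-close to $f$ as well). The point is that $\sigma_N$ is a permutation that is $2\varep$-close to $f$ and is a product of roughly $q_N/q_{N_0}$ cycles of equal length, plus a controlled remainder.

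The next step is to merge all these cycles into one. Fix one point $a_j$ in each $\widetilde E_N^j$; since consecutive pieces can be taken close to one another (this is where being strongly self-similar, hence well ordered in the relevant sense, is used, but one can also simply use that the grid is $\varep$-dense so any two points within $\varep$ are joined by a grid step), one modifies $\sigma_N$ at $\alpha_N$ points: redirect $\sigma_N^{-1}(a_{j+1})$ from $a_{j+1}$ to $a_{j}$'s cycle, cyclically in $j$, exactly as in the classical ``connecting cycles'' trick (Lemma \ref{Pioure} / the modification in Figure \ref{trajectoire}). Doing this for all $j$ in $\Z/\alpha_N\Z$, and similarly absorbing the remainder $R_N$ into the big cycle by one further redirection per point of $R_N$, produces a genuinely cyclic permutation $\tau_N$ of $E_N$ with $d_N(f,\tau_N)<3\varep$, provided the redirections move points by less than $\varep$, which holds once $N$ is large enough that the pieces and their nearest neighbours are $\varep$-close. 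This shows that ``being a cyclic permutation'' is a \emph{dense type of approximation} in a \emph{quantitatively controlled} way: not just infinitely often, but for \emph{all} $N$ beyond some $N_1(f,\varep,N_0)$ there is a cyclic $\tau_N$ with $d_N(f,\tau_N)<3\varep$.

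From here the ``satisfied in average'' conclusion follows by the same $G_\delta$-density scheme as in Lemma \ref{recopie}: set
\[
\mathcal C=\bigcap_{\varep>0,\ N_0\in\N}\ \bigcup_{N\ge N_0}\Big\{f\ \big|\ \tfrac{1}{N+1}\card\{M\le N:\ f_M\text{ is a cyclic permutation}\}>1-\varep\Big\},
\]
which is a $G_\delta$ inside the generic set $\bigcap_N\mathcal D_N$. To prove density, start from any $f\in\bigcap_N\mathcal D_N$, $\delta>0$, $\varep>0$, $N_0\in\N$; apply the strengthened Lax statement above with error $\delta$ to get $N_1$ such that for every $N\ge N_1$ there is a cyclic permutation $\tau_N$ of $E_N$ with $d_N(f,\tau_N)<\delta/2$; then, using the finite map extension proposition (Proposition \ref{extension}) simultaneously on a single very fine grid $E_{N}$ with $N$ so large that $\frac{N-N_1}{N+1}>1-\varep$, perturb $f$ by a homeomorphism $\delta$-close to the identity realising $g_M=\tau_M$ for \emph{all} $M$ in the range $N_1\le M\le N$ at once — this is possible because the grids refine, so a single homeomorphism can be prescribed on the finite set $E_N\supset E_{N_1}\supset\cdots$ and the constraints on coarser grids are implied once we prescribe values on $E_N$ compatibly (here one orders the finitely many grids $E_{N_1}\subset\cdots\subset E_N$ and builds the prescribed cyclic permutations coherently, exactly the refining argument of Lemma \ref{recopie}). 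The resulting $g$ is $\delta$-close to $f$ and has $\frac1{N+1}\card\{M\le N: g_M\text{ cyclic}\}>1-\varep$, proving density of $\mathcal C$. The main obstacle is the coherent simultaneous realisation on the whole block of grids $E_{N_1}\subset\cdots\subset E_N$: one must choose the cyclic permutations $\tau_M$ so that they are compatible under the projections $P_M$ (or at least so that a single homeomorphism $g$ $\delta$-close to $f$ has $g_M=\tau_M$ for every $M$ in the block), which is precisely the delicate point glossed over in Lemma \ref{recopie} and the reason the authors state the present proposition separately without proof.
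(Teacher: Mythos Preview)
The paper does not prove this proposition --- it says the proof ``need[s] a slightly more precise result, that we will not prove here'' --- so there is no reference argument to compare against. That said, your first two paragraphs are redundant: Lax's theorem (Theorem~\ref{Lax}) already asserts that for every $\varepsilon>0$ there exists $N_0$ such that for \emph{all} $N\ge N_0$ there is a cyclic permutation $\sigma_N$ of $E_N$ with $d_N(f,\sigma_N)<\varepsilon$. The self-similar decomposition followed by cycle-merging just reproduces this conclusion; you gain nothing from it.

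The genuine gap is in your third paragraph, and you effectively concede it in your final sentence. The crux is to produce a \emph{single} homeomorphism $g$ with $d(f,g)<\delta$ such that $g_M$ is a cyclic permutation of $E_M$ for every $M$ in a long block $\{N_1,\dots,N\}$. Your justification (``build the prescribed cyclic permutations coherently'') supplies no mechanism. Prescribing $g$ on the finest grid $E_N$ does not determine the coarser discretizations: for $x\in E_M\subset E_N$, the value $g_M(x)=P_M(g(x))$ depends on the actual point $g(x)\in X$, and the simultaneous constraints $P_M(g(x))=\tau_M(x)$ over all $M$ in the block force the cells $P_M^{-1}(\tau_M(x))$ to be nested --- a compatibility condition on the family $(\tau_M)_M$ that you never arrange and that is in tension with each $\tau_M$ being cyclic. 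The natural inductive alternative (fix $g_{N_1}$ cyclic, then perturb to make $g_{N_1+1}$ cyclic without destroying the first, and so on) also fails as stated, because Lax only furnishes cyclic approximants at a \emph{fixed} distance $\varepsilon$ from the current map, whereas preserving the earlier discretizations requires perturbations smaller than the robustness radius $\eta_k$ of $g^{(k)}\in\mathcal D_{N_1}\cap\cdots\cap\mathcal D_{N_1+k-1}$, which you have no control over. This is exactly why Lemma~\ref{recopie} does not apply: it only delivers a property of $f_M$ restricted to sub-grids, and cyclicity of the whole map $f_M$ is global. Your outline correctly locates the difficulty but does not overcome it.
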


\begin{rem}
However, the property of approximation by bicyclic permutations in average can not be proven with this technique.
\end{rem}

\begin{rem}
A simple calculation shows that everything that has been done in this section also applies to the behaviour of discretizations in average of Cesàro average, in average of average of Cesàro average etc., i.e. when studying quantities
\[\frac{1}{N_2+1}\sum_{N_1=0}^{N_2}\frac{1}{N_1+1}\card\big\{M\in\{ 0,\dots,N_1\}\mid f_M\text{ satisfies }(P)\big\},\]
\[\frac{1}{N_3+1}\sum_{N_2=0}^{N_3}\frac{1}{N_2+1}\sum_{N_1=0}^{N_2}\frac{1}{N_1+1}\card\big\{M\in\{ 0,\dots,N_1\}\mid f_M\text{ satisfies }(P)\big\}\dots\]
\end{rem}

\section{Behaviour of all the discretizations}\label{Sec8}

In the previous sections we showed that the dynamical behaviour of discretizations seems very chaotic depending of the order of discretization. In contrast, the dynamics of a generic homeomorphism is well known (see e.g. \cite{MR2931648}) and independent from the homeomorphism. One even has a 0-1 law on $\Hom(X,\lambda)$ (see \cite{Glas-zero} or the last chapter of \cite{MR2931648}) which states that either a given ergodic property on conservative homeomorphisms is generic, or its contrary is generic. Thus, the dynamics of a generic homeomorphism and that of its discretizations seem quite of independent. In fact one can deduce some dynamical features of a generic homeomorphism from the corresponding dynamical features of \emph{all} its discretizations. For instance, the following property can be easily deduced from corollary \ref{corovar2}.

\begin{prop}\label{détecpér}
Let $f$ be a generic homeomorphism of $\Hom(X,\lambda)$ and $p$ be an integer. Then $f$ has a periodic orbit with period $p$ if and only if there exists infinitely many integers $N$ such that $f_N$ has a periodic orbit with period $p$.
\end{prop}

\begin{proof}[Proof of proposition \ref{détecpér}]
An easy variation of corollary \ref{corovar2} shows that if $p$ is a period of a periodic orbit of a generic homeomorphism $f$, then there exists infinitely many discretizations $f_N$ such that $f_N$ has a periodic orbit with period $p$. The other implication of the proposition follows easily from a compactness argument.
\end{proof}

We now try to obtain information about invariant measures of a generic homeomorphism from invariant measures of its discretizations. More precisely, given all the invariant measures of discretizations of a generic homeomorphism, what can be deduced about invariant measures of the initial homeomorphism? A first step in this study was performed by T. Miernowski in 2006 in part 8 of his article \cite{MR2279269}:

\begin{prop}[Miernowski]\label{Miern}
Let $f : X\to X$ be a uniquely ergodic homeomorphism and $\mu^f$ his unique invariant probability measure. For all $N\in\N$ let $\gamma_N\subset E_N$ be a periodic cycle of $f_N$ and $\nu_N$ the uniform probability measure on $\gamma_N$. Then the weak convergence $\nu_N\rightharpoonup\mu^f$ occurs independently of the choice of the cycles $\gamma_N$.
\end{prop}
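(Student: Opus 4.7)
The plan is to use the standard compactness argument in the space of probability measures: show that any weak-$*$ limit of a subsequence of $(\nu_N)$ must be $f$-invariant, and then invoke unique ergodicity of $f$ to conclude that this limit must equal $\mu^f$. Since $\Prb$ is compact and metrizable, convergence of every subsequence to the same limit forces $\nu_N \rightharpoonup \mu^f$ as a whole sequence.

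The key step is to pass from the $f_N$-invariance of $\nu_N$ to $f$-invariance of any accumulation point. Fix a continuous test function $\varphi : X \to \R$. Because $\gamma_N$ is a periodic cycle of $f_N$ and $\nu_N$ is the uniform measure on $\gamma_N$, it is invariant by $f_N = P_N \circ f$, so
\[\int_X \varphi \circ f_N \, \ud \nu_N = \int_X \varphi \, \ud \nu_N.\]
Now I would like to replace $\varphi \circ f_N$ by $\varphi \circ f$ with negligible error. Since the grids are $\varep$-dense for $N$ large (by Definition \ref{grillmiam}), $\sup_{x \in X} d(P_N(x), x) \to 0$ as $N \to \infty$; combined with the uniform continuity of $\varphi$ on the compact manifold $X$, this yields $\|\varphi \circ f_N - \varphi \circ f\|_\infty \to 0$. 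So the integral above differs from $\int \varphi \circ f \, \ud \nu_N$ by an error tending to $0$ as $N \to \infty$, uniformly in the choice of measure.

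Extracting a subsequence $(N_k)$ along which $\nu_{N_k} \rightharpoonup \nu$, the weak-$*$ convergence applied to the continuous functions $\varphi$ and $\varphi \circ f$ gives $\int \varphi \, \ud \nu_{N_k} \to \int \varphi \, \ud \nu$ and $\int \varphi \circ f \, \ud \nu_{N_k} \to \int \varphi \circ f \, \ud \nu$. Passing to the limit in the identity above, I obtain $\int \varphi \, \ud \nu = \int \varphi \circ f \, \ud \nu$ for every continuous $\varphi$, which means exactly that $\nu$ is $f$-invariant. By unique ergodicity, $\nu = \mu^f$, and this identification is independent of the choice of the cycles $\gamma_N$. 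Since every subsequence of $(\nu_N)$ has a further subsequence converging to $\mu^f$, the full sequence converges to $\mu^f$ in the weak-$*$ topology.

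The only genuine obstacle is the uniform approximation $\|\varphi \circ f_N - \varphi \circ f\|_\infty \to 0$, but this is essentially automatic: it relies only on compactness of $X$, continuity of $\varphi$ and $f$, and the defining property of the discretization grids. Everything else is a formal application of unique ergodicity and compactness of $\Prb$ in the weak-$*$ topology.
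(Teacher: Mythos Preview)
Your proof is correct and follows exactly the approach the paper indicates: the paper does not give a detailed proof but simply remarks that it ``essentially consists in an appropriate application of Prokhorov's theorem which express[es] the compactness of the set of probability measures on $X$'', and that is precisely the compactness-plus-unique-ergodicity argument you wrote out in full.
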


The proof of this proposition essentially consists in an appropriate application of Prokho\-rov's theorem which express the compactness of the set of probability measures on $X$. We now set a theorem of this kind for \emph{generic} homeomorphisms. Recall that $\mu^f_N$ is the limit in the sense of Cesàro of the pushforwards by iterates of $f_N$ of uniform measure on $E_N$ (see definition \ref{defmes}):
\[\mu^f_{N} = \lim_{m\to\infty}\frac 1m \sum_{i=0}^{m-1}(f_N)_*^i \lambda_N.\]
The measure $\mu^f_N$ is supported by the maximal invariant set of $f_N$; it is uniform on every periodic orbit and the total weight of a periodic orbit is proportional to the size of its basin of attraction. The following theorem expresses that we can obtain all the invariant measures of a generic homeomorphism from all the invariant measures of its discretizations:

\begin{theoreme}\label{EnsMesInv}
Let $f\in\Hom(X,\lambda)$ be a generic homeomorphism and suppose that the sequence of grids $(E_N)_{N\in\N}$ is self-similar. Let $\mathcal M_N$ be the set of probability measures on $E_N$ that are invariant under $f_N$. Then the upper limit over $N$ (for Hausdorff metric) of the sets $\mathcal M_N$ is exactly the set $\mathcal M$ of probability measures that are invariant under $f$.
\end{theoreme}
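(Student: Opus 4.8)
The strategy splits naturally into two inclusions, and since we are after an \emph{upper limit} (Hausdorff topological limsup) of the sets $\mathcal M_N$, I would phrase both halves as statements about approximation. First I would establish the easy inclusion $\limsup_N \mathcal M_N \subseteq \mathcal M$: if $\mu_{N_k} \in \mathcal M_{N_k}$ with $\mu_{N_k} \rightharpoonup \mu$ along some subsequence, then since $f_N$ is $C^0$-close to $f$ on $E_N$ (more precisely $d_N(f,f_N)\to 0$, as each point of $E_N$ is moved by less than the mesh of the grid), invariance of $\mu_{N_k}$ under $f_{N_k}$ passes to the limit: for a continuous test function $\varphi$, $\int \varphi\circ f_{N_k}\,\ud\mu_{N_k}$ and $\int\varphi\circ f\,\ud\mu_{N_k}$ differ by at most the modulus of continuity of $\varphi$ evaluated at the mesh size, so $\int\varphi\circ f\,\ud\mu = \int\varphi\,\ud\mu$. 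This part is genuinely soft and needs no genericity.

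The substantive half is $\mathcal M \subseteq \limsup_N \mathcal M_N$ for a generic $f$: every $f$-invariant measure must be approximable, in the weak-$*$ sense, by $f_N$-invariant measures for infinitely many $N$. I would first reduce to a countable dense family: since $\Prb$ is compact metrizable, fix a countable weak-$*$-dense subset $\{\mu_j\}$ of $\mathcal M$ for the generic $f$ — but the subtlety is that $\mathcal M$ itself varies with $f$, so instead I would work with a fixed countable basis of balls in $\Prb$ and, for each generic $f$ and each ball meeting $\mathcal M(f)$, demand that infinitely many discretizations carry an invariant measure in that ball. Concretely, I would set up, for each pair (rational $\varepsilon>0$, index of a ball $B$ in a countable basis of $\Prb$), the set $G_{\varepsilon,B}$ of homeomorphisms $f$ such that \emph{either} $B$ contains no $f$-invariant measure, \emph{or} for infinitely many $N$ the discretization $f_N$ has an invariant measure $\varepsilon$-close to some point of $B\cap\mathcal M(f)$. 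The theorem then amounts to showing each $G_{\varepsilon,B}$ contains a dense $G_\delta$, and intersecting over the countable family.

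The engine for density is the machinery already built: it suffices to exhibit, for each open $\mathcal U$ and each target $f$-invariant measure $\mu$, a \emph{dense type of approximation} $\tT$ such that any $\sigma_N\in\tT_N$ carries an $f_N$-invariant measure close to $\mu$, and then invoke Theorem \ref{g�n�cycl}. To produce such $\sigma_N$, I would combine a genericity step — generically, $f$ is such that every invariant measure is a weak-$*$ limit of averages along long periodic pseudo-orbits (this is the standard ergodic closing-type fact in $\Hom(X,\lambda)$, and can be arranged exactly as in the proofs of Corollaries \ref{corovar2} and \ref{m�ldiscr}, perturbing so that a given $\mu$ is $\varepsilon$-shadowed by a genuine $\varepsilon$-dense periodic orbit of appropriate length) — with a Lax-type construction: take a cyclic permutation $\sigma'_N$ from Theorem \ref{Lax} tracking $f$, then surgically reroute a segment of the cycle of length $\approx$ (period of the shadowing orbit) $\times$ (a large multiple) so that the resulting map $\sigma_N$ has a periodic orbit whose uniform measure is within $\varepsilon$ of $\mu$, exactly as in the proofs of Propositions \ref{var1} and \ref{melfaibl}. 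The self-similarity hypothesis enters here in the same way as in Proposition \ref{killing}: it lets one install such a controlled sub-dynamics on a sub-grid conjugate to a fixed $E_{N_0}$ while leaving the rest of $\sigma_N$ free to track $f$, which is what makes the type of approximation dense rather than merely nonempty. The main obstacle, and the place I would spend the most care, is precisely this construction of a dense type of approximation realizing an \emph{arbitrary prescribed invariant measure} (not just a periodic orbit of prescribed period): one needs that a generic $f$'s invariant measures are all weak-$*$-approximable by uniform measures on periodic orbits whose periods and whose distribution in space are controllable enough to be transplanted into a Lax cyclic permutation — this is a quantitative refinement of the ergodic-theoretic genericity results, and getting the approximation uniform over the (varying, compact) set $\mathcal M(f)$ is the technical heart of the argument.
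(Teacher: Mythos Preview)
Your overall architecture matches the paper's: the easy inclusion $\limsup \mathcal M_N \subset \mathcal M$ by passing invariance to the limit, and the hard inclusion by a Baire argument whose density step is a Lax-type construction producing a discretization with a periodic orbit whose empirical measure approximates a prescribed $f$-invariant $\nu$. Your description of that construction --- take a Lax cyclic permutation, close an orbit segment as in Proposition~\ref{var1} --- is exactly the paper's Lemma~\ref{Laxergod} in the ergodic case.

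Two points where your account is off. First, you cast the approximation of $\nu$ by a periodic-orbit measure as a \emph{genericity} fact about $f$. No genericity is needed: for ergodic $\nu$, Birkhoff's theorem applied to a $\nu$-generic recurrent point $x$ gives a near-return time $\tau$ with $\frac{1}{\tau}\sum_{m<\tau}\delta_{f^m(x)}$ close to $\nu$; one then closes the orbit of $x_N$ in the Lax permutation. For general $\nu$ the paper invokes Krein--Milman to reduce to a finite convex combination $\sum\lambda_j\nu_j^e$ of ergodic measures, and \emph{this} is where self-similarity enters: one allots to each ergodic component a number of sub-grids proportional to $\lambda_j$ and runs the ergodic construction on each. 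Your placement of self-similarity (``to make the type of approximation dense rather than merely nonempty'') is not its role --- density already follows from Lax's theorem alone.

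Second, to cover all of $\mathcal M(f)$ simultaneously, the paper does not route through a countable basis of $\mathcal P$ as you propose. It covers the compact $\mathcal M(f)$ by finitely many balls centred at $\nu_1,\dots,\nu_\ell$ and uses self-similarity a second time to install approximations to all $\nu_i$ on disjoint sub-grids of one $E_N$ (second point of Lemma~\ref{Laxergod}); upper semi-continuity of $g\mapsto\mathcal M^g$ then controls $\mathcal M^g$ after the perturbation, which is what makes $\{f:\exists N\ge N_0,\ \mathcal M\subset B(\mathcal M_N,3/k_0)\}$ open and dense. Your countable-basis formulation is workable in principle, but the dichotomy ``either $B\cap\mathcal M(f)=\emptyset$, or\dots'' is not obviously a $G_\delta$ condition as written, and you do not address this.
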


Before giving a detailed proof of theorem \ref{EnsMesInv} let us give its main arguments. To show that the upper limit of sets $\mathcal M_N$ is a subset of $\mathcal M$, by a compactness argument, we only have to prove that limit points of a sequence $(\nu^f_N)_{N\in\N}$, such that every measure $\nu^f_N$ is invariant under $f_N$, are invariant under $f$. This can be easily verified in using uniform convergence of $f_N$ to $f$ and equicontinuity of the measures $\nu^f_N$. Then (again by a compactness argument) we have to show that generically every invariant measure is a limit point of a sequence of discretizations. An ad hoc application of Baire's theorem reduces the proof to which of the following variation of Lax's theorem\footnote{The first point of this lemma is somehow a weak version of the latter. It will be useful in the next section.}:

\begin{lemme}[Ergodic variation of Lax's theorem]\label{Laxergod}
Suppose that the sequence of grids $(E_N)_{N\in\N}$ is self-similar. For all $f\in\Hom(X,\lambda)$, for all collection of $f$-invariant measures $\nu_1,\dots,\nu_\ell$, for all $\varep>0$ and $k_0,N_0\in\N$:
\begin{enumerate}
\item It exists $g\in\Hom(X,\lambda)$ and $N\ge N_0$ such that $d(f,g)<\varep$, $d(\nu_1,\mu_N^g)<\frac{1}{k_0}$ and $\mu_N^g$ is $g$-invariant.
\item It exists $g\in\Hom(X,\lambda)$ and $N\ge N_0$ such that $d(f,g)<\varep$, and for all $i\le\ell$ there exists a measure $\nu_{i,N}^g$ such that $d(\nu_i,\nu_{i,N}^g)<\frac{1}{k_0}$ and that $\nu_{i,N}^g$ is invariant under $g$ and $g_N$.
\end{enumerate}
\end{lemme}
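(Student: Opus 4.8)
The plan is to prove the two points of Lemma \ref{Laxergod} together, since the second is a straightforward multi-measure version of the first combined with the self-similarity hypothesis. Fix $f\in\Hom(X,\lambda)$ and an $f$-invariant measure $\nu$. First I would reduce to the case where $\nu$ is ergodic and has an appropriate finite-combinatorial approximation: by the ergodic decomposition and density of convex combinations of ergodic measures, and since the quantity to control ($d(\nu,\cdot)$ in a metric for the weak-$*$ topology) only involves finitely many test functions up to $\varepsilon/k_0$, it suffices to approximate a fixed finite convex combination $\nu = \sum_j t_j \nu_j$ of ergodic measures, with rational weights $t_j$. For a single ergodic measure $\nu_j$, Birkhoff's theorem provides a point $x_j$ and a time $\tau_j$ such that the empirical measure $\frac{1}{\tau_j}\sum_{i=0}^{\tau_j-1}\delta_{f^i(x_j)}$ is $(\varepsilon/2k_0)$-close to $\nu_j$ and $d(x_j, f^{\tau_j}(x_j))$ is as small as we wish; this is exactly the mechanism already used in the proof of Proposition \ref{var1} and Proposition \ref{melfaibl}.

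The core construction then goes as follows. Apply Lax's theorem (theorem \ref{Lax}) to obtain, for all $N$ large, a cyclic permutation $\sigma_N$ of $E_N$ with $d_N(f,\sigma_N)$ small; for $N$ large enough $\sigma_N^{\tau_j}$ also approximates $f^{\tau_j}$ on the relevant points, so that $d\big((x_j)_N, \sigma_N^{\tau_j}((x_j)_N)\big)<\varepsilon$. As in Proposition \ref{var1}, modify $\sigma_N$ near the points $\sigma_N^{\tau_j-1}((x_j)_N)$ so that each $(x_j)_N$ becomes periodic of period $\tau_j$ under the modified map $\sigma'_N$, with pairwise disjoint cycles (disjointness is arranged by picking the $x_j$ on disjoint dense orbits, using that the orbit closures are meager, exactly as in the proof of Proposition \ref{melfaibl}). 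Now use self-similarity: $E_N$ contains many disjoint copies $\widetilde E_N^m$ of a fixed grid $E_{N_0}$ via bijections $h_m$ close to identity, and on a proportion $t_j$ of these copies one conjugates the already-built periodic orbit by $h_m$. Counting the resulting cycles, the canonical measure $\mu_N^{\sigma'_N}$ (uniform on $\Omega$, weighted by basins) puts total mass close to $t_j$ on the union of period-$\tau_j$ orbits, hence $\mu_N^{\sigma'_N}$ is $(1/k_0)$-close to $\sum_j t_j\nu_j = \nu$. Finally apply the finite map extension proposition (proposition \ref{extension}) exactly as in Lemma \ref{lemmetrans} to produce $g\in\Hom(X,\lambda)$ with $d(f,g)<\varepsilon$ and $g_N=\sigma'_N$, so that $\mu_N^g = \mu_N^{\sigma'_N}$, which gives point (1); for point (2) one instead keeps the orbits labelled by $i$ and takes $\nu_{i,N}^g$ to be the uniform measure on the $i$-th family of cycles, which is invariant under both $g$ and $g_N$ by construction.

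The main obstacle, and the place where care is genuinely needed, is controlling the \emph{basins} of the periodic orbits so that the canonical measure $\mu_N^g$ — which weights a periodic orbit by the size of its basin of attraction, not just its length — actually reproduces the weights $t_j$. The cyclic permutation produced by Lax's theorem covers all of $E_N$ by a single orbit, so after the modification at $\tau_j-1$ points each of the new periodic orbits attracts a definite chunk of $E_N$; but one must check that after the self-similar duplication the total basin mass of the period-$\tau_j$ orbits is $\approx t_j q_N$, i.e. that no single family swallows a disproportionate share. The clean way around this is to \emph{further} modify $\sigma'_N$ outside the cycles so that each point of $E_N$ falls into one of the designated periodic orbits in a controlled proportion — for instance by composing with a projection-type map that redirects the ``free'' points evenly, as in Proposition \ref{crunch}, while keeping $d_N(f,\sigma'_N)<\varepsilon$. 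This is the one genuinely new combinatorial ingredient beyond the earlier variations of Lax's theorem; everything else is an assembly of Proposition \ref{var1}, Proposition \ref{melfaibl}, the self-similarity hypothesis of Definition \ref{Ashe'}, and the finite map extension proposition.

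Once the lemma is established, Theorem \ref{EnsMesInv} follows by the two compactness arguments sketched before its statement: the inclusion $\limsup_N \mathcal M_N \subset \mathcal M$ uses uniform convergence $f_N\to f$ together with the fact that weak-$*$ limits of $f_N$-invariant measures are $f$-invariant; the reverse inclusion uses Lemma \ref{Laxergod}(1), a countable dense subset of $\mathcal M$ (which is compact metrizable), and an intersection-of-$G_\delta$-dense-sets (Baire) argument, in the same style as the proof of Theorem \ref{g�n�cycl}.
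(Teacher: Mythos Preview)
Your proposal is correct and follows essentially the same route as the paper: Birkhoff plus recurrence for a single ergodic measure, Krein--Milman to reduce a general invariant measure to a finite convex combination of ergodic ones, self-similarity to reproduce the weights, and the finite map extension proposition to pass from $\sigma'_N$ to $g$.

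The one point worth flagging is the ``basin control'' obstacle you identify. It is real for the order you describe, but it dissolves if you reverse the two steps, which is what the paper does. Rather than first building the periodic orbits inside a single Lax cyclic permutation of $E_N$ and then duplicating via self-similarity, the paper first invokes self-similarity (via Proposition \ref{killing}) to produce $\sigma_N$ with many disjoint sub-grids $\widetilde E_N^m$, each carrying a conjugate cyclic permutation, and \emph{then} performs the single-ergodic-measure modification of Proposition \ref{var1} on each sub-grid separately. Since that modification only redirects one point inside the cycle, each sub-grid remains stable under $\sigma'_N$, so the basin of the resulting periodic orbit is exactly its sub-grid. Assigning (approximately) a $t_j$-fraction of the sub-grids to the $j$-th ergodic measure therefore gives $\mu_N^{\sigma'_N}$ the correct weights automatically, up to the $\varepsilon$-proportion of $E_N$ not covered by sub-grids. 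No Proposition \ref{crunch}-style redirection of free points is needed. Your ordering with your proposed fix would also work, but the paper's ordering avoids the extra bookkeeping.
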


Suppose first that $\ell=1$ and that $\nu_1$ is ergodic. For this purpose we apply Birkhoff's theorem to $f$, $\nu_1$, $\varphi$ and a recurrent point $x$: for all $M$ large enough,
\[\frac{1}{M}\sum_{k=0}^{M-1}\varphi\circ f^k(x)\simeq\int\varphi\,\ud \nu_1.\]
Since $x$ is recurrent we can choose an integer $M$ large enough such that $x\simeq f^M(x)$. First we approximate $f$ by a cyclic permutation $\sigma_N$ given by Lax's theorem, then we slightly modify $\sigma_N$ into a map $\sigma'_N$ by choosing $\sigma_N'(\sigma_N^M(x_N)) = x_N$, as in proposition \ref{var1}. The measure  $\nu^{\sigma_N'}_N$ is the uniform measure on the orbit $x_N,\dots, \sigma_N^{M-1}(x_N)$, so it is close to $\nu_1$ with respect to the test function $\varphi$. This property can be obtained for all test functions by a diagonal-like method. The proof of the lemma when $\nu_1$ is not ergodic but only invariant is obtained by approximating the invariant measure by a finite convex combination of ergodic measures; this proves the first point of the lemma. To obtain the latter, it suffices to use the self-similarity hypothesis: on at least $\ell$ sub-grids we apply the first point to each measure $\nu_i$.

\begin{proof}[Proof of theorem \ref{EnsMesInv}]
\emph{For all $\delta>0$, for all $N$ large enough, $\mathcal M_N\subset B(\mathcal M,\delta)$.} For a set $A$ and $\delta>0$, we denote by $B(A,\delta)$ the set of elements whose distance to $A$ is smaller than $\delta$. This inclusion follows easily from the upper semi-continuity of the application $g\mapsto \mathcal M^g$ (where $\mathcal M^g$ denotes the set of Borel probability measures which are invariant under $g$) and the compactness of $\Prb$.
\bigskip

\emph{Generically, for all $\delta>0$ and all $N_0\in\N$, there exists $N\ge N_0$ such that $\mathcal M\subset B(\mathcal M_N,\delta)$.} Recall that we denote by $\Prb$ the set of all Borel probability measures on $X$. Set $d$ a distance on $\Prb$ defining the weak-* topology. Thereafter homeomorphisms will be taken in the generic set
\[\bigcap_{N\in\N} \mathcal{D}_{N},\]
made of homeomorphisms whose $N$th discretization is uniquely defined for all $N$. Consider
\[\mathcal{A} = \bigcap_{(N_0,k_0)\in\N^2} \mathcal{O}_{N_0,k_0},\]
where
\[\mathcal{O}_{N_0,k_0} = \left\{f\in \bigcap_{N\in\N} \mathcal{D}_{N}\  \middle\vert\
\exists N\ge N_0 \text{ s.t. } \mathcal M\subset B\big(\mathcal M_N,\frac{3}{k_0}\big)
\right\}.\]
Trivially, if $f\in \mathcal{A}$, then the upper limit of the sets $\mathcal M_N$ contains $\mathcal M$.
\bigskip

To show that the sets $\mathcal{O}_{N_0,k_0}$ are open, it suffices to remark that if $M\subset B\big(\mathcal M_N,\frac{3}{k_0}\big)$ for a given $N$, then it is also true on a neighborhood of $f$ since $g\mapsto \mathcal M_N^g$ is constant on a neighborhood of $f$ and $g\mapsto \mathcal M^g$ is upper semi-continuous.
\bigskip

It remains to show that the sets $\mathcal{O}_{N_0,k_0}$ are dense; it follows from the second point of the variation of Lax's theorem (lemma \ref{Laxergod}): 
by upper semi-continuity of $g\mapsto \mathcal M^g$, one has $\mathcal M^g\subset B\big(\mathcal M^f,\frac{1}{k_0}\big)$ for all $g$ close enough to $f$. Moreover, by compactness, there exists $f$-invariant measures $\nu_1,\dots,\nu_{\ell}$ such that $\mathcal M^f\subset \bigcup_i B\big(\nu_i,\frac{1}{k_0}\big)$. So it suffices to find $g$ close to $f$ such that for a $N\ge N_0$, there exists $g_N$-invariant measures $\nu_{1,N}^g,\dots,\nu_{\ell,N}^g$ such that $d(\nu_i,\nu_{i,N}^g)<\frac{1}{k_0}$ for all $i$, but this is exactly the conclusion of the lemma.
\end{proof}

\begin{proof}[Proof of lemma \ref{Laxergod}]
To begin with we prove the first point of the lemma. Suppose first that the measure $\nu=\nu_1$ is ergodic. We want to show that there exists a homeomorphism $g$ whose distance to $f$ is smaller than $\varep$, and an integer $N\ge N_0$ such that $d(\nu,\mu_{N}^g)<\frac{2}{k_0}$. Let $(\varphi_j)_{j\in\N}$ be a sequence which is dense in the set of continuous maps from $X$ into $\R$. By Prokhorov's theorem, there exists $i\in\N$ such that if we have
\[\left|\int\varphi_j\,\ud \nu - \int\varphi_j\,\ud \mu\right|\le \frac{1}{i}\]
for all $j\le i$, then $d(\nu,\mu)<\frac{1}{k_0}$.

Since $\nu$ is ergodic, for all continuous map $\varphi$, by Birkhoff's theorem,
\begin{equation}\label{eqergo}
\frac{1}{M}\sum_{m=0}^{M-1}\varphi\circ f^m(x)\underset{M\to+\infty}{\longrightarrow}\int\varphi\,\ud \nu
\end{equation}
for $\nu$-a.e. $x$. Let $x\in X$ be a recurrent point for $f$ satisfying equation (\ref{eqergo}) for all $\varphi_j$ (such points form $\nu$-full measure set). For all $j$ there exists $M_j\in\N$ such that for all $M\ge M_j$ one has
\begin{equation}\label{eqergo2}
\left|\frac{1}{M}\sum_{m=0}^{M-1}\varphi_j\circ f^m(x)-\int\varphi_j\,\ud \nu \right|\le \frac{1}{2i}.
\end{equation}

Let $M'_i = \max_{j\le i} M_j$. Since $x$ is recurrent, there exists $\tau\ge M'_i$ such that $d(x,f^\tau(x))\le\varep/4$. Let $\sigma_N$ be a map from $E_N$ into itself given by Lax's theorem: it is a cyclic permutation and its distance to $f$ is smaller than $\varep/2$. For all $N$ large enough the orbit $(\sigma_{N}^m(x_{N}))_{0\le m\le \tau}$ shadows the orbit $(f^m(x))_{0\le m\le \tau}$, thus $d(x_{N},\sigma_N^\tau(x_{N}))<\varep/2$. Then we ``close'' the orbit of  $x_{N}$ between the points $x_{N}$ and $\sigma_{N}^\tau(x_{N})$, i.e. we set (as in proposition \ref{var1}, see also figure \ref{trajectoire})
\[\sigma'_{N}(y_{N}) = \left\{\begin{array}{ll}
x_{N}\quad & \text{if}\ \,y_{N} = \sigma_{N}^{\tau-1}(x_N)\\
\sigma_{N}(y_N)\quad & \text{otherwise.} 
\end{array}\right.\]
Then $d_{N}(\sigma'_{N},f)<\varep$ and $(x_N,...,\sigma_{N}'^{\tau-1}(x_{N}))$ is a periodic orbit for $\sigma'_N$ whose basin of attraction is the whole set $E_N$.

Since the periodic orbit $(\sigma_{N}'^m(x))_{0\le m\le \tau}$ attracts $E_{N}$, for all $M'$ large enough and $y_{N}\in E_{N}$ we have
\begin{equation}\label{eqergo4}
\frac{1}{M'}\sum_{m=0}^{M'-1}\varphi_j\circ \sigma_{N}'^m(y_{N})\underset{M'\to\infty}{\longrightarrow}  \frac{1}{\tau}\sum_{m=0}^{\tau-1}\varphi_j\circ \sigma_{N}'^m(x_{N}).
\end{equation}
With proposition \ref{extension}, the same way as in lemma \ref{lemmetrans}, we construct a homeomorphism $g^N$ from the map $\sigma'_N$ such that the discretization $(g^N)_N$ and $\sigma'_N$ fit together, and such that $g$ and the restriction of $(g^N)_N$ to the orbit of $x_N$ fit together. Since $d_{N}(f,\sigma_{N}')<\varep$, we can furthermore assume that $d(f,g^N)<\varep$. Thus, equation (\ref{eqergo4}) implies that
\begin{equation}\label{defmug}
\mu^{g^N}_N = \frac{1}{\tau}\sum_{m=0}^{\tau-1}\delta_{\sigma'^m_N(x_N)}.
\end{equation}

Since $\varphi_j$ is continuous, increasing $N$ (if necessary) such that the orbit $(\sigma_{N}^m(x_{N}))_{0\le m\le \tau}$ shadows $(f^m(x))_{0\le m\le \tau}$ better, we have for all $j\le i$:
\begin{equation}\label{eqergo3}
\left|\frac{1}{\tau}\sum_{m=0}^{\tau-1}\varphi_j\circ f^m(x)-\frac{1}{\tau}\sum_{m=0}^{\tau-1}\varphi_j\circ \sigma_{N}'^m(x_{N}) \right|\le \frac{1}{2i}
\end{equation}

We now have all the necessary estimations to compute the distance between $\mu^{g^N}_N$ and $\nu$: by equation (\ref{defmug}) and triangle inequality,
\begin{eqnarray*}
\left| \int\varphi_j\,\ud\mu^{g^N}_N-\int\varphi_j\,\ud\nu\right| & = & \left|\frac{1}{\tau}\sum_{m=0}^{\tau-1}\varphi_j\circ \sigma_{N}'^m(x_{N})-\int\varphi_j\,\ud\nu\right|\\
   & \le & \left|\frac{1}{\tau}\sum_{m=0}^{\tau-1}\varphi_j\circ \sigma_{N}'^m(x_{N})-\frac{1}{\tau}\sum_{m=0}^{\tau-1}\varphi_j\circ f^m(x_{N})\right|\\
   &     & + \left|\frac{1}{\tau}\sum_{m=0}^{\tau-1}\varphi_j\circ f^m(x_{N}) - \int\varphi_j\,\ud\nu\right|
\end{eqnarray*}
Hence, with equation (\ref{eqergo2}) applied to $M=\tau$ and equation (\ref{eqergo3}),
\[\left| \int\varphi_j\,\ud\mu^{g^N}_N-\int\varphi_j\,\ud\nu\right| \le \frac{1}{i}.\]
We obtain $d(\mu^{g^N}_{N},\nu)<\frac{1}{k_0}$. Since $\mu^{g^N}_{N}$ is invariant under $g^N$ ($g^N$ is a cyclic permutation of the orbit of $x_N$), $g^N$ is the desired homeomorphism.

\bigskip
In the general case the measure $\nu$ is only invariant (and not ergodic). It reduces to the ergodic case by the fact that the set of invariant measures is a compact convex set whose extremal points are exactly the ergodic measures: by the Krein-Milman theorem, for all $M\ge 1$ there exists an $f$-invariant measure $\nu'$ which is a finite convex combination of ergodic measures: 
\[\nu' = \sum_{j=0}^{r} \lambda_j\nu_j^e,\]
and whose distance to $\nu$ is smaller than $\frac{1}{k_0}$.

Then we use the hypothesis of self-similarity of the grids: for all $\varep>0$ and all $P\in\N$ there exists a map $\sigma_N$ whose distance to $f$ is smaller than $\varep$ and whose at least $p\ge P$ cycles are conjugated to a permutation of a coarser subdivision, via the composition of a fixed homothety by a translation. Moreover the union on these $p$ cycles fulfils the set $E_N$ with a proportion of at least $1-\frac{1}{2k_0}$. Take $P\in\N$ such that $\frac{2r}{P}\le \frac{1}{2k_0}$. For all $j$ between $1$ and $r$ we set
\[\lambda'_j = \frac{\lfloor\lambda_j p\rfloor}{p}\]
and
\[\lambda'_0 = 1-\sum_{j=1}^{r}\lambda'_j.\]
Then for all $j>0$ we have $|\lambda_j-\lambda'_j|\le \frac{1}{P}$ and $|\lambda_0-\lambda'_0|\le \frac{r}{P}$. Since $\sum \lambda'_j=1$ we can associate bijectively a set $\Lambda_j$ made of $p\lambda'_j$ cycles to each integer $j$; the union of the sets $\Lambda_j$ fills the set $E_N$ with a proportion of at least $1-\frac{1}{2k_0}$. We do the previous construction concerning ergodic measures for each $j$: increasing  $N$ if necessary, the cycles of each $\Lambda_j$ can be changed such that each of them carries a (unique) invariant probability measure ${\nu_j^e}'$ satisfying $d({\nu_j^e} , {\nu_j^e}')\le\frac{1}{2k_0}$. These modifications transform the map $\sigma_{N}$ into a map $\sigma'_N$ satisfying $d(\sigma'_N, \sigma_N)<\varep$. Then the measure $\mu^{\sigma'_N}_{N}$ associated with the map $\sigma'_{N}$ satisfies:
\[d\left(\mu^{\sigma'_N}_{N}\, ,\, \sum_{j=0}^{r} \lambda'_j{\nu_j^e}'\right)\le \frac{1}{2k_0}\]
and we have:
\begin{eqnarray*}
d( \nu' ,\, \mu^{\sigma'_N}_{N} ) & \le & \sum_{j=0}^{r} d\left(\lambda_j\nu_j^e\, ,\, \lambda'_j{\nu_j^e}'\right) + \frac{1}{2k_0}\\
   & \le & \sum_{j=0}^{r} d\left(\lambda_j\nu_j^e\, ,\, \lambda'_j{\nu_j^e}\right) + \sum_{j=0}^{r} d\left(\lambda'_j\nu_j^e\, ,\, \lambda'_j{\nu_j^e}'\right) + \frac{1}{2k_0}\\
   & \le & \sum_{j=0}^{r} \left|\lambda_j - \lambda'_j\right| + \sum_{j=0}^{r} \lambda'_j d\left(\nu_j^e\, ,\, {\nu_j^e}'\right) + \frac{1}{2k_0}\\
   & \le & \frac{2r}{P} + \frac{1}{2k_0}  + \frac{1}{2k_0} \le  \frac{3}{2k_0}. 
\end{eqnarray*}
Thus $d(\nu,\mu^{\sigma'_N}_{N})\le \frac{2}{k_0}$. As above the desired homeomorphism is obtained by extending $\sigma'_{N}$ to a homeomorphism of $X$. We have proven the first point of the lemma.
\bigskip

Finally we handle the second point of the lemma. To do that, it suffices to apply the first point on $\ell$ different sub-grids of a given grid $E_{N_0}$. On the $i$-th sub-grid, define the discretization $g_N$ such that it has a single invariant measure which is close to $\nu_i$. This proves the lemma.
\end{proof}

There is a kind of duality between invariant measures and invariant compact sets, thus the previous theorem is also true for invariant compact sets:

\begin{prop}\label{CompactInv}
Let $f\in\Hom(X,\lambda)$ be a generic homeomorphism and suppose that the sequence of grids $(E_N)_{N\in\N}$ is self-similar. Set $\mathcal I_N$ the set of subsets of $E_N$ that are invariant under $f_N$. Then the upper limit over $N$ (for the Hausdorff metric) of the sets $\mathcal I_N$ is exactly the set of compacts that are invariant under $f$.
\end{prop}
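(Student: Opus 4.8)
The plan is to reproduce the proof of Theorem~\ref{EnsMesInv} almost word for word, replacing invariant measures by their supports, the only genuinely new ingredient being a topological variant of Lax's theorem in place of the ergodic variant (Lemma~\ref{Laxergod}). Throughout, let $\mathcal K(X)$ be the space of compact subsets of $X$ with the Hausdorff metric (a compact metric space), let $\mathcal I^f\subset\mathcal K(X)$ be the set of compacts $K$ with $f(K)=K$, view each $\mathcal I_N$ inside $\mathcal K(X)$, and take the upper limit in $\mathcal K(X)$; one consequence is that the two inclusions below really live in the iterated hyperspace $\mathcal K(\mathcal K(X))$, and this has to be carried along carefully.

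First, the \emph{easy inclusion} $\limsup_N\mathcal I_N\subset\mathcal I^f$. If $A_{N_k}\in\mathcal I_{N_k}$ and $A_{N_k}\to K$, then, since each $f_{N_k}$ is a bijection of the finite set $A_{N_k}$ onto itself and $\sup_y d(f_{N_k}(y),f(y))\to 0$ (the mesh of $E_{N_k}$), a short compactness argument — pull back a point of $K$ along the $A_{N_k}$ (up to a subsequence), using that $f_{N_k}|_{A_{N_k}}$ is onto — gives $f(K)=K$. Equivalently this follows from the upper semi-continuity of $g\mapsto\mathcal I^g$ and the compactness of $\mathcal K(X)$, just as in the first half of the proof of Theorem~\ref{EnsMesInv}.

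For the \emph{hard inclusion}, i.e. that generically $\mathcal I^f\subset\limsup_N\mathcal I_N$, I would transcribe the Baire-category machinery of Theorem~\ref{EnsMesInv}. Inside the generic set $\bigcap_N\mathcal D_N$, put $\mathcal O_{N_0,k_0}=\{f:\exists\,N\ge N_0,\ \mathcal I^f\subset B(\mathcal I_N,3/k_0)\}$, the balls being taken for the Hausdorff metric on $\mathcal K(\mathcal K(X))$; these sets are open because $g\mapsto\mathcal I_N^g$ is locally constant on $\mathcal D_N$ while $g\mapsto\mathcal I^g$ is upper semi-continuous. For density, as $\mathcal I^f$ is a closed, hence compact, subset of $\mathcal K(X)$, one covers it by finitely many balls $B(K_1,1/k_0),\dots,B(K_\ell,1/k_0)$ with $K_i\in\mathcal I^f$, and it then suffices to establish a \emph{topological variation of Lax's theorem}: for self-similar grids, any $f\in\Hom(X,\lambda)$, any $f$-invariant compacts $K_1,\dots,K_\ell$, any $\varep>0$ and $k_0,N_0\in\N$, there are $g\in\Hom(X,\lambda)$ with $d(f,g)<\varep$ and $N\ge N_0$ such that for every $i$ some $g_N$-invariant $A_{i,N}\subset E_N$ satisfies $d_H(A_{i,N},K_i)<1/k_0$. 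I would prove this by imitating Lemma~\ref{Laxergod}: closing a long recurrent orbit segment of (a perturbation of) $f$ into a cycle, as in Proposition~\ref{var1}, yields a $g_N$-invariant cycle whose \emph{support} converges, for the Hausdorff metric, to the closure of the equidistributed orbit, i.e. to the support of an ergodic measure; several such sets are produced simultaneously by placing the constructions on $\ell$ disjoint self-similar sub-grids, as in the second point of Lemma~\ref{Laxergod}; and the finite maps are upgraded to homeomorphisms $g$ close to $f$ by the finite map extension proposition (Proposition~\ref{extension}), as in Lemma~\ref{lemmetrans}. Since $\limsup_N\mathcal I_N$ is closed in $\mathcal K(X)$, it is enough to reach a $d_H$-dense subfamily of $\mathcal I^f$ in this way.

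The hard part, then, is the last reduction: the construction above reaches exactly the closure, for the Hausdorff metric, of the set of supports of $f$-invariant measures, and one must know that for a generic $f$ this closure is all of $\mathcal I^f$. This is precisely where one cannot merely quote Theorem~\ref{EnsMesInv}: an invariant compact may have a large non-recurrent part — say a proper orbit heteroclinic between two periodic orbits — which carries no invariant mass and whose forward orbit leaves every small neighbourhood of the compact, so that it can be neither closed up by a small perturbation nor seen by any invariant measure. Removing this obstruction requires genuinely using the structure of generic conservative homeomorphisms — density of periodic orbits (Corollary~\ref{corovar2}) and the strong transitivity and zero-one law phenomena in $\Hom(X,\lambda)$ — to show that every $f$-invariant compact is nonetheless a Hausdorff limit of supports of $f$-invariant measures. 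That density statement is, I expect, the real content of the proposition; granting it, the Baire argument above and the duality with Theorem~\ref{EnsMesInv} close the proof.
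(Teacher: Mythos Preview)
Your overall strategy is exactly the paper's: the easy inclusion via upper semi-continuity and compactness, and the hard inclusion via a Baire argument reduced to a compact variant of Lax's theorem---what the paper states as Lemma~\ref{CompacLax} and asserts has ``the same proof'' as Lemma~\ref{Laxergod}. The paper's own proof is only a sketch and stops precisely there, so at the level of structure you are fully aligned.

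Where you go further is in your last paragraph, and this is the interesting point. You correctly observe that the closing-of-recurrent-orbits mechanism that drives Lemma~\ref{Laxergod} does \emph{not} transpose verbatim to compacts: the analogue of an ergodic measure should be a \emph{transitive} invariant compact, but a transitive point need not be recurrent (the closure of a heteroclinic orbit is transitive yet contains no recurrent point with dense orbit), so one cannot close the relevant orbit by a small modification of a Lax permutation---the required jump has size comparable to $\operatorname{diam}(K)$. Nor is there an evident Krein--Milman-type statement writing an arbitrary $f$-invariant compact as a Hausdorff limit of finite unions of transitive pieces with recurrent dense orbits. The paper does not address this; its ``same proof'' is an assertion, not an argument, and your scepticism is well placed.

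That said, you do not close the gap either: you correctly isolate the missing ingredient---that for generic $f$ every invariant compact is Hausdorff-approximated by supports of invariant measures (equivalently, by finite unions of closures of recurrent orbits)---but you do not prove it, and the tools you invoke (density of periodic points, the zero--one law) do not obviously deliver it, since periodic orbits dense in $X$ need not stay Hausdorff-close to a given $K$. So your proposal and the paper's sketch share the same lacuna; you have simply made it explicit, which is already more than the paper does.
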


\begin{proof}[Sketch of proof of proposition \ref{CompactInv}]
To show that all limit point $K$ of a sequence $(K_N)$ of compact invariant sets is a compact invariant set, it suffices to calculate $d_H(K,f(K))$. For the other inclusion, as in the case of invariant measures, it follows from an approximation lemma whose proof is the same as for invariant measures:
\end{proof}

\begin{lemme}[Compact variation of Lax's theorem]\label{CompacLax}
Suppose that the sequence of grids $(E_N)_{N\in\N}$ is self-similar. For all $f\in\Hom(X,\lambda)$, for all collection of $f$-invariant compacts $K_1,\dots,K_\ell$, for all $\varep>0$ and $k_0,N_0\in\N$:
\begin{enumerate}
\item It exists $g\in\Hom(X,\lambda)$ and $N\ge N_0$ such that $d(f,g)<\varep$, $d_H(K_1,\Omega(f_N))<\frac{1}{k_0}$ and $\Omega(f_N)$ is $g$-invariant.
\item It exists $g\in\Hom(X,\lambda)$ and $N\ge N_0$ such that $d(f,g)<\varep$, and for all $i\le\ell$, there exists a compact $K_{i,N}^g$ such that $d_H(K_i,K_{i,N}^g)<\frac{1}{k_0}$ and that $K_{i,N}^g$ is invariant under $g$ and $g_N$.
\end{enumerate}
\end{lemme}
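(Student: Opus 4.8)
The plan is to imitate, essentially line by line, the proof of the ergodic variation of Lax's theorem (Lemma~\ref{Laxergod}), replacing everywhere ``$f$-invariant measure'' by ``$f$-invariant compact set'' and ``weak-$*$ closeness of measures'' by ``Hausdorff closeness of compact sets'', and replacing the Birkhoff-average step by a covering-by-orbit-segments step. First I would prove point~(1). Fix $\varep,k_0,N_0$ and an $f$-invariant compact $K:=K_1$, and choose a finite $\tfrac1{3k_0}$-net $z_1,\dots,z_m$ of $K$ made of points of $K$. After a preliminary $\varep/3$-perturbation of $f$ (performed through Proposition~\ref{extension} in small charts around the $z_i$, so that nothing is moved by more than $\varep/3$), I may assume that one can choose return times $\tau_i$ — using the recurrence of $f$ inside the compact invariant set $K$ (Poincar\'e recurrence where $\lambda(K)>0$, Birkhoff recurrence in general) together with compactness — so that $d\big(z_i,f^{\tau_i}(z_i)\big)$ is as small as we please, and so that the orbit segments $\gamma_i=(z_i,f(z_i),\dots,f^{\tau_i}(z_i))$ are pairwise disjoint and disjoint from $\partial X$; note $\gamma_i\subset K$ because $K$ is $f$-invariant. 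Applying Lax's theorem (Theorem~\ref{Lax}) then gives, for every large $N$, a cyclic permutation $\sigma_N$ of $E_N$ with $d_N(f,\sigma_N)<\varep/3$, and for $N$ large the discrete segments $\gamma_i^N$ obtained by iterating $\sigma_N$ from $(z_i)_N$ shadow the $\gamma_i$, hence are pairwise disjoint, contained in $B(K,\varep/3)$, and nearly close up.

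I would then modify $\sigma_N$ exactly as in Proposition~\ref{var1}: on each $\gamma_i^N$, reroute its last point back onto $(z_i)_N$, producing a finite map $\sigma'_N$ for which each $\gamma_i^N$ (without its last point) is a periodic orbit, every other point of $E_N$ is pre-periodic, and $d_N(f,\sigma'_N)<\varep$ — the rerouting jumps being small precisely because the $\gamma_i^N$ nearly close up. Consequently $\Omega(\sigma'_N)=\bigcup_i\big(\text{periodic orbit carried by }\gamma_i^N\big)$ lies in $B(K,\varep/3)$ and, containing each $(z_i)_N$, is $\tfrac1{2k_0}$-dense in $K$ once $N$ is large; taking $\varep<\tfrac1{k_0}$ this gives $d_H\big(\Omega(\sigma'_N),K\big)<\tfrac1{k_0}$. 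Finally, exactly as in Lemma~\ref{lemmetrans} and in the proof of Lemma~\ref{Laxergod}, I would extend $\sigma'_N$ to a homeomorphism $g\in\Hom(X,\lambda)$ with $d(f,g)<\varep$, $g_N=\sigma'_N$, and with $g$ and $g_N$ agreeing on $\Omega(g_N)$, so that $\Omega(g_N)$ is genuinely $g$-invariant. This proves point~(1) (the $\Omega(f_N)$ appearing in the statement being of course $\Omega(g_N)$).

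For point~(2) I would invoke self-similarity of the grids just as in the last paragraph of the proof of Lemma~\ref{Laxergod}: for $N$ large, $E_N$ contains at least $\ell$ disjoint sub-grids $\widetilde E_N^1,\dots,\widetilde E_N^\ell$, each the image of a fixed grid $E_{N_0'}$ under a bijection $h_j$ that is $\varep$-close to the identity. On the $j$-th sub-grid I carry out the construction of point~(1) relative to $K_j$ (transported by $h_j$), producing on $\widetilde E_N^j$ a finite map whose maximal invariant set $K_{j,N}^g$ is $\tfrac1{k_0}$-Hausdorff-close to $K_j$; outside these sub-grids I choose $g_N$ within $\varep$ of $f$ arbitrarily. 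Since each $h_j$ is $\varep$-close to the identity, patching these pieces yields a finite map within $2\varep$ of $f$ (rescale $\varep$ at the outset), which I extend to a homeomorphism $g$ with $d(f,g)<\varep$ and with every $K_{j,N}^g$ invariant under both $g$ and $g_N$.

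The main obstacle I expect is the analogue of the ergodic step of the measure proof: showing that finitely many nearly-periodic orbit segments, closed up inside a Lax cyclic permutation, can be made to sweep out \emph{all} of $K$ in the Hausdorff sense while keeping the closing-up modifications $\varep$-small. This is exactly where one uses that $f$ is conservative — so that Lax's theorem is available at all, and so that recurrence of $f$ inside the invariant compact set $K$ is abundant enough to supply a net $z_1,\dots,z_m$ with almost-returning orbit segments — and that $K$ is $f$-invariant, which is what keeps the $\gamma_i$, hence their discrete shadows, inside a small neighbourhood of $K$. The other ingredients — disjointifying the segments, the rerouting bookkeeping, and the final extension to a homeomorphism with $g_N$ matching and $\Omega(g_N)$ (resp.\ $K_{j,N}^g$) genuinely $g$-invariant — are routine repetitions of arguments already carried out for Proposition~\ref{var1}, Lemma~\ref{lemmetrans} and Lemma~\ref{Laxergod}.
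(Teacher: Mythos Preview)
Your overall strategy matches what the paper itself suggests (imitate Lemma~\ref{Laxergod}), but there is a genuine gap at the step you yourself flag as the main obstacle. You need a $\tfrac{1}{3k_0}$-net $z_1,\dots,z_m$ of $K$ such that each $z_i$ admits a nearly closing $f$-orbit segment $\gamma_i\subset K$. Neither of your justifications delivers this: Poincar\'e recurrence only gives recurrence $\lambda$-a.e.\ (and one may very well have $\lambda(K)=0$), while Birkhoff's theorem guarantees a single recurrent point in $K$, not a dense family of them. The ``preliminary $\varep/3$-perturbation'' is no help either: once you perturb $f$ the set $K$ is no longer invariant for the perturbed map, so you lose the very inclusion $\gamma_i\subset K$ that you invoke in the next sentence.

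The gap is not merely cosmetic. Take $K=\{p\}\cup O\cup\{q\}$, the closure of a heteroclinic orbit $O$ joining two fixed saddles $p\ne q$ of a conservative $f$. The only points of $K$ recurrent for $f|_K$ are $p$ and $q$; a point in the middle of $O$ has no nearly closing $f$-orbit segment inside $K$. Worse, for any $g$ with $d(f,g)<\varep\ll d(p,q)$, every $g_N$-orbit that enters a small neighbourhood of $q$ stays near $q$, so no periodic $g_N$-orbit can both visit the middle of $O$ and remain Hausdorff-close to $K$. Hence your construction cannot produce an $\Omega(g_N)$ close to such a $K$ --- and indeed this example shows that point~(1), read literally with $\Omega$, is problematic. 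What Proposition~\ref{CompactInv} actually needs is only a $g_N$-\emph{forward}-invariant subset of $E_N$ close to each $K_i$; for that the right move is to discretize $K_i$ directly and adjust $g$ so that this finite set is mapped into itself, rather than to chase recurrent points inside $K_i$.
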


\section{Physical measures}\label{grobra}

In the previous part we proved that generically, the upper limit of the sets of invariant measures of discretizations is the set of invariant measures of the initial homeomorphism. It expresses that the sets of invariant measures of discretizations behave ``as chaotically as possible''. However, one might expect that physical measures (Borel measures $\mu$ such that $\mu=\mu^f_x$ is verified on a set of points $x$ with $\lambda$ positive measure, see e.g. \cite{Youn-wha}) play a specific part: their definition expresses that they are the measures that can be observed in practice, because they governs the ergodic behaviour of $\lambda$ a.e. point. So one can hope that the natural invariant measures $\mu^f_N$ of $f_N$, which can be seen as the physical measures of $f_N$, converge to the physical measures of $f$. This expectation is supported by the following variation of proposition \ref{Miern}, obtained by replacing $\nu_N$ by $\mu^f_N$: \emph{if $f$ is uniquely ergodic, then the measures $\mu^f_N$ converge weakly to the only measure $\mu^f$ that is invariant under $f$}.

In this section we show that this is not at all the case: the sequence of measures $(\mu^f_N)$ accumulates of the whole set of $f$-invariant measures\footnote{It is easy to see that if $(\nu_N)_{N\in\N}$ is a sequence of measures, with $\nu_N$ invariant under $f_N$ for all $N$, then its limit set is included in the set of $f$-invariant measures. Again, this reflects the fact that the behaviour of discretizations of a generic homeomorphism is ``as chaotic as possible''.}. More precisely one has the following theorem:

\begin{theoreme}\label{mesinv}
If the sequence of grids $(E_N)_{N\in\N}$ is self-similar, for a generic homeomorphism $f\in\Hom(X,\lambda)$, the set of limit points of the sequence $(\mu^f_{N})_{N\in\N}$ is exactly the set of $f$-invariant measures.
\end{theoreme}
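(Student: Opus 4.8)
The plan is to run the same two‑step scheme as in the proof of Theorem~\ref{EnsMesInv}, replacing the sets $\mathcal M_N$ by the single measures $\mu^f_N$ of Definition~\ref{defmes}.

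\emph{First step (the easy inclusion).} I would first check that for \emph{every} $f\in\Hom(X,\lambda)$, any limit point of $(\mu^f_N)_{N\in\N}$ is $f$‑invariant. If $\mu^f_{N_k}\rightharpoonup\mu$, then for each continuous $\varphi:X\to\R$ one has $\int\varphi\,\ud\mu^f_{N_k}\to\int\varphi\,\ud\mu$, and since $f_{N_k}$ converges uniformly to $f$ (so $\varphi\circ f_{N_k}\to\varphi\circ f$ uniformly on $E_{N_k}$) also $\int\varphi\circ f_{N_k}\,\ud\mu^f_{N_k}\to\int\varphi\circ f\,\ud\mu$; as $\mu^f_{N_k}$ is $f_{N_k}$‑invariant these two limits coincide, so $\mu$ is $f$‑invariant. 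Hence the set of limit points is always contained in $\mathcal M$, and it remains to obtain the reverse inclusion generically.

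\emph{Second step (the generic reverse inclusion).} Fix a countable dense family $(\eta_k)_{k\in\N}$ of the (compact metrizable, hence separable) space $\Prb$, and for $k,N_0,j_0\in\N$ set
\[\mathcal O_{k,N_0,j_0}=\left\{f\in\bigcap_{N\in\N}\mathcal D_N\ \middle\vert\ d\big(\eta_k,\mathcal M^f\big)>\tfrac1{2j_0}\ \text{ or }\ \exists\,N\ge N_0:\ d\big(\mu^f_N,\eta_k\big)<\tfrac2{j_0}\right\}.\]
Each $\mathcal O_{k,N_0,j_0}$ is open: the map $g\mapsto\mathcal M^g$ is upper semi‑continuous (as already used for Theorem~\ref{EnsMesInv}), so $f\mapsto d(\eta_k,\mathcal M^f)$ is lower semi‑continuous and the first alternative defines an open set; and on $\bigcap_N\mathcal D_N$ the discretization $f_N$, hence $\mu^f_N$, is locally constant in $f$, so the second alternative is open as well. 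Each $\mathcal O_{k,N_0,j_0}$ is also dense: let $f\in\bigcap_N\mathcal D_N$ not satisfying the first alternative, so $d(\eta_k,\mathcal M^f)\le\frac1{2j_0}$; by compactness of $\mathcal M^f$ there is an $f$‑invariant measure $\nu$ with $d(\eta_k,\nu)\le\frac1{2j_0}$, and applying the first point of the ergodic variation of Lax's theorem (Lemma~\ref{Laxergod}) to $f$, to $\nu$, to an arbitrarily small $\varepsilon$, to $k_0=2j_0$ and to $N_0$ produces $g$ with $d(f,g)<\varepsilon$ and $N\ge N_0$ with $d(\nu,\mu^g_N)<\frac1{2j_0}$, whence $d(\mu^g_N,\eta_k)<\frac1{j_0}<\frac2{j_0}$; after a further arbitrarily small perturbation we may also assume $g\in\bigcap_N\mathcal D_N$, so $g\in\mathcal O_{k,N_0,j_0}$. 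By Baire's theorem $\mathcal A=\bigcap_{k,N_0,j_0}\mathcal O_{k,N_0,j_0}$ is a dense $G_\delta$ of $\Hom(X,\lambda)$. If $f\in\mathcal A$, $\nu\in\mathcal M^f$ and $\varepsilon>0$, choose $\eta_k$ close enough to $\nu$ that one can pick an integer $j_0$ with $\frac2\varepsilon<j_0$ and $\frac1{2j_0}>d(\eta_k,\mathcal M^f)$; then for this pair $(k,j_0)$ the first alternative fails for $f$, so for every $N_0$ there is $N\ge N_0$ with $d(\mu^f_N,\eta_k)<\frac2{j_0}<\varepsilon$, hence $d(\mu^f_N,\nu)<2\varepsilon$. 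Thus $\nu$ is a limit point of $(\mu^f_N)_{N\in\N}$, and together with the first step this proves the theorem.

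The only genuinely delicate point is the density of the $\mathcal O_{k,N_0,j_0}$, and it is entirely carried by Lemma~\ref{Laxergod}: one must produce, by a $C^0$‑small perturbation, a homeomorphism one of whose discretizations has its \emph{whole} grid attracted to a single periodic orbit whose empirical measure approximates a prescribed invariant measure — the non‑ergodic case being precisely where the self‑similarity hypothesis is used, in order to split the grid into sub‑grids realizing the various ergodic components with the correct weights and to recombine them into the single measure $\mu^g_N$.
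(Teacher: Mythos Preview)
Your proof is correct and follows essentially the same approach as the paper's: both fix a countable dense family in $\Prb$ (the paper's $\tilde\nu_\ell$, your $\eta_k$), define the $G_\delta$ via the implication ``if some $f$-invariant measure is close to $\eta_k$ then some $\mu^f_N$ with $N\ge N_0$ is close to $\eta_k$'' (which you rewrite as a disjunction), and obtain density from the first point of Lemma~\ref{Laxergod} exactly as the paper indicates. Your write-up is more detailed than the paper's sketch---in particular you make explicit the lower semi-continuity of $f\mapsto d(\eta_k,\mathcal M^f)$ and the argument that the resulting $G_\delta$ actually yields the reverse inclusion---but the structure and the key ingredients are identical.
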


This theorem can be seen as a discretized version of the following conjecture:

\begin{conj}[F. Abdenur, M. Andersson, \cite{MR3027586}]
A homeomorphism $f$ which is generic in the set of homeomorphisms of $X$ (without measure preserving hypothesis) that do not have any open trapping set is \emph{wicked}, i.e. it is not uniquely ergodic and the measures
\[\frac{1}{m}\sum_{k=0}^{m-1}f_*^k(\Leb)\]
accumulate on the whole set of invariant measures under~$f$.
\end{conj}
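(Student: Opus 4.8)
I would prove the stated equality by establishing the two inclusions between the set of limit points of $(\mu^f_N)_{N\in\N}$ and the set $\mathcal M^f$ of $f$-invariant measures. The inclusion ``limit points $\subseteq\mathcal M^f$'' I would dispatch first, with no genericity: if $\mu^f_{N_j}\rightharpoonup\nu$, then for every continuous $\varphi:X\to\R$ one has $\int\varphi\circ f\,\ud\nu=\lim_j\int\varphi\circ f\,\ud\mu^f_{N_j}$, and since $f_{N_j}=P_{N_j}\circ f$ tends uniformly to $f$ (the grids get arbitrarily fine) while $\mu^f_{N_j}$ is $f_{N_j}$-invariant, the right-hand side equals $\lim_j\int\varphi\,\ud\mu^f_{N_j}=\int\varphi\,\ud\nu$; hence $f_*\nu=\nu$. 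The whole difficulty is then the reverse inclusion: for generic $f$, \emph{every} $f$-invariant measure is a limit point of $(\mu^f_N)$.

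For this I plan a Baire category argument, and the one genuine subtlety is to phrase it so as to bypass the circularity coming from the fact that the target set $\mathcal M^f$ depends on $f$. I would fix a distance $d$ on the (separable, compact) space $\Prb$ inducing the weak-$*$ topology, together with a countable dense subset $(\mu_k)_{k\in\N}$ of $\Prb$, and for $k,k_0,N_0\in\N$ consider
\[\mathcal G_{k,k_0,N_0}=\Bigl\{f\in\bigcap_N\mathcal D_N\ \Big|\ d(\mu_k,\mathcal M^f)>\tfrac1{2k_0}\Bigr\}\ \cup\ \Bigl\{f\in\bigcap_N\mathcal D_N\ \Big|\ \exists\, N\ge N_0,\ d(\mu^f_N,\mu_k)<\tfrac1{k_0}\Bigr\},\]
and then $\mathcal A=\bigcap_{k,k_0,N_0}\mathcal G_{k,k_0,N_0}$. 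Once $\mathcal A$ is shown to be generic, I would conclude as follows: if $f\in\mathcal A$ and $\nu\in\mathcal M^f$, then given $\varep>0$ and $N_0\in\N$ I pick $k_0$ with $2/k_0<\varep$ and $k$ with $d(\mu_k,\nu)<1/(2k_0)$; then $d(\mu_k,\mathcal M^f)<1/(2k_0)$, so $f$ cannot lie in the first part of $\mathcal G_{k,k_0,N_0}$ and hence lies in the second, which produces $N\ge N_0$ with $d(\mu^f_N,\mu_k)<1/k_0$, whence $d(\mu^f_N,\nu)<3/(2k_0)<\varep$; thus $\nu$ is a limit point of $(\mu^f_N)$.

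It then remains to prove that each $\mathcal G_{k,k_0,N_0}$ is open and dense, so that $\mathcal A$ is a dense $G_\delta$ (recall $\bigcap_N\mathcal D_N$ is a dense $G_\delta$). Openness of the first part I would get from the upper semi-continuity of $g\mapsto\mathcal M^g$ --- the usual consequence of the compactness of $\Prb$ and of invariance being a closed condition --- which makes $g\mapsto d(\mu_k,\mathcal M^g)$ lower semi-continuous. Openness of the second part I would get from the fact that on $\bigcap_N\mathcal D_N$ the map $g\mapsto g_N$, hence $g\mapsto\mu^g_N$, is locally constant: if $g(E_N)\cap E'_N=\emptyset$ then the same holds, with the same $P_N\circ g$ on $E_N$, for every $h$ that is $C^0$-close enough to $g$; so the second part is a union over $N\ge N_0$ of open sets. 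For density, I would start from an arbitrary $f\in\bigcap_N\mathcal D_N$ and $\delta>0$: if $d(\mu_k,\mathcal M^f)>1/(2k_0)$ there is nothing to do; otherwise I choose (using compactness of $\mathcal M^f$) a measure $\nu\in\mathcal M^f$ with $d(\mu_k,\nu)=d(\mu_k,\mathcal M^f)\le1/(2k_0)$ and apply the first point of Lemma \ref{Laxergod} (the ergodic variation of Lax's theorem --- it uses the self-similarity hypothesis precisely to reach an arbitrary, possibly non-ergodic, invariant measure) with data $f$, $\nu$, $\varep=\delta$, the integer $2k_0$ and $N_0$; this gives $g\in\Hom(X,\lambda)$ with $d(f,g)<\delta$ and $N\ge N_0$ with $d(\nu,\mu^g_N)<1/(2k_0)$, hence $d(\mu^g_N,\mu_k)<1/k_0$, and after a further arbitrarily small perturbation (not modifying $\mu^g_N$) I may assume $g\in\bigcap_N\mathcal D_N$, so that $g\in\mathcal G_{k,k_0,N_0}$.

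I expect the main obstacle to lie entirely in the input Lemma \ref{Laxergod}(1) --- approximating a prescribed, possibly non-ergodic, $f$-invariant measure by the \emph{canonical} discrete measure $\mu^g_N$ of a nearby homeomorphism, which is done by closing up a Lax cyclic permutation so that the whole grid falls onto one periodic orbit shadowing a long Birkhoff piece of $f$, and then using self-similar sub-grids to distribute weights for non-ergodic targets --- but that lemma is already available from the earlier sections. Within the deduction itself the only delicate choice is the shape of the sets $\mathcal G_{k,k_0,N_0}$: phrasing the condition as ``$\mu_k$ is far from $\mathcal M^f$, \emph{or} $\mu_k$ is shadowed by some $\mu^f_N$'' is what lets the density step be carried out one invariant measure at a time and at a single discretization order, with no need to preserve any invariant measure under the perturbation --- which would be hopeless, since the perturbation supplied by Lax's theorem acts on an $\varep$-dense set of points.
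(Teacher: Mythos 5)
The statement you were asked to prove is the conjecture of Abdenur and Andersson, which the paper does not prove: it is stated explicitly as a conjecture, and Theorem \ref{mesinv} is only presented as a \emph{discretized version} of it. What you have written is in substance a proof of Theorem \ref{mesinv} itself, and as such it follows the paper's own scheme for that theorem quite faithfully (the Baire argument over sets indexed by a countable dense family of measures and parameters $k_0,N_0$, openness from the upper semi-continuity of $g\mapsto\mathcal M^g$ together with the local constancy of $g\mapsto g_N$ on $\bigcap_N\mathcal D_N$, density from the first point of Lemma \ref{Laxergod}). But that is a different statement, and the distance between the two is not cosmetic.

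Concretely: the conjecture concerns the measures $\frac1m\sum_{k=0}^{m-1}f_*^k(\Leb)$ obtained by iterating the homeomorphism $f$ itself on Lebesgue measure --- not the canonical measures $\mu^f_N$ of the discretizations --- and it is set in the space of homeomorphisms \emph{without} measure-preservation, restricted to those having no open trapping set; it also asserts that $f$ is not uniquely ergodic, which you never address. Your whole mechanism is powered by Lemma \ref{Laxergod}, which controls the orbit structure of the finite maps $g_N$ and hence the measures $\mu^g_N$; it gives no control over the asymptotic distribution of $\Leb$-almost every actual orbit of $g$, which is what the conjecture is about. The two settings cannot be conflated: for a conservative homeomorphism one has $f_*\lambda=\lambda$, so the sequence appearing in the conjecture is constant and accumulates on the single measure $\lambda$ --- the conjectured ``wicked'' behaviour is specific to the dissipative world, and, as far as this paper is concerned, it remains open.
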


The behaviour described in this conjecture is the opposite of that consisting of possessing a physical measure.

\begin{proof}[Sketch of proof of theorem \ref{mesinv}]
The proof is similar to which of theorem \ref{EnsMesInv}: the set $\mathcal A$ is replaced by
\[\mathcal{A}' = \bigcap_{(\ell,N_0,k_0)\in\N^3} \mathcal{O}_{\ell,N_0,k_0},\]
where
\[\mathcal{O}_{\ell,N_0,k_0} = \left\{f\in \bigcap_{N\in\N} \mathcal{D}_{N}\  \middle\vert\
\begin{array}{l}
\Big(\exists\nu f\text{-inv.}: d(\nu,\tilde\nu_\ell)\le\frac{1}{k_0}\Big)\implies\\
\Big(\exists N\ge N_0 : d(\tilde\nu_\ell,\mu^f_{N})<\frac{2}{k_0}\Big)
\end{array}\right\},\]
and the second point of lemma \ref{Laxergod} is replaced by the first.
\end{proof}

\begin{rem}
Taking over the proof of the theorem, if we set $x\notin\bigcap_{N} E'_{N}$, then generically the measures $\mu^f_{{N},x}$ accumulate on the whole set of $f$-invariant measures. This seems to contradict the empirical observations made by A. Boyarsky in 1986 (see \cite{Boya-comp} or \cite{Gora-why}): when a homeomorphism $f$ has a unique ergodic measure $\mu^f$ which is absolutely continuous with respect to Lebesgue measure ``most of'' the measures $\mu^f_{{N},x}$ tend to measure $\mu^f$. However, the author does not specify in what sense he means ``most of the points'', or if his remark is based on a tacit assumption of regularity for $f$.
\end{rem}

Note that as in the previous section, we have a compact counterpart of theorem \ref{mesinv}:

\begin{prop}
If the sequence of grids $(E_N)_{N\in\N}$ is self-similar, for a generic homeomorphism $f\in\Hom(X,\lambda)$, the set of limit points of the sequence $(\Omega(f_{N}))_{N\in\N}$ is exactly the set of $f$-invariant compact sets.
\end{prop}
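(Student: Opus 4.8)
The plan is to follow the proof of theorem~\ref{mesinv} almost verbatim, replacing everywhere the canonical measures $\mu^f_N$ by the maximal invariant sets $\Omega(f_N)$, the weak-$*$ distance on $\Prb$ by the Hausdorff distance $d_H$ on the (compact, hence separable) space $\mathcal K(X)$ of nonempty compact subsets of $X$, and the ergodic variation of Lax's theorem (lemma~\ref{Laxergod}) by its compact counterpart (lemma~\ref{CompacLax}). By compactness of $\mathcal K(X)$ the sequence $(\Omega(f_N))_N$ always has limit points, so there are two inclusions to establish.

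\emph{Every limit point of $(\Omega(f_N))_N$ is $f$-invariant.} This needs no genericity, exactly as in the sketch of proposition~\ref{CompactInv}: if $\Omega(f_{N_k})\to K$, then since $f_{N_k}(\Omega(f_{N_k}))=\Omega(f_{N_k})$ and $\sup_{x\in E_{N_k}}d\big(f(x),f_{N_k}(x)\big)=\sup_{x\in E_{N_k}}d\big(f(x),P_{N_k}(f(x))\big)$ tends to $0$ as the grids get finer, the triangle inequality for $d_H$ together with the uniform continuity of $f$ yields $d_H(f(K),K)=0$, i.e. $f(K)=K$. This again reflects that the chaos of discretizations is constrained to stay inside the $f$-invariant world.

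\emph{Generically every $f$-invariant compact is a limit point of $(\Omega(f_N))_N$.} Fix a countable dense family $(K_\ell)_{\ell\in\N}$ of $\mathcal K(X)$ and, on the generic set $\bigcap_N\mathcal D_N$, set
\[\mathcal O_{\ell,N_0,k_0}=\left\{f\in\bigcap_{N\in\N}\mathcal D_N\ \middle\vert\ \Big(\exists K\ f\text{-inv. compact}:\ d_H(K,K_\ell)\le\tfrac{1}{k_0}\Big)\implies\Big(\exists N\ge N_0:\ d_H(K_\ell,\Omega(f_N))<\tfrac{2}{k_0}\Big)\right\}\]
and $\mathcal A=\bigcap_{(\ell,N_0,k_0)\in\N^3}\mathcal O_{\ell,N_0,k_0}$. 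If $f\in\mathcal A$ and $K$ is $f$-invariant, picking $K_\ell$ with $d_H(K,K_\ell)\le 1/k_0$ and applying the implication gives, for arbitrarily large $N$, $d_H(K,\Omega(f_N))<3/k_0$; letting $k_0\to\infty$ makes $K$ a limit point. So it remains to see that each $\mathcal O_{\ell,N_0,k_0}$ is open and dense. For openness: the set of $g$ having an invariant compact $1/k_0$-close to $K_\ell$ is closed (if $g_n\to g$ and $K^{(n)}$ is $g_n$-invariant and $1/k_0$-close to $K_\ell$, a Hausdorff limit $K$ of a subsequence is $g$-invariant by passing to the limit in $g_n(K^{(n)})=K^{(n)}$, and still $1/k_0$-close to $K_\ell$), and on $\bigcap_N\mathcal D_N$ the map $g\mapsto\Omega(g_N)$ is locally constant, so the set $\{g:\exists N\ge N_0,\ d_H(K_\ell,\Omega(g_N))<2/k_0\}$ is open; hence $\mathcal O_{\ell,N_0,k_0}$, being the union of the complement of the first set with the second set, is open. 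For density: on the open set where no $f$-invariant compact is $1/k_0$-close to $K_\ell$ the implication is vacuous; elsewhere, lemma~\ref{CompacLax}(1), applied with $K_1$ a fixed $f$-invariant compact $1/k_0$-close to $K_\ell$, produces $g$ arbitrarily close to $f$ and $N\ge N_0$ with $d_H(K_1,\Omega(g_N))<1/k_0$, hence $d_H(K_\ell,\Omega(g_N))<2/k_0$.

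I expect no serious obstacle: the argument is a transcription of theorems~\ref{EnsMesInv}--\ref{mesinv} into the language of compact sets, and lemma~\ref{CompacLax} does all the analytic work. The two points worth a little care are the bookkeeping in the openness proof (splitting the ``vacuous'' from the ``active'' case of the implication, exactly as in theorem~\ref{EnsMesInv}) and reading lemma~\ref{CompacLax}(1) with $\Omega(g_N)$ --- rather than $\Omega(f_N)$ --- as the finite set that is made Hausdorff-close to the prescribed invariant compact while $f$ is perturbed within $\varep$.
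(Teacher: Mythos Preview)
Your proof is correct and follows essentially the same approach as the paper, which does not spell out a proof of this proposition beyond noting that it is the compact counterpart of theorem~\ref{mesinv}: replace measures by compact sets, the weak-$*$ distance by the Hausdorff distance, and use the first point of lemma~\ref{CompacLax} in place of the first point of lemma~\ref{Laxergod}. You have filled in exactly these details, including the openness argument via upper semi-continuity of $g\mapsto\{K:f(K)=K\}$ and local constancy of $g\mapsto\Omega(g_N)$ on $\bigcap_N\mathcal D_N$, and you correctly read the typo in lemma~\ref{CompacLax}(1) as $\Omega(g_N)$.
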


\part{Discretizations of a generic dissipative homeomorphism}

Throughout this section, we fix a manifold $X$ and a good measure $\lambda$ as defined in section \ref{0..2}. Now homeomorphisms are no longer supposed conservative. More formally, we recall that we denote by $\Hom(X)$ the set of all \emph{dissipative} homeomorphisms of $ X $ (without assumption of conservation of a given measure), we are interested in properties of discretizations of generic elements of $\Hom(X)$. The manifold $X$ is equipped with a sequence $(E_N)_{N\in\N}$ of discretization grids.

Generic topological properties of dissipative homeomorphisms have been discussed in a survey of E. Akin, M. Hurley and J. Kennedy \cite{MR1980335}. However, ergodic properties of generic homeomorphisms had not been studied until the work of F. Abdenur and M. Andersson in \cite{MR3027586}, in which the authors are interested in the typical behaviour of Birkhoff averages. For this purpose they establish a technical lemma they call \emph{shredding lemma} which allows them to show that a generic homeomorphism is \emph{weird}.

\begin{definition}\label{stange}
A homeomorphism $f$ is said \emph{weird} if almost every point $x\in X$ (for $\lambda$) has a Birkhoff's limit $\mu^f_x$, and if $f$ is \emph{totally singular} (\emph{i.e.} there exists a Borel set with total measure whose image by $f$ is null measure) and does not admit any physical measure.
\end{definition}

In opposition to the conservative case, the lack of conservation of a given measure allows us to perturb any diffeomorphism to create attractors whose basins of attraction cover almost the entire space $X$. This is an open property, so such basins of attraction can be seen on discretizations.

With what has just been said, we can already note that properties of discretizations of generic dissipative and conservative homeomorphisms are quite different: if before (almost) all possible dynamical properties are observed on the discretizations of a homeomorphism, the dissipative case reveals a much more regular behaviour: ``the dynamics of discretizations of a homeomorphism $f$ converges to the dynamics of $f$''. For example, the measures $\mu^f_N$ accumulate on a single measure, namely $\mu^f_X$, and not on all the invariant measures under $f$.

\section{The shredding lemma and its discrete counterpart}

In their article \cite{MR3027586}, F. Abdenur and M. Andersson try to identify the generic ergodic properties of continuous maps and homeomorphisms of compact manifolds. More precisely, they study the behaviour of Birkhoff limits $\mu^f_x$ for a generic homeomorphism $ f \in \Hom(X) $ and almost every point $x$ for Lebesgue measure. To do this they define some interesting behaviours of homeomorphisms related to Birkhoff limits, including one they call \emph{weird} (see def \ref{stange}). This definition is supported by their proof, based on the shredding lemma, that a generic homeomorphism is weird. We outline an improvement of this lemma, whose main consequence is that a generic homeomorphism has many open attractive sets, all of small measure and decomposable into a small number of small diameter open sets:

\begin{lemme}[Shredding lemma, F. Abdenur, M. Andersson, \cite{MR3027586}]\label{déchet}
For all homeomorphism $f\in\Hom(X)$, for all $\varep,\delta>0$, there exists a family of regular pairwise disjoints open sets\footnote{An open set is said \emph{regular} if it is equal to the interior of its closure.} $U_1,\dots,U_\ell$ such that for all $\varep'>0$, there exists $g\in\Hom(X)$ such that $d(f,g)<\delta$ and:
\begin{enumerate}[(i)]
\item $g(\overline{U_j})\subset U_j$,
\item $\lambda(U_j)<\varep$,
\item $\lambda\left(\bigcup_{j=1}^\ell U_j\right)>1-\varep$,
\item $\lambda(g(U_j))<\varep\, \lambda(U_j)$,
\item there exists open sets $W_{j,1},\dots,W_{j,\ell_j}$ such that:
  \begin{enumerate}
  \item $\operatorname{diam}(W_{j,i})<\varep'$ for all $i\in\{1,\dots,\ell_j\}$,
  \item $g(\overline{W_{j,i}})\subset W_{j,i+1}$, for every $i\in\{1,\dots,\ell_j-1\}$ and $g(\overline{W_{j,\ell_j}})\subset W_{j,1}$,
  \item \[\overline{U_j}\subset\bigcup_{m\ge 0} g^{-m}\big(\bigcup_{i=1}^{\ell_j}W_{j,i}\big).\]
  \end{enumerate}
\end{enumerate}
\end{lemme}

The lemma as stated by F. Abdenur and M.Andersson in their paper can be obtained making $\varep = \varep'$. To get this improvement, it suffices to consider the sets $W_{j,i}$ given by the weak version of the lemma and to crush them with a contraction mapping, as the authors do in the first part of the proof of the lemma.

\begin{rem}
It can be easily seen that the lemma remains true on a neighborhood of the homeomorphism $g$.
\end{rem}

\begin{rem}\label{remsh}
We can further assume that:
\begin{enumerate}
\item The open sets $W_{j,i}$ have disjoints attractive sets, \emph{i.e.} for all $j\in\N$ and all $i\neq i'$, we have\footnote{These two sets are decreasing intersections of compact sets, so they are compact and nonempty.} 
\[\left(\bigcap_{m\ge 0}\,\bigcup_{m'\ge m} f^{m'}(W_{j,i})\right) \cap \left(\bigcap_{m\ge 0}\,\bigcup_{m'\ge m} f^{m'}(W_{j,i'})\right) = \emptyset.\]
For this it suffices to replace $W_{j,i}$ by $W'_{j,i}$ defined by induction by:
\begin{eqnarray*}
W'_{j,1} & = & W_{j,1}\\
W'_{j,i+1} & = & W_{j,i+1}\setminus \left(\bigcup_{i'\le i}\,\bigcap_{m\ge 0}\,\bigcup_{m'\ge m} f^{m'}(W_{j,i'})\right).
\end{eqnarray*}
\item Each set $W_{j,i}$ contains a periodic Lyapunov stable point. Indeed, for all $\varep>0$, there exists $x\in W_{j,i}$ and $m>0$ such that $d(x,f^m(x))<\varep$. Then we perturb $f$ in order to make $x$ $m$-periodic and attractive, and the Lyapunov stability remains true in a neighborhood of $f$.
\item The set $g(\overline{U_j})$ is ``independent'' from the choice of $\varep'$: there exists compact sets $V_j$ such that $g(\overline{U_j})\subset V_j$ and $V_j\subset U_j$, $\lambda(V_j)<\varep\, \lambda(U_j)$.
\end{enumerate}
\end{rem}

This lemma tells us a lot about the dynamics of a generic homeomorphism, which becomes quite clear: there are many attractors whose basins of attraction are small and attract almost all the manifold $X$. Moreover there is convergence of the attractive sets of the shredding lemma to the closure of the Lyapunov stable periodic points of $f$:

\begin{notation}\label{echo}
For an homeomorphism $f\in\Hom(X)$, we denote by $A_0$ the set of Lyapunov stable periodic points of $f$, \emph{i.e.} the sets of periodic points $x$ such that for all $\delta>0$, there exists $\eta>0$ such that if $d(x,y)<\eta$, then $d(f^m(x),f^m(y))<\delta$ for all $m\in\N$.
\end{notation}

\begin{coro}\label{convattra}
Let $f\in\Hom(X)$ verifying the conclusions of the shredding lemma for all $\varep=\varep'>0$ and $W_{j,i,\varep}$ be the corresponding open sets. Such homeomorphisms form a $G_\delta$ dense subset of $\Hom(X)$. Then the sets
\[A_\varep = \overline{\bigcup_{j,i} W_{j,i,\varep}}\]
converge for Hausdorff distance when $\varep$ tends to $0$ to a closed set. This is a null set\footnote{And better, if we are given a countable family $(\lambda_m)_{m\in\N}$ of good measures, generically $\lambda_m(A_0)=0$.} which coincides generically with the closure of the set $A_0$.
\end{coro}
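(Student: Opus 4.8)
The plan is to prove Corollary \ref{convattra} in three stages: first establish the $G_\delta$ density of the set of homeomorphisms satisfying the shredding lemma conclusions for all $\varep=\varep'>0$, then prove the Hausdorff convergence of the sets $A_\varep$, and finally identify the limit with $\overline{A_0}$ while showing it is a null set.

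\emph{Genericity of the hypothesis.} For each fixed $\varep>0$, the shredding lemma together with the remark following it gives that the set of $g\in\Hom(X)$ for which a family $\{W_{j,i}\}$ with the listed properties exists (with $\varep'=\varep$) is open and dense: openness comes from the remark that the conclusion persists on a neighbourhood of $g$, density from the lemma itself applied to an arbitrary $f$. Intersecting over a sequence $\varep_n\to 0$ (which suffices by monotonicity of the properties in $\varep$) yields a $G_\delta$ dense set; we also intersect with the $G_\delta$ dense set of homeomorphisms all of whose periodic points are as in item (2) of Remark \ref{remsh}, i.e.\ carrying Lyapunov stable periodic points in each $W_{j,i}$. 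This gives the claimed $G_\delta$ dense subset.

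\emph{Convergence of $A_\varep$.} The key point is that the family $(A_\varep)$ is essentially nested as $\varep$ decreases: each $U_j(\varep)$ is an attracting open set with $g(\overline{U_j})\subset U_j$, and the smaller-scale decomposition at a finer $\varep'<\varep$ can be taken inside the dynamics already trapped in the $U_j(\varep)$'s, so that the attractive part only shrinks. More precisely, using item (5c), $\overline{U_j}\subset\bigcup_{m\ge 0}g^{-m}(\bigcup_i W_{j,i})$, every point of $A_{\varep'}$ lies in the forward closure of some $W_{j,i}(\varep)$, hence within distance tending to $0$ of $A_\varep$; conversely the periodic Lyapunov stable points provided by Remark \ref{remsh}(2) lie in every $A_\varep$, so the sets cannot collapse to something too small. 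Since the space of nonempty compact subsets of $X$ with the Hausdorff metric is compact, and the sequence $(A_{\varep_n})$ is Cauchy by the shrinking estimate, it converges to a compact set $A_0'$. The diameter bound $\operatorname{diam}(W_{j,i})<\varep$ forces $A_{\varep}$ to be a union of sets of diameter at most $\varep$, each of which is $g$-almost-invariant; in the limit $A_0'$ is $f$-invariant.

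\emph{Identification with $\overline{A_0}$ and nullity.} For the inclusion $\overline{A_0}\subset A_0'$: by Remark \ref{remsh}(2) each $W_{j,i}(\varep)$ contains a Lyapunov stable periodic point, and a Lyapunov stable periodic point of $f$ is, by definition and a small perturbation argument, approximated by such points; hence $A_0$ (the Lyapunov stable periodic points) is contained in $\bigcup_{j,i}\overline{W_{j,i,\varep}}=A_\varep$ for all $\varep$, so $\overline{A_0}\subset\bigcap_\varep A_\varep\subset A_0'$. For the reverse inclusion: any $x\in A_0'$ is a limit of points $x_\varep\in A_\varep$, and each $x_\varep$ lies in an open set $W_{j,i,\varep}$ of diameter $<\varep$ which is cyclically permuted by $g$ up to distance $<\varep$ of $f$; a diagonal/compactness argument produces in the limit a point whose orbit returns arbitrarily close to itself in a Lyapunov stable way, and a genericity argument (perturbing to make such recurrence periodic and stable, then invoking Baire) shows this limit point lies in $\overline{A_0}$. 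Finally, nullity: by item (iv), $\lambda(g(U_j))<\varep\,\lambda(U_j)$, and $A_\varep$ is contained (up to the dynamics) in $g(\bigcup_j U_j)$ whose measure is $<\varep$; letting $\varep\to 0$ gives $\lambda(A_0')=0$. The stronger assertion for a countable family $(\lambda_m)$ follows by doing the shredding construction simultaneously for finitely many measures at a time (the measure estimates (ii)--(iv) can be imposed for each $\lambda_m$, $m\le k$, at stage $\varep_k$) and then taking a further $G_\delta$ intersection over $k$.

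The main obstacle I expect is the reverse inclusion $A_0'\subset\overline{A_0}$: controlling a limit point of the $A_\varep$ requires passing from ``trapped in a cyclically permuted chain of tiny open sets under a homeomorphism $g_\varep$ that is only $\delta$-close to $f$'' to an actual Lyapunov stable periodic point of $f$ itself, which needs the perturbation $g_\varep\to f$ to be compatible with a simultaneous genericity (Baire category) argument rather than a single explicit construction. The measure-theoretic and convergence parts are comparatively routine given the (improved) shredding lemma and Remark \ref{remsh}.
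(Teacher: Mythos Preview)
Your proposal has the two inclusions inverted in difficulty, and this leads to genuine gaps in both directions.

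\textbf{The inclusion $A_0'\subset\overline{A_0}$ is the easy one.} You treat it as the main obstacle and sketch a diagonal/compactness/Baire argument, but in fact it is a one-liner given Remark~\ref{remsh}(2): each $W_{j,i,\varep}$ has diameter $<\varep$ and contains a Lyapunov stable periodic point of $f$, hence $W_{j,i,\varep}\subset B(A_0,\varep)$, so $A_\varep\subset B(\overline{A_0},\varep)$ for every $\varep$. No limiting procedure is needed. Note also that $f$ itself satisfies the shredding conclusions by hypothesis; there is no auxiliary sequence $g_\varep\to f$ to worry about, so your concern about ``passing from $g_\varep$ to $f$'' is misplaced.

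\textbf{The inclusion $\overline{A_0}\subset A_0'$ is where the work is, and your argument there does not go through.} You claim $A_0\subset A_\varep$ on the grounds that each $W_{j,i,\varep}$ contains a Lyapunov stable periodic point; but that only says \emph{some} such points lie in $A_\varep$, not that \emph{every} Lyapunov stable periodic point does. There is no reason a given Lyapunov stable periodic point $x$ should sit inside one of the finitely many sets $W_{j,i,\varep}$. The paper's argument is different: given such $x$ and $\delta>0$, use Lyapunov stability to get a neighbourhood $O$ of $x$ whose forward orbit stays $\delta/2$-close to the orbit of $x$; since $\lambda\big(\bigcup_j U_{j,\varep}\big)>1-\varep$, for $\varep$ small enough $O$ meets some $U_{j,\varep}$; pick $y$ in the intersection; by property~(v)(c) the forward orbit of $y$ eventually enters $\bigcup_i W_{j,i,\varep}\subset A_\varep$; taking an iterate where $f^m(x)=x$ (periodicity) gives $d(x,A_\varep)<\delta/2$. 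Then cover $\overline{A_0}$ by finitely many such balls to get a uniform $\varep_0$.

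Your nullity argument is essentially right in spirit (iterating property~(iv) shows the attractor inside each $U_j$ has measure zero), and the genericity paragraph is fine.
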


\begin{proof}[Proof of corollary \ref{convattra}]
Let $f$ verifying the hypothesis of the corollary. We want to show that the sets $A_\varep$ tend to $A_0$ when $\varep$ goes to $0$. This is equivalent to show that for all $\delta>0$, there exists $\varep_0>0$ such that for all $\varep<\varep_0$, $A_0\subset B(A_\varep,\delta)$ and $A_\varep\subset B(A_0,\delta)$ (where $B(A,\delta)$ denotes the set of points of $X$ whose distance to $A$ is smaller than $\delta$). Subsequently we will denote by $U_{j,\varep}$ and by $W_{i,j,\varep}$ the open sets given by the shredding lemma for the parameters $\varep=\varep'$.

Let $\delta> 0$. We start by taking $x\in X$ whose orbit is periodic and Lyapunov stable. Then there exists $\eta>0$ such that if $d(x,y)<\eta$, then $d(f^m(x),f^m(y))<\delta/2$ for all $m\in\N$; let $O=B(x\eta)$. Then there exists $\varep_0>0$ such that for all $\varep\in]0,\varep_0[$, there exists $j\in\N$ such that the intersection between $O$ and $U_{j,\varep}$ is nonempty. Let $y$ be an element of this intersection. By compactness, there exists a subsequence of $(f^m(y))_{m\in\N}$ which tends to $x_0$; moreover $f^m(y)\in \bigcup_{i}W_{j,i}$ eventually and $d(x,x_0)<\delta/2$. We deduce that $x\in  B\big(\overline{\bigcup_{j,i} W_{j,i,\varep_0}},\delta/2\big)$ for all $\varep_0$ small enough. Since $\overline{A_0} $ is compact, it is covered by a finite number of balls of radius $\delta/2$ centred in some points $x_i$ whose orbits attract nonempty open sets. Taking $\varep_0'$ the minimum of all the $\varep_0$ associated to the $x_i$, the inclusion $A_0\subset B(A_\varep,\delta)$ takes place for all $\varep <\varep_0'$.

In the other way, let $\delta>0$, $\varep<\delta$ and focus on the set $W_{j,i,\varep}$. By point 2. of the remark \ref{remsh}, we can suppose that there exists $x\in W_{j,i,\varep}$ whose orbit is periodic and Lyapunov stable. Thus $x\in A_0$ and since the diameter of $W_{j,i,\varep}$ is smaller than $\delta$, $W_{j,i,\varep}\subset B(A_0,\delta)$. The corollary is proved.
\end{proof}

Now we establish a discrete counterpart of the shredding lemma. This result shows that the general dynamics of a generic dissipative homeomorphism is quite simple; thus the dynamics of the discretizations these homeomorphisms is also quite simple. More precisely, since having basins of attraction is stable by perturbation, we have a similar statement for discretizations of a generic homeomorphism. The following develops these arguments.

To each point $x_{N} \in E_{N}$, we associate a closed set $\overline{P_N^{-1}(\{x_N\})}$, made of the points in $X$ one of whose the projections on $E_{N}$ is $x_{N}$. The closed sets $\overline{P_N^{-1}(\{x_N\})}$ form a basis of the topology of $X$ when $N$ runs through $\N$ and $x_{N}$ runs through $E_{N}$.\label{CNxN} Let $f\in\Hom(X)$, $N\in\N$ and $\vartheta : \N\to\R_+^*$ be a function that tends to $+\infty$ at $+\infty$. Let $\delta,\varep>0$ and $U_j$ be the sets obtained by the shredding lemma for $f$, $\delta$ and $\varep$.

For all $j\le \ell$ we denote by $\widetilde U_j^{N}$ the union over $x_{N}\in E_{N}$ of the closed sets $\overline{P_N^{-1}(\{x_N\})}$ whose intersection with $U_j$ are nontrivial. Then $\widetilde U_j^{N}$ tends to $U_j$ for the metric $d(A,B) = \lambda(A\Delta B)$, and for the Hausdorff metric. By point (3) of remark \ref{remsh}, these convergences are independent from the choice of $\varep'$. Thus, for all $k$ big enough, properties $(i)$ to $(iii)$ of the shredding lemma remain true for the discretizations $g_{N}$ (for arbitrary $g$ satisfying the properties of the lemma). Taking $\varep'$ small enough and modifying a little $g$ if necessary, there exists sets $W_{j,i}$ and $g\in\Hom(X)$ such that $d(f,g)<\delta$, $W_{j,i}\subset\overline{P_N^{-1}(\{x_N\})}$ and $\card(\bigcup_{j,i} W_{j,i}\cap E_N)\le\vartheta(N)$. The others estimations over the sizes of the sets involved in the lemma are obtained similarly. Finally we have:

\begin{lemme}[Discrete shredding lemma]\label{déchetdiscr}
For all $f\in \Hom(X)$, for all $\varep,\delta>0$ and all function $\vartheta : \N\to\R_+^*$ that tends to $+\infty$ at $+\infty$, there exists $N_0\in\N$ such that for all $N\ge N_0$, there exists a family of subsets $U_1^{N},\dots,U_\ell^{N}$ of $E_{N}$ and $g\in\Hom(X)$ such that $d(f,g)<\delta$ and:
\begin{enumerate}[(i)]
\item $g_{N}(  U_j^{N})\subset   U_j^{N}$,
\item $\card(U_j^{N})<\varep q_{N}$,
\item $\card\left(\bigcup_{j=1}^\ell   U_j^{N}\right)>(1-\varep)q_{N}$,
\item $\card(g_{N}(U_j^{N}))<\varep\card(U_j^{N})$,
\item for all $j$, there exists $w_{j,1}^{N},\dots,w_{j,\ell_j}^{N}\in E_N$ such that
  \begin{enumerate}
  \item $\card(\bigcup_{j,i} \{w_{j,i}\})\le\vartheta(N)$,
  \item $g_N(w_{j,i}^N) = w_{j,i+1}^N$, for every $i\in\{1,\dots,\ell_j-1\}$ and $g_N(w_{j,\ell_j}^N) = w_{j,1}^N$,
  \item \[U_j^{N}\subset\bigcup_{m\ge 0} g_{N}^{-m}\big(\bigcup_{i=1}^{k_j}  w_{j,i}^{N}\big),\]
  \end{enumerate}
\item for all $j$ and all $i$, there exists sets $U_j$ and $W_{j,i}$ satisfying properties $(i)$ to $(v)$ of the shredding lemma such that $U_j\subset P_N^{-1}(U_j^{N})$ and $W_{j,i}\subset P_N^{-1}(w_{j,i}^{N})$.
\item if $N\ge N_0$, then for all $j$ and all $i$, we have
\[\sum_j d_H(\overline{U_j},\overline{P_N^{-1}(U_j^{N})})<\varep\quad\text{and}\quad\sum_{i,j} d_H(\overline{W_{j,i}},\overline{P_N^{-1}(w_{j,i}^{N})})<\varep\]
(where $d_H$ is the Hausdorff metric), and
\[\sum_j \lambda(U_j\Delta P_N^{-1}(U_j^{N}))<\varep\quad\text{and}\quad\sum_{i,j} \lambda(W_{j,i}\Delta P_N^{-1}(w_{j,i}^{N}))<\varep.\]
\end{enumerate}
\end{lemme}

\begin{rem}
Properties \emph{(i)} to \emph{(v)} are discrete counterparts of properties \emph{(i)} to \emph{(v)} of the continuous shredding lemma, but the two last ones reflect the convergence of the dynamics of discretizations to that of the original homeomorphism.
\end{rem}

This lemma implies that we can theoretically deduce the behaviour of a generic homeomorphism from the dynamics of its discretizations. The next section details this remark.

\section{Dynamics of discretizations of a generic homeomorphism}\label{label}

To begin with, we deduce from the shredding lemma that the dynamics of discretizations $f_N$ tends to that of the homeomorphism $f$. More precisely almost all orbits of the homeomorphism are $\delta $-shadowed by the orbits of the corresponding discretizations.

\begin{definition}
Let $f$ and $g$ be two maps from a metric space $X$ into itself, $x,y\in X$ and $\delta>0$. We say that the orbit of $x$ by $f$ \emph{$\delta$-shadows} the orbit of $y$ by $g$ if for all $m\in\N$, $d(f^m(x),g^m(y))<\delta$.
\end{definition}

\begin{coro}\label{stabilis}
For a generic homeomorphism $f\in\Hom(X)$, for all $\varep>0$ and all $\delta>0$, there exists an open set $A$ such that $\lambda(A)>1-\varep$ and $N_0\in\N$ such that for all $N\ge N_0$ and all $x\in A$, the orbit of $x_N = P_N(x)$ by $f_N$ $\delta$-shadows that of $x$ by $f$.

Therefore, for a generic homeomorphism $f$, there exists a full measure dense open set $O$ such that for all $x\in O$, all $\delta>0$ and all $N$ large enough, the orbit of $x_N$ by $f_N$ $\delta$-shadows that of $x$ by $f$.
\end{coro}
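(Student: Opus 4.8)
The plan is to read the corollary off the discrete shredding lemma, applied to a generic $f$ to which that lemma applies with $g=f$ itself, and then to track the orbit of a typical point in two stages: a \emph{bounded transient stage}, controlled by the modulus of continuity of $f$, and an \emph{everlasting stage} inside the small ``funnels'' $W_{j,i}$, where the orbit of $x$ and that of $x_N$ are forced to run through the very same tiny sets for ever. Because the conclusions of the discrete shredding lemma are open conditions that, by the remark following it, hold on a whole neighbourhood of $g$ with an unchanged threshold, the set of homeomorphisms satisfying these conclusions --- with $g$ the homeomorphism itself, for all $\varepsilon>0$ and all large $N$ --- is open, and it is dense by the lemma; hence a generic $f$, which we also take in $\bigcap_N\mathcal{D}_N$, enjoys the discrete shredding lemma with $g=f$. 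Fix such an $f$, fix $\varepsilon,\delta>0$, apply the lemma with its diameter parameter $\varepsilon'<\delta$, and set $A=\bigcup_jU_j$: it is open, $\lambda(A)>1-\varepsilon$, and by the shape of the shredding lemma the family $U_1,\dots,U_\ell$, hence $A$, depends only on $f,\varepsilon,\delta$.

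For the shadowing itself, write $V_j=\bigcup_iW_{j,i}$. From $f(\overline{W_{j,i}})\subset W_{j,i+1}$ cyclically we get $f(\overline{V_j})\subset V_j$, so the open sets $f^{-m}(V_j)$ increase with $m$; by compactness of $\overline{U_j}$ and the inclusion $\overline{U_j}\subset\bigcup_mf^{-m}(V_j)$ there is an integer $T$, depending only on $f,\varepsilon,\delta$, such that every $x\in\overline{U_j}$ satisfies $f^m(x)\in V_j$ for all $m$ beyond its first entry time $m_0(x)\le T$. Put $\rho=\min_{j,i}d\big(f(\overline{W_{j,i}}),X\setminus W_{j,i+1}\big)>0$, and let $\eta_N\to0$ be the mesh of $E_N$, so that $d(z,P_N(z))\le\eta_N$ for all $z$. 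A routine iteration with the modulus of continuity $\omega_f$ of $f$ --- using $e_0\le\eta_N$ and $e_{m+1}\le\eta_N+\omega_f(e_m)$, where $e_m=\sup_xd(f_N^m(x_N),f^m(x))$ --- gives $e_m\to0$ as $N\to\infty$ for each fixed $m$. Choose $N_0$, depending only on $f,\varepsilon,\delta$, so that $e_m<\min(\rho,\delta)$ for all $m\le T+1$ and $N\ge N_0$, and so that the discrete shredding data is available at every level $N\ge N_0$.

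Now take $N\ge N_0$, $x\in A$, say $x\in U_j$. For $0\le m\le m_0(x)$ we have $d(f_N^m(x_N),f^m(x))\le e_m<\delta$ at once. At the time $m^\ast=m_0(x)+1\le T+1$ the point $f^{m^\ast}(x)$ lies in $f(\overline{W_{j,i_1}})\subset W_{j,i_1+1}$, hence at distance at least $\rho$ from the complement of $W_{j,i_1+1}$; therefore the grid point $f_N^{m^\ast}(x_N)$, which is within $\rho$ of it, lies in $W_{j,i_1+1}\subset P_N^{-1}(w_{j,i_1+1}^N)$, and since a Voronoi cell contains a single grid point, namely its centre, this forces $f_N^{m^\ast}(x_N)=w_{j,i_1+1}^N$. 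From there the two orbits are \emph{locked}: using $f_N(w_{j,i}^N)=w_{j,i+1}^N$ and $f(\overline{W_{j,i}})\subset W_{j,i+1}$ one finds, for every $k\ge0$, $f_N^{m^\ast+k}(x_N)=w_{j,i_1+1+k}^N$ and $f^{m^\ast+k}(x)\in W_{j,i_1+1+k}\subset P_N^{-1}(w_{j,i_1+1+k}^N)$, hence $d\big(f_N^{m^\ast+k}(x_N),f^{m^\ast+k}(x)\big)\le\eta_N<\delta$. Combining the two ranges of $m$ gives the first assertion. The second follows by applying the first with $\varepsilon=\delta=2^{-k}$, producing open sets $A_k$ with $\lambda(A_k)>1-2^{-k}$ and thresholds $N_k$; by Borel--Cantelli the set $\bigcup_K\bigcap_{k\ge K}A_k$ has full $\lambda$-measure, hence (as $\lambda$ has total support) is dense, and each of its points is $\delta$-shadowed by its discretizations for every $\delta>0$ once $N$ is large. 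To replace this by a genuinely \emph{open} set one passes to the interior, or intersects with the complement of the closed null set $A_0$ of Corollary~\ref{convattra}, and checks that full measure is preserved --- pure bookkeeping.

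The step I expect to resist is the \emph{synchronisation}: a priori the orbit of $x_N$ is only known to stay \emph{close} to the orbit of $x$ for boundedly many steps, and this has to be upgraded to the discrete orbit landing \emph{exactly} on the funnel point $w_{j,i}^N$ at the right instant, after which everything is automatic. The mechanism above relies on two features supplied by the discrete shredding lemma: the strict inclusions $f(\overline{W_{j,i}})\subset W_{j,i+1}$, which provide a uniform positive margin $\rho$ around the forward images of the funnels, and the containment $W_{j,i}\subset P_N^{-1}(w_{j,i}^N)$ of each funnel in a single Voronoi cell, so that a grid point entering $W_{j,i+1}$ can only be $w_{j,i+1}^N$. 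A softer, secondary issue is the category bookkeeping that turns the ``$g$ near $f$'' of the discrete shredding lemma into ``$g=f$'', and that produces a genuinely open set in the second assertion.
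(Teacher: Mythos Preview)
Your argument follows the paper's approach --- whose proof is a single sentence pointing to the discrete shredding lemma and the Hausdorff convergence $w_{j,i}^N\to W_{j,i}$ --- and your two-stage scheme (bounded transient via the modulus of continuity, then a locked phase inside the funnels) is exactly the right way to make that sentence precise. There is, however, a small inconsistency in your synchronisation step. You fix the $W_{j,i}$'s once and for all via the parameter $\varepsilon'<\delta$ of the \emph{continuous} shredding lemma, so that $\rho$ and the entry time $T$ are genuine constants; but you then invoke the containment $W_{j,i}\subset P_N^{-1}(w_{j,i}^N)$ from property (vi) of the \emph{discrete} lemma to force the exact landing $f_N^{m^\ast}(x_N)=w_{j,i+1}^N$. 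These two readings are incompatible: a fixed open set cannot sit inside a single Voronoi cell once $N$ is large, and conversely the $W_{j,i}$'s in property (vi) are the $N$-dependent ones (crushed to diameter $\sim\eta_N$), for which your $\rho$ and $T$ would no longer be uniform in $N$.

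The repair is to drop the exact synchronisation and argue more simply. Since $f(\overline{W_{j,i}})\subset W_{j,i+1}$ with margin $\rho$ and $f_N=P_N\circ f$ moves points by at most $\eta_N$, taking $\eta_N<\rho$ gives $f_N(W_{j,i}\cap E_N)\subset W_{j,i+1}$. Hence once $f_N^{m^\ast}(x_N)\in W_{j,i_1+1}$ (which your transient estimate already yields), induction gives $f_N^{m^\ast+k}(x_N)\in W_{j,i_1+1+k}$ for all $k\ge 0$, and since $f^{m^\ast+k}(x)$ lies in the same set of diameter $<\varepsilon'<\delta$, the $\delta$-shadowing follows. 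This is both simpler and avoids property (vi) entirely. Your flag on the open-set bookkeeping in the second assertion is fair; the paper does not address it either.
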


\begin{proof}[Proof of corollary \ref{stabilis}]
This easily follows from the discrete shredding lemma, and especially from the fact that the points  $w_{j,i}^N$ tend to the sets $W_{j,i}$ for Hausdorff metric.
\end{proof}

This statement is a bit different from the genericity of shadowing (see \cite{MR1711347}): here the start point is not a pseudo-orbit but a point $x\in X$; the corollary \ref{stabilis} expresses that we can ``see'' the dynamics of $f$ on that of $f_N$, with arbitrarily high precision, provided that $N$ is large enough. Among other things, this allows us to observe the basins of attraction of the Lyapunov stable periodic points of $f$ on discretizations. Better yet, to each family of attractors $(W_{j,i})_i$ of the basin $U_j$ of the homeomorphism correspond a unique family of points $(w_{j,i}^N)_i$ that are permuted cyclically by $f_N$ and which attracts a neighborhood of $U_j$. Thus attractors are shadowed by cyclic orbits of $f_N$ and we can detect the ``period'' of the attractor (i.e. the integer $k_j$) on discretizations. This behaviour is the opposite of what happens in the conservative case, where discretized orbits and true orbits are very different for most points.

Again, in order to show that the dynamics of discretizations converge to that of the initial homeomorphism, we establish the convergence of attractive sets of $f_N$ to that of $f$. Recall that $A_0$ is the closure of the Lyapunov periodic points of $f$ (see notation \ref{echo}).

\begin{prop}\label{Hausmodif}
For a generic homeomorphism $f\in\Hom(X)$, the sets $\Omega(f_N)$ tend weakly to $A_0$ in the following sense: for all $\varep>0$, it existe $N_0\in \N$ such that for all $N\ge N_0$, there exists a subset $\widetilde E_N$ of $E_N$, stabilized by $f_N$ such that, noting $\widetilde \Omega(f_N)$ the corresponding maximal invariant set, we have $\frac{\card(\widetilde E_N)}{\card(E_N)}>1-\varep$ and $d_H(A_0,\widetilde \Omega(f_N))<\varep$.
\end{prop}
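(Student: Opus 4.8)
\emph{Strategy.} The plan is to read the dynamical structure of a generic $f$ off the shredding lemma together with corollary~\ref{convattra} (convergence of the attractive sets $A_\varep$ to $A_0$), and then to push this structure onto the discretizations exactly as in the proof of the discrete shredding lemma and of corollary~\ref{stabilis}.

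\emph{Set-up.} I would work inside the $G_\delta$ dense set of corollary~\ref{convattra}, intersected with $\bigcap_N\mathcal{D}_N$: for such an $f$ the conclusions of the shredding lemma together with those of remark~\ref{remsh} hold with $g=f$ for every value $\varep=\varep'>0$, and $d_H(A_\varep,A_0)\to 0$ as $\varep\to 0$, where $A_\varep=\overline{\bigcup_{j,i}W_{j,i,\varep}}$. Fix such an $f$ and $\varep>0$; using corollary~\ref{convattra}, choose $\varep_1\in(0,\varep/3)$ with $d_H(A_{\varep_1},A_0)<\varep/3$, and apply the shredding lemma (with remark~\ref{remsh}) to $f$ with $\varep=\varep'=\varep_1$. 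One gets regular, pairwise disjoint open sets $U_1,\dots,U_\ell$ with $\lambda\big(\bigcup_j U_j\big)>1-\varep_1$ and $f(\overline{U_j})\subset U_j$ (a compact set inside an open one), together with open sets $W_{j,i}\subset U_j$ ($i\le\ell_j$) of diameter $<\varep_1$, each meeting $A_0$, with $f(\overline{W_{j,i}})\subset W_{j,i+1}$ cyclically and $\overline{U_j}\subset\bigcup_{m\ge 0}f^{-m}\big(\bigcup_i W_{j,i}\big)$. Each $\bigcup_i W_{j,i}$ is forward invariant, so this last inclusion is uniform by compactness: there is $m_0$ with $f^{m_0}(\overline{U_j})\subset\bigcup_i W_{j,i}$ for every $j$.

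\emph{Discretization and the two inclusions.} For $N$ large, put $U_j^N=\{x_N\in E_N\mid\overline{P_N^{-1}(\{x_N\})}\cap U_j\neq\emptyset\}$ and $\widetilde E_N=\bigcup_j U_j^N$. Since the cells shrink and $f(\overline{U_j})$ is a compact subset of the open set $U_j$, one has $f_N(U_j^N)\subset U_j^N$ for $N$ large, hence $\widetilde E_N$ is $f_N$-stabilized; and $\lambda\big(\bigcup_j U_j\big)>1-\varep_1$ gives $\card(\widetilde E_N)/\card(E_N)>1-\varep$ for $N$ large, as in the proof of the discrete shredding lemma. Let $\widetilde\Omega(f_N)$ be the maximal invariant set of $f_N$ on $\widetilde E_N$. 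A periodic orbit of $f_N$ contained in $\widetilde E_N$ lies in a single $U_j^N$ (disjointness and invariance); for $N$ large the iterates $f_N,\dots,f_N^{m_0}$ shadow $f,\dots,f^{m_0}$ so well that $f_N^{m_0}(U_j^N)$ is contained in the $\varep_1$-neighbourhood of $\bigcup_i W_{j,i}$, whence the orbit itself is (being periodic), hence it is within $2\varep_1<\varep$ of $A_0$; thus $\widetilde\Omega(f_N)\subset B(A_0,\varep)$. Conversely, given $x\in A_0$, corollary~\ref{convattra} provides --- uniformly in $x$, since $d_H(A_{\varep_1},A_0)<\varep/3$ --- some $W_{j,i}$ with $d(x,\overline{W_{j,i}})<\varep/3$; from $f(\overline{W_{j,i}})\subset W_{j,i+1}$ cyclically, for $N$ large $f_N^{\ell_j}$ maps the set of cells meeting $W_{j,i}$ into itself, so $f_N$ has a periodic orbit passing through those cells and staying in $U_j^N\subset\widetilde E_N$ (using $W_{j,i}\subset U_j$), hence lying in $\widetilde\Omega(f_N)$ and passing within $\varep/3+2\varep_1<\varep$ of $x$; thus $A_0\subset B(\widetilde\Omega(f_N),\varep)$. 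Choosing $N_0$ so that everything above holds for all $N\ge N_0$ completes the proof.

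\emph{Main obstacle.} The shadowing estimates for the first $m_0$ iterates are elementary (uniform continuity of $f,\dots,f^{m_0}$ and shrinking cells) and require no genericity. The delicate step is the second inclusion, i.e.\ exhibiting, near \emph{every} Lyapunov stable periodic point of $f$, a genuine periodic orbit of the finite map $f_N$ contained in $\widetilde E_N$; this is precisely where one needs the full cyclic ``trapping block'' structure $(W_{j,i})_i$ of the shredding lemma --- not merely the basins $U_j$ --- together with the uniform covering $A_0\subset B(A_{\varep_1},\varep/3)$ furnished by corollary~\ref{convattra}.
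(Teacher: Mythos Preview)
Your proof is correct and follows essentially the same route as the paper: define $\widetilde E_N$ as the union of the discretized basins $U_j^N$ coming from the (discrete) shredding lemma, and combine the Hausdorff convergence of the discrete attractive cycles to $A_{\varep_1}$ with corollary~\ref{convattra} ($A_{\varep_1}\to A_0$). The paper's proof is much terser --- it simply invokes items (iii) and (vii) of lemma~\ref{d�chetdiscr} (so that $\widetilde\Omega(f_N)=\bigcup_{j,i}\{w_{j,i}^N\}$ is directly Hausdorff-close to $A_{\varep}$) --- whereas you re-derive the two Hausdorff inclusions by hand from the continuous sets $W_{j,i}$ and a shadowing argument, which amounts to reproving the relevant parts of the discrete shredding lemma inline.
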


\begin{proof}[Proof of proposition \ref{Hausmodif}]
Let $\varep>0$. For all $N\in\N$, set $\widetilde E_N$ the union of the sets $U_j^{N}$ of lemma \ref{déchetdiscr} for the parameter $\varep$. This lemma ensures that $\widetilde E_N$ is stable by $f_N$ and fills a proportion greater than $1-\varep$ of $E_N$. We also denote by $\widetilde \Omega(f_N)$ the associated maximal invariant set:
\[\widetilde \Omega(f_N) = \bigcup_{x\in\bigcup_{j=1}^\ell \widetilde U_j^{N}} \omega_{f_N}(x).\]
Property \emph{(viii)} of lemma \ref{déchetdiscr} ensures that
\[\underset{N\to +\infty}{\overline\lim} d_H(A_\varep,\widetilde \Omega(f_N))<\varep.\]
To conclude, it suffices to observe that $A_\varep\to A_0$ for Hausdorff distance.
\end{proof}

Finally, we conclude this section with a last consequence of lemma \ref{déchetdiscr} which reflects that the ratio between the cardinality of the image of discretizations and which of the grid is smaller and smaller:

\begin{coro}\label{totsingdiscr}
For a generic homeomorphism $f\in\Hom(X)$,
\[\lim_{N\to+\infty}\frac{\card(f_{N}(E_{N}))}{\card(E_{N})} = 0\, ;\]
particularly for all $\delta>0$, there exists $N_0\in\N$ such that if $N\ge N_0$, then the cardinality of each orbit of $f_{N}$ is smaller than $\delta\card(E_{N})$.
\end{coro}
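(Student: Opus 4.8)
The plan is to read the estimate off the discrete shredding lemma (Lemma~\ref{d�chetdiscr}) by a one-line count, after pinning down the genericity set. First I would work inside the dense $G_\delta$ set of Corollary~\ref{convattra}, intersected with $\bigcap_{N}\mathcal D_N$: such an $f$ satisfies the conclusions of the (continuous) shredding lemma for every $\varep=\varep'>0$ with $g=f$, and all its discretizations $f_N$ are well defined. Fix $\varep>0$. Because $f$ itself realizes the shredding conclusions for the parameter $\varep$ — and because, by point~(3) of Remark~\ref{remsh}, the image bound $\lambda(f(U_j))<\varep\,\lambda(U_j)$ comes with compact sets $f(\overline{U_j})\subset V_j\subset U_j$, $\lambda(V_j)<\varep\,\lambda(U_j)$, a feature that is insensitive to the auxiliary parameter and survives discretization — the discussion preceding Lemma~\ref{d�chetdiscr} applies to $g=f$ and yields $N_0\in\N$ such that for every $N\ge N_0$ there are pairwise disjoint, $f_N$-invariant subsets $U_1^N,\dots,U_\ell^N\subset E_N$ satisfying properties (iii) and (iv) of Lemma~\ref{d�chetdiscr}: $\card\bigl(\bigcup_{j}U_j^N\bigr)>(1-\varep)q_N$ and $\card(f_N(U_j^N))<\varep\,\card(U_j^N)$ for each $j$.

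Next comes the count. Writing $E_N=\bigl(\bigcup_{j}U_j^N\bigr)\sqcup R_N$, property (iii) gives $\card(R_N)<\varep q_N$, hence $\card(f_N(R_N))\le\card(R_N)<\varep q_N$; and by the union bound, property (iv), and the disjointness of the $U_j^N$,
\[\card\Bigl(f_N\bigl(\textstyle\bigcup_{j}U_j^N\bigr)\Bigr)\le\sum_{j}\card\bigl(f_N(U_j^N)\bigr)<\varep\sum_{j}\card(U_j^N)\le\varep q_N.\]
Therefore $\card(f_N(E_N))<2\varep q_N$ for every $N\ge N_0$, and since $\varep>0$ was arbitrary, $\card(f_N(E_N))/q_N\to 0$.

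For the second assertion I would only observe that the forward orbit $\{x,f_N(x),f_N^2(x),\dots\}$ of any $x\in E_N$ is contained in $\{x\}\cup f_N(E_N)$, hence has at most $1+\card(f_N(E_N))$ elements (in particular every periodic orbit of $f_N$ lies in $f_N(E_N)$). Given $\delta>0$, applying the first part with $\varep<\delta/3$ produces $N_0$ such that $1+\card(f_N(E_N))<\delta q_N$ for all $N\ge N_0$.

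There is no genuine obstacle: once Lemma~\ref{d�chetdiscr} is in hand the statement is pure bookkeeping. The only point requiring a word of care is the genericity step — ensuring that a \emph{generic} $f$, and not merely some perturbation of a prescribed $f$, enjoys the shredding conclusions — but this is exactly the openness-plus-density mechanism already set up for Corollary~\ref{convattra} (the image condition being an open condition on $f$ thanks to the compact sets $V_j$ of Remark~\ref{remsh}), so it may be invoked verbatim.
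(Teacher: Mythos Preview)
Your argument is correct and is exactly the derivation the paper has in mind: the corollary is stated as an immediate consequence of the discrete shredding lemma, and you have spelled out the only bookkeeping needed, namely the split $E_N=\bigl(\bigcup_j U_j^N\bigr)\sqcup R_N$ together with properties~(iii) and~(iv), and the trivial inclusion of any forward orbit in $\{x\}\cup f_N(E_N)$. Your care about the genericity step (ensuring that $f$ itself, not merely a perturbation $g$, enjoys the discrete shredding conclusions, via Corollary~\ref{convattra} and Remark~\ref{remsh}(3)) is the one point the paper leaves implicit, and you handle it appropriately.
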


This corollary can be seen as a discrete analogue of the fact that a generic homeomorphism is totally singular, \emph{i.e.} that there exists a Borel set of full measure whose image under $f$ is zero measure. Again, it reflects the regularity of the behaviour of the discretizations of a dissipative homeomorphism: generically, we can describe the behaviour of \emph{all} (fine enough) discretizations. This is very different from the conservative case, where sometimes $f_N(E_N) =  E_N$ and sometimes $\card(f_N(E_N)) \le \vartheta(N)$ where $\vartheta:\R_+ \to \R_+^* $ is a given map that tends to $+\infty$ at $+\infty$. Moreover, we have the following estimation on the maximal invariant set (easily deduced from the discrete shredding lemma):

\begin{coro}\label{totsingdiscr2}
Let $\vartheta:\R_+ \to \R_+^* $ be a function that tends to $+\infty$ at $+\infty$. Then for a generic homeomorphism $f\in\Hom(X)$,
\[\lim_{N\to+\infty}\card(\Omega(f_{N}))\le\vartheta(N).\]
\end{coro}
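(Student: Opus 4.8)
The plan is to read the estimate off from the discrete shredding lemma (lemma \ref{d�chetdiscr}), in the same way corollary \ref{stabilis} and proposition \ref{Hausmodif} are obtained, together with the Baire bookkeeping already used in this part. Fix the function $\vartheta$ and consider
\[S=\{f\in\Hom(X)\mid \exists N_0,\ \forall N\ge N_0,\ \card(\Omega(f_N))\le\vartheta(N)\};\]
the goal is to show that $S$ contains a dense $G_\delta$ of $\Hom(X)$. As in the proof of proposition \ref{Hausmodif}, one first rewrites the conclusions of the shredding lemma with the compact sets $V_j$ of remark \ref{remsh}(3), so that ``$f$ realizes the shredding lemma for the fixed data $U_j,V_j,W_{j,i}$ and parameters $\varep,\delta$'' becomes an \emph{open} condition holding, with a common threshold $N_0$, on a whole neighbourhood; by the shredding lemma such open sets are dense, so it suffices to verify that $\card(\Omega(f_N))\le\vartheta(N)$ holds for all $N\ge N_0$ on each of them.

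The deterministic computation is short. Applying lemma \ref{d�chetdiscr} to $f$, to $\vartheta$ and to small $\varep,\delta$ gives, for every large $N$, a map $g$ with $d(f,g)<\delta$, forward-invariant finite sets $U_1^N,\dots,U_\ell^N\subset E_N$ and points $w_{j,i}^N$ that are permuted cyclically by $g_N$ inside each block, with $\card\big(\bigcup_{j,i}\{w_{j,i}^N\}\big)\le\vartheta(N)$. By item (v)(c) every point of $U_j^N$ is eventually sent by $g_N$ into the cycle $\{w_{j,1}^N,\dots,w_{j,\ell_j}^N\}$; since $U_j^N$ is finite and $g_N$-invariant, the only $g_N$-cycle contained in it is that cycle, so $\bigcap_{k\ge0}g_N^k(U_j^N)=\{w_{j,1}^N,\dots,w_{j,\ell_j}^N\}$ and every periodic point of $g_N$ that lies in $\bigcup_j U_j^N$ is one of the $w_{j,i}^N$. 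Hence $\card(\Omega(g_N))\le\vartheta(N)+\card\big(E_N\setminus\bigcup_j U_j^N\big)$, and item (iii) bounds the last term by $\varep q_N$.

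The main obstacle is precisely this last term: one must eliminate the residual periodic points of $g_N$ lying outside the trapping regions, which the bare lemma only controls by $\varep q_N$. Since $\vartheta(N)$ is in general $o(q_N)$, a fixed $\varep$ --- or even $\varep=\varep_N\to0$ subordinated to the category argument --- cannot absorb this term. The natural remedy is to iterate the shredding construction \emph{inside the leftover region} $X\setminus\bigcup_j\overline{U_j}$: at step $k$ one perturbs the current homeomorphism by less than $\delta\,2^{-k}$, covers all but a small fraction of the current leftover by new trapping regions decomposed into at most $\vartheta(N)\,2^{-k}$ attractor points, so that over all steps the attractor points still total at most $\vartheta(N)$ while the residual set shrinks geometrically in $\lambda$-measure; one then has to check that, at resolution $N$, enough of these steps are already visible on $E_N$ for the residual set to occupy fewer than $\vartheta(N)$ points of $E_N$. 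Carrying out this iteration with bounds uniform on the neighbourhoods used in the Baire argument, and matching the number of effective steps with the resolution $N$, is the technical heart of the proof; everything else --- openness of the reformulated shredding conditions, density straight from the shredding lemma, and the elementary fact that an eventually periodic orbit of a finite map confined to a forward-invariant set meets a cycle of that set --- is routine.
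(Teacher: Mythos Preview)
The paper does not actually give a proof of this corollary: it is simply asserted to be ``easily deduced from the discrete shredding lemma'', with no further details. So there is nothing substantive to compare your argument against.

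Your analysis is more careful than the paper's one-line claim, and you have put your finger on a genuine issue. A single application of lemma \ref{d�chetdiscr} with a fixed $\varep>0$ only yields
\[
\card(\Omega(g_N))\le \vartheta(N)+\card\Big(E_N\setminus\textstyle\bigcup_j U_j^N\Big)\le \vartheta(N)+\varep\, q_N,
\]
and since $\vartheta$ may grow arbitrarily slowly the residual $\varep\, q_N$ cannot in general be absorbed. Intersecting over $\varep=1/k$ in the Baire argument does not help directly either, because the threshold $N_0(\varep)$ from the lemma may blow up as $\varep\to 0$. The paper's ``easily deduced'' is therefore, at best, elliptical.

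Your proposed fix --- iterating the shredding construction in the successive leftover regions, with perturbations of size $\delta\,2^{-k}$ and attractor budgets summing to at most $\vartheta(N)$ --- is the natural way to close the gap, and is in the spirit of how the continuous shredding lemma itself is proved. Two points deserve care when you carry it out. First, the leftover set $X\setminus\bigcup_j\overline{U_j}$ is not $g$-invariant (points can fall into the $U_j$), so you are not literally applying the shredding lemma to a smaller manifold; rather, you perturb $g$ on a neighbourhood of the leftover to create \emph{new} trapping regions there, disjoint from the old ones. Second, you must synchronise the iteration with the grid resolution: at each $N$ only finitely many levels of shredding are ``visible'' on $E_N$, so the remaining leftover must already contain fewer than the unused portion of $\vartheta(N)$ grid points. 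This is where the bookkeeping lies, and you are right to flag it as the technical heart. Once this is organised, openness and density follow exactly as in the proofs of corollary \ref{stabilis} and proposition \ref{Hausmodif}.
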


It remains to study the behaviour of measures $\mu^{f_{N,U}}$ (see definition \ref{defmes}). Again, the results are very different from the conservative case: for any open set $U$, the measures $\mu^f_{{N},U}$ tend to a single measure, say $\mu^f_U$.

\begin{theoreme}\label{convmesdissip}
For a generic homeomorphism $f\in\Hom(X)$ and an open subset $U$ of $X$, the measure $\mu^f_U$ is well defined\footnote{In other words a generic homeomorphism is weird, see also \cite{MR3027586}.} and is supported by the closure of the set of attractors $A_0$. Moreover the measures $\mu^f_{{N},U}$ tend weakly to $\mu^f_U$.
\end{theoreme}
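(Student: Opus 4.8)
The plan is to prove three things in turn: that $\mu^f_U$ is well defined, that it is supported by $\overline{A_0}$, and that $\mu^f_{N,U}\rightharpoonup\mu^f_U$. For the first two, recall that by the shredding lemma and its refinements (Remark~\ref{remsh}, Corollary~\ref{convattra}) a generic $f\in\Hom(X)$ is weird, so $\lambda$-almost every $x$ has a Birkhoff limit $\mu^f_x$, which is carried by the $\omega$-limit set $\omega_f(x)$; for the parameter $\varep=1/k$ this $\omega$-limit set sits inside the attractive set $A_{1/k}$ of the shredding lemma. Intersecting over $k\in\N$ and using that $A_\varep\to\overline{A_0}$ for the Hausdorff distance (Corollary~\ref{convattra}), we obtain a full-measure set of points $x$ with $\omega_f(x)\subset\overline{A_0}$, hence with $\mu^f_x$ supported by $\overline{A_0}$. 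Proposition~\ref{convdom} then says that $\mu^f_U$ is well defined and that $\int_X\varphi\,\ud\mu^f_U=\int_U\big(\int_X\varphi\,\ud\mu^f_x\big)\,\ud\lambda_U$ for every continuous $\varphi$; since almost every $\mu^f_x$ is supported by $\overline{A_0}$, so is $\mu^f_U$.

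The core of the convergence statement is the following pointwise fact: for $\lambda$-almost every $x\in X$, one has $\mu^f_{N,P_N(x)}\rightharpoonup\mu^f_x$ when $N\to\infty$. Here $\mu^f_{N,P_N(x)}$ always makes sense, being the uniform measure on the terminal cycle of the (necessarily preperiodic) $f_N$-orbit of $P_N(x)$. Fix a continuous $\varphi:X\to\R$ with modulus of uniform continuity $\omega_\varphi$. The quantitative form of Corollary~\ref{stabilis} provides, for each $\varep,\delta>0$, an open set $A$ with $\lambda(A)>1-\varep$ and an integer $N_0$ such that for all $N\ge N_0$ and all $x\in A$ the \emph{entire} $f_N$-orbit of $x_N=P_N(x)$ $\delta$-shadows the $f$-orbit of $x$. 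Comparing partial Birkhoff sums term by term gives, for such $x$ and $N$,
\[\Big|\frac1M\sum_{m=0}^{M-1}\varphi\big(f_N^m(x_N)\big)-\frac1M\sum_{m=0}^{M-1}\varphi\big(f^m(x)\big)\Big|\le\omega_\varphi(\delta)\]
uniformly in $M$; letting $M\to\infty$ (both Ces\`{a}ro limits exist --- the left one because $f_N$ is finite, the right one for a.e.\ $x$ since $f$ is weird) yields $\big|\int\varphi\,\ud\mu^f_{N,x_N}-\int\varphi\,\ud\mu^f_x\big|\le\omega_\varphi(\delta)$. Letting $\delta\to0$ and intersecting over a countable dense family of test functions and over $\varep=1/k$ gives the claimed pointwise convergence off a $\lambda$-null set.

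To conclude, write $\mu^f_{N,U}=\int_X\mu^f_{N,y}\,\ud\lambda_{N,U}(y)$ (the discrete version of Proposition~\ref{convdom}) and compare it with $\int_U\mu^f_{N,P_N(x)}\,\ud\lambda_U(x)$; the difference is governed by the discrepancy between the uniform measure $\lambda_{N,U}$ on $E_N\cap U$ and the pushforward $(P_N)_*\lambda_U$, which tends to $0$ using the measure-approximation clauses of the discrete shredding lemma applied to the basins that carry almost all the mass (together with the regularity of the grids). Dominated convergence applied to the pointwise statement of the previous paragraph --- the exceptional set $X\setminus A$ contributing at most $2\|\varphi\|_\infty\,\lambda(X\setminus A)$, which is uniformly small --- then gives $\int_U\mu^f_{N,P_N(x)}\,\ud\lambda_U(x)\rightharpoonup\int_U\mu^f_x\,\ud\lambda_U(x)=\mu^f_U$, hence $\mu^f_{N,U}\rightharpoonup\mu^f_U$. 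I expect the main obstacle to be precisely this last step: making the all-time shadowing of Corollary~\ref{stabilis} uniform over a set of measure close to $1$, ensuring the term-by-term comparison of Birkhoff sums survives the Ces\`{a}ro limit uniformly in the window length $M$, and passing from the discrete averaging measure $\lambda_{N,U}$ to $\lambda_U$ through the convergence estimates of the discrete shredding lemma. The remaining, purely measure-theoretic, parts are routine once the shredding lemma is available.
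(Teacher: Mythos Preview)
Your proposal is correct and follows essentially the same route as the paper's (sketch of) proof: both rest entirely on the shredding lemma and its discrete counterpart, use uniform continuity of test functions together with the fact that orbits funnel into the small sets $W_{j,i}$ to show that Birkhoff limits exist and are nearly constant on each basin, and then invoke the convergence of the discrete basins $U_j^N$, $\{w_{j,i}^N\}$ to $U_j$, $W_{j,i}$ to pass to the discretizations. Your organization via the pointwise statement $\mu^f_{N,P_N(x)}\rightharpoonup\mu^f_x$ (obtained from the all-time shadowing of Corollary~\ref{stabilis}) followed by integration is a clean way to package what the paper only sketches, and the technical point you flag---reconciling $\lambda_{N,U}$ with $(P_N)_*\lambda_U$---is exactly the place where the paper's sketch is also silent; the measure-convergence clauses of the discrete shredding lemma are indeed what one uses there.
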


\begin{proof}[Sketch of proof of theorem \ref{convmesdissip}]
The proof of this theorem is based on the shredding lemma: the set of homeomorphisms which satisfy the conclusions of the lemma is a $G_\delta$ dense, so it suffices to prove that such homeomorphisms $f$ satisfy the conclusion of the proposition. Let $U$ be an open subset of $X$ and $\varphi : X\to\R$ be a continuous function.

We want to show that on one hand the integral $\int_X \varphi\, \ud\mu^f_U$ is well defined, i.e. that the Birkhoff limits for the function $\varphi$
\[\lim_{m\to+\infty} \frac{1}{m}\sum_{i=0}^{m-1} \varphi(f^i(x))\]
are well defined for a.e. $x\in U$; and on the other hand we have the convergence
\[\int_X  \varphi \,\ud\mu^f_{{N},U}\underset{N\to+\infty}{\longrightarrow}\int_X \varphi\, \ud\mu^f_U,\]

For the first step, the idea of the proof is that most of the points (for $\lambda$) go in a $W_{j,i}$. Since the iterates of the sets $W_{j,i}$ have small diameter, by uniform continuity, the function $\varphi$ is almost constant on the sets $f^m(W_{j,i})$. Thus the measure $\mu^f_x$ is well defined and almost constant on the set of points which goes in $W_{j,i}$. And by the same construction, since the dynamics of $f_N$ converge to that of $f$, and in particular that the sets $U_j^N$ and $\{w_{j,i}^N\}$ converge to the sets $U_j$ and $W_{j,i}$, the measures $\mu_{N,U}^f$ tend to the measures $\mu_U^f$.
\end{proof}

\part{Numerical simulations}\label{partietrois}

\section{Conservative homeomorphisms}

Now we present the results of numerical simulations of conservative homeomorphisms we conducted. We sought whether it is possible to observe behaviours such as those obtained in section \ref{partie 1.3} or that described by theorem \ref{mesinv} on actual simulations: it is not clear that the orders of discretization described by these results can be reached in practice, or if simple examples of homeomorphisms behave the same way as generic homeomorphisms.

Recall that we simulate homeomorphisms $f(x,y) = P\circ Q\circ P(x,y)$, where $P$ and $Q$ are both homeomorphisms of the torus that modify only one coordinate:
\[P(x,y) = (x,y+p(x,y))\quad\text{and}\quad Q(x,y) = (x+q(x,y),y),\]
and that we discretize according to the uniform grids on the torus:
\[E_N = \left\{\left(\frac{i_1}{N},\dots,\frac{i_n}{N}\right)\in \T^n \big|\ \forall j,\, 0\le i_j\le {N}-1\right\}.\]
To obtain conservative homeomorphisms, we choose $p(x,y)=p(x)$ and $q(x,y)=q(y)$. We perform simulations of two conservative homeomorphisms which are small perturbations of identity or of the standard Anosov automorphism $A : (x,y)\mapsto (x+y,x+2y)$:
\begin{itemize}
\item To begin with we study $f_1 = P\circ Q\circ P$, with\label{defex}
\[p(x) = \frac{1}{259}\cos(2\pi\times 227x)+\frac{1}{271}\sin(2\pi\times 253x),\]
\[q(y) = \frac{1}{287}\cos(2\pi\times 241y)+\frac{1}{263}\sin(2\pi\times 217y).\]
This conservative homeomorphism is a small $C^0$ perturbation of identity. Experience shows that even dynamical systems with fairly simple definitions behave chaotically (see e.g. \cite{Gamb-dif}). We can expect that a homeomorphism such as $f_1$ has a complex dynamical behaviour and even behaves essentially as a generic homeomorphism, at least for small orders of discretization. Note that we choose coefficients that have (virtually) no common divisors to avoid arithmetical phenomena such as periodicity or resonance.
\item We also simulate $f_2$ the composition of $f_1$ with the standard Anosov automorphism $A$, say $f_2 = P\circ Q\circ P\circ A$. This makes it a small $C^0$-perturbation of $A$. Thus $f_2$ is semi-conjugated to $A$ but not conjugated: to each periodic orbit of $A$ corresponds many periodic orbits of $f_2$. As for $f_1$, we define $f_2$ in the hope that the behaviour of its discretizations is fairly close to that of discretizations of a generic homeomorphism.
\end{itemize}

Note that the homeomorphisms $f_1$ and $f_2$ have at least one fixed point (for $f_1$, simply make simultaneously $p(x)$ and $q(y)$ equal to $0$; for $f_2$, note that $0$ is a persistent fixed point of $A$). Thus, the theoretical results indicate that for a generic homeomorphism $f$ which has a fixed point, infinitely many discretizations has a unique fixed point; moreover a subsequence of $(\mu_N^f)_N$ tends to an invariant measure under $f$ supported by a fixed point. We can test if this is true on simulations.

From a practical point of view, we restricted ourselves to grids of sizes smaller than $2^{15}\times 2^{15}$: the initial data become quickly very large, and the algorithm creates temporary variables that are of size of the order of five times the size of the initial data. For example, for a grid $2^{15}\times 2^{15}$, the algorithm takes between $25$ and $30$ Go of RAM on the machine.

\addtocontents{toc}{\SkipTocEntry}\subsection{Combinatorial behaviour}\label{simulgrafcons}

As a first step, we are interested in some quantities related to the combinatorial behaviour of discretizations of homeomorphisms. These quantities are:
\begin{itemize}
\item the cardinality of the maximal invariant set $\Omega(f_N)$,
\item the number of periodic orbits of the map $f_N$,
\item the maximal size of a periodic orbit of $f_N$.
\end{itemize}
Recall that according to corollaries \ref{typlax}, \ref{corovar2} and \ref{corovar3}, for a generic homeomorphism, the ratio $\frac{\card(\Omega(f_N))}{q_N}$ must fluctuate between $0$ and $1$ depending on $N$, the number of orbits must fluctuate between $1$ and (e.g.) $\sqrt{q_N}$ and the maximal size of a periodic orbit must fluctuate between $1$ and $q_N$.

We calculated these quantities for discretizations of orders $128 k$, for $k$ from $1$ to $100$ and have represented them graphically. For information, if $N =128\times 100$, then $q_N\simeq 1.6.10^8$.

\begin{figure}[h]
\begin{bigcenter}
\makebox[1.1\textwidth]{\parbox{1.1\textwidth}{%
\begin{minipage}[c]{.33\linewidth}
	\includegraphics[width=5cm,trim = .8cm .3cm .8cm .1cm,clip]{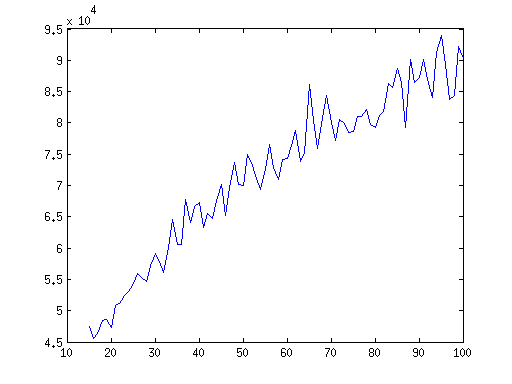}
\end{minipage}\hfill
\begin{minipage}[c]{.33\linewidth}
	\includegraphics[width=5cm,trim = .8cm .3cm .8cm .1cm,clip]{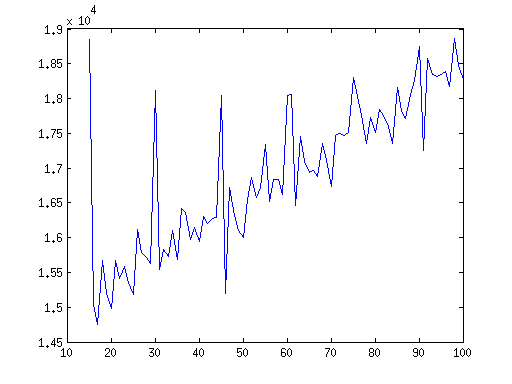}
\end{minipage}\hfill
\begin{minipage}[c]{.33\linewidth}
	\includegraphics[width=5cm,trim = .8cm .3cm .8cm .1cm,clip]{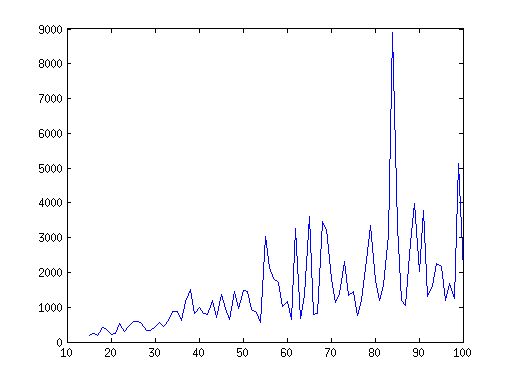}
\end{minipage}

\begin{minipage}[c]{.33\linewidth}
	\includegraphics[width=5cm,trim = .8cm .3cm .8cm .1cm,clip]{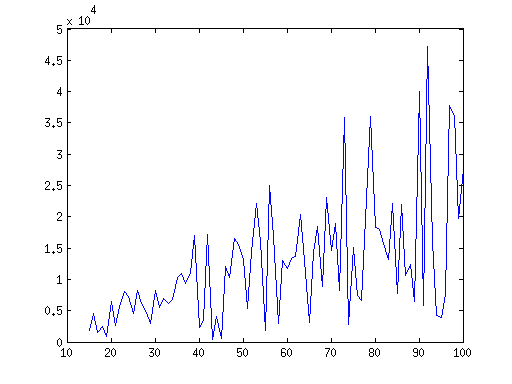}
\end{minipage}\hfill
\begin{minipage}[c]{.33\linewidth}
	\includegraphics[width=5cm,trim = .8cm .3cm .8cm .1cm,clip]{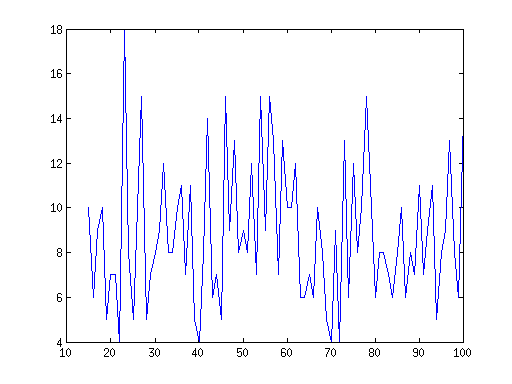}
\end{minipage}\hfill
\begin{minipage}[c]{.33\linewidth}
	\includegraphics[width=5cm,trim = .8cm .3cm .8cm .1cm,clip]{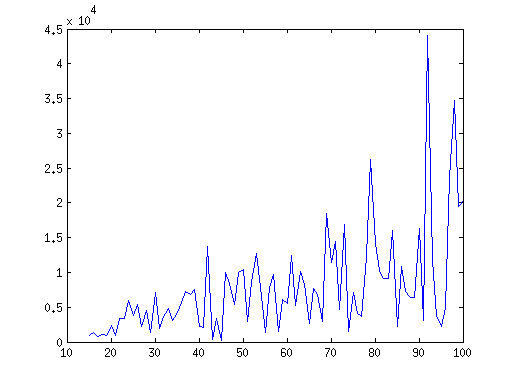}
\end{minipage}
}}
\end{bigcenter}
\caption{Size of the maximal invariant set $\Omega((f_i)_N)$ (left), number of periodic orbits of $(f_i)_N$ (middle) and length of the largest periodic orbit of $(f_i)_N$ (right) depending on $N$, for $f_1$ (top) and $f_2$ (bottom), on the grids $E_N$ with $N=128k$, $k=15,\dots,100$}\label{GrafCons}
\end{figure}

We begin with the cardinality of the maximal invariant set $\Omega(f_N)$ (figure \ref{GrafCons}). Contrary to what the theoretical results provide for a generic homeomorphism, for all simulations the ratio $\frac{\card(\Omega(f_N))}{q_N}$ tends to $0$ when $N$ increases. More specifically, the cardinality of $\Omega(f_N)$ evolves much more regularly for $f_1$ than for $f_2$:  for $f_1$ the value of this cardinality seems to be the sum of a smooth increasing function and a random noise, but for $f_2$ this value seems to be the product of a smooth increasing function with a random noise. We have no explanation to the parabolic shape of the curve for $f_1$: it reflects the fact that the cardinality of $\Omega((f_1)_N) $ evolves in the same way as $\sqrt[4]{q_N}$ (whereas for a random map of a finite set with $q$ elements into itself it evolves the same way as $\sqrt{q}$). Finally, it is interesting to note that for $f_2$, the size of the maximal invariant set is distributed more or less around the size of the maximal invariant set of a random map of a set with $q_N$ elements into itself, which depends (asymptotically) linearly of $N$ and is worth about $16,000$ for $N=128 \times 100$ (see \cite{Boll-rand}).

According to the results of the previous sections, for a generic homeomorphism $f$, the number of periodic orbits of $f_N$ must fluctuate between $1$ and (e.g.) $\sqrt{q_N}$. It is clearly not the case for the simulations. In addition, its behaviour is not the same for $f_1$ and for $f_2$ (figure \ref{GrafCons}): for $f_1$ the number of orbits reaches quickly a value around $2.10^4$ to increase slightly thereafter, while for $f_2$ it oscillates between $1$ and $18$, regardless the order of discretization. These graphics can be compared with those representing the size of the maximal invariant set $\Omega((f_i)_N)$: the number of periodic orbits and the size of the maximal invariant set are of the same order of magnitude for $f_1$ (up to a factor $5$), which means that the average period of a periodic orbit is small (which is not surprising, since $f_1$ is a small perturbation of identity). In contrast, they differ by a factor roughly equal to $10^3$ for $f_2$, which means that this time the average period of a periodic orbit is very large. This can be explained partly by the fact that the standard Anosov automorphism tends to mix what happens in the neighborhood of identity. The fact remains that these simulations (such as the size of the maximal invariant set $\Omega ((f_i)_N)$) suggest that the behaviour in the neighborhood of identity and of the standard Anosov automorphism are quite different, at least for small orders of discretization.

Regarding the maximum size of a periodic orbit of $f_N$ (figure \ref{GrafCons}), again its behaviour does not correspond to that of a generic homeomorphism: it should oscillate between $1$ and $q_N$, which is not the case. However, it varies widely depending on $N$, especially when $N$ is large. The qualitative behaviours are similar for both simulations, but there are some quantitative differences: the maximum of the maximal length of a periodic orbit is greater for $f_2$ than for $f_1$.

\addtocontents{toc}{\SkipTocEntry}\subsection{Behaviour of invariant measures}

We also simulated the measure $\mu_N^{f_i}$ for the two examples of conservative homeomorphisms $f_1$ and $f_2$ as defined in page \pageref{defex}. Our purpose is to test whether phenomena as described by theorem \ref{mesinv} can be observed in practice or not. We obviously can not expect to see the sequence $(\mu^{f_i}_N)_{N\in\N}$ accumulating on \emph{all} the invariant probability measures of $f$, since these measures generally form an infinite-dimensional convex set, but we can still test if it seems to converge or not.

We present images of sizes $128\times 128$ pixels representing in logarithmic scale the density of the measures $\mu_N^f$: each pixel is coloured according to the measure carried by the set of points of $E_N$ it covers. The blue corresponds to a pixel with very small measure and the red to a pixel with very high measure. Scales on the right of each image corresponds to the measure of one pixel in logarithmic scale to base $10$: if the green codes $-3$, then a green pixel will have measure $10^{-3}$ for $\mu_N^f$. For information, when Lebesgue measure is represented, all the pixels have a value about $-4.2$.\label{pagealgo}
\bigskip

Theoretically, the maximal value of the measure $\mu_N$ (figure \ref{GrafMaxmeasCons}) should oscillate between $1/q_N$ and $1$, but this is not the case for these examples. Again the behaviour is quite different for $f_1$ and for $f_2$: for $f_1$ it seems that the values of $\mu_N$ are well distributed between null function and a linear function of $N$ while for $f_2$ we observe peaks: there are a few values of $N$ for which $\mu_N$ is much higher than elsewhere. For $f_1$ the maximal value of $\mu_N$ seems globally grow with $N$, but with a quite irregular behaviour, apparently confirming theorem \ref{mesinv}. Finally, we note that the maximum value of $\mu_N$ is much lower for $f_2$ than for $f_1$.

\begin{figure}[H]
\begin{bigcenter}
\makebox[.75\textwidth]{\parbox{.75\textwidth}{%
\begin{minipage}[c]{.49\linewidth}
	\includegraphics[width=5cm,trim = .8cm .3cm .8cm .1cm,clip]{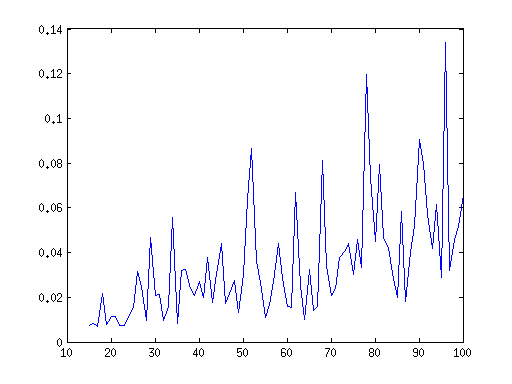}
\end{minipage}\hfill
\begin{minipage}[c]{.49\linewidth}
	\includegraphics[width=5cm,trim = .8cm .3cm .8cm .1cm,clip]{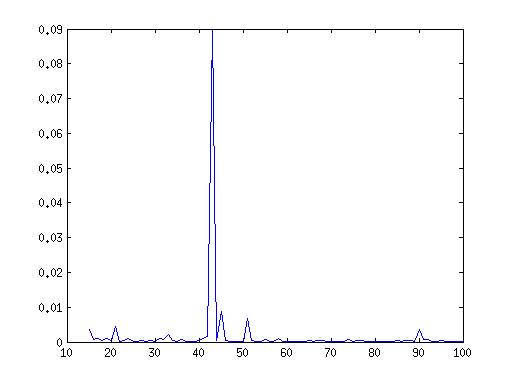}
\end{minipage}
}}
\end{bigcenter}
\caption{Maximal value of the measure $\mu_N$ on $E_N$ depending on $N$ for $f_1$ (left) and $f_2$ (right), on the grids $E_N$ with $N=128k$, $k=15,\dots,100$}\label{GrafMaxmeasCons}
\end{figure}

\newpage
\changepage{100pt}{}{}{}{}{-50pt}{}{}{}

\pagestyle{empty}

\begin{figure}[H]
\begin{bigcenter}
\makebox[1.25\textwidth]{\parbox{1.25\textwidth}{%

\begin{minipage}[c]{.33\linewidth}
	\includegraphics[width=6cm,trim = 1.2cm .3cm .6cm .8cm,clip]{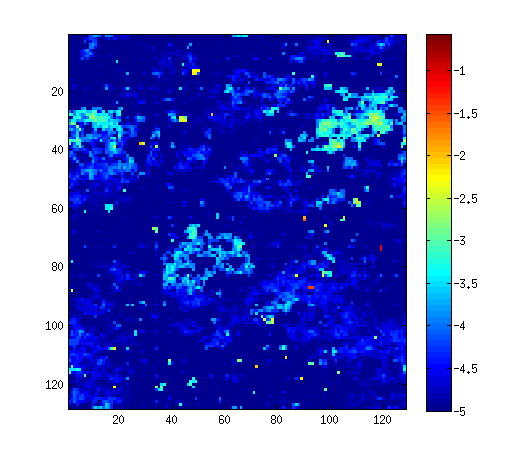}
\end{minipage}\hfill
\begin{minipage}[c]{.33\linewidth}
	\includegraphics[width=6cm,trim = 1.2cm .3cm .6cm .8cm,clip]{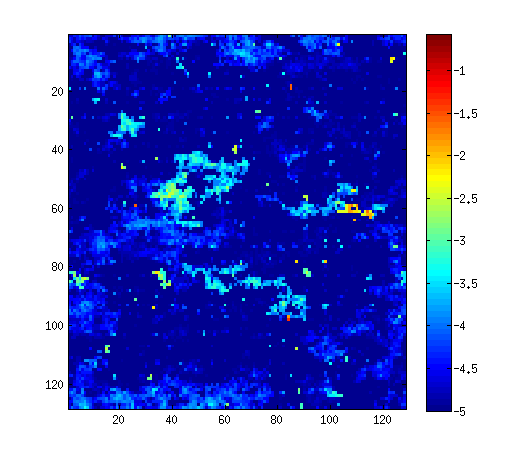}
\end{minipage}\hfill
\begin{minipage}[c]{.33\linewidth}
	\includegraphics[width=6cm,trim = 1.2cm .3cm .6cm .8cm,clip]{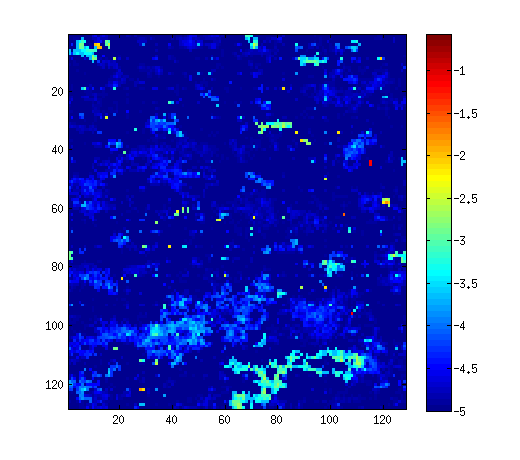}
\end{minipage}

\begin{minipage}[c]{.33\linewidth}
	\includegraphics[width=6cm,trim = 1.2cm .3cm .6cm .8cm,clip]{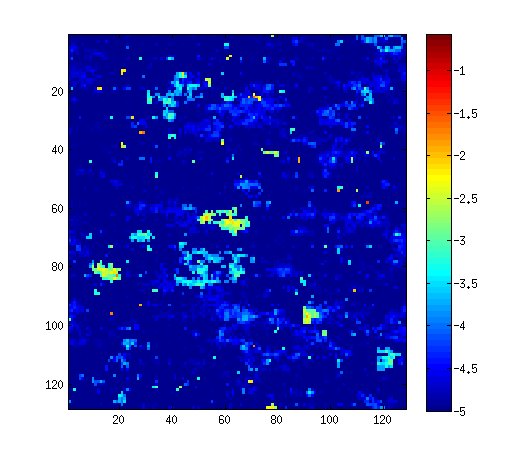}
\end{minipage}\hfill
\begin{minipage}[c]{.33\linewidth}
	\includegraphics[width=6cm,trim = 1.2cm .3cm .6cm .8cm,clip]{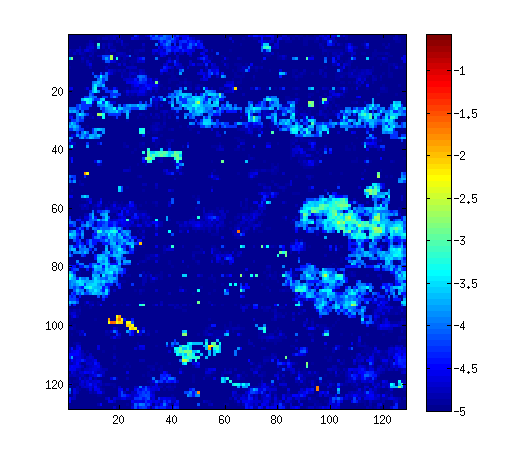}
\end{minipage}\hfill
\begin{minipage}[c]{.33\linewidth}
	\includegraphics[width=6cm,trim = 1.2cm .3cm .6cm .8cm,clip]{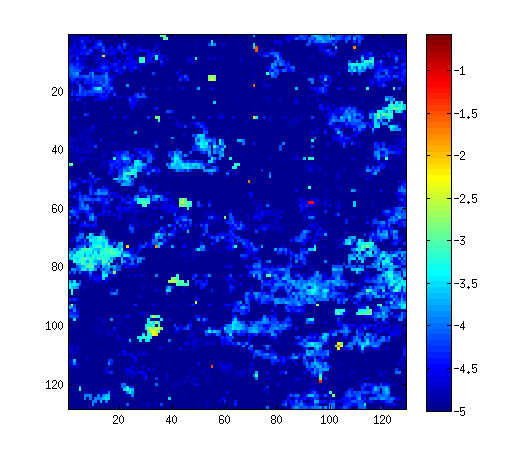}
\end{minipage}

\begin{minipage}[c]{.33\linewidth}
	\includegraphics[width=6cm,trim = 1.2cm .3cm .6cm .8cm,clip]{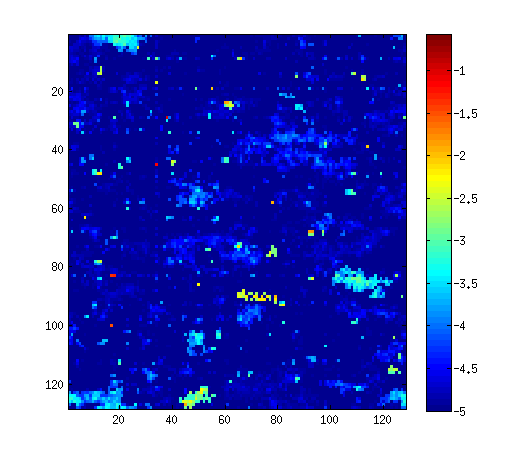}
\end{minipage}\hfill
\begin{minipage}[c]{.33\linewidth}
	\includegraphics[width=6cm,trim = 1.2cm .3cm .6cm .8cm,clip]{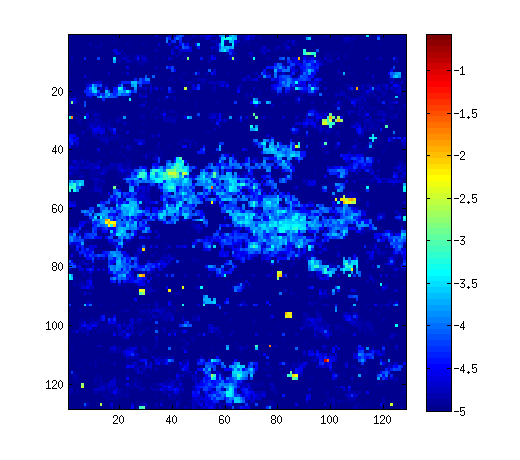}
\end{minipage}\hfill
\begin{minipage}[c]{.33\linewidth}
	\includegraphics[width=6cm,trim = 1.2cm .3cm .6cm .8cm,clip]{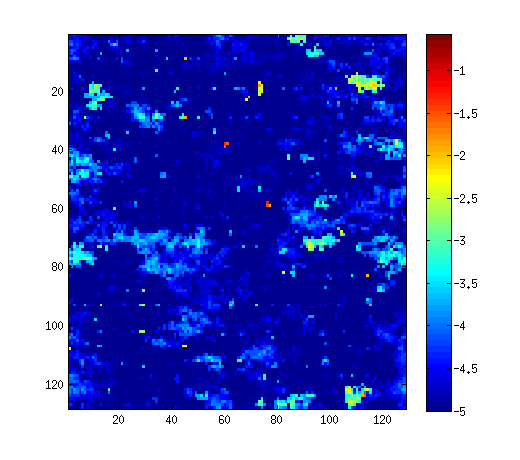}
\end{minipage}
}}
\caption{Simulations of invariant measures $\mu_N^{f_1}$ on grids of size $N\times N$, with $N=20000,\dots,20008$ (from left to right and top to bottom)}\label{MesC0IdConsPQPsinSer}
\end{bigcenter}
\end{figure}

The results of simulations of invariant measures of discretizations of $f_1$ (which is a $C^0$ conservative perturbation of identity) are quite positive: they agree with theoretical results about discretizations of generic conservative homeomorphisms, in particular with theorem \ref{mesinv}. When $f_1$ is discretized, we observe that the measure is first fairly well distributed. When the step of the discretization improves we can observe places where the measure accumulates; moreover these places changes a lot when the orders of discretizations  varies (see figure \ref{MesC0IdConsPQPsinSer}). This fairly agrees with what happens in the $C^0$ generic case, where the measure $\mu^f_N$ depends very much on the order of discretization rather than on the homeomorphism itself. There is also an other phenomenon: when the size of the grid is large enough (around $10^{12}\times 10^{12}$), it appears areas uniformly charged by the measure $\mu^f_N$; their sizes seems to be inversely proportional to the common mass of the pixels of the area.

\newpage

\begin{figure}[H]
\begin{bigcenter}
\makebox[1.25\textwidth]{\parbox{1.25\textwidth}{%

\begin{minipage}[c]{.33\linewidth}
	\includegraphics[width=6cm,trim = 1.2cm .3cm .6cm .8cm,clip]{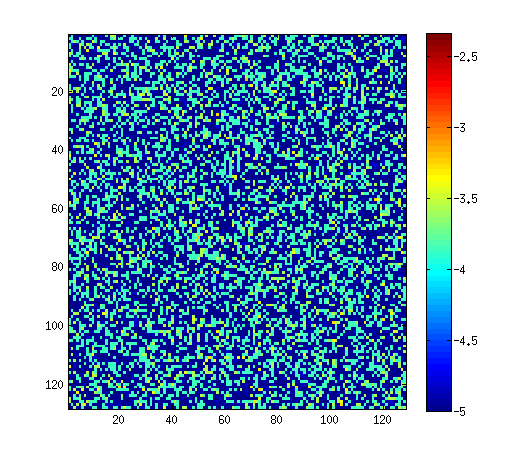}
\end{minipage}\hfill
\begin{minipage}[c]{.33\linewidth}
	\includegraphics[width=6cm,trim = 1.2cm .3cm .6cm .8cm,clip]{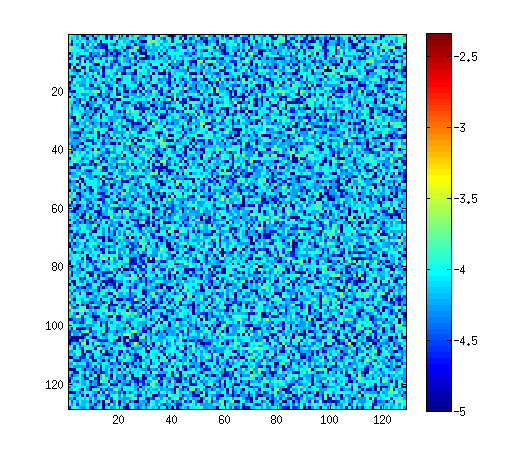}
\end{minipage}\hfill
\begin{minipage}[c]{.33\linewidth}
	\includegraphics[width=6cm,trim = 1.2cm .3cm .6cm .8cm,clip]{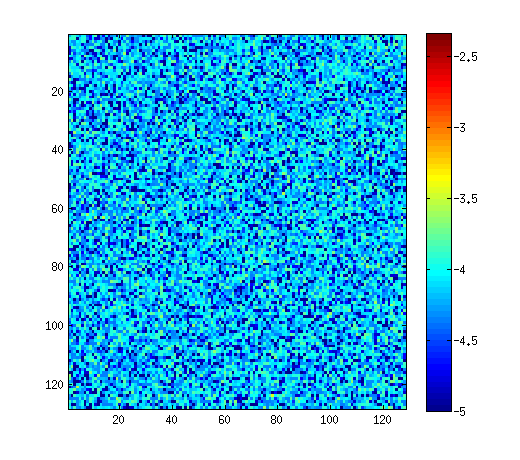}
\end{minipage}

\begin{minipage}[c]{.33\linewidth}
	\includegraphics[width=6cm,trim = 1.2cm .3cm .6cm .8cm,clip]{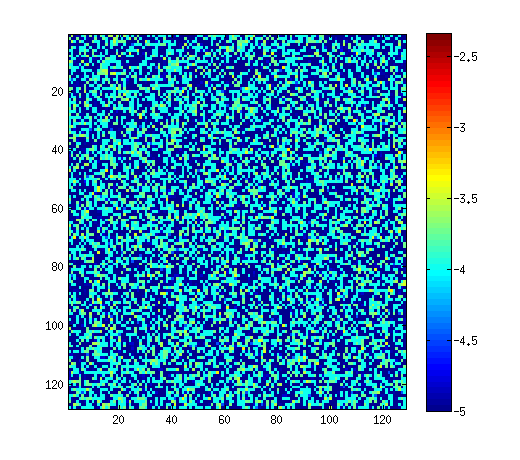}
\end{minipage}\hfill
\begin{minipage}[c]{.33\linewidth}
	\includegraphics[width=6cm,trim = 1.2cm .3cm .6cm .8cm,clip]{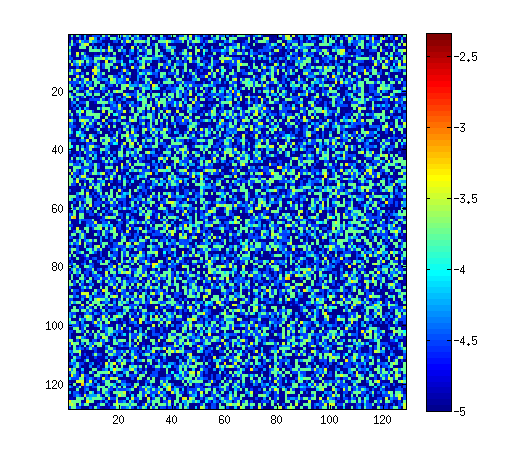}
\end{minipage}\hfill
\begin{minipage}[c]{.33\linewidth}
	\includegraphics[width=6cm,trim = 1.2cm .3cm .6cm .8cm,clip]{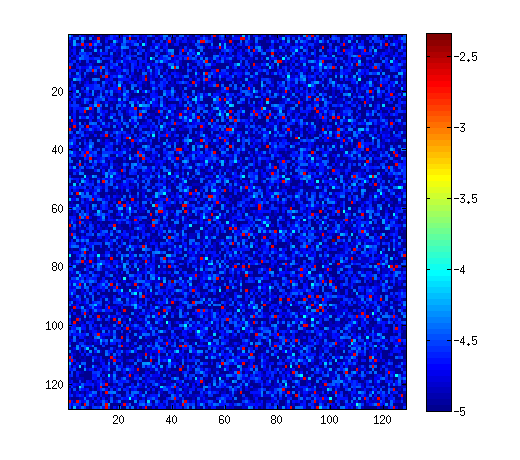}
\end{minipage}

\begin{minipage}[c]{.33\linewidth}
	\includegraphics[width=6cm,trim = 1.2cm .3cm .6cm .8cm,clip]{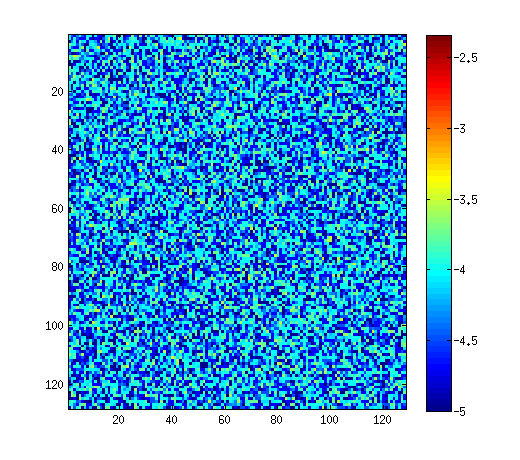}
\end{minipage}\hfill
\begin{minipage}[c]{.33\linewidth}
	\includegraphics[width=6cm,trim = 1.2cm .3cm .6cm .8cm,clip]{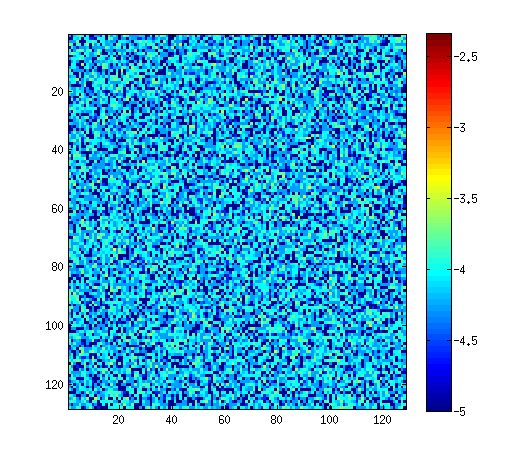}
\end{minipage}\hfill
\begin{minipage}[c]{.33\linewidth}
	\includegraphics[width=6cm,trim = 1.2cm .3cm .6cm .8cm,clip]{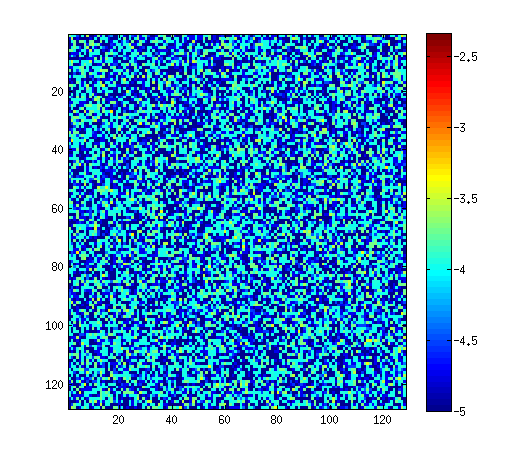}
\end{minipage}
}}

\caption{Simulations of invariant measures $\mu_N^{f_2}$ on grids of size $N\times N$, with $N=20005,\dots,20013$ (from left to right and top to bottom)}\label{MesC0AnoConsSer}
\end{bigcenter}
\end{figure}

\vspace{-5pt}
For the discretizations of $f_2$, which is a $C^0$ conservative perturbation of the standard Anosov automorphism, the simulations on grids of size $2^k\times 2^k$ might suggest that the measures $\mu^{f_2}_N$ tend to Lebesgue measure. In fact, making a large number of simulations, we realize that there is also strong variations of the behaviour of measures (figure \ref{MesC0AnoConsSer}): the measure is often well distributed in the torus and sometimes quite singular with respect to Lebesgue measure (as in figure \ref{GrafMaxmeasCons}). For example when we discretize on the grid of size $20010\times 20010$ (middle right of figure \ref{MesC0AnoConsSer}), one observe an orbit with length $369$ which mass $84\%$ of the total measure. In fact the behaviour of discretizations looks the same that in the neighborhood of identity, modulo the fact that the standard Anosov automorphism tends to spread the attractive periodic orbits of discretizations on the entire torus: for many values of $N$ composing by $A$ spreads the behaviour of the measure $\mu^{f_2}_N$, but sometimes (in fact seldom) a fixed point of $(f_1)_N$ which attracts a large part of $E_N$ is located around one of the few periodic points of small period for $A$. This then creates a periodic orbit for $(f_2)_N$ with a big measure for $\mu^{f_2}_N$.

\newpage
\changepage{-100pt}{}{}{}{}{50pt}{}{}{}
\pagestyle{headings}

\section{Dissipative homeomorphisms}

Let us finish by presenting the results of some numerical simulations of dissipative homeomorphisms. Again, our aim is to compare the theoretical results with the reality of numerical simulations: for simple homeomorphisms and reasonable orders of discretization does one have convergence of the dynamics of discretizations to that of the homeomorphism, as suggested by the above theorems?

Recall that we simulate homeomorphisms such that
\[f(x,y) = Q\circ P(x,y)\quad\text{or}\quad f(x,y) = P\circ Q\circ P(x,y),\]
where $P$ and $Q$ are two homeomorphisms of the torus that modify only one coordinate:
\[P(x,y) = (x,y+p(x,y))\quad\text{and}\quad Q(x,y) = (x+q(x,y),y).\]
Unlike the conservative case, $p$ and $q$ depends on both $x$ and $y$. Again, we tested two homeomorphisms:
\begin{itemize}\label{defbis}
\item To begin with we studied $f_3 = Q\circ P$, with
\[p(x,y) = \frac{1}{259}\cos(2\pi\times 227y)+\frac{1}{271}\sin(2\pi\times 233x),\]
\[q(x,y) = \frac{1}{287}\cos(2\pi\times 241y)+\frac{1}{263}\sin(2\pi\times 217y) + \frac{1}{263}\cos(2\pi\times 271x)\]
This dissipative homeomorphism is a small $C^0$ perturbation of identity, whose derivative has many oscillations whose amplitudes are close to $1$. That creates many fixed points which are attractors, sources or saddles.
\item It has also seemed useful to us to simulate a homeomorphism close to the identity in $C^0$ topology, but with a small number of attractors. Indeed, as explained heuristically by J.-M. Gambaudo and C. Tresser in \cite{Gamb-dif}, a homeomorphism such that $f_3$ can have a large number of attractors whose basins of attraction are small. It turns out that the dissipative behaviour of $f_3$ can not be detected in orders discretization that can be achieved in practice. We therefore defined another homeomorphism close to the identity in $C^0$ topology, but with much less attractors, say $f_4 =P \circ Q \circ P $, with
\[p(x,y) = \frac{1}{259}\Th\big(50\cos(2\pi\times y)\big)+\frac{1}{271}\Th\big(50\cos(2\pi\times 5x)\big),\]
\[q(x,y) = \frac{1}{287}\Th\big(50\cos(2\pi\times y)\big)+\frac{1}{263}\Th\big(50\cos(2\pi\times 7y)\big) +\frac{1}{263}\Th\big(50\cos(2\pi\times 3x)\big).\]
\end{itemize}

\addtocontents{toc}{\SkipTocEntry}\subsection{Combinatorial behaviour}\label{simulgrafdissip}

We simulated some quantities related to the combinatorial behaviour of discretizations of homeomorphisms, namely:
\begin{itemize}
\item the cardinality of the maximal invariant set $\Omega(f_N)$,
\item the number of periodic orbits of $f_N$,
\item the maximal size of a periodic orbit of $f_N$.
\end{itemize}
We calculated these quantities for discretizations of orders $128k$ for $k$ from $1$ to $100$ and represented it graphically. For information, if $N =128\times100$, then $q_N \simeq 1.6. 10^8$.

\begin{figure}[h]
\begin{bigcenter}
\makebox[1.1\textwidth]{\parbox{1.1\textwidth}{%
\begin{minipage}[c]{.33\linewidth}
	\includegraphics[width=5cm,trim = .8cm .3cm .8cm .1cm,clip]{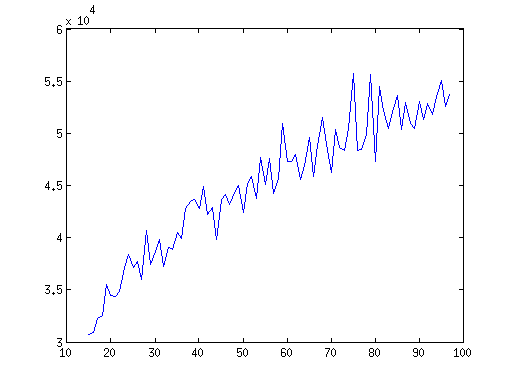}
\end{minipage}\hfill
\begin{minipage}[c]{.33\linewidth}
	\includegraphics[width=5cm,trim = .8cm .3cm .8cm .1cm,clip]{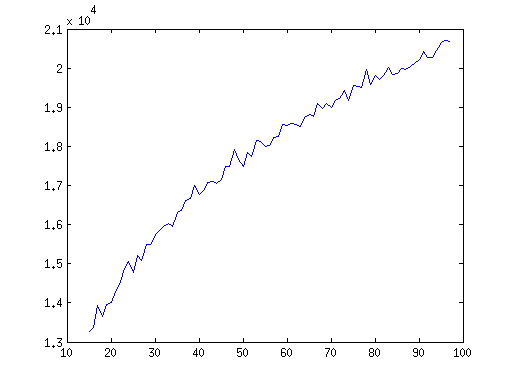}
\end{minipage}\hfill
\begin{minipage}[c]{.33\linewidth}
	\includegraphics[width=5cm,trim = .8cm .3cm .8cm .1cm,clip]{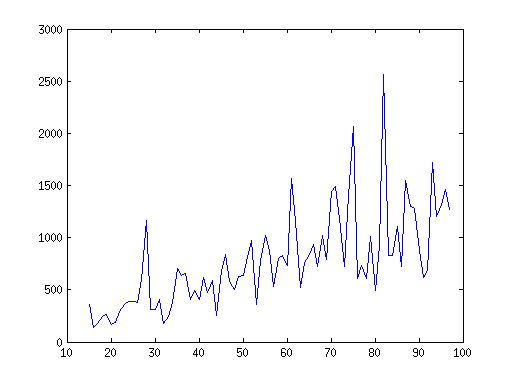}
\end{minipage}

\begin{minipage}[c]{.33\linewidth}
	\includegraphics[width=5cm,trim = .8cm .3cm .8cm .1cm,clip]{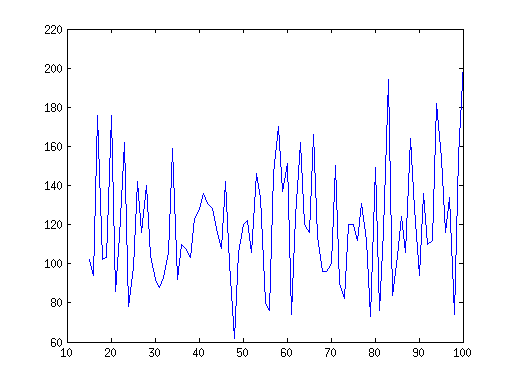}
\end{minipage}\hfill
\begin{minipage}[c]{.33\linewidth}
	\includegraphics[width=5cm,trim = .8cm .3cm .8cm .1cm,clip]{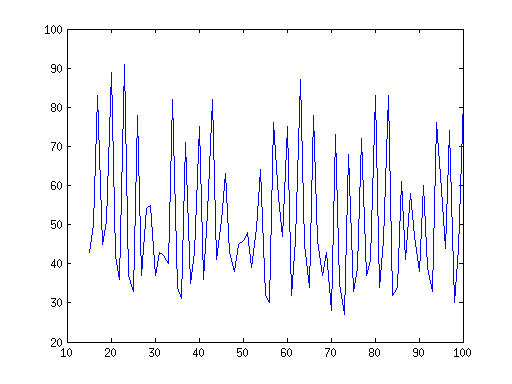}
\end{minipage}\hfill
\begin{minipage}[c]{.33\linewidth}
	\includegraphics[width=5cm,trim = .8cm .3cm .8cm .1cm,clip]{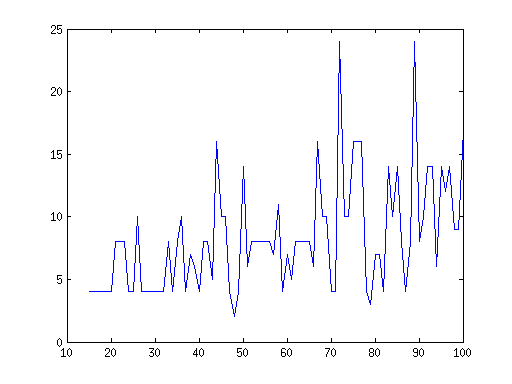}
\end{minipage}
}}
\end{bigcenter}
\caption{Size of the maximal invariant set $\Omega((f_i)_N)$ (left), number of periodic orbits of $(f_i)_N$ (middle) and length of the largest periodic orbit of $(f_i)_N$ (right) depending on $N$, for $f_3$ (top) and $f_4$ (bottom), on the grids $E_N$ with $N=128k$, $k=15,\dots,100$}\label{GrafCnDissip}
\end{figure}

Theoretically, the ratio between the cardinality of $\Omega(f_N)$ and $q_N$ must tend to $0$, this is what can be seen in simulations. This is not really surprising: we already observed that for discretizations of conservative homeomorphisms. In this context it is interesting to compare the behaviour of $\Omega(f_N)$ in the conservative and the dissipative case. The result is a bit disappointing: for $f_3$, the graphics in the case of conservative and dissipative homeomorphisms are as alike as two peas in a pod, while in theory they should be very different. Still, for $f_4$, there are very few attractors, and around each one there are just a few attractive points of the discretization (each attractor is ``sharp'') so that the cardinality of $\Omega((f_4)_N)$ is more or less constant. For this case, the behaviour of discretizations seems clearly typical of the dissipative case.
\bigskip

The behaviour of the number of periodic orbits of $f_N$ is not provided by the theoretical study. However, we can hope that it reflects the fact that the dynamics converges to that of the initial homeomorphism: among others, we can test if it is of the same order as the number of attractors of the homeomorphism. In practice, its behaviour is similar to that of the cardinality of the maximal invariant set $\Omega (f_N)$: its evolution is very regular for $f_3$ but oscillates (from $N=15$) between $30$ and $90$ for $f_4$. As in the conservative case, the comparison with figure \ref{GrafCnDissip} indicates that the average length of a periodic cycle is about $3$ for $f_3$ and $f_4$.
\bigskip

Since the dynamics of discretizations is assumed to converge to that of the initial homeomorphism, one could expect that the length of the longest periodic orbit of discretizations $(f_i)_N$ is almost always a multiple of that of an attractive periodic orbit of $f_i$. The graphic of this length for $f_3$ looks like the conservative case, so one can say that the dissipative behaviour of this homeomorphism is not detected for reasonable orders of discretizations. Despite this, the values of the lengths of the longest periodic orbit are much smaller (up to a factor $10$) than in the conservative case. It is not obvious if it is more a dissipative effect than a coincidence. For $f_4$, the length of the longest orbit is smaller than $25$, which is normal because we are supposed to detect the attractors of the homeomorphism. The changes of the length of the longest orbit are probably due to errors of discretization if the neighborhood of the attractors of $f_4$. Furthermore, we note that some values of the size of the longest orbit are more achieved: for example for $30$ different integers $N$ the longest periodic orbit has length $4$, for $19$ different integers $N$ this length is $8$ and  for $10$ different integers $N$ it is $10$. This is certainly due to the shadowing property of periodic orbits of $f_4$ by those of the discretizations: see the work of M. Blank for a discussion about the phenomenon of multiplication of the period \cite{MR1037009,MR875433,MR765293,MR1299502}.

\addtocontents{toc}{\SkipTocEntry}\subsection{Behaviour of invariant measures}

As in the conservative case we calculated the invariant measures $\mu_N^{f_i}$ of dissipative homeomorphisms $f_3$ and $f_4$ as defined on page \pageref{defbis}. Our aim is to test whether theorem \ref{convmesdissip} apply in practice or if there are technical constraints such that this behaviour can not be observed on these examples. For a presentation of the representations of the measures, see page~\pageref{pagealgo}.

\begin{figure}[H]
\begin{bigcenter}
\makebox[.75\textwidth]{\parbox{.75\textwidth}{%
\begin{minipage}[c]{.49\linewidth}
	\includegraphics[width=5cm,trim = .8cm .3cm .8cm .1cm,clip]{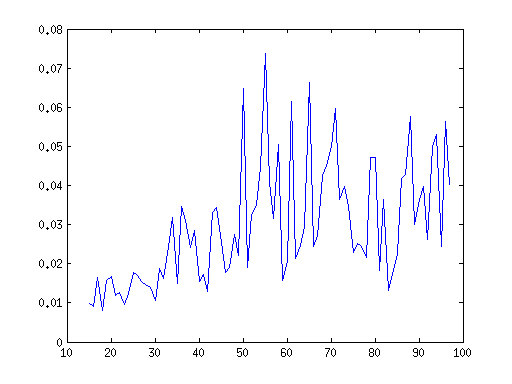}
\end{minipage}\hfill
\begin{minipage}[c]{.49\linewidth}
	\includegraphics[width=5cm,trim = .8cm .3cm .8cm .1cm,clip]{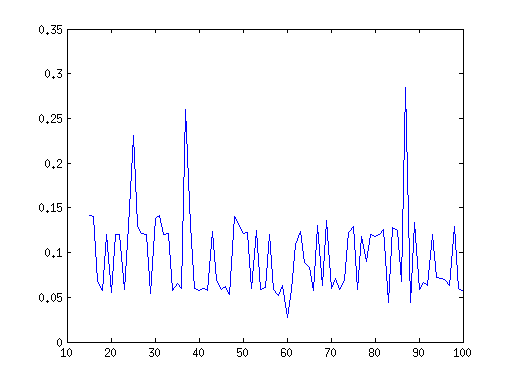}
\end{minipage}
}}
\end{bigcenter}
\caption{Maximal value of $\mu_N$ on $E_N$ depending on $N$ for $f_3$ (left) and $f_4$ (right), on the grids $E_N$ with $N=128k$, $k=15,\dots,100$}\label{GrafMaxmeasDissip}
\end{figure}

There is no clear theoretical results concerning the behaviour of the maximal value of the measure $\mu_N^f$ (figure \ref{GrafMaxmeasDissip}), but we can at least observe that a convergence of this amount would be a good indication that the considered homeomorphism is dissipative. As in the conservative case (in the neighborhood of identity), the behaviour of $\mu_N^{f_3}$ seems very chaotic. While $\mu_N^{f_3}$ is always smaller than $0.07$, the maximum value of $\mu_N^{f_4}$ is very high and fluctuates between $0.02$ and $0.3$, with many values around $0.06$ and $0.12$ and some values around $0.25$. The obvious arithmetic relationships between these values can be explained by the fact that some attractors of discretizations merge for some values of $N$. Thus, there are orbits which attract many points; it seems to reflect the typical behaviour of generic homeomorphisms.

\newpage
\changepage{100pt}{}{}{}{}{-50pt}{}{}{}

\pagestyle{empty}

\begin{figure}[H]
\begin{bigcenter}
\makebox[1.25\textwidth]{\parbox{1.25\textwidth}{%

\begin{minipage}[c]{.33\linewidth}
	\includegraphics[width=6cm,trim = 1.2cm .3cm .6cm .8cm,clip]{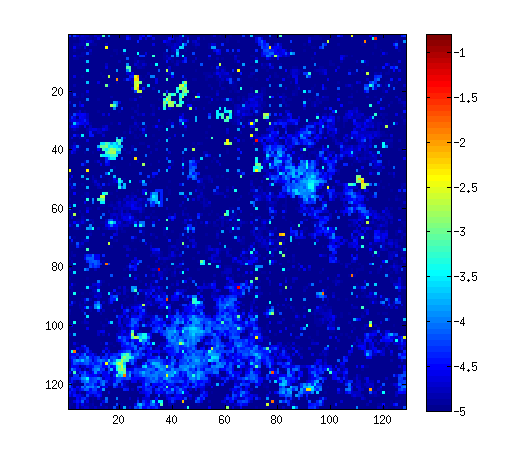}
\end{minipage}\hfill
\begin{minipage}[c]{.33\linewidth}
	\includegraphics[width=6cm,trim = 1.2cm .3cm .6cm .8cm,clip]{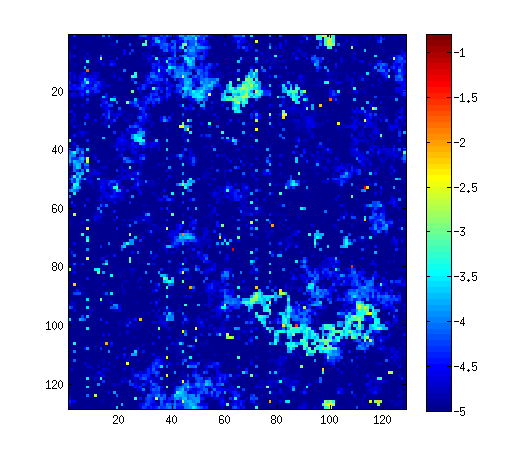}
\end{minipage}\hfill
\begin{minipage}[c]{.33\linewidth}
	\includegraphics[width=6cm,trim = 1.2cm .3cm .6cm .8cm,clip]{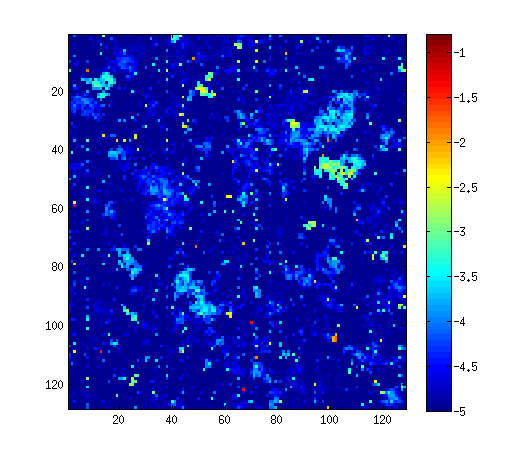}
\end{minipage}

\begin{minipage}[c]{.33\linewidth}
	\includegraphics[width=6cm,trim = 1.2cm .3cm .6cm .8cm,clip]{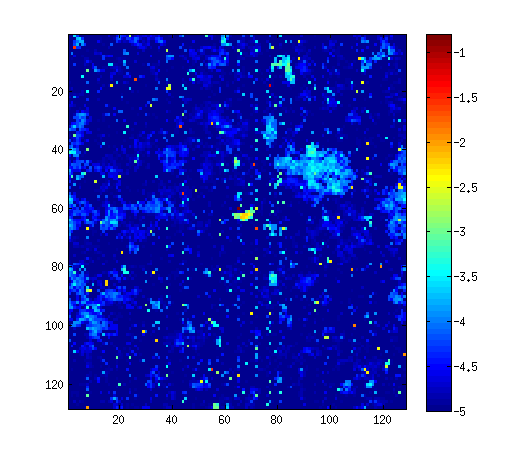}
\end{minipage}\hfill
\begin{minipage}[c]{.33\linewidth}
	\includegraphics[width=6cm,trim = 1.2cm .3cm .6cm .8cm,clip]{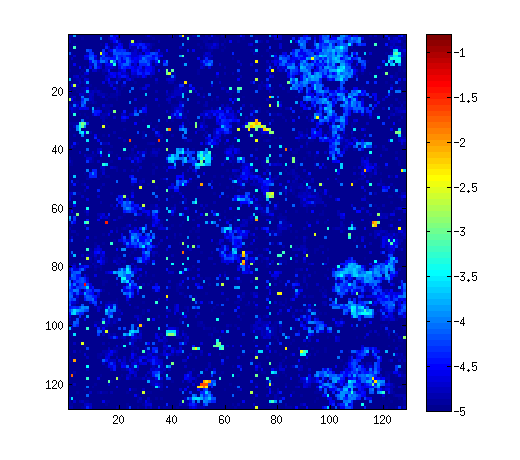}
\end{minipage}\hfill
\begin{minipage}[c]{.33\linewidth}
	\includegraphics[width=6cm,trim = 1.2cm .3cm .6cm .8cm,clip]{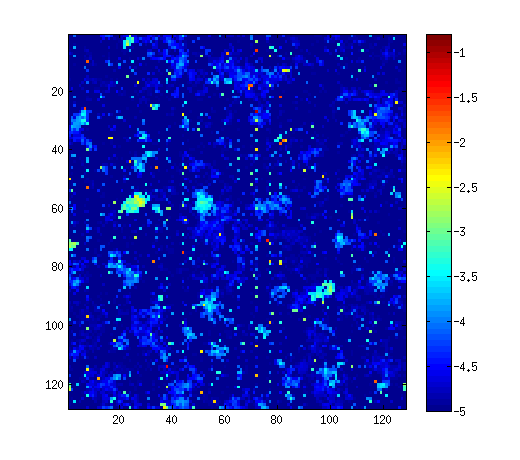}
\end{minipage}

\begin{minipage}[c]{.33\linewidth}
	\includegraphics[width=6cm,trim = 1.2cm .3cm .6cm .8cm,clip]{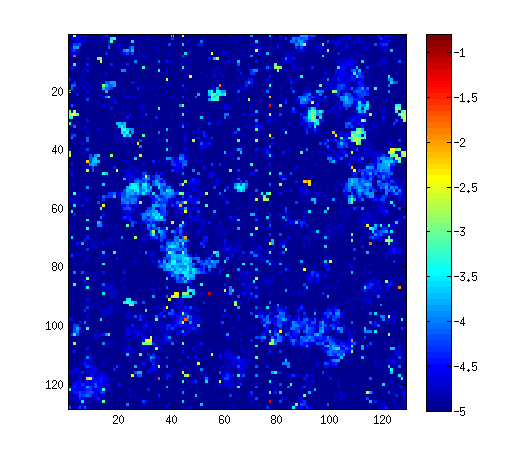}
\end{minipage}\hfill
\begin{minipage}[c]{.33\linewidth}
	\includegraphics[width=6cm,trim = 1.2cm .3cm .6cm .8cm,clip]{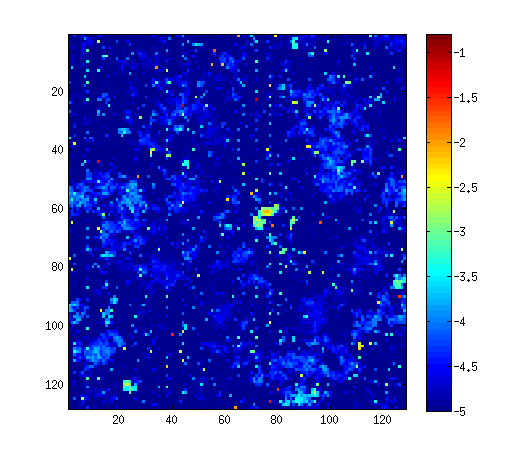}
\end{minipage}\hfill
\begin{minipage}[c]{.33\linewidth}
	\includegraphics[width=6cm,trim = 1.2cm .3cm .6cm .8cm,clip]{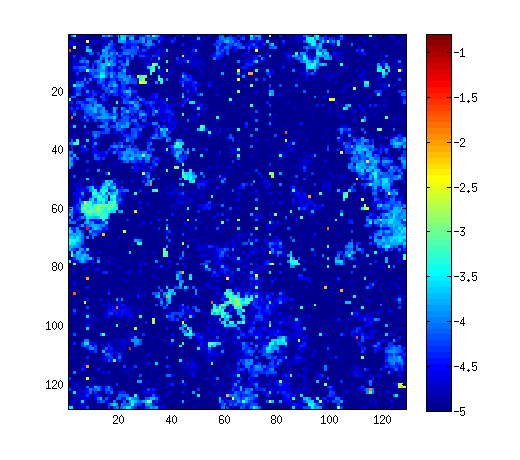}
\end{minipage}
}}
\caption{Simulations of $\mu_N^{f_3}$ on grids $E_N$, with $N=20000,\dots,20008$ (from left to right and top to bottom)}\label{MesC0IdDissipSer}
\end{bigcenter}
\end{figure}
\vspace{-5pt}

The behaviour of invariant measures for $f_3$, which is a small $C^0$ dissipative perturbation of identity, is relatively similar to that of invariant measures for $f_1$ i.e. the corresponding conservative case: when the order discretization is large enough, there is a strong variation of the measure $\mu_N^{f_3}$. Moreover this measure has a significant absolutely continuous component with respect to Lebesgue measure. Nevertheless, there are differences with the conservative case: the maximum value of $\mu_N^{f_3}$ is very large, much more than for $f_1$, and there are orbits which attract many points. On the other hand, one can observe the following phenomenon (we hope that it is anecdotal): these orbits are located on vertical stripes. This is probably related to specific arithmetic phenomena due to the specific form of the homeomorphism $f_3$.

It is normal not to be able to observe all the attractors of $f_3$ on these simulations: as noted by J.-M. Gambaudo and C. Tresser in \cite{Gamb-dif}, the size of these attractors can be very small compared to the numbers involved in the definition of $f_3$. So even in orders discretization such as $2^{15}$, the dissipative nature of the homeomorphism can not be detected on discretizations. For this reason, it looks as if the discretizations of $f_3$ are very similar to those of $f_1$.

\newpage

\begin{figure}[H]
\begin{bigcenter}
\makebox[1.25\textwidth]{\parbox{1.25\textwidth}{%

\begin{minipage}[c]{.33\linewidth}
	\includegraphics[width=6cm,trim = 1.2cm .3cm .6cm .8cm,clip]{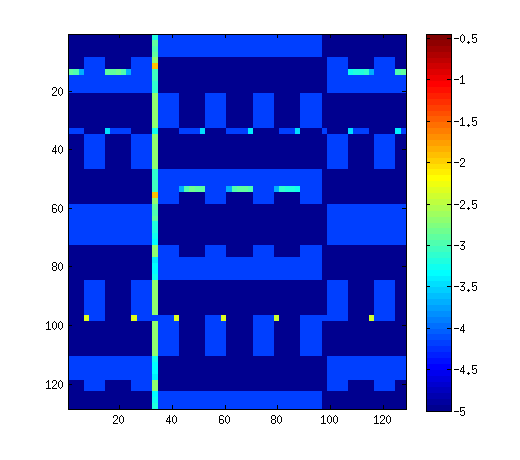}
\end{minipage}\hfill
\begin{minipage}[c]{.33\linewidth}
	\includegraphics[width=6cm,trim = 1.2cm .3cm .6cm .8cm,clip]{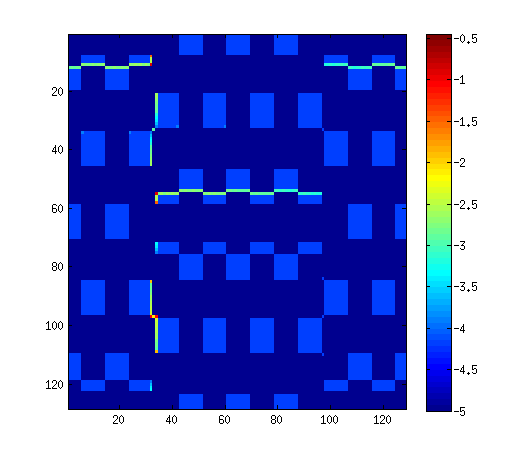}
\end{minipage}\hfill
\begin{minipage}[c]{.33\linewidth}
	\includegraphics[width=6cm,trim = 1.2cm .3cm .6cm .8cm,clip]{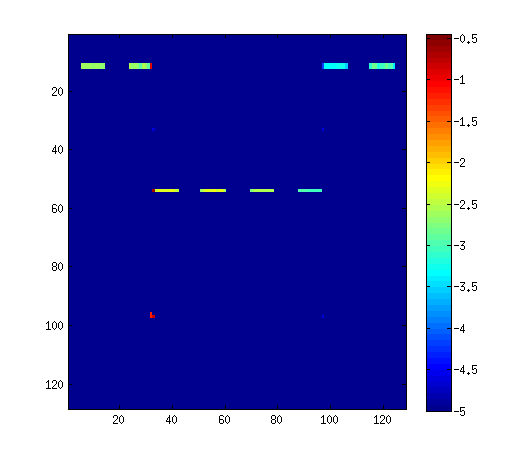}
\end{minipage}

\begin{minipage}[c]{.33\linewidth}
	\includegraphics[width=6cm,trim = 1.2cm .3cm .6cm .8cm,clip]{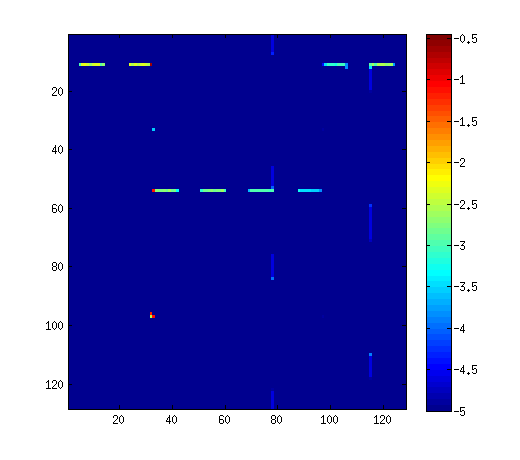}
\end{minipage}\hfill
\begin{minipage}[c]{.33\linewidth}
	\includegraphics[width=6cm,trim = 1.2cm .3cm .6cm .8cm,clip]{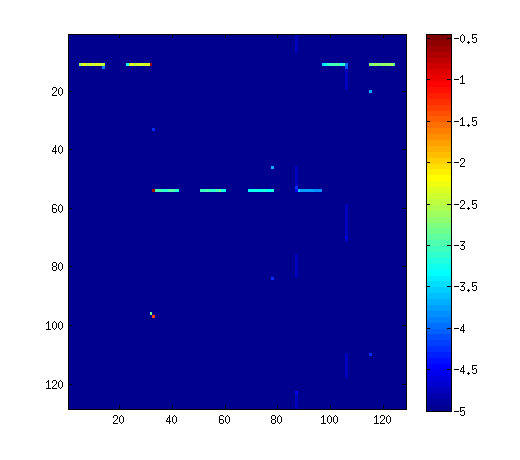}
\end{minipage}\hfill
\begin{minipage}[c]{.33\linewidth}
	\includegraphics[width=6cm,trim = 1.2cm .3cm .6cm .8cm,clip]{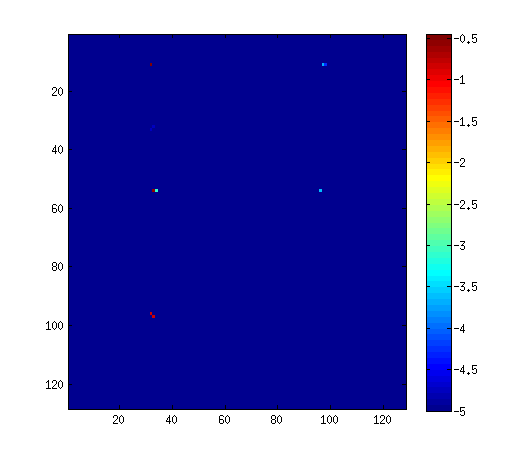}
\end{minipage}

\begin{minipage}[c]{.33\linewidth}
	\includegraphics[width=6cm,trim = 1.2cm .3cm .6cm .8cm,clip]{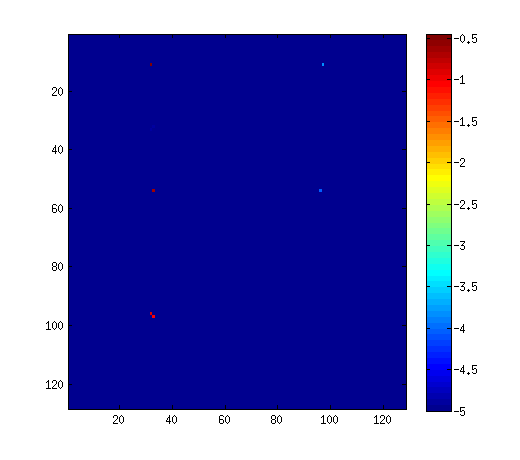}
\end{minipage}\hfill
\begin{minipage}[c]{.33\linewidth}
	\includegraphics[width=6cm,trim = 1.2cm .3cm .6cm .8cm,clip]{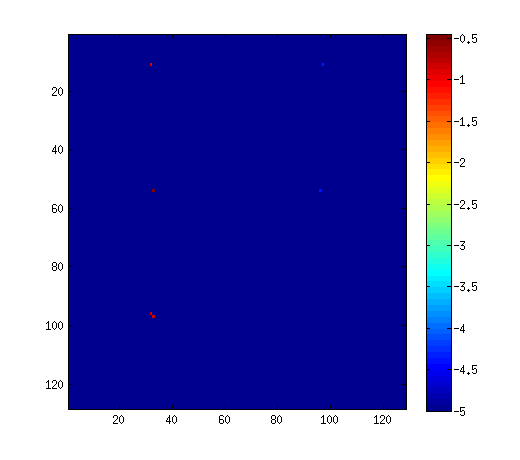}
\end{minipage}\hfill
\begin{minipage}[c]{.33\linewidth}
	\includegraphics[width=6cm,trim = 1.2cm .3cm .6cm .8cm,clip]{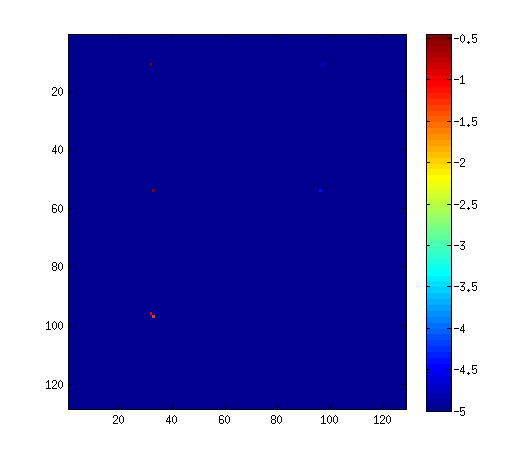}
\end{minipage}
}}
\caption{Simulations af $\mu_N^{f_4}$ on the grids $E_N$, with $N = 2^k$ and $k=6,\dots,14$ (from left to right and top to bottom)}\label{MesC0C1IdDissip}
\end{bigcenter}
\end{figure}

Recall that what happens for $f_3$ is rather close to what happens for $f_1$. For their part, simulations of invariant measures for $f_4$ on grids of size $2^k \times 2^k$ highlight that we expect from a generic dissipative homeomorphism: the measures $\mu^{f_4}_N$ tend quickly to a single measure (that is also observed on a series of simulations), which is carried by the attractors of $f_4$. The fact that $f_4$ has few attractors allows the discretizations of reasonable orders (typically $2^{11}$) to find the actual attractors of the initial homeomorphism, contrary to what we had observed for $f_3$.

\newpage

\changepage{-100pt}{}{}{}{}{50pt}{}{}{}
\pagestyle{plain}

\addtocontents{toc}{\SkipTocEntry}\subsection*{Acknowledgments} The author warmly thanks François Béguin for his sound advises and constant support throughout the achievement of this work, as well as the two anonymous reviewers for their insightful comments.

\bibliographystyle{amsalpha}
\bibliography{../../../Biblio}

\end{document}